\documentclass[twoside,12pt,leqno]{amsart}
\usepackage{amssymb,latexsym,verbatim,enumerate} 
\usepackage{mathtools}
\usepackage{tikz}
\usepackage{hyperref}
\hypersetup{citecolor=red, linkcolor=blue, colorlinks=true}
\usepackage{amsmath,mathrsfs}
\usepackage{stmaryrd}
\usepackage{booktabs}
\usepackage{datetime2}

\DeclareMathAlphabet{\mathpzc}{OT1}{pzc}{m}{it}

\newcommand{\im}{\textup{im}}

\renewcommand{\le}{\leqslant}
\renewcommand{\ge}{\geqslant}
\renewcommand{\leq}{\leqslant}
\renewcommand{\geq}{\geqslant}
\newcommand{\lhdeq}{\trianglelefteqslant}    
\DeclareMathOperator{\lcm}{lcm}

\newcommand{\Binom}[2]{{\genfrac{[}{]}{0pt}{}{#1}{#2}}}

\newcommand\GX{\textup{G{\bf X}}}
\newcommand\OmegaX{\Omega\textup{{\bf X}}}
\newcommand\OmegaY{\Omega\textup{{\bf Y}}}
\newcommand{\coloneq}{\vcentcolon=}      
\newcommand{\Asch}{\mathpzc{A}\mathpzc{C}}
\newcommand{\Aut}{\mathrm{Aut}}

\newcommand{\cC}{\mathscr{C}}
\newcommand{\cD}{{\mathcal D}}
\newcommand{\cM}{{\mathcal M}}

\newcommand{\cU}{{\mathcal U}}

\newcommand{\F}{{\mathbb F}}

\newcommand{\bfa}{{\mathbf a}}
\newcommand{\bfb}{{\mathbf b}}
\newcommand{\bfL}{{\mathbf L}}
\newcommand{\bfO}{{\mathbf O}}
\newcommand{\bfU}{{\mathbf U}}
\newcommand{\bfSp}{{\mathbf{Sp}}}
\newcommand{\bfX}{{\mathbf X}}
\newcommand{\bfY}{{\mathbf Y}}
\newcommand{\SX}{{\mathrm {S{\bf X}}}}

\newcommand{\GY}{{\mathrm {G{\bf Y}}}}
\newcommand{\GL}{{\mathrm {GL}}}
\newcommand{\SL}{{\mathrm {SL}}}
\newcommand{\PSL}{{\mathrm {PSL}}}
\newcommand{\PSU}{{\mathrm {PSU}}}
\newcommand{\GSp}{{\mathrm {GSp}}}
\newcommand{\Sp}{{\mathrm {Sp}}}
\newcommand{\GU}{{\mathrm {GU}}}
\newcommand{\SU}{{\mathrm {SU}}}
\newcommand{\SO}{{\mathrm {SO}}}
\newcommand{\GO}{{\mathrm {GO}}}

\newcommand{\rhogen}{\rho_{\mathrm{gen}}}
\newcommand{\rhonongen}{\rho_{\mathrm{nongen}}}

\newcommand{\eps}{\varepsilon}

\newtheorem{theorem} {Theorem} [section]
\newtheorem{proposition} [theorem] {Proposition}
\newtheorem{lemma} [theorem] {Lemma}
 
\newtheorem{hypothesis}[theorem]{Hypothesis}
\theoremstyle{definition}
\newtheorem{definition}[theorem]{Definition}

\newtheorem{remark}[theorem]{Remark}

\title[Probability that certain elements generate a classical group]{The probability that two elements with large $1$-eigenspaces
  generate a classical group}
\author{S.P.~Glasby}
\address{Center for the Mathematics of Symmetry and Computation, University of
Western Australia, Perth 6009, Australia; 
\qquad\textup{Email: {\tt Stephen.Glasby@uwa.edu.au}}}
\author{Alice C.~Niemeyer}
\address{Chair for Algebra and Representation Theory, RWTH Aachen University, Pontdriesch 10-16, 52062 Aachen, Germany;\qquad\textup{Email: {\tt Alice.Niemeyer@art.rwth-aachen.de}} }
\author{Cheryl E.~Praeger}
\address{Center for the Mathematics of Symmetry and Computation, University of
Western Australia, Perth 6009, Australia;\qquad\textup{Email: {\tt Cheryl.Praeger@uwa.edu.au}}}

\begin{document}
\date{\today}

\begin{abstract}

With high probability, among $O(\log n)$ independent randomly selected elements from  a finite $n$-dimensional classical group, some pair of elements 
power to a $2$-element generating set for  a naturally embedded classical subgroup of dimension $O(\log n)$.
The $2$-element generating set produced consists of certain elements with large $1$-eigenspaces, called stingray elements.
Underpinning this result is a new theorem  on the generation of a finite classical group by a pair of stingray elements. In particular we show that, for classical groups not containing $\SL_n(q)$, the probability of generation is at least $0.975$.
The explicit probability bounds we obtain will be applied to justify complexity analyses
for new constructive recognition algorithms for finite classical groups.

  \vskip2mm\noindent 
  {\bf Keywords:}  
    finite classical groups, Aschbacher classes, almost simple, stingray element, random generation
  \vskip2mm\noindent
      {\bf 2020 Mathematics Subject Classification:} 20D06, 20P05, 20H20

\end{abstract}
\maketitle 

\section{Introduction}

It has been known since 1995 that two random elements of a finite nonabelian simple group $G$ generate the group with probability approaching $1$ as $|G|\to\infty$,  \cite{Dixon, KLub, LieSh95}. That this might be true for $G=A_n$  goes back to a  conjecture in 1882 of Netto \cite[p.76.]{Netto}.  More delicate arguments show that one may control the orders of the generating elements: for example, with high probability a simple group $G$ will be generated by a random involution together with a second randomly selected element (as conjectured in \cite{KLub} and proved in \cite{LieSh96a, LieSh96b}). The main motivation for much of this work was the long-standing problem of determining which finite simple groups $G$ were quotients of the classical modular group $\PSL_2(\mathbb{Z})$, equivalently deciding which simple groups $G$ were $(2, 3)$-generated, see \cite{LieSh96a}. Further work \cite{LieSh02} showed that,  for primes $r, s$ not both $2$, the probability that two randomly chosen elements of a simple classical group $G$, of orders $r$ and $s$, generate $G$ tends to $1$ as $|G|\to\infty$, provided the rank of $G$ is sufficiently large. 

For effective use of these results computationally, explicit probability bounds are required not just asymptotic ones, and these bounds should be available for classical groups of all ranks. Moreover, efficient methods both to construct and to recognise suitable generating elements are needed. An example where two prime order elements were used to recognise classical groups was given in \cite{NieP}; these were so-called `ppd elements' (Definition~\ref{def:stingray}(b)) and they could be efficiently constructed and recognised, and explicit bounds were available for their proportions.

 With a variety of applications in mind,  Burness, Liebeck and Shalev \cite{BLS} showed that all maximal subgroups of a finite nonabelian simple group can also be  generated efficiently. Moreover, several recognition algorithms for (quasi)simple groups require construction of \emph{naturally embedded} smaller (quasi)simple groups; for example a subgroup $A_k$ of a larger alternating group $A_n$ is naturally embedded if it has one orbit of size $k$ and fixes the remaining $n-k$ points (see Subsection~\ref{s:natAn}), and a $k$-dimensional classical group is naturally embedded in a larger $n$-dimensional classical group if it acts naturally on a $k$-dimensional subspace, and fixes pointwise an $(n-k)$-dimensional complementary space. Most constructive recognition algorithms for finite classical groups require the construction of smaller naturally embedded classical subgroups: for example, the 1996 algorithm of Kantor and Seress \cite{KS} to recognise black-box classical groups  constructs a naturally embedded $\SL_3(q)$ (or $\Sp_4(q)$ or $\SU_4(q)$) in an $n$-dimensional classical group, usually via constructing transvections or similar elements from certain $(n-2)$-ppd elements (Definition~\ref{def:stingray}(b)); while a 2013 classical recognition algorithm by Dietrich, Leedham-Green,  L\"ubeck and O'Brien \cite[Section 3]{DLLOB13}  recursively constructs naturally embedded $k$-dimensional classical subgroups in an $n$-dimensional classical group where $k\in [n/3,2n/3]$ using stingray elements (Definition~\ref{def:stingray}(c)).  In an earlier algorithm  for classical groups in odd characteristic, such naturally embedded classical subgroups were obtained by constructing involutions and their centralisers, \cite[Section 4]{LGOB}.

 In all the procedures mentioned above the first objective was to obtain a naturally embedded $k$-dimensional classical subgroup, with $k$ very small, of an $n$-dimensional classical group. To achieve this recursively, by essentially halving the dimension each time, requires $\log n$ steps. It was a request from our colleague \'Akos Seress to the second and third authors that essentially led to the work presented in this paper. \'Akos was convinced by computational evidence that one could achieve, in a single step, a natural embedding of a $k$-dimensional classical subgroup in an $n$-dimensional classical group with $k \approx \log n$. The construction would, as in \cite{DLLOB13}, involve `$e$-stingray elements' (Definition~\ref{def:stingray}(c)), but this time with $e =O(\log n)$ rather than $e=O(n)$. That such elements can be constructed efficiently by examining only $O(\log n)$ random elements of the $n$-dimensional classical group was established in \cite[Theorem 3.3(b)]{NP14}; and in \cite[Section 3]{GNP2} (see also \cite[Theorem 1.1]{GIM}) it was proved that  a random $e_1$-stingray element and a random $e_2$-stingray element (with $e_1, e_2$ roughly $\log n$) should with high probability act on a non-degenerate  $(e_1+e_2)$-subspace and fix a complementary subspace pointwise. What remained was to prove that such a pair of stingray elements generates a  naturally embedded $(e_1+e_2)$-dimensional classical subgroup, with high probability. This is the main result of our paper (Theorem~\ref{thm:main})  and was experimentally predicted by Frank L\"ubeck. Moreover, exploiting this result together with the results from \cite{GNP2, NP14} shows that a naturally embedded $d$-dimensional classical subgroup, for some $d\in [2\log n, 8\log n]$ can be constructed by examining $O(\log n)$ random elements from the $n$-dimensional classical group (Theorem~\ref{thm:appn}; this is  the result mentioned at the beginning of the abstract).

 In Theorems~\ref{thm:appn} and~\ref{thm:main}, by a \emph{classical group of type $\bfX$}, or more precisely, \emph{of type $(\bfX,n,|\F|)$},  we mean a group $G$ satisfying $\OmegaX_n(q)\leq G\leq \GX_n(q)$ in its action on the underlying vector space $V_n=\mathbb{F}^n$, with $\bfX$, $\OmegaX_n(q)$, $\GX_n(q)$, $\F$ as in one of the lines of Table~$\ref{tab:G}$. Note in particular that, for unitary groups, we write $\SU_n(q)$ and $\GU_n(q)$ and that these are subgroups of $\GL_n(q^2)$. Theorem~\ref{thm:appn} is  proved in Section~\ref{ss:thm1.2} as a corollary of a more technical Proposition~\ref{prop:appn}. Note that, in Theorem~\ref{thm:appn} and Proposition~\ref{prop:appn}, the classical group may be an odd dimensional orthogonal group, while in Theorem~\ref{thm:main} orthogonal groups must have even dimension as odd dimensional orthogonal groups contain no `stingray duos'.

  \begin{table}
  \caption{Classical groups of type $(\bfX, n, q)$}
\begin{tabular}{llll|c|l}
  \toprule
  {\bf X} & $\OmegaX_n(q)$ & $\GX_n(q)$ & $|\mathbb{F}|$&$c(\bfX)$& Conditions on {\bf Y} for Theorems~\ref{thm:appn} and~\ref{thm:main}\\
  \midrule
  {\bf L} & $\SL_n(q)$ & $\GL_n(q)$ & $q$&4   & {\bf Y} $=$ {\bf X}   \\
  {\bf U} & $\SU_n(q)$ & $\GU_n(q)$ & $q^2$&4 & {\bf Y} $=$ {\bf X}  \\
  {\bf Sp} & $\Sp_n(q)$ & $\Sp_n(q)$ & $q$ &8 & {\bf Y} $=$ {\bf X}  if  $q$ is odd\\
      &  &  &  & & {\bf Y} $=$ {\bf O}$^\eps$ where $\eps=\pm$ if $q$ is even \\
  {\bf O}$^\eps$  & $\Omega^\eps_n(q)$  &  $\GO^\eps_n(q)$ & $q$&8  & $\eps\in\{\circ, +, -\}$, and {\bf Y} $=$ {\bf O}$^{\eps'}$ where $\eps'=\pm$;  \\
  &&&&& but in Theorem~\ref{thm:main}, $\eps',\eps\in\{+, -\}$.\\
\bottomrule
\end{tabular}
\label{tab:G}
\end{table}

\begin{theorem}\label{thm:appn}
Suppose that $G$ is a classical group of type $(\bfX, n, q)$  with $n > 8$  as in Table~$\ref{tab:G}$. Then for each $\eta > 0$, there is a positive  constant $k(\eta)$ such that, with probability at least $1-\eta$, among $k(\eta) \log n$ independent uniformly distributed random elements from   $G$, some pair  power to a $2$-element generating set for  a naturally embedded $d$-dimensional classical subgroup of type $\bfY$ with $2\log n< d \leq c(\bfX)\log n$ and $c(\bfX), \bfY$ as in Table~$\ref{tab:G}$.
\end{theorem}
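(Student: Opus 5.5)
The proof combines three probabilistic ingredients: the stingray construction of \cite[Theorem 3.3(b)]{NP14}, the geometric result of \cite[Section 3]{GNP2} (see also \cite[Theorem 1.1]{GIM}), and the generation result Theorem~\ref{thm:main}. It is convenient to route everything through a technical statement, Proposition~\ref{prop:appn}, with explicit constants, and then choose $k(\eta)$ at the end.

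First fix a target range for the stingray parameter $e$ with $\log n< e\le (c(\bfX)/2)\log n$. For unitary and linear groups take $e$ in an interval of length comparable to $\log n$. For symplectic and orthogonal groups take $e$ even, so that the $e$-dimensional space is non-degenerate of minus type. A random element $g\in G$ powers to an $e$-stingray element (Definition~\ref{def:stingray}(c)) for some $e$ in this range with probability at least $c/\log n$, for a constant $c>0$ independent of $n$ and $q$. This follows from \cite[Theorem 3.3(b)]{NP14}, summing the roughly $1/e$ proportions over $e$ in the range. With $k\log n$ independent random elements, the number $N$ of successes stochastically dominates a binomial variable with mean $ck$. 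A Chernoff bound (or simply that each disjoint block of $\log n/c$ elements succeeds with probability at least $1-e^{-1}$) then gives: with probability at least $1-\eta/2$ we obtain at least $m$ stingray elements, where $m=m(\eta)$ is any prescribed integer, once $k$ is large enough.

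Next pair them up, say $(g_1,g_2),(g_3,g_4),\dots$, giving $\lfloor m/2\rfloor$ independent pairs. Conditional on the stingray parameters $e_1,e_2$, each stingray power is uniformly distributed over the $G$-conjugates of its type, because conjugation preserves the uniform distribution on $G$. So the pair behaves as a random stingray pair. By \cite[Section 3]{GNP2}, with probability at least some absolute constant $p_1>0$, the pair forms a ``stingray duo''. This means the sum of the two nontrivial subspaces $U=\im(g_1-1)+\im(g_2-1)$ has dimension $d=e_1+e_2$, is non-degenerate (for $\bfX\ne\bfL$), and the two elements fix a complement pointwise. Since $2\log n<e_1+e_2\le c(\bfX)\log n$, the dimension $d$ is in the required range. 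Given a duo, Theorem~\ref{thm:main} says the restrictions generate $\OmegaY_d(q)$ with probability at least $p_2>0$; this is $0.975$ except in the linear case, where it is still bounded below. For $\bfX=\bfSp$ with $q$ even and for orthogonal groups, Theorem~\ref{thm:main} produces the type $\bfY$ of Table~\ref{tab:G}. Hence each pair succeeds independently with probability at least $p=p_1p_2$. The probability that all $\lfloor m/2\rfloor$ pairs fail is at most $(1-p)^{\lfloor m/2\rfloor}<\eta/2$ once $m$ is chosen suitably. Combining the two failure probabilities gives total failure at most $\eta$, and we set $k(\eta)$ accordingly.

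The main obstacle is the independence and conditioning bookkeeping, since pairs drawn from the same sample are not literally independent random stingray pairs. I would handle this by conditioning on the multiset of stingray types $e_i$. Within a pair, the elements are independent and conjugation-invariantly distributed, so the \cite{GNP2} and Theorem~\ref{thm:main} bounds apply uniformly over all $e_1,e_2$ in range. The secondary issue is small $n$: the hypothesis $n>8$ is what guarantees that the interval for $e$ is nonempty and meets the constraints required by Theorem~\ref{thm:main}, such as $e_i$ even or $d\ge 4$. Checking this interval arithmetic case by case against $c(\bfX)$ is where I expect the constants $4$ and $8$ to come from.
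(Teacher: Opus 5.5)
Your proposal is correct in substance. It uses the same three ingredients as the paper: \cite[Theorem 3.3(b)]{NP14} for stingray powers, \cite{GNP1}, \cite{GNP2} and \cite{GIM} for the duo condition, and Theorem~\ref{thm:main} for generation. Where it differs is the probabilistic bookkeeping.

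The paper first proves Proposition~\ref{prop:appn}, a lower bound $c_0/\log n$ for a single pair $(h_1,h_2)$. It then takes $k+1$ elements, uses the $k$ consecutive pairs $(h_i,h_{i+1})$, bounds the failure probability by $(1-\pi)^k$, and sets $k(\eta)=\log(\eta^{-1})/c_0$.

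You instead harvest stingray powers element by element, so that $O(\log n)$ elements yield $m(\eta)$ of them with high probability. Only then do you pair them disjointly, each pair succeeding independently with constant probability $p_1p_2$.

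Your version is the more robust one, for two reasons. First, disjoint pairs are genuinely independent, whereas consecutive pairs share an element, so $(1-\pi)^k$ is not literally justified. Second, if each element only powers to a stingray element with probability $b/\log n$, as the paper states, then the stated ingredients give a fixed pair a success probability of order $(b/\log n)^2$, not $bb'b''/\log n$. It is precisely your harvest-then-pair step that makes $O(\log n)$ elements suffice. What the paper's route buys is a self-contained per-pair statement and a closed form for $k(\eta)$.

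Some points need tightening.
\begin{itemize}
\item Theorem~\ref{thm:main} concerns $e_i$-ppd stingray elements, so you need ppd stingray powers, which is what \cite{NP14} provides.
\item The power should be chosen by a rule depending only on the conjugacy class of $h$. Its law is then conjugation-invariant, that is, a mixture of uniform distributions over several classes rather than uniform on one class. This suffices because the bounds from \cite{GNP2}, \cite{GIM} and Theorem~\ref{thm:main} are uniform over classes.
\item You should say why, given a duo in the $n$-dimensional group, the restrictions to $V_0=U_g\oplus U_{g'}$ are uniformly distributed over duos in a pair of $\GX(V_0)$-classes. The reason is that each of $g,g'$ is determined by its restriction to $V_0$ together with acting trivially on the fixed complement.
\item The hypothesis of Theorem~\ref{thm:main} is $d>8$, not $d\ge 4$, and $n>8$ does not force $d=e_1+e_2>8$. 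For type $\bfL$ and $n=9$, the choice $e_1=e_2=4$ lies in $(\log n,2\log n]$ whether $\log$ means $\ln$ or $\log_2$, giving $d=8$. The paper's proof passes over this point too. Strictly, one needs $\log n\ge 4$ or a separate argument for $d\le 8$.
\end{itemize}
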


 We explain some terminology which we use in Theorem~\ref{thm:main}, and comment on the type $\bfY$ in Table~\ref{tab:G}.
An element $g$ in a classical group $G$ of type $\bfX$, with natural module $V_n$, is called an \emph{$e$-ppd stingray element} if $g$ leaves invariant an $e$-dimensional subspace $U_g$ of $V_n$, fixes pointwise a complementary subspace, and its order $|g|$ is divisible by a primitive prime divisor (ppd) of $|\F|^e-1$ (Definition~\ref{def:stingray}). Moreover, a pair $(g_1,g_2)$ is an \emph{$(e_1,e_2)$-ppd stingray duo} if, for each $i$, $g_i$ is an $e_i$-ppd stingray element relative to an $e_i$-dimensional subspace $U_{g_i}$ of $V_n$, such that $U_{g_1}\cap U_{g_2}=0$ so $V_d:=U_{g_1}\oplus U_{g_2}$ has dimension $d:=e_1+e_2$, and $V_d$ is non-degenerate if $\bfX\ne \bfL$. In proving Theorem~\ref{thm:appn}, the $2$-element generating set for the naturally embedded classical subgroup of type $\bfY$ will form an  $(e_1,e_2)$-ppd stingray duo with each $e_i\in [\log n, 2\log n]$ and with natural module $V_d$, and we will apply Theorem~\ref{thm:main} below to obtain the probability bound for generation, as a function of $d$ and~$q$ (see Section~\ref{s:proofs}). As we observe from Table~\ref{tab:G}, the type $\bfY$ is usually the same as $\bfX$ but may be different, notably  when ${\bf X} = {\bf Sp}$ and $q$ is even. The reason for this difference is explained in  Lemma~\ref{lem:spgen}, namely, whenever the stingray duo generates  a group acting irreducibly on $V_d$, the group generated is  contained in an orthogonal subgroup of $\Sp_d(q)$. 
For the last line of Table~\ref{tab:G} we note that, when $\bfX= \bfO^\eps$, then each $g_i$ in the stingray duo acts irreducibly on the space $U_{g_i}$ and this forces $U_{g_i}$ to have minus type (Lemma~\ref{lem:uperp}), and the non-degenerate direct sum $V_d=U_{g_1}\oplus U_{g_2}$ can have type $\eps'\in\{+, -\}$ (not necessarily equal to  $\eps$). We now introduce the concept of generating duos and their proportion; these are fundamental for Theorem~\ref{thm:main}.

\begin{definition}\label{e:gen}
    {\rm 
Let $G$ be a finite classical group of type $(\bfX, d, |\F|)$ as in one of the lines of Table~\ref{tab:G}, and for $i=1,2$ let  $g_i^G$ be a $G$-conjugacy class of $e_i$-ppd stingray elements, such that $e_1+e_2=d$ (Definition~\ref{def:stingray}).   
 We say that an $(e_1,e_2)$-ppd stingray duo
$(g,g')\in g_1^G\times g_2^G$ is  a \emph{generating stingray duo} if $\langle g, g'\rangle$ contains $\OmegaY_d(q)$ with the type $\bfY$ as in the line for $\bfX$  of Table~\ref{tab:G}. 
The \emph{proportion of generating   stingray duos} in $g_1^G\times g_2^G$ is defined as
\begin{equation}\label{e:rhogen}
   \rhogen(g_1,g_2,G) \coloneq   
   \frac{ \mbox{Number of generating stingray duos in}\ g_1^G\times g_2^G  }{ \mbox{Number of stingray duos in} \ g_1^G\times g_2^G  }.
\end{equation} 
    }
\end{definition}

\begin{theorem}\label{thm:main}
Suppose that $G$ is a classical group of type $(\bfX, d, |\F|)$ on $V$ as in Table~$\ref{tab:G}$, and that $e_1, e_2$ are integers  such that $2\leq e_2\leq e_1$ and $d=e_1+e_2>8$.  If, for $i=1,2$, $g_i$ is an $e_i$-ppd stingray element in $G$, then the proportion $\rhogen(g_1,g_2,G)$ in \eqref{e:rhogen} satisfies
\[
\rhogen(g_1,g_2,G) \geq 1 - \lambda_\bfX\cdot (q^{-1}+q^{-2})  -  \kappa_{\bfX}(q)\cdot q^{-d+3},
\]
where $ \lambda_\bfX=1$ if $\bfX=\bfL$ and otherwise $ \lambda_\bfX=0$ and $\kappa_{\bfX}(q)$ is given in Table~$\ref{t:kappa}$.
Moreover, $\rhogen(g_1,g_2,G)>0.975$ if $\bfX\ne\bfL$, while  if $\bfX=\bfL$ then $\rhogen(g_1,g_2,G)$ is greater than $0.555$ if  $q\geqslant3$, or $0.248$ if $q=2$. 
\end{theorem}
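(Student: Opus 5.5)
We will argue by Aschbacher's theorem applied to $H=\langle g,g'\rangle$ for a stingray duo $(g,g')\in g_1^G\times g_2^G$. Set $V_d=U_g\oplus U_{g'}$. Since $g$ has order divisible by a ppd $r_1$ of $|\F|^{e_1}-1$, and $e_1\ge d/2$, the group $H$ contains an element of order $r_1$ acting irreducibly on an $e_1$-space and trivially on a complement. Such a \emph{ppd element with large fixed space} places strong restrictions on $H$. The plan is to run through the maximal subgroups $M$ of $\GY_d(q)$ (together with $\GX_d(q)$ in the cases $\bfX=\bfSp$, $q$ even, via Lemma~\ref{lem:spgen}) that could contain $H$ and do not contain $\OmegaY_d(q)$. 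For each class $\Asch_i$ we bound the proportion of duos lying in some conjugate of $M$.

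Step 1 (geometric classes). We first treat reducibility ($\Asch_1$), which is the only class giving a non-negligible contribution.
\begin{itemize}
\item For $\bfX\ne\bfL$, $U_g\cap U_{g'}=0$ and nondegeneracy of $V_d$ mean that an $H$-invariant subspace must be a sum of $g$- and $g'$-invariant pieces. Irreducibility of $g$ on $U_g$ (Lemma~\ref{lem:uperp}) then forces the invariant subspace to be $U_g$, $U_{g'}$, $U_g^\perp$ or similar. This happens only when $U_{g'}\subseteq U_g^\perp$ or the analogue, and counting such configurations among all duos gives a term of order $q^{-d+c}$.
\item For $\bfX=\bfL$ there is no form, and the invariant subspaces $[V,g]\cap C_V(g')$-type configurations occur with probability about $q^{-1}+q^{-2}$. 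Concretely, we will count pairs where $\im(g-1)$ meets $\ker(g'-1)$ nontrivially, or dually; this is the origin of the $\lambda_\bfL(q^{-1}+q^{-2})$ term.
\item Imprimitive and tensor classes ($\Asch_2,\Asch_4,\Asch_7$) are excluded because a ppd element for $e_1>d/2$ (or $e_1=d/2$, using $g'$ as well) cannot normalise a decomposition into blocks of dimension $<e_1$. Extension field classes ($\Asch_3$) are excluded since the fixed space of $g$ has dimension $e_2$, which would have to be a multiple of the extension degree $b$, while $r_1$ forces $b\mid e_1$; this gives a contradiction since $e_1,e_2$ would both be multiples of $b$, and we then use $g'$'s ppd $r_2$ with $b\mid e_2$ and large fixed space to rule it out.
\item Subfield ($\Asch_5$) and classical ($\Asch_8$) subgroups are handled by the ppd (Zsigmondy) property and by the form on $V_d$ respectively; $\Asch_8$ is precisely what forces $\bfY=\bfO^\pm$ for $\bfSp$ with $q$ even.
\end{itemize}

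Step 2 (class $\Asch_6$ and $\Asch_9$). For these we use the classification of irreducible subgroups containing ppd elements of Guralnick--Penttila--Praeger--Saxl, together with the fact that $g$ has a fixed space of dimension $e_2\ge 2$ and $d>8$. This large fixed space rules out almost simple examples: by the Guralnick--Saxl bound, an element with a fixed space of codimension $e_1<d$ in a nongeometric subgroup requires the codimension to be at least roughly $\sqrt d$-type bounds, but a ppd of order $r_1$ with $e_1$ large leaves essentially only alternating groups in their deleted permutation module. Those are excluded since $r_1\ge e_1+1$ would need to act on many points, incompatible with a fixed space of dimension $e_2\ge 2$ unless $d$ is small. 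Step 2 is the main obstacle: it requires careful case checking against the GPPS list, and I would first try to show that every non-geometric candidate contradicts $\dim C_V(g)=e_2$.

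Step 3 (numerics). Summing gives $1-\rhogen\le\lambda_\bfX(q^{-1}+q^{-2})+\kappa_\bfX(q)q^{-d+3}$. The stated numerical bounds follow by inserting $d\ge 9$ and minimising over $q$ using Table~\ref{t:kappa}. For $\bfX=\bfL$, $q=2$ the bound $0.248$ likely needs a sharper direct count of the reducible term rather than $q^{-1}+q^{-2}$.
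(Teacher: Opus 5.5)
\paragraph{Verdict.} Your overall architecture matches the paper's: bound $\rhonongen$ class by class over $\Asch_1,\dots,\Asch_9$, with $\Asch_1$ giving the $\lambda_\bfL(q^{-1}+q^{-2})$ term. However, Steps~1--2 rest on exclusions that are false, and this is a genuine gap. Several classes really do contain ppd stingray duos, and their contributions are exactly what $\kappa_\bfX(q)$ records; for $\bfX\ne\bfL$ it is $\Asch_1$ that is the negligible class.

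\paragraph{Classes you wrongly exclude.}
\begin{itemize}
\item $\Asch_3$. Suppose a prime $b$ divides $\gcd(e_1,e_2)$. Then a ppd $(e_1/b,e_2/b)$-stingray duo of the classical group over $\F_{q^{ub}}$ (e.g.\ in $\GL_{d/b}(q^b)$, or in $\GU_{d/2}(q)<\GO^+_d(q)$ when both $e_i/2$ are odd) is a ppd $(e_1,e_2)$-stingray duo of $G$. So ``$b$ divides both $e_i$'' is not a contradiction; it is the existence condition. For $\bfO^+$, $q=2$, $e_2=2$, the paper's bound for this class is $12.81\,q^{-d+3}$, most of $\kappa=14.61$.
\item $\Asch_9$. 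An $r$-cycle acting on the fully deleted permutation module of $A_{d+1}$ has fixed space of dimension $d+1-r$. Hence an $(e_1+1)$-cycle and an $(e_2+1)$-cycle whose supports meet in one point form a stingray duo generating $A_{d+1}$, whenever $r_i=e_i+1$ and $p$ has order $e_i$ modulo $r_i$. For example, $d=10$, $(e_1,e_2)=(6,4)$, $p=3$ gives $A_{11}<\Omega^{\varepsilon}_{10}(3)$. This is entirely compatible with $\dim C_V(g)=e_2$, contrary to your Step~2.
\item $\Asch_2$. The permutation matrix of an $(e_1+1)$-cycle in $\GL_1(q^u)\wr S_d$ is a ppd $e_1$-stingray element when $r_1=e_1+1$. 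So block dimension $1<e_1$ excludes nothing.
\item $\Asch_5$, $\Asch_6$, $\Asch_8$. Subfield groups over $\F_{q^{1/r}}$ with $r\nmid\lcm(e_1,e_2)$ contain duos, since ppds survive. So do $\Asch_6$ normalisers ($d=2^n$, $q$ an odd prime), and, for $\bfX=\bfL$, the $\Asch_8$ groups $\Sp_d$, $\GU_d$, $\GO^\pm_d$.
\end{itemize}

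\paragraph{What is actually needed.} Each class needs a counting argument, not an exclusion:
\begin{itemize}
\item pin down the conjugacy classes of the relevant $M$;
\item show $g_i^G\cap M$ is a single $M$-class, so that a fixed $g$ lies in exactly $|C_G(g_1)|/|C_M(g)|$ conjugates of $M$ (Lemma~\ref{lem:countM});
\item bound the number of admissible $g'\in M$;
\item divide by $|N(d,q^u,e_1,\bfX)|$ using Propositions~\ref{p:Lbds} and~\ref{p:Xbds}.
\end{itemize}
Without these bounds Step~3 has nothing to sum.

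Step~2 has a further gap: the GPPS classification only covers $e>d/2$. The case $e_1=e_2=d/2$ needs a separate classification of $\Asch_9$-subgroups containing $d/2$-ppd stingray elements.

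\paragraph{Reducibility criterion for $\bfX=\bfL$.} Your concrete criterion cannot be right. Since $\dim\im(g-1)=\dim\ker(g'-1)=e_1$, these two subspaces meet nontrivially for every duo with $e_1>e_2$, and the dual condition is likewise automatic. For a duo, $\langle g,g'\rangle$ is reducible if and only if $F_g\cap F_{g'}\ne0$, or $U_g=F_{g'}$, or $U_{g'}=F_g$. The paper takes the exact count of such duos from earlier graph-theoretic work (Proposition~\ref{prop:redL}).

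\paragraph{The ``Moreover'' numbers.} These do not all follow by substitution into Table~\ref{t:kappa}.
\begin{itemize}
\item For $\bfL$, $q=2$, no sharper count is needed: $1-\frac12-\frac14-0.11\cdot 2^{-6}>0.248$.
\item For $(\bfX,q,e_2)=(\bfO^+,2,2)$ with $d=10$, the table gives only $1-14.61\cdot 2^{-7}\approx 0.886$. The paper reaches $0.975$ by revisiting $d\in\{10,12\}$. There the $\Asch_9$ term vanishes (as $d\equiv 2,4\pmod 8$), and the $\Asch_3$ term is $0$ for $d=10$ and below $0.0246$ for $d=12$.
\end{itemize}
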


{
\setlength{\tabcolsep}{10pt} 
  \begin{table}
  \caption{Values of $\kappa_{\bfX}(q)$ for Theorem~\ref{thm:main}}
\begin{tabular}{lll|r|l}
  \toprule
  {\bf X} & $\kappa_{\bfX}(2)$ & $\kappa_{\bfX}(q)$  &  \multicolumn{1}{c|}{Exceptional} & \multicolumn{1}{c}{Conditions} \\
         &       &    $q\geq3$       &  \multicolumn{1}{c|}{bound} & \multicolumn{1}{c}{for exception}\\
  \midrule
$\bfL$ & $0.11$&$0.06$&& --\\
$\bfU$ & $5.3\cdot 10^{-6}$&$1.6 \cdot 10^{-8}$&& --\\
$\bfSp$ & $1.15$&$1.52$& $8.42$ & $e_2=2$, $q\geq3$ \\
$\bfO^+$ & $2.08$&$1.54$&$14.61$ & $e_2=2$, $q=2$\\
         &      &           &$3.53$ & $e_2=2$, $q\geq3$\\
         &      &           &$10.43$ & $e_2=2$, $d=32$ and $q\in\{11, 13, 17\}$\\
$\bfO^-$ & $1.85$&$3.02$&  &  --\\
\bottomrule
\end{tabular}
\label{t:kappa}
\end{table}
}

The exceptional term $ \lambda_\bfX\cdot (q^{-1}+q^{-2})$ appearing in the lower bound for type $\bfX=\bfL$ is an upper bound on the probability that the stingray duo generates a reducible subgroup (Proposition~\ref{prop:redL}). For type $\bfL$, the conditional probability that a stingray duo $(g,g')$ is generating, given that $\langle g,g'\rangle$ is irreducible, is at least  $1 -  \kappa_{\bfL}(q)\cdot q^{-d+3}> 0.998$.

In Section~\ref{s:strategy} we discuss our strategy for proving Theorem~\ref{thm:main}, based on the Aschbacher subdivision~\cite{Asch} of maximal subgroups of the finite classical groups into nine families. We then consider these families in separate sections, and finally draw together the results in Section~\ref{s:proofs} to give a proof of Theorem~\ref{thm:main}.

\subsection{A brief commentary on Theorem~\ref{thm:main}}

(a)  We treat the generic case where $|\mathbb{F}|^{e_i}-1$ has a primitive prime divisor for each $i$, so the dimension $e_1+e_2$ is assumed to be not too small (we assume that $e_1+e_2>8$). This assumption facilitates our analysis as it allows us to apply powerful results such as those in \cite{GNP4, GPPS} which depend on the finite simple group classification.

(b) 
Part of the motivation for studying naturally embedded subgroups of finite classical groups comes from the need for explicit probability bounds to justify complexity analyses for new classical recognition algorithms. The bounds we obtain have indeed been adopted in the design of these new algorithms. Moreover the algorithms have been implemented, and early tests indicate that they work exceptionally well for classical groups in all characteristics, even in higher dimensions and over large fields, see Rademacher's thesis   \cite{Rad24}  and \cite{HNPR}.

(c) 
A special case of Theorem~\ref{thm:main} was proved by  Seress, Yal\c{c}inkaya and the third author in \cite[Theorem 2]{PSY}, namely where $e_1=e_2$ and the random stingray elements are conjugate ppd-elements. This special case was fundamental in the analysis of the algorithm in \cite{DLLOB13}. Experiments conducted by  L\"ubeck in connection with that algorithm suggested that the two stingray generators could be chosen independently and still generate a naturally embedded classical subgroup with high probability. This possibility was conjectured formally in \cite[Conjecture 1]{PSY}. Thus Theorem~\ref{thm:main} confirms \cite[Conjecture 1]{PSY}.

 Finally we note two additional points  of difference between~\cite[Theorem 2]{PSY} and  Theorem~\ref{thm:main}. Firstly, ~\cite[Theorem 2]{PSY} estimates the fraction
 \[  
   \frac{ \mbox{Number of generating stingray duos in}\ g_1^G\times g_2^G  }{ |g_1^G\times g_2^G|  }
\]
 while Theorem~\ref{thm:main} gives a lower bound for $\rhogen(g_1,g_2,G)$ as in \eqref{e:rhogen} (note the different denominators for these proportions). 
 Secondly, Theorem~\ref{thm:main} deals with all field sizes whereas~\cite[Theorem 2]{PSY} requires $q\geq3$ for the orthogonal case. In this regard we note that  \cite[Theorem 2]{PSY}, which was used to analyse the complexity of the algorithm in \cite{DLLOB13},  was stated in \cite{DLLOB13} only for $q>4$ with $q$ even,  since the result in the then-available preprint of \cite{PSY} was proved only for $q>4$, see \cite[Theorem 5.1]{DLLOB13}. However, by the time the paper \cite{PSY} was published, its main result \cite[Theorem 2]{PSY} had been proved for all $q\geq3$.

\subsection{Naturally embedded subgroups for other finite simple groups}\label{s:natAn}

The major results of the paper address the question of constructing a naturally embedded classical subgroup $H$ of a finite $n$-dimensional classical group with $H$ of dimension $O(\log n)$. 
We hope that there might be analogous generation results for subgroups of finite exceptional groups of Lie type which play a similar role to naturally embedded subgroups of finite classical groups, and which could be exploited computationally.

Here we discuss briefly the analogous question of constructing  naturally embedded subgroups $A_k$ of an alternating group $G=A_n$ with $k$ `roughly' $\log n$. Let $\Omega=\{1,\dots,n\}$ denote the set on which $G$ acts naturally.

\medskip
\noindent
\emph{Step 1:}\quad By~\cite[Theorem 1]{GPU}, 
 for $n$ sufficiently large, the proportion of  elements of $G=A_n$ for which some power is a $p$-cycle for a prime $p$ between $\log n$ and $(\log n)^{\log\log n}$, is at least $1-7/\log\log n$. By a $p$-cycle, we mean an element with one cycle of length $p$ and $n-p$ fixed points in $\Omega$. Thus after a constant number of independent random selections we can construct, with high probability, two elements $g$ and $h$ such that $g$ is a $p$-cycle and $h$ is an $r$-cycle, for primes $p, r$ such that $\log n\leq p\leq r\leq (\log n)^{\log\log n}$.  Suppose we have found such elements $g, h$, and let $\Delta, \Delta'$ denote the \emph{supports} of $g, h$ respectively, that is, the sets of points forming the $p$-cycle of $g$ and the $r$-cycle of $h$.

\medskip
\noindent
\emph{Step 2:}\quad If $\Delta, \Delta'$ are not disjoint or, equivalently, if $|\Delta\cup \Delta'|<p+r$, then there is a good chance that $g, h$ generate a naturally embedded alternating group $A_k$, as the next result shows.

\begin{lemma}\label{l:natAn}
 Let $G=A_n$ and suppose that $g, h\in G$ are a $p$-cycle and an $r$-cycle, with supports $\Delta, \Delta'$, respectively, where $p, r$ are primes such that $p\leq r\leq n$. Suppose also that $k:=|\Delta\cup \Delta'|$ satisfies $\max\{p+3,r\}\leq k\leq \min\{p+r-1, n\}$. Then $H:=\langle g,h\rangle \cong A_k$ is naturally embedded in $G$.   
\end{lemma}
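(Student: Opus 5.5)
The plan is to show that $H=\langle g,h\rangle$, regarded as a permutation group on $\Sigma:=\Delta\cup\Delta'$ (it fixes $\Omega\setminus\Sigma$ pointwise), is transitive and primitive on $\Sigma$ and contains a $p$-cycle. A classical theorem of Jordan then applies: a primitive group of degree $k$ containing a cycle of prime length $p$ with $p\le k-3$ contains $A_k$. Since $g,h$ are even permutations, this gives $H$ equal to $A_\Sigma\cong A_k$, naturally embedded in $G$. The hypotheses are arranged to supply each ingredient. First, $k\le p+r-1$ means $\Delta\cap\Delta'\neq\emptyset$. Second, $k\ge p+3$ is exactly the Jordan bound. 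Third, $k\ge r$ is automatic, and $k\le n$ is also automatic.

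\emph{Transitivity.} The group $\langle g\rangle$ is transitive on $\Delta$ and $\langle h\rangle$ is transitive on $\Delta'$. Since these two sets meet, the orbit of $H$ containing a common point contains $\Delta\cup\Delta'=\Sigma$. Hence $H$ is transitive on $\Sigma$.

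\emph{Primitivity.} Suppose $\mathcal B$ is a nontrivial block system for $H$ on $\Sigma$, with blocks of size $b$, where $1<b<k$. I will argue using the $r$-cycle $h$. Consider a block $B$ meeting $\Delta'$.
\begin{itemize}
\item If $B\subseteq\Delta'$, then $\langle h\rangle$ permutes the blocks inside $\Delta'$. Because $r$ is prime, either $h$ fixes $B$, which forces $B=\Delta'$, or $h$ moves $B$ through an orbit of $r$ blocks, which forces $b=1$, a contradiction. In the surviving case $B=\Delta'$, so $\Delta'$ is a block and $b=r$ divides $k$. But $r\le k\le p+r-1<2r$ (using $p\le r$), so $k=r$. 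Then $\Delta\subseteq\Delta'$ and $\Sigma=\Delta'$, making the block trivial, a contradiction.
\item If $B$ also meets $\Sigma\setminus\Delta'$, note that $h$ fixes the points of $\Sigma\setminus\Delta'$, so $h$ fixes $B$. Then $B\cap\Delta'$ is $h$-invariant and nonempty, so $\Delta'\subseteq B$ and $b>r\ge k/2$. This forces $B=\Sigma$, again a contradiction.
\end{itemize}
So $H$ is primitive on $\Sigma$. (The case $\Sigma=\Delta'$ is already primitive, since $h$ is then a full $r$-cycle of prime length.)

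\emph{Conclusion.} The element $g$ is a $p$-cycle in the primitive group $H\le\mathrm{Sym}(\Sigma)$, and $p\le k-3$. Jordan's theorem (e.g.\ Wielandt, Theorem 13.9) gives $A_\Sigma\le H$. Since $H\le A_\Sigma$ because $g,h$ are even, we get $H=A_\Sigma\cong A_k$, fixing $\Omega\setminus\Sigma$ pointwise, as required.

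The main obstacle is the primitivity step, specifically checking that the block analysis covers every configuration. This includes the degenerate case $\Delta\subseteq\Delta'$ and the role of $p\le r$ in the inequality $k<2r$. If the case analysis above has a gap, I would instead use the $p$-cycle $g$ symmetrically. Alternatively, I would invoke Jordan's result that a primitive group containing a cycle fixing at least $3$ points contains the alternating group, possibly combined with Marggraff-type arguments.
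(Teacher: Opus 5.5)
Your proposal is correct and follows the same route as the paper: transitivity because $\Delta\cap\Delta'\neq\emptyset$, then primitivity from the $r$-cycle $h$ with $r>k/2$, then Jordan's theorem using $k\ge p+3$. The only difference is that you spell out the block argument the paper compresses into one sentence; your case split on a block meeting $\Delta'$ is exhaustive and sound, so you do not need the fallback via $g$.
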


\begin{proof}
  Note that $H\leq A_k$ since $g,h\in A_n$, and that $p\geq3$ as $A_n$ has no 2-cycles.
  Since $k\leq p+r-1$, the support intersection $\Delta\cap\Delta'\ne \varnothing$, with $\ell := |\Delta\cap\Delta'|=p+r-k>0$. It follows that $H$ is transitive on its support $\Delta\cup \Delta'$, of cardinality $k$.  Also $k=p+r-\ell< p+r\leq 2r$, and hence $h$ acts primitively on its support $\Delta'$ of size $r>k/2$, and we conclude that $H$ must be primitive on $\Delta\cup \Delta'$. Then, as $k\geq p+3$, the $p$-cycle $g\in H$ fixes at least $3$ points of $\Delta\cup \Delta'$ and hence, by a theorem of Jordan, see for example,~\cite[Theorem~3.3E]{DM}, we conclude that $H=A_k$ is naturally embedded in $G$.
\end{proof}

We note that this result is `sharp' in a number of respects. For example, if $k=p+r$ then $\Delta\cap\Delta'= \varnothing$, and the group $H=\langle g,h\rangle \cong C_p\times C_r$ acts intransitively on $\Delta\cup \Delta'$. Also, if $k=p+2$ with $p=r=2^f-1$ (a Mersenne prime), then it is possible to have $H=\langle g,h\rangle \cong \PSL(2,2^f)$ in its $3$-transitive action on the projective line $\mathrm{PG}(1,2^f)$. 

\medskip
\noindent
\emph{Step 3:}\quad If we are `lucky', we can apply Lemma~\ref{l:natAn} immediately and find the naturally embedded alternating subgroup $H=\langle g,h\rangle = A_k$. However, since the primes $p,r$ in Step~1 are at most $(\log n)^{\log\log n}$, it is likely that the supports $\Delta, \Delta'$ are disjoint so that, as we noted above, the group $H=\langle g,h\rangle \cong C_p\times C_r$. In this case we try to conjugate $g$ by a random element $x\in G=A_n$ to obtain a conjugate $g^x$ with support $\Delta^x$ that intersects $\Delta'$ appropriately. In particular, if $|\Delta^x\cap\Delta'|=  1$ then 
$k = |\Delta^x\cup \Delta'| = p+r-1$, which is at least $\max\{p+3,r\}$ (unless $p=r=3\geq \log n$, but this only holds for $n\leq 20$). Thus, whenever $n>20$, the conditions of Lemma~\ref{l:natAn} would hold for such a conjugate yielding  a naturally embedded alternating subgroup $\langle g^x,h\rangle = A_{p+r-1}$. By Lemma~\ref{l:natAn2} below,  we would find a conjugating element $x$ with this property, with high probability, among $O(n/(\log n)^2)$ independently selected random elements.

\begin{lemma}\label{l:natAn2}
Let $G=A_n$ and suppose that $g, h\in G$ are a $p$-cycle and an $r$-cycle, with supports $\Delta, \Delta'$, respectively, where $p, r$ are primes such that $p\leq r$ and $p + r < n.$ Suppose further that $\Delta\cap\Delta'= \varnothing$.
Then 
\[ \frac{|\{ x\in A_n\mid |\Delta^x\cap\Delta'|=  1 \}|}{|A_n|} 
	>\frac{rp(n-rp)}{(n-p+2)^2}.
    \]
    In particular, if $\log n\leq p\leq r$ and $pr\leq n/2$, then this proportion is greater than $(\log n)^2/2n$. 
\end{lemma}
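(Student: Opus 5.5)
The plan is to reduce the count to a hypergeometric probability and then bound a product of ratios from below. The first step is to check that, as $x$ runs uniformly over $A_n$, the image $\Delta^x$ is a uniformly distributed $p$-subset of $\Omega$. This holds because $A_n$ is transitive on $p$-subsets. Indeed, $S_n$ is transitive on them, and the setwise stabiliser in $S_n$ of a $p$-subset contains a transposition: inside the subset if $p\ge 2$, or inside its complement since $n-p>r\ge 2$. Hence this stabiliser is not contained in $A_n$, so the $S_n$-orbit is a single $A_n$-orbit. Each $p$-subset is then hit by the same number $|A_n|/\binom{n}{p}$ of elements $x$. Therefore the proportion in question is exactly
\[
P:=\frac{r\binom{n-r}{p-1}}{\binom{n}{p}},
\]
since a $p$-subset meeting $\Delta'$ in exactly one point is determined by a point of $\Delta'$ together with $p-1$ points of $\Omega\setminus\Delta'$.

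Next I rewrite $P$ as a product. Expanding the binomial coefficients gives
\[
P=\frac{rp}{n-p+1}\prod_{j=0}^{p-2}\frac{n-r-j}{n-j}
=\frac{rp}{n-p+1}\prod_{j=0}^{p-2}\Bigl(1-\frac{r}{n-j}\Bigr).
\]
For $0\le j\le p-2$ each factor is at least $1-r/(n-p+2)$, which lies in $[0,1]$ because $r<n-p$. The Weierstrass (Bernoulli) product inequality then gives
\[
\prod_{j=0}^{p-2}\Bigl(1-\frac{r}{n-j}\Bigr)\ge 1-\frac{(p-1)r}{n-p+2}.
\]
Combining this with $\frac{1}{n-p+1}>\frac{1}{n-p+2}$, I obtain
\[
P>\frac{rp\,(n-p+2-pr+r)}{(n-p+2)^2}\ge \frac{rp\,(n-rp)}{(n-p+2)^2},
\]
where the last step uses $r\ge p$, so that $r-p+2>0$. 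Two degenerate cases need a word. If the bracket is nonpositive, the claimed bound is nonpositive (since $n-rp<n-p+2-pr+r$), while $P>0$ because $n-r\ge p-1$; so the inequality still holds. The strictness of the first inequality requires $P>0$, which is the same observation.

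For the final assertion, assume $\log n\le p\le r$ and $pr\le n/2$. Then $n-rp\ge n/2$, and $p\ge 2$ gives $(n-p+2)^2\le n^2$. So
\[
P>\frac{rp\cdot n/2}{n^2}=\frac{rp}{2n}\ge\frac{(\log n)^2}{2n}.
\]
There is no serious obstacle in this argument. The only points needing care are the $A_n$-versus-$S_n$ transitivity in the first step and keeping the inequalities strict.
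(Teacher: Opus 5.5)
Your proof is correct and follows essentially the same route as the paper: both reach the exact expression $\frac{rp}{n-p+1}\prod_{j=0}^{p-2}\bigl(1-\frac{r}{n-j}\bigr)$, linearise the product with the Weierstrass/Bernoulli inequality, and finish with the same chain of bounds. Your differences are small improvements in rigour. You justify the passage from $S_n$ to $A_n$ by showing $A_n$ is transitive on $p$-subsets, whereas the paper simply divides a permutation count by $2$. You also handle the case of a nonpositive bracket, which the paper's step replacing $n-p+1$ by $n-p+2$ tacitly ignores.
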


\begin{proof}
Let $S := \{ x\in A_n\mid |\Delta^x \cap \Delta' | = 1 \}$.
Then $x\in S$ if and only if there is exactly one $i\in \Delta$ such that $i^x \in \Delta'$.

We count the number of such permutations $x\in S$.
There are $p$ choices for the point $i$, and as $i^x\in \Delta'$, there are $r$ choices for $i^x$.
Each of the remaining $p-1$ points in $\Delta$ must be mapped to a point in $\{1,\ldots,n\} \backslash \Delta'.$ Thus there are
\[ 
pr (n-r) (n-r-1) \cdots (n-r-(p-2))
\]
choices for the image of $\Delta$ under $x$.
Each of the points not in $\Delta$ can be mapped to any point of $\{1,\ldots,n\} \backslash \Delta^x.$
Thus, since $x$ must lie in $A_n$, the total number is 
\[
|S| = \frac{1}{2} \cdot rp\left(\prod_{i=0}^{p-2} (n-r-i) \right)\cdot (n-p)!.
\]
In particular,
\begin{align*}
\frac{|S|}{|A_n|} &= \frac{rp}{n-p+1}\prod_{i=0}^{p-2} \frac{n-r-i}{n-i}
	= \frac{rp}{n-p+1}\prod_{i=0}^{p-2}\left(1- \frac{r}{n-i}\right)\\
	&\ge\frac{rp}{n-p+1}\left(1-\sum_{i=0}^{p-2} \frac{r}{n-i}\right)
	\ge\frac{rp}{n-p+2}\left(1-r\frac{p-1}{n-p+2}\right)\\
	&>\frac{rp}{(n-p+2)^2}(n-(r+1)(p-1))
	>\frac{rp(n-rp)}{(n-p+2)^2}
\end{align*}
proving the first assertion. Note that $\frac{rp(n-rp)}{(n-p+2)^2} > rp(n-rp) n^{-2}$, and if $rp\leq n/2$ this is at least $rp/2n$. If also $r\geq p\geq \log n$, then  $|S|/|A_n|>(\log n)^2/2n$.
\end{proof}

\section{Groups and stingray elements}\label{s:sting}

Let $q = p^a $ be a  power of a prime $p$ and integer $a\geq1$, and let $d$ be a positive integer. Throughout, we make the following assumptions about the group $G$.

\begin{hypothesis}\label{hypothesis}
We assume that $G$ is a classical group of type ${\bf X} = {\bf L}, {\bf U}, {\bf Sp}$, or ${\bf O}^\eps$ with $\eps=\pm$, and satisfying
$\OmegaX_d(q) \lhdeq G \le  \GX_d(q)$ acting naturally
on $V = \F^d$, where  $d>8$ and
$\OmegaX_d(q), \GX_d(q), |\F|=q^u$, are as in Table~$\ref{tab:G}$. Note, in particular: 
\begin{itemize}
    \item if ${\bf X} \ne {\bf L}$ then $d$ is even;
    \item if ${\bf X} \ne  {\bf U}$ then $u = 1$, while if ${\bf X} =  {\bf U}$ then $u = 2$ and we view $\GU_d(q)$ as a subgroup of $\GL_d(q^2)$, and hence $\SU_d(q)$ as a subgroup of $\SL_d(q^2)$.
\end{itemize}
\end{hypothesis}

Next we formalise the concepts of stingray elements and stingray duos. In the light of Hypothesis~\ref{hypothesis} we 
consider actions of elements of $\GL_d(q^u)$ on $\F^d_{q^u}$. 

\begin{definition}\label{def:stingray}
  Let $d, e,a $ be integers such that $1\leq e\leq d$ and $q=p^a$ where $p$
  is prime. Let $u \in \{1,2\}$, and consider $\GL_d(q^u)$ acting naturally on the  module  $V=\F^d_{q^u}$.
  \begin{enumerate}
  \item[(a)] A  \emph{primitive prime divisor} of $(q^u)^e-1$ is a prime $r$
    dividing $(q^u)^e-1$ such that $r$ does not divide $(q^u)^i-1$ for any $i$ such that $1\leqslant i<e$;
    we sometimes call such a prime an \emph{$e$-ppd prime}. Note that  $q^u$ has order
$e$ modulo $r$ and hence $e$ divides $r-1$; equivalently $r = ke +1$ for some $k\ge 1.$ 
  \item[(b)]  Let   $r$ be a primitive prime divisor of $(q^u)^e-1$. Then an element $g\in\GL_d(q^u)$ of order
    a multiple of $r$ is called an \emph{$e$-ppd element}. 
  \item[(c)] An element $g\in\GL_d(q^u)$ is called an \emph{$e$-stingray element} if $g$ preserves a
    decomposition $V=U_g\oplus F_g$, where $F_g$ is  the
    fixed point subspace of $g$ and $g$ acts
    irreducibly (and nontrivially) on $U_g$ with $\dim(U_g)=e$.  In particular $|g|\mid (q^u)^e-1$.
    An $e$-stingray element $g$ of order a multiple of an $e$-ppd prime $r$ is called
    an \emph{$e$-ppd stingray element} (and in most of this paper $|g|$ is equal to the $e$-ppd prime $r$). 
   \item[(d)]    For positive integers $e_1, e_2$ such that $e_1 + e_2 = d$, a pair $(g_1,g_2)$  is  an  \emph{$(e_1,e_2)$-stingray duo},  if $e_1 \ge e_2$ and,
for $i=1,2$, the element $g_i\in G$ is an $e_i$-stingray element  and, moreover,
  $U_{g_1}+ U_{g_2} = V$, or equivalently, $U_{g_1}\cap U_{g_2} = 0$. Further, $(g_1,g_2)$  
  is  an  \emph{$(e_1,e_2)$-ppd stingray duo} if, for each $i$, $g_i$ is an $e_i$-ppd stingray element.
 \end{enumerate}   
\end{definition}

\begin{remark}\label{rem:ppd}
 (a) For Theorem~\ref{thm:main}, we are concerned with  $(e_1,e_2)$-ppd stingray duos in a $d$-dimensional classical subgroup of $\GL_d(q^u)$, where $d=e_1+e_2$, so we will be assuming that $(q^u)^{e_i}-1$ has a primitive prime divisor, for each $i=1,2$.   Such primes are known to exist for almost all values of $q^u, e_i$, the exceptions being $(q^u,e_i)=(2,6)$, and the case $e_i=2$ with $q^u+1$ a power of $2$, see \cite[Section 1.13]{BHRD}. Thus, for example, if $e_i=2$ then we will require  $q^u\not\in\{3, 7, 31,127, \dots\}$.    

    (b) In particular if $e\geq 2$ then each primitive prime divisor of  $(q^u)^{e}-1$ is an odd prime.

    (c) For an $e$-ppd element $g\in\GL_d(q^u)$ as in Definition~\ref{def:stingray}(b), if $|g|$ is coprime to $p$ then $V=\F_{q^u}^d$ has the form
    $V=\left(\bigoplus_{i=1}^t U_i\right)\oplus F$, where $g$ fixes $F$ elementwise, and fixes each $U_i$ setwise, so $t\geq 1$. Moreover, if $|g|=r$, then $g$ acts irreducibly (and nontrivially) on $U_i$ and $\dim(U_i)=e$ for each $i$.
    
\end{remark}

For Theorem~\ref{thm:main}, the property of interest is whether or not a given $(e_1,e_2)$-ppd stingray duo $(g,g')$ in a $d$-dimensional classical group $G$
is `generating', see Definition~\ref{e:gen}. If $(g,g')$ is not a generating stingray duo then we say that  $(g,g')$ is \emph{non-generating}. Our objective is to obtain a positive lower bound for the proportion $\rhogen(g_1,g_2,G)$ of generating   stingray duos in $g_1^G\times g_2^G$, as defined in \eqref{e:rhogen}. Equivalently, we seek  an upper bound strictly less than~$1$ for the quantity
\begin{equation}\label{e:rhonongen}
   \rhonongen(g_1,g_2,G) \coloneq   
   \frac{\mbox{Number of non-generating stingray duos in}\ g_1^G\times g_2^G   }{ \mbox{Number of stingray duos in} \ g_1^G\times g_2^G}.
\end{equation}

Note that $\rhogen(g_1,g_2,G)+\rhonongen(g_1,g_2,G)=1$. Moreover, in Proposition~\ref{prop:om-class}(a) we show that the conjugacy classes $g_i^G$ are independent of the group $G$ satisfying $\OmegaX_d(q) \lhdeq G \le  \GX_d(q)$, and hence also the proportions of generating and non-generating stingray duos are independent of $G$. We therefore may (and usually do) write
\begin{equation}\label{e:rhoX}
\mbox{$\rhogen(g_1,g_2,\bfX):= \rhogen(g_1,g_2,G)$\quad and \quad $\rhonongen(g_1,g_2,\bfX):= \rhonongen(g_1,g_2,G)$.}    
\end{equation}
Our strategy, which is discussed in detail in Section~\ref{s:strategy}, is to find an upper bound for  $\rhonongen(g_1,g_2,\bfX)$, and we observe that $\rhonongen(g_1,g_2,\bfX) = \rhonongen(g_1',g_2',\bfX)$ for any choice of $g_i'\in g_i^G$. For the remainder of this section we study properties of stingray elements and stingray duos in classical groups.

\subsection{Properties of  Stingray Elements}

We are primarily interested in stingray duos, but first we examine properties of individual stingray elements. Parts (a) and (b) of the first result were proved in \cite[Lemma 3.7]{GNP2}. (Note that, although the hypothesis ${\bf X}\ne {\bf L}$ is implicit in the hypotheses of \cite[Lemma 3.7]{GNP2}, this is not used in the proof.) The proof of part (c) follows easily from the fact that, since $V=U_g\oplus F_g$, each subspace $W$ satisfying $U_g\leq W\leq V$ is equal to $W=U_g\oplus (F_g\cap W)$, and hence is $g$-invariant.

\begin{lemma}\label{lem:unique} 
Let $g\in \GL(V)$ be an $e$-stingray   element, with $U_g, F_g$ as in Definition~$\ref{def:stingray}$.   Then
\begin{enumerate}[{\rm (a)}]
    \item
    $0\ne v^g-v\in U_g$, for all $v\in V\backslash F_g$; and hence $U_g$ is the unique $\langle g\rangle$-invariant submodule of $V$ on which $g$ acts non-trivially and irreducibly;
    \item
    if $Z$ is a $\langle g\rangle$-invariant submodule of $V$ then either $Z\leq F_g$, or $U_g\leq Z$, and in the latter case the restriction $g|_Z$ is an $e$-stingray element of $\GL(Z)$;
    \item
    if $U_g\leq W\leq V$ then $W$ is $\langle g\rangle$-invariant.
\end{enumerate}
\end{lemma}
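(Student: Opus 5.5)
The plan is to work directly from the decomposition $V=U_g\oplus F_g$ of Definition~\ref{def:stingray}(c) and from the fact that $g$ acts trivially on $F_g$ and irreducibly and nontrivially on $U_g$.

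\textbf{Part (a).} Let $v\in V\backslash F_g$ and write $v=u+f$ with $u\in U_g$, $f\in F_g$. Then $v^g-v=u^g-u$, which lies in $U_g$ because $U_g$ is $g$-invariant. If it were zero, then $u\neq 0$, since $v\notin F_g$, and $u$ would be a nonzero vector of $U_g$ fixed by $g$. I would rule this out as follows. The fixed space $C_{U_g}(g)$ is a $g$-submodule of $U_g$, so by irreducibility it is $0$ or $U_g$. It is not $U_g$, since $g$ acts nontrivially there. Hence it is $0$, and $u^g-u\neq 0$.

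For the uniqueness claim, let $U'$ be any $g$-invariant submodule on which $g$ acts irreducibly and nontrivially. Choose $v\in U'\backslash F_g$; such a $v$ exists, since otherwise $g$ would act trivially on $U'$. Then $0\neq v^g-v\in U'\cap U_g$. Now $U'\cap U_g$ is a nonzero $g$-submodule of both $U'$ and $U_g$, so by the irreducibility of each we get $U'=U'\cap U_g=U_g$.

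\textbf{Part (b).} Let $Z$ be a $g$-invariant submodule with $Z\not\leq F_g$, and pick $v\in Z\backslash F_g$. By (a), $0\neq v^g-v\in Z\cap U_g$. Since $Z\cap U_g$ is a nonzero $g$-invariant subspace of the irreducible module $U_g$, it equals $U_g$, so $U_g\leq Z$. Then $Z=U_g\oplus(F_g\cap Z)$ by the modular law. Consequently $g|_Z$ fixes $F_g\cap Z$ pointwise and acts irreducibly and nontrivially on $U_g$. Its fixed space in $Z$ is exactly $F_g\cap Z$, because by the argument in (a) any vector with a nonzero $U_g$-component is moved. So $g|_Z$ is an $e$-stingray element of $\GL(Z)$.

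\textbf{Part (c).} Again the modular law gives $W=U_g\oplus(F_g\cap W)$. Both summands are $g$-invariant: $U_g$ is invariant by definition, and $F_g\cap W$ is fixed pointwise. Hence $W$ is $g$-invariant.

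No step is a real obstacle. The only point that needs care is in (a): irreducibility together with nontriviality is what guarantees that $g$ fixes no nonzero vector of $U_g$, and this is the fact that makes $F_g$ exactly the fixed space and drives all three parts.
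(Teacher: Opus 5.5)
Your proof is correct. Part (c) is exactly the paper's argument: the modular law gives $W=U_g\oplus(F_g\cap W)$, which is a sum of two $g$-invariant pieces.

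For parts (a) and (b) the paper gives no argument of its own; it simply cites \cite[Lemma 3.7]{GNP2}. Your direct derivation from the decomposition $V=U_g\oplus F_g$ is therefore a valid self-contained replacement.

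The step in (a) that no nonzero vector of $U_g$ is fixed by $g$ can be obtained more quickly than you do it. Definition~\ref{def:stingray}(c) takes $F_g$ to be the full fixed-point space of $g$, so such a vector would lie in $U_g\cap F_g=0$. Likewise, in (b) the fixed space of $g|_Z$ is just $Z\cap F_g$, with no further argument needed. Your route through irreducibility and nontriviality is not required for either point. It does, however, show that the lemma still holds under the weaker hypothesis that $F_g$ is merely fixed pointwise.
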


The next lemma identifies a cyclic torus associated with an $e$-stingray element.  

\begin{lemma}\label{lem:uperp}
  Let $G$ be a  group as in Hypothesis~$\ref{hypothesis}$ of type $\bf X$,
  and let $g\in G$ be an $e$-stingray   element with 
$U_g, F_g$ as in Definition~$\ref{def:stingray}$,  where $1\leq e\leq d-1$.

  \begin{enumerate}[{\rm (a)}]
  \item 
  If ${\bf X}\ne {\bf L}$, then $U_g$ and $F_g$ are
  non-degenerate, $V = U_g\perp F_g$, and  $U_g^\perp = F_g$.
 
    \item 
    Either $e = 1$ and  ${\bf X}\in\{ {\bf L}, {\bf U}, {\bf O}^\eps \}$, or $e \ge 2$ and the parity of $e$, and types of $U_g$ and $F_g$ are as  given in
  Table~$\ref{tab:one}$; in particular, if ${\bf X}={\bf O}^\eps$,  then  $U_g$ has minus type.
    
    \item 
     Suppose that $({\bf X}, \mbox{$e$-parity})\ne ({\bf O}^\eps, \mbox{even})$. Then there is a unique cyclic torus $T$ in $\GX(V)$ containing $g$
  such that  $T$ preserves the decomposition $V=U_g\oplus F_g$, $F_g$ is
  the fixed point space of $T$, and $|T|$ is given in Table~$\ref{tab:one}$. Moreover, 
  \[
  C_{\GX(V)} (g) = C_{\GX(V)} (T) = \left\{\begin{array}{ll}
     T\times  \GX(F_g) & \mbox{if ${\bf X}\ne{\bf O}^\pm$} \\
     T\times \GO^{-\eps}(F_g)  & \mbox{if ${\bf X}={\bf O}^\eps$, with $\eps=\pm$}
  \end{array}\right. 
  \]
Also $g^{\GX(V)}\cap T$ has size $e$, and $N_{\GX(V)}(T)$ permutes this set transitively by conjugation.
  \end{enumerate}
\end{lemma}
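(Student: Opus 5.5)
We prove the three parts in order, each using the previous one. Throughout, write $\beta$ for the form preserved by $G$ (symplectic, hermitian, or the bilinear form polarising the quadratic form $Q$), and use Lemma~\ref{lem:unique} freely.

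\emph{Part (a).} Suppose $\bfX\ne\bfL$. Since $g$ is an isometry, $U_g^\perp$ is $\langle g\rangle$-invariant. By Lemma~\ref{lem:unique}(b), either $U_g^\perp\le F_g$ or $U_g\le U_g^\perp$. In the first case $\dim U_g^\perp = d-e=\dim F_g$, so $U_g^\perp=F_g$.

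In the second case $U_g$ is totally isotropic. For $u\in U_g$ and $v\in V$ we have $\beta(u,v)=\beta(u^g,v^g)$ and $v^g-v\in U_g$ by Lemma~\ref{lem:unique}(a). Hence $\beta(u^g-u,v)=-\beta(u^g,v^g-v)=0$, so $(g-1)U_g\le V^\perp=0$. Thus $g$ is trivial on $U_g$, a contradiction.

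So $U_g^\perp=F_g$, and therefore $U_g\cap U_g^\perp = U_g\cap F_g=0$. Hence $U_g$ is non-degenerate, $F_g=U_g^\perp$ is non-degenerate, and $V=U_g\perp F_g$. In the orthogonal case with $q$ even we must also check non-degeneracy of $Q$ restricted to $U_g$, not just of $\beta$. Since $d$ is even and $e\ge2$ is forced by irreducibility (see below), the radical of $\beta|_{U_g}$ is zero, so this is automatic.

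\emph{Part (b).} The restriction $h:=g|_{U_g}$ is an irreducible element of the classical group $\GX(U_g)$, and $U_g$ is non-degenerate by (a). I will quote the standard classification of irreducible semisimple elements of classical groups (Huppert's theorem / the description of Singer-type tori):
\begin{itemize}
\item for $\bfL$, any $e$ is possible;
\item for $\bfU$, an irreducible element of $\GU_e(q)$ requires $e$ odd, or $e=1$;
\item for $\Sp$, an irreducible element of $\Sp_e(q)$ requires $e$ even;
\item for $\GO^\pm_e(q)$, an irreducible element requires $e$ even and minus type, with the exception $e=1$ in odd characteristic or the $\GO^\circ_1$ situation.
\end{itemize}
The underlying reason is that the $\F\langle h\rangle$-module $U_g$ is isomorphic to $\F_{q^{ue}}$, and the form makes it self-dual. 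This forces $h\mapsto h^{-1}$ to be realised by the Galois automorphism of order $2$, giving $|h|\mid q^{e/2}+1$ (or $q^e+1$ in the unitary case with $e$ odd).

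The type of $F_g$ then follows from additivity of Witt type under orthogonal sums. For example, if $\bfX=\bfO^\eps$ then $F_g$ has type $-\eps$. The parity of $d-e$ is determined by the parity of $d$ and of $e$. This fills in Table~\ref{tab:one}.

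\emph{Part (c).} This is where the real work lies. By Schur's lemma, $C_{\GL(U_g)}(h)=\F_{q^{ue}}^\times$. Its intersection with $\GX(U_g)$ is a cyclic group $T_0$: it is the whole of $\F_{q^{ue}}^\times$ for $\bfL$, the norm-one subgroup of order $q^{e}+1$ or $q^{e/2}+1$ in the other cases, and it has order $q^{e/2}+1$ in the orthogonal case. Set $T:=T_0\times 1_{F_g}$.

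\emph{Uniqueness of $T$.} Any cyclic torus containing $g$ and with fixed space $F_g$ centralises $h$ on $U_g$, so it lies in $T_0$. Maximality of a torus then gives equality.

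\emph{Centraliser.} Anything commuting with $g$ preserves $U_g=[V,g]$ and $F_g=C_V(g)$. So $C_{\GX(V)}(g)=C_{\GX(U_g)}(h)\times\GX(F_g)=T\times\GX(F_g)$, with $\GO^{-\eps}(F_g)$ in the orthogonal case by (b). The same argument applies to $T$, because $T$ and $g$ have the same fixed space and the same irreducible constituent.

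\emph{The set $g^{\GX(V)}\cap T$.} Elements of $T$ conjugate to $g$ must act on $U_g$ with the same minimal polynomial as $h$. The roots of that polynomial are the $e$ Galois conjugates $\lambda^{q^{ui}}$ of an eigenvalue $\lambda$ (with $u$ adjusted in the unitary case), and this gives exactly $e$ elements of $T$. Conversely, all $e$ of these are conjugate to $g$, and $N_{\GX(V)}(T)$ acts on $T$ inducing the Frobenius group $\mathrm{Gal}(\F_{q^{ue}}/\F_{q^u})$ of order $e$. To show this I will realise the Frobenius $x\mapsto x^{q^u}$ on $U_g\cong\F_{q^{ue}}$ as a $\beta$-isometry; it preserves the trace form defining $\beta$. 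Hence it lies in $\GX(U_g)$ and, extended by the identity on $F_g$, in $N_{\GX(V)}(T)$.

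The main obstacle is this last step. In the orthogonal case with $e$ even, the Frobenius has determinant or spinor norm issues, which is exactly why that case is excluded in the hypothesis of (c). In the remaining cases I will check directly that the Frobenius map lies in $\GX(U_g)$.
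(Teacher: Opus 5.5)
Your arguments for (a), (b) and for the centraliser formula in (c) are sound. Part (a) in particular is a clean self-contained replacement for the paper's citation of \cite[Lemma 3.8]{GNP2}. Your centraliser computation is the paper's own ($C_{Y_1}(g)=\hat T\cap Y_1$). The gap is in the final assertion of (c), which the paper takes from the proof of \cite[Lemma 3.1(ii)]{PSY}.

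\textbf{Frobenius fails for $\bfSp$ with $q$ odd.} Your plan to show that $x\mapsto x^{q^u}$ on $U_g\cong\F_{q^{ue}}$ ``preserves the trace form'' does not work in this case. Every $T_0$-invariant bilinear form on $\F_{q^e}$ has the shape $\beta(x,y)=\mathrm{Tr}_{\F_{q^e}/\F_q}(\mu\,x y^{q^{e/2}})$. Being alternating forces $\mu^{q^{e/2}}=-\mu$, so $\mu\notin\F_q$. Then $\beta(x^q,y^q)=\mathrm{Tr}(\mu^{q^{-1}}xy^{q^{e/2}})\neq\beta(x,y)$. Changing the identification only multiplies $\mu$ by norms, which lie in $\F_{q^{e/2}}$, so no choice of identification helps. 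Already for $e=2$, the map $x\mapsto x^q$ is an involution of determinant $-1$, hence not in $\SL_2(q)=\Sp_2(q)$.

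The repair is to use $x\mapsto \nu x^q$ with $\nu^{1+q^{e/2}}=\mu^{q-1}$. Such a $\nu$ exists because $\mu^{q-1}\in\F_{q^{e/2}}^\times$ and the norm map is onto. This map is an isometry, normalises $T_0$, and still conjugates multiplication by $\lambda$ to multiplication by $\lambda^{q}$. In the cases $\bfU$, $\bfO^\eps$, and $\bfSp$ with $q$ even, you must at least rescale the identification first so that $\mu=1$; surjectivity of the norm allows this. The Frobenius ($x\mapsto x^{q^2}$ in the unitary case) is then an isometry. With these repairs your counting gives $|g^{\GX(V)}\cap T|=e$ and transitivity of $N_{\GX(V)}(T)$.

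\textbf{The orthogonal case is mishandled.} The hypothesis of (c) as printed is a misprint for $(\bfX,e\mbox{-parity})\ne(\bfO^\eps,\mbox{odd})$. Four things in the paper show this.
\begin{itemize}
\item The proof of (c) opens with ``If $e=1$ then by part (b), $\bfX=\bfL$ or $\bfU$''.
\item Table~\ref{tab:one} gives $|T|=q^{e/2}+1$ for orthogonal type.
\item The display contains $\GO^{-\eps}(F_g)$, which only makes sense for even $e$.
\item Propositions~\ref{prop:om-class} and~\ref{p:Xbds} apply (c) to orthogonal stingray elements with $e$ even.
\end{itemize}
So orthogonal even $e$ is a case you must prove. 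Your reason for setting it aside is also mistaken. $\GX(V)=\GO^\eps(V)$ is the full isometry group, so determinants and spinor norms play no role. After rescaling so that $Q(x)=\mathrm{Tr}_{\F_{q^{e/2}}/\F_q}(x^{1+q^{e/2}})$, the map $x\mapsto x^q$ is visibly an isometry normalising $T_0$. Conversely, on your literal reading you would owe the case $(\bfO^\eps,e=1)$ with $q$ odd, where $g|_{U_g}=-1$ and no torus of an order in Table~\ref{tab:one} exists. Your construction does not address that case either.
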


\begin{table}
  \caption{Conditions for Lemma~\ref{lem:uperp} when $e\geq2$.}
\begin{tabular}{lclll}
  \toprule
  {\bf X} & $e$-parity & type of $U_g$ & type of $F_g$ & $|T|$\\
  \midrule
  {\bf L} & any & {\bf L} & {\bf L} & $q^e-1$\\
  {\bf U} & odd & {\bf U} & {\bf U} & $q^e+1$\\
  {\bf Sp} & even &  {\bf Sp} & {\bf Sp} & $q^{e/2}+1$ \\
  {\bf O}$^\circ$ & even &  {\bf O}$^-$ & {\bf O}$^\circ$ & $q^{e/2}+1$\\
  {\bf O}$^\eps$, $\eps=\pm$  & even  &  {\bf O}$^-$ & {\bf O}$^{-\eps}$ & $q^{e/2}+1$\\
\bottomrule
\end{tabular}
\label{tab:one}
\end{table}

\begin{proof}
Part (a) is proved in \cite[Lemma 3.8(a)]{GNP2}. If $e=1$, then the possibilities for $\bf X$ follow from part (a). If $e\geq2$, then the assertions in part (b) follow from \cite[Lemma 3.8(b)]{GNP2}. 

(c) If $e=1$ then by part (b), $\bfX= \bfL$ or $\bfU$, and $T:= C_{\GX(V)} (g)|_{U_g} \cong C_{q^e-1}$ or $C_{q^e+1}$ respectively. The order $|T|$ may be read from \cite[Table 1]{Bereczky}. If $e\geq2$, then  the existence and uniqueness of $T$ follows from the proof of the result \cite[Lemma~3.1(ii)]{PSY} by
    replacing $t, W, U_g, n$ by $g, U_g, F_g, e$, as the proof there only used the
    fact that $t\mid_W$  (or $g\mid_{U_g}$ in our case) is irreducible.
    The arguments also extend to the case  $G_0=\SO^\circ_d(q)$ as $U_g$
    is nondegenerate of minus type.  Finally we examine $C_{\GX(V)} (g)$. Set $Y\coloneq\GX(V)$. Since $T$ leaves invariant the decomposition $V=U_g\oplus F_g$, $C_{\GX(V)} (g)\leq Y_{U_g, F_g} = Y_1\times Y_2$, where 
    $Y_1, Y_2$ are the isometry groups of $U_g, F_g$ (with types as in Table~\ref{tab:one}). Moreover $1\times Y_2\leq C_{\GX(V)} (g)$ and so $C_{\GX(V)} (g) = C_{Y_1} (g) \times Y_2$. Now $T$ is a self-centralising (irreducible, cyclic) subgroup of $Y_1$, and as $g\in T$ and $T$ is cyclic, we have $C_{Y_1}(T)=T\leq C_{Y_1} (g)$. However since $g|_{U_g}$ is irreducible, $C_{\GL(U_g)}(g)$ is a cyclic torus $\hat T$ of order $q^{u e}-1$, and therefore $\hat T\cap Y_1$ is a cyclic torus of $Y_1$ containing $T$. It follows that $\hat T\cap Y_1=T=C_{Y_1} (g)$, and hence $C_{\GX(U_g)}(g)=C_{\GX(U_g)}(T)$ is as in part (c). 
    
    Finally, as in the proof of \cite[Lemma~3.1(ii)]{PSY}, the normaliser
$N_{\GX(V)}(T)$ leaves invariant $U_g$ and $F_g$, and $|N_{\GX(V)}(T)| = e\cdot|C_{\GX(V)}(T)|$; and moreover $g^G \cap T$ has size $e$,  and $N_{\GX(V)}(T)$ permutes this set transitively by conjugation.
\end{proof}

We use Lemma~\ref{lem:uperp} to derive the important consequence that the $G$-conjugacy class $g^G$ of an $e$-stingray element $g$ is independent of the choice of $G$ in Hypothesis~\ref{hypothesis}, and moreover  a conjugacy class of $e$-ppd stingray elements is in bijection with a conjugacy class of $e$-stingray elements of $e$-ppd prime order.

  \begin{proposition}\label{prop:om-class}
    Let $G$ be a  group as in Hypothesis~$\ref{hypothesis}$ of type $\bf X$, so $\OmegaX_d(q)\lhdeq G\leq \GX_d(q)$,
  and let $g\in G$ be an $e$-stingray   element as in Definition~$\ref{def:stingray}$ such that $1\le e\le d-1$ and if $\bfX\in\{ \bfSp, \mathbf{O^\varepsilon}\}$ then $e$ is even.
    Then 
    \begin{enumerate}[{\rm (a)}]
        \item 
        $G=C_G(g)\,\OmegaX_d(q)$, and $g^{\OmegaX_d(q)} = g^G = g^{\GX_d(q)}$ is independent of $G$.
        \item 
        If $e\geq2$ and $|g|$ is divisible by a primitive prime divisor  $r$ of $(q^u)^{e}-1$, and if $m$ is as in Table~$\ref{tab:m}$ then, letting $n\in\{m, |g|/r\}$ and $h:=g^n$, the following hold:
\begin{enumerate}[{\rm (i)}]
    \item $h$ is an $e$-ppd stingray element, $h\in\OmegaX(V)$, $U_h=U_{g}$, $F_h=F_{g}$;
    \item $C_{\GX(V)}(h)=C_{\GX(V)}(g)$, and the map $x\to x^{n}$ defines a  bijection $g^G\to h^G$.
\end{enumerate}
    \end{enumerate}
 \end{proposition}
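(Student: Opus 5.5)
For part (a) I will use the Frattini-type identity: if a group $Y$ with $\OmegaX_d(q)\lhdeq Y$ satisfies $Y=C_Y(g)\,\OmegaX_d(q)$, then $g^{Y}=g^{\OmegaX_d(q)}$. So the plan is to show $\GX_d(q)=C_{\GX_d(q)}(g)\,\OmegaX_d(q)$.

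By Lemma~\ref{lem:uperp}(c), in the cases allowed here, $C_{\GX(V)}(g)=T\times \GX(F_g)$ (with $\GO^{-\eps}(F_g)$ in the orthogonal case). It therefore suffices to check that $\GX(F_g)$, viewed as fixing $U_g$ pointwise, maps onto $\GX_d(q)/\OmegaX_d(q)$. I would treat the cases separately.
\begin{itemize}
\item[$\bfL$:] The quotient is $\GL/\SL$, detected by the determinant. Since $\dim F_g=d-e\geq1$, $\GL(F_g)$ realises every determinant.
\item[$\bfU$:] The same argument applies, with determinants of norm $1$.
\item[$\bfSp$:] There is nothing to prove, since $\GX=\OmegaX$.
\item[$\bfO^\eps$:] Here $\GO/\Omega$ is detected by the determinant and the spinor norm (or the Dickson invariant when $q$ is even). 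The space $F_g$ is non-degenerate of dimension $d-e\geq 2$ (as $e\le d-1$ and $d,e$ are both even). Hence $\GO(F_g)$ contains reflections in vectors of both square classes of norm, and these generate the full quotient, which has order at most $4$.
\end{itemize}
This gives $\GX_d(q)=C\,\OmegaX_d(q)$. Any $G$ between $\OmegaX_d(q)$ and $\GX_d(q)$ then also satisfies $G=C_G(g)\OmegaX_d(q)$ by Dedekind's law, so the class $g^G$ is the same for every such $G$.

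For part (b), write $|g|=r^a s$ with $r$-part $r^a$. Raising to the power $n=|g|/r$ gives an element $h$ of order exactly $r$, or of order divisible by $r$ if $n=m$. Since $h\in T$, it preserves $U_g$ and $F_g$ and fixes $F_g$ pointwise. Its restriction to $U_g$ has order divisible by the $e$-ppd $r$, so it acts irreducibly on $U_g$: the order of $q^u$ modulo $r$ is $e$, so no proper nonzero invariant subspace is possible. Hence $U_h=U_g$, $F_h=F_g$, and $h$ is an $e$-ppd stingray element.

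For $h\in\OmegaX(V)$, I would rely on the choice of $m$ in Table~\ref{tab:m}. Presumably $m$ is chosen so that $g^m$ lies in $T\cap \OmegaX$, since $|T:T\cap\OmegaX|$ divides $m$. In the case $n=|g|/r$, $h$ has odd prime order $r$ coprime to $|\GX:\OmegaX|$, which in these cases divides $q^u-1$ times a factor of $2$, together with $r\nmid q^u-1$; hence $h\in\OmegaX$.

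For (ii), $C(g)\le C(h)$ is clear. Conversely, $C(h)=C(T_h)$ for the unique torus $T_h$ containing $h$ given by Lemma~\ref{lem:uperp}(c). By uniqueness $T_h=T$, so $C(h)=C(T)=C(g)$. Then the map $x\mapsto x^n$ is $\GX$-equivariant and surjective from $g^{\GX}$ onto $h^{\GX}$. Both classes have size $|\GX:C(g)|=|\GX:C(h)|$, so the map is a bijection, and by (a) these are the $G$-classes.

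The main obstacle is the orthogonal case of (a), namely verifying that $F_g$ supplies elements of every spinor norm and determinant, together with confirming that the exponent $m$ lands $g^m$ in $\OmegaX$.
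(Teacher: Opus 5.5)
Your proposal is correct and follows essentially the same route as the paper. In (a) you show that the factor $1\times\GX(F_g)\le C_{\GX(V)}(g)$ maps onto $\GX_d(q)/\OmegaX_d(q)$; the paper handles the orthogonal case by comparing the indices $|\GO:\Omega|$ for $V$ and for $F_g$ rather than by using reflections, but the idea is the same. In (b) you use that $r$ is coprime to the index, that the ppd forces irreducibility on $U_g$, that the torus $T$ is unique (giving equal centralisers), and a size count for the bijection; the step you flagged is fine, since $\GX_d(q)/\OmegaX_d(q)$ is cyclic of order $q-1$ or $q+1$, trivial, or (by your own determinant/spinor-norm description) elementary abelian of order at most $4$, so it has exponent dividing $m$ and $g^m\in\OmegaX_d(q)$, exactly as the paper argues.
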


   \begin{table}
  \caption{The value of $m$ for Proposition~\ref{prop:om-class}}
\begin{tabular}{l|cccc}
  \toprule
  {\bf X} &  {\bf L} &  {\bf U} &  {\bf Sp} & {\bf O}$^\pm$\\
  $m$       & $q-1$ & $q+1$ & $1$ &$2$\\
\bottomrule
\end{tabular}
\label{tab:m}
\end{table}

  \begin{proof}
  (a) Suppose first that  $\GX(V)=C_{\GX(V)}(g)\,\OmegaX(V)$. Then each element $x\in G$ is of the form $x=yz$ for some $y\in C_{\GX(V)}(g)$ and $z\in\OmegaX(V)$. Since $G$ contains $\OmegaX(V)$ the element $y=xz^{-1}\in G$ and hence $y\in G\cap C_{\GX(V)}(g)=C_G(g)$. Thus $G=C_G(g)\,\OmegaX_d(q)$. Moreover, we have $g^{\OmegaX_d(q)} \subseteq g^G \subseteq g^{\GX_d(q)} = g^{C_{\GX(V)}(g)\,\OmegaX(V)} = g^{\OmegaX_d(q)}$, and so equality holds.
  Therefore to prove part (a) it suffices to prove that $G=C_G(g)\,\OmegaX_d(q)$ in the case where $G=\GX(V)$. So assume that  $G=\GX(V)$.
  
 For ${\bf X}={\bf L}$, ${\bf U}$ or ${\bf Sp}$, the kernel of the determinant map $\det\colon\GX_d(q)\to\F_{q^u}^\times$ is $\OmegaX(V)=\SX(V)$. 
Lemma~\ref{lem:uperp}(c) shows that $\det(C_{\GX(V)}(g)) =\det(\GX(F_g))= \det(\GX(V))$, and so
    \begin{align*}
    |C_{\GX(V)}(g)\SX(V)|&= \frac{|C_{\GX(V)}(g)|\cdot |\SX(V)|}{|C_{\GX(V)}(g)\cap \SX(V)|}
    = |\det(C_{\GX(V)}(g))|\cdot |\SX(V)|\\
    &= |\det(\GX(V))|\cdot |\SX(V)| = |\GX(V)|.
    \end{align*}
   Thus $\GX(V)=C_{\GX(V)}(g)\,\SX(V)$ as claimed. So assume that ${\bf X}={\bf O}^\eps$, with $\eps=\pm$ and $d$ even (see Hypothesis~\ref{hypothesis}). Here $\OmegaX(V)=\Omega^\eps(V)$ and $e$ is even by assumption. Also by~\cite[Lemma~4.1.1(i)]{KL} and Lemma~\ref{lem:uperp}, 
   \[
   1\times\GO^{-\eps}(F_g)\leqslant C_{\GO^\eps(V)}(g)=T\times\GO^{-\eps}(F_g)\leq \GO^{-}(U_g)\times\GO^{-\eps}(F_g) = \GO^\eps(V)_{U_g},
   \]
    with $|T|=q^{e/2}+1$, and by~\cite[Lemma~4.1.1(ii)]{KL}, $(1\times\GO^{-\eps}(F_g))\cap\Omega^\eps(V)=1\times\Omega^{-\eps}(F_g)$, so 
    \[
      \frac{|\GO^\eps(V)|}{|\Omega^\eps(V)|}\ge\frac{|C_{\GO^\eps(V)}(g)\Omega^\eps(V)|}{|\Omega^\eps(V)|}
      \ge\frac{|(1{\times}\GO^{-\eps}(F_g))\Omega^\eps(V)|}{|\Omega^\eps(V)|}=\frac{|1{\times}\GO^{-\eps}(F_g)|}{|1{\times}\Omega^{-\eps}(F_g)|}
      =\frac{|\GO^{-\eps}(F_g)|}{|\Omega^{-\eps}(F_g)|}.
    \]
    By~\cite[Table~2.1.C]{KL}, $|\GO^{-\eps}(V)/\Omega^{-\eps}(V)|=|\GO^{-\eps}(F_g)/\Omega^{-\eps}(F_g)|$ is~$2$ if $q$ is even, and $4$ 
    if $q$ is odd. Since this holds for each $\eps=\pm$, it follows that equality holds for all the inequalities in the above display, and in particular  $|\GO^\eps(V)|=|C_{\GO(V)}(g)\Omega^\eps(V)|$, so $\GO^\eps(V)=C_{\GO^\eps(V)}(g)\,\Omega^\eps(V)$. Thus part (a) is proved.
      
  (b) The group  $\GX_d(q)/\OmegaX_d(q)$ has exponent $m$ for $m$ as in Table~\ref{tab:m}.
  (To see this for case ${\bf O}$ it suffices to consider $q$ odd. As $\GO^\eps(V)$ is generated by
  reflections~\cite[Corollary 11.42]{Taylor}, the same is true for $\GO^\eps(V)/\Omega^\eps(V)$, so the exponent is $2$.)  Hence $g^m\in\OmegaX_d(q)$.
  Also, since $e\ge2$, the primitive prime
  divisor  $r$ of $(q^u)^{e}-1$ does not divide $q^u-1$ and hence $r$ does not divide $m$ in cases ${\bf L}$ or ${\bf U}$. In addition, by Definition~\ref{def:stingray}(a),  $r\geq e+1\geq 3$, and so $r$ does not $m$ in cases ${\bf Sp}$ or ${\bf O^\varepsilon}$ either. Thus in all cases $r$ is
  coprime to $m$ and hence $m$ divides $|g|/r$. This implies that $g^{|g|/r}\in \OmegaX_d(q)$.
  Thus we have proved that the element $h=g^n\in \OmegaX_d(q)$, and $|h|$ is divisible by $r$.
  
  Since $h\in\langle g\rangle$, $h$ fixes $F_g$ pointwise, and leaves $U_g$ invariant. Also since $|h|$ is divisible by $r$, it follows that $h$ acts irreducibly on $U_g$, so $h$ is an $e$-ppd stingray element, and $U_h=U_{g}$, $F_h=F_{g}$. Moreover, since  $h\in\langle g\rangle$, the  unique cyclic torus containing $h$ given by Lemma~\ref{lem:uperp}(c) is the cyclic torus $T$ containing $g$ and hence $C_{\GX(V)}(g)=C_{\GX(V)}(h)$ by Lemma~\ref{lem:uperp}(c). Thus using part (a), $|g^G|=|g^{\GX(V)}|=|\GX(V):C_{\GX(V)}(g)|=|h^{\GX(V)}|=|h^G|$. The map $x\to x^n$ clearly defines a surjection $g^G\to h^G$, and since $|g^G|=|h^G|$, this map is a bijection.  
     \end{proof}

 In our analysis we frequently need to know the number of conjugates of certain overgroups of stingray elements. The final lemma of this subsection is very helpful for this.
 
\begin{lemma}\label{lem:countM}
Suppose that $G=\GX_d(q)$ of type $\bfX$ with a proper subgroup $M$ such that $N_{G}(M)=M$. If $g\in M$ is a stingray element such that $g^{G}\cap M$ is the single $M$-conjugacy class $g^M$, then
\[
|\{ L\in M^G\mid g\in L  \}| = \frac{|C_{G}(g)|}{|C_M(g)|}
\]
with $C_{G}(g)$ given by Lemma~$\ref{lem:uperp}{\rm (c)}$. 
\end{lemma}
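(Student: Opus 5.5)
We prove the formula by a standard double count of the incidences between conjugates of $g$ and conjugates of $M$. Consider the set of pairs
\[
\mathcal{P}:=\{(x,L)\in g^G\times M^G \mid x\in L\}.
\]
We count $|\mathcal{P}|$ in two ways, using transitivity of $G$ on each coordinate.

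First fix the second coordinate. Since $N_G(M)=M$, the number of conjugates of $M$ is $|M^G|=|G:N_G(M)|=|G:M|$. Each conjugate $L=M^y$ contains $|g^G\cap L|=|(g^G\cap M)^y|=|g^G\cap M|$ elements of $g^G$. By hypothesis $g^G\cap M=g^M$, so this equals $|M:C_M(g)|$. Hence
\[
|\mathcal{P}|=|G:M|\cdot|M:C_M(g)|=\frac{|G|}{|C_M(g)|}.
\]

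Next fix the first coordinate. Conjugation by $y\in G$ gives a bijection between $\{L\in M^G\mid g\in L\}$ and $\{L\in M^G\mid g^y\in L\}$. So every $x\in g^G$ lies in the same number $N:=|\{L\in M^G\mid g\in L\}|$ of conjugates of $M$. This gives
\[
|\mathcal{P}|=|g^G|\cdot N=\frac{|G|}{|C_G(g)|}\cdot N.
\]

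Equating the two expressions yields $N=|C_G(g)|/|C_M(g)|$, which is the claim. Since $G=\GX_d(q)$, Lemma~\ref{lem:uperp}(c) then supplies the explicit structure of $C_G(g)$ (as $T\times\GX(F_g)$ or $T\times\GO^{-\eps}(F_g)$). Strictly speaking, that lemma needs the parity and type conditions, namely $e$ even in the symplectic and orthogonal cases, which hold for the stingray elements under consideration.

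There is no real obstacle here. The only points needing care are, first, that self-normality is what makes $|M^G|=|G:M|$, and second, that the single-class hypothesis is what makes $|g^G\cap M|=|M:C_M(g)|$. Without the latter one would get a sum over the $M$-classes in $g^G\cap M$.
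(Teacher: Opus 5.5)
Your proof is correct and follows the paper's argument essentially verbatim: double count the incidences $(x,L)$ with $x\in g^G$, $L\in M^G$, $x\in L$. Self-normality gives $|M^G|=|G:M|$, the single-class hypothesis gives $|g^G\cap L|=|g^M|$, and equating the two counts yields $|C_G(g)|/|C_M(g)|$.
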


\begin{proof}
Let $Y$ denote the number of conjugates $L\in M^{G}$ containing $g$, and note that $|M^{G}|=|G|/|M|$ (since $N_{G}(M)=M$ by assumption). 
Then, the number of pairs $(L,g')$ such that  $L\in M^G$, $g'\in g^{G}$ and $g'\in L$, is equal to the number $|g^{G}|$ of choices for $g'$ times~$Y$. Also the number of these pairs is equal to the number $|M^{G}|=|G|/|M|$ of choices for the group $L$ times the number $|g^{G}\cap L|=|g^{G}\cap M|=|g^M|$ of choices for $g'$, for a given group $L$. Thus 
\[
Y=\frac{|M^{G}|\cdot |g^M|}{|g^{G}|} = \frac{|G|}{|M|}\cdot \frac{|M|}{|C_M(g)|}\cdot \frac{|C_{G}(g)|}{|G|} = \frac{|C_{G}(g)|}{|C_M(g)|}
\]
and we recall that  $C_{G}(g)$ is given by Lemma~\ref{lem:uperp}(c).
\end{proof}

\begin{table}
  \caption{Conditions for $(e_1,e_2)$-stingray duos in classical groups.}
\begin{tabular}{llll}
  \toprule
  {\bf X} & $e_i$-parity & $e_i$-range & $d=e_1+e_2$ and $U_i=V(g_i-1)$\\
  \midrule
  {\bf L} & any & $1 \le e_2\le e_1 \le d-1$ & $d$ odd or even\\
    {\bf U} & odd & $1 \le e_2\le e_1 \le d-1$ & $d$ even\\
  {\bf Sp} & even & $2 \le e_2\le e_1 \le d-2$ & $d$ even \\
{\bf O}$^\pm$ & even & $2 \le e_2 \le e_1\le d-2$ & $d$ even, each $U_i$ has minus type\\
\bottomrule
\end{tabular}
\label{tab:stingraycond}
\end{table}

\subsection{Stingray duos in classical groups}

Let $(g_1,g_2)$ be an $(e_1,e_2)$-stingray duo, as in Definition~\ref{def:stingray}, for a classical group  of type $\bf X$ contained in $\GL_d(q^u)$, so $d=e_1+e_2$ and $e_1\geq e_2\geq1$. (See our comments later in this paragraph for why we allow $e_2=1$, especially in analysing the Aschbacher class $\Asch_3$.)  We  assume that $\OmegaX_d(q)\leq G\leq \GX_d(q)$ and that $g_1, g_2$ are stingray elements in $\GX_d(q)$, not necessarily lying in $G$.
By Lemma~\ref{lem:uperp} the value $e_i=1$ may occur only for ${\bf X}\in\{ {\bf L}, {\bf U}, {\bf O}^\pm\}$; 
and if ${\bf X}=\bf O^\pm$ then we  assume that both of the $e_i$ are even so that, in particular, $d=e_1+e_2$ is even (and we do not consider $(e_1,e_2)$-stingray duos in $\GO_d^\circ(q)$ with $d$ odd). For convenience we summarise the parity and range of the $e_i$ in Table~\ref{tab:stingraycond}, based on these comments and Table~\ref{tab:one}.
In Theorem~\ref{thm:main} we assume further that $d=e_1+e_2$ with each $e_i\geq2$, and $d>8$. However in parts of our analysis we need to deal with classical groups of smaller dimension and duos with smaller $e_i$, for example, when considering classical subgroups over extension fields. Thus we assume that $e_1\geq e_2$ in the discussion below, and often allow the possibility of $e_2=1$ for types  ${\bf X}\in\{ {\bf L}, {\bf U}\}$.

Our general aim is to show that, with high probability, an $(e_1,e_2)$-stingray duo generates  a group containing $\OmegaX_d(q)$, but, as we discussed in the paragraph before Definition~\ref{e:gen}, the case of symplectic groups with $q$ even is exceptional in this regard. The next lemma is a generalisation of \cite[Lemma 3.3]{PSY}, and justifies our definition of generating stingray duo in Definition~\ref{e:gen}.

\begin{lemma}\label{lem:spgen}
Let $G=\Sp_d(q)=\GSp_d(q)$, where $q$ is even and $d=e_1+e_2$ with each $e_i$ even, and let $(g_1,g_2)$ be an $(e_1,e_2)$-stingray duo in $G$ as in Definition~$\ref{def:stingray}$. Then either $\langle g_1, g_2\rangle$ is reducible, or $\langle g_1, g_2\rangle\leq \SO^\varepsilon_d(q)$ for some $\varepsilon=\pm$. 
\end{lemma}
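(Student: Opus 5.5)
Write $B$ for the symplectic form preserved by $G$, $U_i := U_{g_i}$ and $F_i := F_{g_i}$. The plan is to construct an explicit $\langle g_1,g_2\rangle$-invariant quadratic form $Q$ polarising to $B$.

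By Lemma~\ref{lem:uperp}(a) each $U_i$ is non-degenerate, with $V = U_i\perp F_i$ and $F_i = U_i^\perp$. Since $U_1\cap U_2=0$ and $\dim U_1+\dim U_2=d$, we have $V=U_1\oplus U_2$. The quadratic forms polarising to $B$ form a coset of the space of squares of linear functionals, since $q$ is even and $\F_q$ is perfect. Each $g_i$ preserves $B$, so it acts on this set of forms; what we need is a form fixed by both $g_1$ and $g_2$.

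The key step is to use the stingray structure of each $g_i$ separately.

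\begin{enumerate}
\item[(i)] Since $g_i|_{U_i}$ is irreducible on the non-degenerate space $U_i$ of even dimension $e_i$, the element $g_i|_{U_i}$ lies in a cyclic torus of order $q^{e_i/2}+1$ (Lemma~\ref{lem:uperp}(c), Table~\ref{tab:one}). Such a torus lies in a unique $\GO^-(U_i)$. Equivalently, there is a quadratic form $Q_i$ on $U_i$, polarising to $B|_{U_i}$, that is $g_i$-invariant; and it is unique, since two such forms differ by $\lambda^2$ for a $g_i$-invariant linear functional $\lambda$ on $U_i$, and irreducibility with nontrivial action forces $\lambda=0$.
\item[(ii)] Any $g_i$-invariant quadratic form $Q$ on $V$ polarising to $B$ satisfies $Q(u+f)=Q(u)+Q(f)$ for $u\in U_i$, $f\in F_i$, because $B(u,f)=0$. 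We also need $Q|_{U_i}=Q_i$. The form $Q|_{F_i}$ is unconstrained, since $g_i$ acts trivially on $F_i$.
\end{enumerate}

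So $g_i$ preserves $Q$ if and only if $Q|_{U_i}=Q_i$. We therefore want a $Q$ with $Q|_{U_1}=Q_1$ and $Q|_{U_2}=Q_2$.

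To build it, use $V=U_1\oplus U_2$ and define
\[
Q(u_1+u_2):=Q_1(u_1)+Q_2(u_2)+B(u_1,u_2).
\]
This is a quadratic form with polar form $B$. It restricts correctly to each $U_i$, and hence is fixed by both $g_i$. This already gives $\langle g_1,g_2\rangle\le \GO(Q)$ without any irreducibility hypothesis, and $Q$ is non-degenerate because its polar form is. So $\GO(Q)=\GO^\varepsilon_d(q)$ for some $\varepsilon=\pm$.

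The remaining subtlety is landing in $\SO^\varepsilon_d(q)$, which for $q$ even means the kernel of the Dickson invariant, i.e.\ $\Omega^\varepsilon_d(q)$ in this characteristic. An element $g\in\GO(Q)$ has Dickson invariant $\dim V(g-1)\bmod 2$. For $g_i$ this is $\dim U_i=e_i$, which is even. Therefore $g_1,g_2\in \SO^\varepsilon_d(q)$.

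The reducible alternative in the statement is then not needed, but it is harmless to keep. The main obstacle I expect is step (i): making rigorous the existence of an invariant quadratic form on $U_i$. I would first try averaging, i.e.\ solving $Q_0^{g}-Q_0=\lambda^2$ via the invertibility of $g-1$ on $U_i^*$ (valid because $g_i$ fixes no nonzero vector of $U_i$). Failing that, I would cite the embedding of the torus into $\GO^-$ from \cite{PSY}.
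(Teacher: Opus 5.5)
Your proof is correct, and it takes a genuinely different route from the paper's.

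\textbf{The paper's route.} It uses the isomorphism $\Sp_d(q)\cong\Omega_{d+1}(q)$ and works in the $(d+1)$-dimensional orthogonal space $\widehat V$, whose radical $R$ is non-singular. It applies Lemmas~\ref{lem:unique} and~\ref{lem:uperp} there to obtain the $g_i$-irreducible subspaces $\widehat U_i$. It then observes that $W=\widehat U_1\oplus\widehat U_2$ is a $\langle g_1,g_2\rangle$-invariant hyperplane complementing $R$. If $W$ is non-degenerate, its stabiliser is an orthogonal group; if $W$ were degenerate, it would give an invariant totally isotropic subspace of $V$.

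\textbf{How your route relates.} Your argument is the intrinsic version of this. Hyperplanes of $\widehat V$ complementing $R$ correspond exactly to quadratic forms on $V$ polarising to $B$. Under that dictionary, your glued form $Q(u_1+u_2)=Q_1(u_1)+Q_2(u_2)+B(u_1,u_2)$ is the form attached to $W$.

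\textbf{What your route gains.} It is a self-contained linear-algebra proof. Your averaging argument settles the existence of $Q_i$ cleanly: if $Q_0^{g_i}-Q_0=\mu^2$, solve $\lambda-\lambda^{g_i}=\mu$ using invertibility of $1-g_i$ on $U_i^*$. So there is no need to cite \cite{PSY} for the torus lying in a $\GO^-$. It also yields the sharper conclusion that the reducible alternative never occurs. That fact is latent in the paper's proof too: since $W\cap R=0$ and $R$ is the radical of the bilinear form on $\widehat V$, one gets $W\cap W^\perp=0$, so $W$ is always non-degenerate. The paper's route, for its part, is shorter given the exceptional isomorphism and reuses lemmas already in place.

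\textbf{A convention point.} The paper, following \cite{KL}, takes $\SO^\varepsilon_d(q)=\GO^\varepsilon_d(q)\cap\SL_d(q)$, which equals $\GO^\varepsilon_d(q)$ for $q$ even. So your Dickson-invariant step proves more than is required, namely membership in $\Omega^\varepsilon_d(q)$. That step is correct. It also follows at once from $|g_i|$ dividing the odd number $q^{e_i}-1$, since $\Omega^\varepsilon_d(q)$ has index $2$ in $\GO^\varepsilon_d(q)$.
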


\begin{proof}
Let $H:=\langle g_1, g_2\rangle$, and for each $i$ let $U_i:=U_{g_i}$ and $F_i:=F_{g_i}$ as in Definition~$\ref{def:stingray}$, so that $V=U_i\oplus F_i$ and $V=U_1\oplus U_2$.
Since $q$ is even, $G=\Sp_d(q) \cong \Omega_{d+1}(q)$. Let $\widehat{V}$ be the associated orthogonal space of dimension $d+1$ for $G$ with nonsingular $1$-dimensional radical $R$ and $V=\widehat{V}/R$. Then for each $i$, $g_i$ acts as an $e_i$-stingray element on $\widehat{V}$, and by Lemmas~\ref{lem:unique} and~\ref{lem:uperp}, $g_i$ acts irreducibly on an  $e_i$-dimensional subspace $\widehat{U}_i$ of $\widehat{V}$ and has an $(d-e_i+1)$-dimensional fixed point subspace $\widehat{F}_i$ containing $R$ (recall that $e_i\geq2$). Thus $U_i=(\widehat{U}_i+R)/R$ for each $i$, and $W\coloneq\widehat{U}_1 + \widehat{U}_2$ has dimension at most $e_1+e_2=d$. Since $V=U_1\oplus U_2$ it follows that  $\widehat{V}=  (\widehat{U}_1 + \widehat{U}_2)\oplus R$, and $W=\widehat{U}_1 \oplus \widehat{U}_2$ is a hyperplane of $\widehat{V}$ which intersects $R$ trivially. 
Moreover, by Lemma~\ref{lem:unique}(c), $W$ is invariant under both $g_1$ and $g_2$, and hence $H$ is contained in the stabiliser in $G$ of $W$. If $W$ is a nondegenerate subspace of $\widehat{V}$, then the stabiliser of $W$ is an orthogonal group and we have $H\leq \GO^\varepsilon_d(q)$ for some $\varepsilon=\pm$, and since  $g_1,g_2\in\Sp_d(q)\leq\SL_d(q)$ the subgroup $H=\langle g_1, g_2\rangle\leq\SO_d^\eps(q)$. On the other hand if $W$ is degenerate then $H$ leaves invariant the nonzero totally isotropic subspace $((W\cap W^\perp)+R)/R$ of $V$.
\end{proof}

We first record some basic properties about ppd stingray duos which allow us to reduce to the case where these elements have prime orders and lie in $\OmegaX_d(q)$, and where the group $G$ is $\GX_d(q)$.

\begin{lemma}\label{lem:redOm}
Let $G$ be a classical group of type \textup{\bf X} satisfying
  $\OmegaX_d(q) \lhdeq G \le  \GX_d(q)$ as in Hypothesis~$\ref{hypothesis}$, and
   let $(g_1,g_2)$ be an $(e_1,e_2)$-ppd stingray duo in $\GX_d(q)\times \GX_d(q)$ with $e_1, e_2$ as in Table~$\ref{tab:stingraycond}$ with $e_1\geq e_2\geq 2$. For each $i$, let $|g_i|$ be a multiple of an $e_i$-ppd $r_i$ of $q^{ue_i}-1$, and let $h_i=g_i^{|g_i|/r_i}$. Then the following hold.
\begin{enumerate}[{\rm (a)}]
      \item 
     The proportions defined in \eqref{e:rhogen} and \eqref{e:rhonongen} satisfy 
     \begin{align*}
     \rho_{gen}(g_1,g_2,\OmegaX_d(q))&=\rho_{gen}(g_1,g_2,G) =\rho_{gen}(g_1,g_2,\GX_d(q))\\
     \rho_{nongen}(g_1,g_2,\OmegaX_d(q))&=\rho_{nongen}(g_1,g_2,G) =\rho_{nongen}(g_1,g_2,\GX_d(q)),
     \end{align*}
     so we write these proportions as $\rho_{gen}(g_1,g_2,\bfX)$ and $\rho_{nongen}(g_1,g_2,\bfX)$, respectively. 

      \item 
      Both $h_i$ lie in $\OmegaX_d(q)$ where $h_i^{\OmegaX_d(q)} = h_i^G=h_i^{\GX_d(q)}$, and $(h_1,h_2)$ is an $(e_1,e_2)$-ppd stingray duo.

      \item 
      Moreover if $(h_1,h_2)$ is a generating stingray duo, then $(g_1,g_2)$ is also a generating stingray duo, so
      \[
       \rhogen(g_1,g_2,\bfX)  \geq  \rhogen(h_1,h_2,\bfX ) \quad\mbox{and}\quad
        \rhonongen(g_1,g_2,\bfX)   \leq  \rhonongen(h_1,h_2,\bfX).
      \]
\end{enumerate}
\end{lemma}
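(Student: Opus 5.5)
Part (a) will come from Proposition~\ref{prop:om-class}(a), and part (b) from Proposition~\ref{prop:om-class}(b). The main work is a bijection argument for part (c).

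\textbf{Part (a).} By Proposition~\ref{prop:om-class}(a), for each $i$ we have $g_i^{\OmegaX_d(q)}=g_i^G=g_i^{\GX_d(q)}$. So the set $g_1^G\times g_2^G$ is the same for every $G$ with $\OmegaX_d(q)\lhdeq G\le\GX_d(q)$. The notions of stingray duo and generating stingray duo depend only on the pair of elements: the conditions are $U_{g}\cap U_{g'}=0$, and whether $\langle g,g'\rangle$ contains $\OmegaY_d(q)$. They do not depend on the ambient group. Hence numerator and denominator in \eqref{e:rhogen} and \eqref{e:rhonongen} are independent of $G$, giving (a).

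One point needs care: Proposition~\ref{prop:om-class} is stated for $g\in G$, whereas here $g_i\in\GX_d(q)$. I would apply it with $G=\GX_d(q)$ to get $g_i^{\GX_d(q)}=g_i^{\OmegaX_d(q)}$. For intermediate $G$ the class is then sandwiched between these two, so it equals both.

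\textbf{Part (b).} Take $n=|g_i|/r_i$ in Proposition~\ref{prop:om-class}(b). Then $h_i\in\OmegaX_d(q)$ is an $e_i$-ppd stingray element with $U_{h_i}=U_{g_i}$, and its class is independent of $G$ by part (a) applied to $h_i$. Since $U_{h_1}\cap U_{h_2}=U_{g_1}\cap U_{g_2}=0$, the pair $(h_1,h_2)$ is an $(e_1,e_2)$-ppd stingray duo.

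\textbf{Part (c).} Let $\phi_i\colon g_i^G\to h_i^G$ be $x\mapsto x^{|g_i|/r_i}$, a bijection by Proposition~\ref{prop:om-class}(b)(ii). Then $\phi=\phi_1\times\phi_2$ is a bijection $g_1^G\times g_2^G\to h_1^G\times h_2^G$.

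For $x\in g_i^G$, the element $x$ is conjugate to $g_i$, so $\phi_i(x)$ is the corresponding conjugate of $h_i$. Proposition~\ref{prop:om-class}(b)(i) then gives $U_{\phi_i(x)}=U_x$. Hence $(x,y)$ is a stingray duo if and only if $\phi(x,y)$ is, so $\phi$ restricts to a bijection between the duos in the two classes. The denominators in \eqref{e:rhogen} are therefore equal.

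Moreover $\langle\phi_1(x),\phi_2(y)\rangle\le\langle x,y\rangle$. So if $\phi(x,y)$ is generating, that is, contains $\OmegaY_d(q)$, then so does $\langle x,y\rangle$. Thus the number of generating duos for $(g_1,g_2)$ is at least that for $(h_1,h_2)$, which gives the inequality for $\rhogen$. The inequality for $\rhonongen$ follows by complementation. Applying this to the original pair itself shows that $(g_1,g_2)$ is generating whenever $(h_1,h_2)$ is.

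The only delicate step is checking that the bijection preserves the duo condition, which rests on $U_{x^n}=U_x$ for conjugates. That follows by conjugating the identity $U_{h_i}=U_{g_i}$.
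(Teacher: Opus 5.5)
Your proposal is correct and follows essentially the same route as the paper. It uses Proposition~\ref{prop:om-class}(a) for class-independence in (a), Proposition~\ref{prop:om-class}(b) for the powers $h_i$ in (b), and for (c) the bijection $(x,y)\mapsto(x^{|g_1|/r_1},y^{|g_2|/r_2})$ on duos together with $\langle x^{|g_1|/r_1},y^{|g_2|/r_2}\rangle\le\langle x,y\rangle$. You also handle $g_i\notin G$ via the sandwich $g_i^{\OmegaX_d(q)}\subseteq g_i^G\subseteq g_i^{\GX_d(q)}$, and you check via $U_{x^n}=U_x$ that the duo condition is preserved in both directions; the paper states both points only briefly.
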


\begin{proof}
    Applying Proposition~\ref{prop:om-class}(a) to both $g_1$ and $g_2$, the numerators and the denominators of the fractions defining $\rho_{gen}$ and $\rho_{nongen}$ in \eqref{e:rhogen} and \eqref{e:rhonongen} are independent of $G$. This implies equality of the proportions asserted in part (a). 

    Part (b) follows immediately from Proposition~\ref{prop:om-class}. Moreover we see from Proposition~\ref{prop:om-class} that, for each $i$, the map $x\to x^{|g_i|/r_i}$ is a bijection $g_i^G\to h_i^G$. It follows that the map  
 $(x,y)\to (x^{|g_i|/r_1}, y^{|g_i|/r_2})$ is a bijection $g_1^G\times g_2^G\to h_1^G\times h_2^G$ that maps stingray duos to stingray duos; and, further, the domain and range are independent of the choice of the group $G$.

 Suppose that $(h_1,h_2)$ is a generating stingray duo, so by Definition~\ref{e:gen} either $\langle h_1,h_2\rangle =\Omega X_d(q)$, or  $\bfX=\bfSp$ and $q$ is even, and $\langle h_1,h_2\rangle$ is an orthogonal subgroup of $\Sp_d(q)$. Then also $\langle g_1,g_2\rangle$ contains $\Omega X_d(q)$, or  $\langle g_1, g_2\rangle$ contains an orthogonal subgroup, respectively, so $(g_1,g_2)$ is a generating stingray duo. Finally, the assertions about the proportions follow since the numbers of stingray duos in $g_1^G\times g_2^G$ and in $h_1^G\times h_2^G$ are equal. 
\end{proof}

\section{Strategy for proving Theorem~\ref{thm:main}}\label{s:strategy}

Let $G\leq \GL_d(q^u)$ be a classical group of type $\bf X$ acting on the vector space $V=(\F_{q^u})^d$, as in Hypothesis~\ref{hypothesis}, and let $(g_1,g_2)$ be an $(e_1,e_2)$-ppd stingray duo for  $G$, as in Definition~\ref{def:stingray}. 
As discussed in Section~\ref{s:sting}, to prove Theorem~\ref{thm:main}, our objective is to obtain  an upper bound strictly less than $1$ for the proportion of non-generating   stingray duos in $g_1^G\times g_2^G$, namely for the quantity $\rhonongen(g_1,g_2,\bfX)$ defined in \eqref{e:rhoX} (see also \eqref{e:rhonongen} and Lemma~\ref{lem:redOm}(a)). 
 Clearly these proportions are independent of the choices of the elements $g_1, g_2$ in their $G$-conjugacy classes $g_1^G, g_2^G$, and by Proposition~\ref{prop:om-class}(a),  these conjugacy classes are independent of the group $G$ in the range $\OmegaX_d(q)\lhdeq G\leq\GX_d(q)$. Also, if $|g_i|$ is a multiple of an $e_i$-ppd $r_i$ of $(q^u)^{e_i}-1$, then by Lemma~\ref{lem:redOm}, it is sufficient to prove the theorem, replacing each $g_i$ by $h_i:=g_i^{|g_i|/r_i}$, an element of  $e_i$-ppd prime order $r_i$ lying in $\OmegaX_d(q)$. Thus we may (and shall) henceforth assume the following.

\begin{hypothesis}\label{hyp}
\begin{enumerate}[{\rm (a)}]
    \item 
    $V=(\F_{q^u})^d$, and $G=\GX_d(q)$ of type $\bfX$, with $\bfX, u, \GX_d(q)$ as in one of the lines of Table~$\ref{tab:G}$.
    \item 
    $d=e_1+e_2$ with $2\leq e_2\leq e_1$ as in Table~$\ref{tab:stingraycond}$ (usually $d>8$ but this will often be specified explicitly). For $i=1,2$, $(q^u)^{e_i}-1$ has a primitive prime divisor $r_i$.
    \item 
    For $i=1,2$, $g_i\in G$ is an $e_i$-ppd stingray element of order $r_i$, in particular $g_i\in\OmegaX_d(q)$.
    \item 
    We usually use $(g, g')$  to denote a typical non-generating stingray duo with $g\in g_1^G, g'\in g_2^G$, so $H\coloneq\langle g,g'\rangle$ is a proper subgroup of $\OmegaX_d(q)$, and $H$ is not an orthogonal group if  $\bfX= \bfSp$ with $q$ even, see Lemma~$\ref{lem:spgen}$.
\end{enumerate}    
\end{hypothesis}

Our strategy for obtaining a useful upper bound for $\rhonongen(g_1,g_2,\bfX)$ is to 
 identify a collection $\mathcal M(\bfX)$ of proper subgroups of $G=\GX_d(q)$, usually (but not always) maximal subgroups, such that, for each non-generating stingray duo $(g,g')$, $H =\langle g,g'\rangle\leq M\cap\OmegaX_d(q)$ for at least one $M\in {\mathcal M}(\bfX)$. Then we have
 \begin{equation}\label{e:aschnongen0}
    \rhonongen(g_1,g_2,\bfX)  =  \frac{\mid \bigcup_{M\in {\mathcal M}(\bfX)}
    \{ \mbox{stingray duos $(g,g')$ in}\ g_1^G\times g_2^G \ \mbox{with $\langle g,g'\rangle \leq M$} \}| }{ \mbox{Number of stingray duos in} \ g_1^G\times g_2^G }
\end{equation}
 
 By Aschbacher's Theorem \cite{Asch}, maximal subgroups of $G$  not containing $\OmegaX_d(q)$ 
belong to one of nine `categories' (families of subgroups) denoted $\Asch_1, \ldots , \Asch_9$.
Our standard reference is the book of Kleidman and Liebeck~\cite{KL} (where the exact 
conditions are given for a subgroup in category $\Asch_i$ to be maximal in $G$, for dimension $d\geq 13$). 
Sometimes the categories $\Asch_i$ are enriched to contain some non-maximal subgroups (sometimes all the subgroups of the maximal ones) or, for example in the case  $\bfX\ne\bfL$ and $e_1=d/2$, $\Asch_1$ might include the stabilisers of all totally isotropic or non-degenerate $d/2$-subspaces even though the stabiliser $M$ of a $d/2$-dimensional non-degenerate subspace $U$  might be a proper subgroup of the stabiliser of the decomposition $V=U\oplus U^\perp$, so $M$ has index two in $N_G(M)$.

\begin{remark}\label{r:MX-Sp}
\begin{enumerate}[{\rm (a)}]
    \item 
    In summary, the subgroups in $\cM(\bfX)$ will be maximal among the elements of $\Asch_i$, for various $i$, but are not necessarily maximal in $G$. 
    Thus the collection ${\mathcal M}(\bfX)$ has a natural partition as a disjoint union ${\mathcal M}(\bfX)=\cup_{i=1}^9 {\mathcal M}_i(\bfX)$ with ${\mathcal M}_i(\bfX)\subseteq \Asch_i$ for each $i$.  
    \item 
     The case $\bfX=\bfSp$ with $q$  even  (so $G=\Sp_d(q)=\OmegaX_d(q)$) deserves special mention. By Lemma~\ref{lem:spgen},  $H\leq M$, where either $M$ is a maximal reducible subgroup of $G$ or $M$ is a maximal (proper) subgroup of some subgroup $\Omega_d^\pm(q)$ of $G$. In particular, it follows from Aschbacher's Theorem \cite{Asch} in this case that $H\leq M$ for some $M\in \cup_{i\ne 8}\Asch_i$, and hence we may assume that ${\mathcal M}_8(\bfSp)=\emptyset$ if $q$ is even. Also in this case, ${\mathcal M}_9(\bfSp)$ will consist of the $\Asch_9$-subgroups of an orthogonal subgroup $\Omega_d^\pm(q)$ of $G$. 
     \end{enumerate}    
\end{remark}

We often approximate the contribution 
\[
\frac{\mid \bigcup_{M\in {\mathcal M}_i(\bfX)}
    \{ \mbox{stingray duos $(g,g')$ in}\ g_1^G\times g_2^G \ \mbox{with $\langle g,g'\rangle \leq M$} \}| }{ \mbox{Number of stingray duos in} \ g_1^G\times g_2^G }
\]
to $\rhonongen(g_1,g_2,\bfX)$ in \eqref{e:aschnongen0} from subgroups in $\cM_i(\bfX)$ by the upper bound
\[
\sum_{M\in {\mathcal M}_i(\bfX)}\frac{\mid 
    \{ \mbox{stingray duos $(g,g')$ in}\ g_1^G\times g_2^G \ \mbox{with $\langle g,g'\rangle \leq M$} \}| }{ \mbox{Number of stingray duos in} \ g_1^G\times g_2^G}.
\]
We use several strategies to reduce this upper bound to avoid too much over-counting. Firstly, we try to choose the subsets ${\mathcal M}_i(\bfX)$ as small as possible subject to the following: if $H$ is contained in some subgroup lying in $\Asch_i$, for some $i$, then we must ensure 
that there exists some $M\in \cup_{j=1}^i {\mathcal M}_j(\bfX)$ with $H\leq M$. However, the set 
${\mathcal M}_i(\bfX)$ will not usually contain all the subgroups in $\Asch_i$, not even all those containing $H$. For example, if $i=1$ and $\bfX\ne\bfL$, then we will see in Lemma~\ref{lem:dim} that the only maximal subgroups in $\Asch_1$ containing an $(e_1,e_2)$-ppd stingray duo are stabilisers of nondegenerate $e_j$-subspaces (for $j=1,2$). Thus we may take 
${\mathcal M}_1(\bfX)$ to consist of the stabilisers of non-degenerate subspaces of dimensions $e_1$ and $e_2$ (and no other subgroups). Secondly, we analyse the categories in order so that, by the time we consider cases where $H$ is contained in a subgroup of $\Asch_i$, we may assume that $H$ is not contained in any subgroup in $\cup_{1\leq j<i}\Asch_j$.  
To facilitate choosing the ${\mathcal M}_i(\bfX)$ with this property, we define:
\begin{equation}\label{e:aschnongen00}
   {\mathcal M}_1^\ast(\bfX) =\emptyset, \quad  \mbox{and for $1<i\leq 9$,}\quad  {\mathcal M}_i^\ast(\bfX) = \cup_{1\leq j<i} {\mathcal M}_j(\bfX). 
\end{equation} 
Moreover, for $1\leq i\leq 9$, we let 
\begin{equation}\label{e:s1}
 \mathcal{S}_i(g_1,g_2,\bfX)=   \bigcup_{M\in {\mathcal M}_i(\bfX)}
    \left\{\mbox{duos $(g,g')$ in  $g_1^G\times g_2^G$}\ \middle|
    \begin{array}{lll}
     &\mbox{$\langle g,g'\rangle \leq M$ but $\langle g,g'\rangle \nleq L$}\\
    &\quad\qquad\mbox{for any $L\in\cM_i^\ast(\bfX)$}
    \end{array}
    \right\}
\end{equation}
and  we define 
 \begin{equation}\label{e:asch2}
{\rm Prob}_i(g_1,g_2,\bfX) =    \frac{\mid 
\mathcal{S}_i(g_1,g_2,\bfX)\mid}{ \mbox{Number of stingray duos in} \ g_1^{G}\times g_2^{G}}.
\end{equation}
We therefore have
\begin{equation}\label{e:aschnongen}
    \rhonongen(g_1,g_2,\bfX)  = \sum_{i=1}^9 {\rm Prob}_i(g_1,g_2,\bfX).
\end{equation}

We will find upper bounds for each ${\rm Prob}_i(g_1, g_2, \bfX)$ as a function of $d$ and $q$. We make a few comments about the analysis. 
\begin{itemize}
    \item For some  subgroups $M\in\cM_i(\bfX)$, it is possible to count the numbers of stingray duos they contain, leading to expressions for the numerator in  \eqref{e:asch2} which allow some cancellation with the expressions for the denominator. For example, this  occurs for $\Asch_1$ with $\bfX\ne \bfL$ in the proof of Proposition~\ref{prop:redUSO}. 
    
    \item For some $i>1$,  the extra restriction that  $\langle g,g'\rangle$ does not lie in any subgroup from $\cM_j(\bfX)$ for $j<i$ is useful. For example, for $\Asch_2$, we will see in Lemma~\ref{lem:imprim1} that the family $\cM_2(\bfX)$ can be restricted to maximal imprimitive subgroups of $G$ preserving a decomposition of $V$ as a direct sum of $d$ subspaces of dimension $1$.
    
    \item In all cases we use Lemma~\ref{lem:fixg1} and Propositions~\ref{p:Lbds} and~\ref{p:Xbds} to estimate the denominators in \eqref{e:aschnongen0} and  \eqref{e:asch2} (see also \eqref{e:asch3} below). 
    
    \item The last technique which we now describe is useful in cases where we can specify the subgroups $M\in\cM_i(G)$ containing a given $g$, see for example Proposition~\ref{prop:redUSO} (for $\Asch_1$) or Proposition~\ref{prop:imprim} (for $\Asch_2$). 
 For each  $i$, the family $\cM_i(\bfX)$ is a union of $G$-conjugacy classes of subgroups, and hence the set $\mathcal{S}_i(g_1,g_2,\bfX)$ in \eqref{e:s1} is closed under conjugation so that the number of stingray duos in $\mathcal{S}_i(g_1,g_2,\bfX)$ with first entry  $g\in g_1^{G}$ is independent of the choice of $g$.
Therefore, for $M\in\cM_i(\bfX)$ such that $g\in  g_1^G\cap M$, setting
\begin{equation}\label{e:siM}
\mathcal{S}_i(g, M) =   
 \left\{g'\in g_2^{G}\cap M\ \middle| 
    \begin{array}{lll}
    &\mbox{$(g,g')$ a stingray duo, $\langle g,g'\rangle \leq M$, but }\\ 
    &\quad\quad\quad\mbox{$\langle g,g'\rangle \not\le L$ for any $L\in\cM_i^\ast(\bfX)$}
    \end{array} 
    \right\}
\end{equation}
the cardinality of $\mathcal{S}_i(g_1,g_2,\bfX)$ as given in \eqref{e:s1} satisfies, for a fixed $g\in g_1^G$, 
\begin{equation}\label{e:asch4}
    |\mathcal{S}_i(g_1,g_2,\bfX)| = |g_1^{G}|\cdot \left|  \bigcup_{M\in\cM_i(\bfX), g\in M} \mathcal{S}_i(g,M)\right|\leq |g_1^{G}|\cdot \sum_{M\in\cM_i(\bfX), g\in M} \left|\mathcal{S}_i(g,M)\right|.
\end{equation}
To relate this back to estimating ${\rm Prob}_i(g_1,g_2,\bfX)$ we use the following lemma.
\end{itemize}

\begin{lemma}\label{lem:fixg1}
 Let $G, \bfX, e_1, e_2, g_1, g_2$ be  as in Hypothesis~$\ref{hypothesis}$, and choose a fixed $g\in g_1^G$. Then 
\openup 2pt
\begin{align}\label{e:asch3}
    {\rm Prob}_i(g_1,g_2,\bfX) &= 
    \frac{\left|\bigcup_{M\in\cM_i(\bfX), g\in M} \mathcal{S}_i(g,M)\right|}{\left|N(d,q^u,e_1,\bfX)\right|}
      \leq   
\sum_{M\in\cM_i(\bfX), g\in M}
\frac{\left|\mathcal{S}_i(g,M)\right|}{\left|N(d,q^u,e_1,\bfX)\right|},
\end{align}
with $\mathcal{S}_i(g,M)$ as in \eqref{e:siM} and
 \begin{equation}\label{e:ndex}
    N(d,q^u,e_1,\bfX) \coloneq \{g'\in g_2^{G} \mid (g,g') \ \mbox{is a stingray duo  in $g_1^{G}\times g_2^{G}$}\}.
\end{equation}
\end{lemma}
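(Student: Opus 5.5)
The approach is a double count of stingray duos, organised by their first entry. Write $D$ for the set of stingray duos in $g_1^G\times g_2^G$, and project $D$ onto its first coordinate.

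First, $G$ acts on $D$ by simultaneous conjugation. This action is transitive on first entries, because every element of $g_1^G$ is a conjugate of the fixed element $g$. Conjugation also preserves the stingray duo property: the subspaces transform as $U_{g^x}=U_g x$, so trivial intersection is preserved, as is non-degeneracy where it is required. Hence the fibre over any $g''\in g_1^G$ is mapped bijectively onto the fibre over $g$ by a suitable conjugation. The fibre over $g$ is exactly $N(d,q^u,e_1,\bfX)$ from \eqref{e:ndex}, and so
\[
|D| = |g_1^G|\cdot |N(d,q^u,e_1,\bfX)|.
\]

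Second, I treat the numerator $\mathcal{S}_i(g_1,g_2,\bfX)$ the same way. Since $\cM_i(\bfX)$ and $\cM_i^\ast(\bfX)$ are unions of $G$-conjugacy classes of subgroups, the set $\mathcal{S}_i(g_1,g_2,\bfX)$ is invariant under simultaneous conjugation. The same transitivity argument then shows that its fibre over $g$ is
\[
\bigcup_{M\in\cM_i(\bfX),\, g\in M}\mathcal{S}_i(g,M).
\]
For this I need to check that membership $(g,g')\in\mathcal{S}_i(g_1,g_2,\bfX)$ means precisely that $g'\in \mathcal{S}_i(g,M)$ for some $M\in\cM_i(\bfX)$ with $g\in M$. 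This holds because $\langle g,g'\rangle\le M$ forces $g\in M$ and $g'\in M$, and the conditions on $L\in\cM_i^\ast(\bfX)$ are identical in \eqref{e:s1} and \eqref{e:siM}. This gives the equality in \eqref{e:asch4}.

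Dividing the two counts, the factor $|g_1^G|$ cancels and yields the equality in \eqref{e:asch3}. The inequality then follows by bounding the size of a union by the sum of the sizes. The only step needing care is the conjugation-invariance (equivariance) argument, and it is routine.
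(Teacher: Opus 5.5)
Your proposal is correct and is essentially the paper's proof. The paper likewise takes, for each $x\in g_1^G$, the fibres $A(x)$ (of $\mathcal{S}_i(g_1,g_2,\bfX)$) and $B(x)$ (of all stingray duos) over $x$, and uses $A(x)^z=A(x^z)$ and $B(x)^z=B(x^z)$ to see that their sizes do not depend on $x$. It then cancels $|g_1^G|$ and finishes with the union bound.
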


    \begin{proof}
  Let $x\in g_1^G$, let $A(x), B(x)$ be the set of all elements $y\in g_2^G$ such that $(x,y)\in \mathcal{S}_i(g_1,g_2,\bfX)$, or $(x,y)$ is a stingray duo in $g_1^G\times g_2^G$, respectively. Then,  with $\mathcal{S}_i(x,M)$ as in \eqref{e:siM}, we have $A(x)= \bigcup_{M\in\cM_i(\bfX), x\in M} \mathcal{S}_i(x,M)$. 
  Also for each $z\in G$ it is clear that $A(x)^z=A(x^z)$ and $B(x)^z=B(x^z)$, and hence the sizes of these subsets do not depend on the choice of $x\in g_1^G$. Now fix $g\in g_1^G$. Then $B(g)=N(d,q^u,e_1,\bfX)$ as in \eqref{e:ndex} and it follows from \eqref{e:asch2} that
  \[
   {\rm Prob}_i(g_1,g_2,\bfX) = \frac{|g_1^G|\cdot |A(g)|}{|g_1^G|\cdot |B(g)|}
  = \frac{ |A(g)|}{|B(g)|} = 
    \frac{\left|\bigcup_{M\in\cM_i(\bfX), g\in M} \mathcal{S}_i(g,M)\right|}{\left|N(d,q^u,e_1,\bfX)\right|}
  \]
and the assertions in \eqref{e:asch3} follow.       
    \end{proof}


\section{Bounds on numbers of subspaces and duos}\label{s:explicit}

For our analysis we need explicit bounds on the numbers of (nondegenerate) $e$-subspaces of $V$ for the various types $\bfX$, and on related quantities. Since  the parity of $e$ is restricted as in Table~\ref{tab:one}, we refine bounds available from, for example, \cite{PSer}. To facilitate use of the result in \cite{PSer}, we define, for integers $k, d, q$ with $1\leq k\leq d$ and $q\geq2$, and $\tau=\pm1$, 
\begin{equation}\label{e:OmDel1}
    \omega(k,d;\tau q) \coloneq\prod_{i=k}^d (1-(\tau q)^{-i}),
\end{equation}
and, if $k<d$,
\begin{equation}\label{e:OmDel2}
    \Delta(k,d;\tau q)\coloneq \frac{\omega(1,d;\tau q)}{\omega(1,k;\tau q)\omega(1,d-k;\tau q)}
    = \frac{\omega(k+1,d;\tau q)}{\omega(1,d-k;\tau q)}.
\end{equation}

\begin{lemma}\label{lem:Delbds}
Let $d\geq2$, $q\geq2$, and let $k$ satisfy $1\leq k\leq d-1$, and let $\Delta(k,d;q)$ be as in \eqref{e:OmDel2}. Then
\[
1<\Delta(k,d;q)< (1-q^{-1}-q^{-2})^{-1};
\]
and  
\[    
    \begin{array}{clllcl}
(1+q^{-1})^{-1} &<&\Delta(k,d;-q)&<& 1 + q^{-k-1}& \mbox{if $k$ is even,}\\
(1-q^{-k-1})(1+q^{-1})^{-1}&<&\Delta(k,d;-q)&<& 1&\mbox{if $k$ is odd.} 
    \end{array}
\]
Moreover, the bounds in Table~$\ref{t:Del}$ hold, and also $\Delta(k,d; q)< 81/71<8/7$ if $q\geq 9$.
\end{lemma}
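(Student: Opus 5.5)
We work directly from the product form $\Delta(k,d;\tau q)=\omega(k+1,d;\tau q)/\omega(1,d-k;\tau q)$ in \eqref{e:OmDel2}, and bound numerator and denominator separately using the elementary facts $\prod_{i\ge1}(1-q^{-i})>1-q^{-1}-q^{-2}$ (pentagonal number theorem, or a direct check that the tail terms alternate and decrease) and $\prod_{i\in I}(1-x_i)\ge 1-\sum_{i\in I}x_i$ for $0\le x_i\le1$.

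\emph{Case $\tau=+1$.} Since $d-k<d$ and $k\ge1$, we can cancel common factors: $\Delta(k,d;q)=\prod_{i=1}^{k}\frac{1-q^{-(d-k+i)}}{1-q^{-i}}$. Each factor exceeds $1$ because $d-k+i>i$, which gives the lower bound $\Delta>1$. For the upper bound, the numerator is $<1$ and the denominator is $\omega(1,k;q)>\prod_{i\ge1}(1-q^{-i})>1-q^{-1}-q^{-2}$, so $\Delta<(1-q^{-1}-q^{-2})^{-1}$. For $q\ge9$ we sharpen this by using more terms of the Euler product, $\prod(1-q^{-i})\ge 1-q^{-1}-q^{-2}+q^{-5}$, which gives $(1-1/9-1/81)^{-1}=81/71<8/7$.

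\emph{Case $\tau=-1$.} Here the factors $1-(-q)^{-i}$ equal $1+q^{-i}$ for odd $i$ and $1-q^{-i}$ for even $i$. We again write $\Delta(k,d;-q)=\prod_{i=1}^{k}\frac{1-(-q)^{-(d-k+i)}}{1-(-q)^{-i}}$. It is cleaner, however, to pair consecutive factors: for $j\ge1$ the pair $(1+q^{-(2j-1)})(1-q^{-2j})$ lies strictly between $1$ and $1+q^{-(2j-1)}$. We use the alternating structure to control the ratio $\omega(k+1,d;-q)/\omega(1,d-k;-q)$. The denominator $\omega(1,m;-q)$ has partial products lying between $(1-q^{-2})\cdots$ and $1+q^{-1}$; the bounds on the infinite and partial alternating products needed are $1<\omega(1,m;-q)<1+q^{-1}$ for $m\ge1$. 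The numerator $\omega(k+1,d;-q)$ starts at index $k+1$. If $k$ is even, its first factor is $1+q^{-k-1}$, and the pairing gives $1<\omega(k+1,d;-q)<1+q^{-k-1}$. If $k$ is odd, its first factor is $1-q^{-k-1}$, giving $1-q^{-k-1}<\omega(k+1,d;-q)<1$.

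Combining these: for $k$ even, $\Delta>1/(1+q^{-1})$ directly. For the upper bound $\Delta<1+q^{-k-1}$ we cannot simply divide by a denominator exceeding $1$ unless we know it exceeds $1$, and indeed $\omega(1,m;-q)>1$ holds, so $\Delta<1+q^{-k-1}$. For $k$ odd the same argument gives $(1-q^{-k-1})(1+q^{-1})^{-1}<\Delta<1$. The delicate point is verifying the partial-product bounds $1<\omega(a,b;-q)<1+q^{-a}$ (for $a$ odd) and $1-q^{-a}<\omega(a,b;-q)<1$ (for $a$ even), uniformly in the parity of the endpoint $b$. We prove these by induction on $b-a$, using that appending a pair of factors multiplies by a quantity in $(1,1+q^{-(2j-1)})$ with rapidly shrinking excess. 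The upper estimates need the telescoping inequality $(1+q^{-a})(1-q^{-a-1})(1+q^{-a-2})\cdots<1+q^{-a}$, which follows because each subsequent pair is at most $(1-q^{-a-1})(1+q^{-a-2})<1$.

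Finally, the Table~\ref{t:Del} entries for specific small $q$ or $k$ are obtained by evaluating the infinite products numerically (for example $\prod_{i\ge1}(1-2^{-i})\approx0.2888$) and inserting them into the same bounds. We expect the main obstacle to be the careful parity bookkeeping in the $-q$ case rather than anything conceptual.
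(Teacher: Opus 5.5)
Your route differs from the paper's. The paper proves none of the two-sided bounds itself: it quotes \cite[Lemmas 3.1(b) and 3.2(b)]{PSer}, reads off Table~\ref{t:Del} by substituting the least admissible $q$ and $k$, and gets $81/71$ from the monotonicity of $(1-q^{-1}-q^{-2})^{-1}$ in $q$.

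You give a self-contained argument instead.
\begin{itemize}
\item For $\tau=+1$ you use the factor-by-factor form of $\Delta(k,d;q)$, together with Euler's pentagonal theorem or the elementary estimate $\omega(1,k;q)\ge(1-q^{-1})(1-\sum_{i\ge2}q^{-i})=1-q^{-1}-q^{-2}$.
\item For $\tau=-1$ you use the two complementary pairings of consecutive factors. An (odd, even) pair exceeds $1$ and an (even, odd) pair is less than $1$. This controls the alternating partial products of both $\omega(k+1,d;-q)$ and $\omega(1,d-k;-q)$.
\end{itemize}
This is correct in substance and makes the lemma independent of \cite{PSer}.

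Two simplifications are available. First, no numerical infinite products are needed for Table~\ref{t:Del}: its entries are exactly the closed-form bounds evaluated at $q\in\{2,3,4\}$ and $k\in\{2,3\}$, using monotonicity in $q$ and $k$. Second, for $q\ge9$ the sharpened Euler estimate you mention plays no role, since $81/71=(1-9^{-1}-9^{-2})^{-1}$ comes from the basic bound alone.

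There is one false step, and it matters at a boundary case. One-factor products are extremal, not interior: $\omega(1,1;-q)=1+q^{-1}$ and $\omega(a,a;-q)=1-(-q)^{-a}$. The correct statements are therefore
\begin{itemize}
\item $1<\omega(1,m;-q)\le 1+q^{-1}$;
\item $1<\omega(a,b;-q)\le 1+q^{-a}$ for $a$ odd;
\item $1-q^{-a}\le\omega(a,b;-q)<1$ for $a$ even.
\end{itemize}
Equality holds exactly when there is a single factor. The base case $b=a$ of your induction already violates the strict versions you assert.

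This is harmless when $k$ is even, and for the upper bound when $k$ is odd, because the other factor's bound is strict. But when $k$ is odd and $k=d-1$, both equalities occur at once:
\[
\Delta(d-1,d;-q)=\frac{1-q^{-d}}{1+q^{-1}}=(1-q^{-k-1})(1+q^{-1})^{-1}.
\]
For example, $\Delta(3,4;-2)=5/8$. So the strict lower bound for odd $k$, and the strict entry $5/8$ in Table~\ref{t:Del}, cannot be proved as written. Your argument gives them with $\le$, and strictly only for $k\le d-2$. You should state the partial-product bounds non-strictly and record this boundary case. The paper's later applications only use the non-strict form.
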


\begin{table}
  \caption{Bounds for $\Delta(k,d;\pm q)$ for Lemma~\ref{lem:Delbds}}
\begin{tabular}{ccccccccc}
  \toprule
  $q=2$ & $q=3$ & $q\geq4$ & $<$& $\Delta$ &$<$&$q\geq4$ & $q=3$ & $q=2$ \\
  \midrule
  1 & 1 & 1 &$<$& $\Delta(k,d;q)$&$<$& $16/11$ & $9/5$ & $4$\\
$2/3$& $3/4$ & $4/5$ &$<$&$\Delta(k,d;-q)$&$<$& $65/64$ & $28/27$ & $9/8$\\
  &&&&$k$ even&&&&\\
$5/8$ & $20/27$ & $51/64$ &$<$& $\Delta(k,d;-q)$ & $<$ & $1$ & $1$ & $1$\\
  &&&& $k\geq3$, odd&&&&\\
\bottomrule
\end{tabular}
\label{t:Del}
\end{table}

\begin{proof}
The bounds for $\Delta(k,d; q)$ and $\Delta(k,d;-q)$ are proved in \cite[Lemmas 3.1(b) and 3.2(b)]{PSer}. Applying these bounds yields the entries in Table~\ref{t:Del}. Finally, if $q\geq 9$, then using the upper bound for $\Delta(k,d; q)$ we have $\Delta(k,d; q)< (1-q^{-1}-q^{-2})^{-1}\leq (1-9^{-1}-9^{-2})^{-1} =  81/71<8/7$.
\end{proof}

Let $V=\F_{q^u}^d$ be the natural module for $\GX_d(q)$. Let $\Binom{V}{e}$ denote the set of all $e$-subspaces of $V$ if $\bfX=\bfL$, and the set of all nondegenerate $e$-subspaces of $V$ if $\bfX\in\{\bfSp,  \bfU\}$. If $\bfX=\bfO^\eps$, let
$\Binom{V}{e}^\eps_\eta$ denote the set of all nondegenerate $e$-subspaces of $V$ of type $\eta\in\{-,\circ,+\}$.
For a positive integer $e<d$ and a $\GX_d(q)$-conjugacy class $\cC$ of $e$-ppd stingray elements, the estimates use the following sets
\begin{equation}\label{e:ude}
\cU(d, q^u,e,\bfX)\coloneq \left\{\begin{array}{ll}
	\Binom{V}{e} &\mbox{if $\bfX\in \{ \bfL, \bfSp,  \bfU\}$,}\\
	\Binom{V}{e}^\eps_-	&\mbox{if $\bfX= \bfO^\eps$.}
\end{array}\right. 
\end{equation}
Also, let
\begin{equation}\label{e:dde}
\cD(d,q^u,e,\bfX)\coloneq
	\{(U,U')\mid U\in \cU(d,q^u,e,\bfX), U'\in \cU(d,q^u,d-e,\bfX), V=U\oplus U' \}.
\end{equation}
The subspace pairs in $\cD(d,q^u,e,\bfL)$ and subspaces in $\cU(d,q^u,e,\bfX)$ may be used to partition the elements of $\cC$ as follows:
\begin{equation}\label{e:cCux}
  \cC(\delta,\bfX)\coloneq \left\{\begin{array}{ll}
	\{g\in\cC\mid (U_g,F_g)=\delta\}	&\mbox{if $\bfX= \bfL$ and $\delta\in\cD(d,q^u,e,\bfL)$,}\\
	\{g\in\cC\mid U_g=\delta\}	&\mbox{if $\bfX\ne  \bfL$ and $\delta\in\cU(d,q^u,e,\bfX)$.}\\
\end{array}\right.  
\end{equation}
This partition leads to expressions for $|\cC|$, which we then extend to expressions for counting stingray duos from a pair of conjugacy classes.
Some of the estimates we need follow from work in \cite{GNP3} for type $\bfL$, so we treat this type separately from the other classical types.

\subsection{Estimates for the linear type \texorpdfstring{$\bfL$}{L} }

\begin{table}
  \caption{Values for $\bfa(q, \bfX), \bfb(q, \bfX)$ for Propositions~\ref{p:Lbds} and~\ref{p:Xbds}}
\begin{tabular}{cccc|cccc}
  \toprule
  $\bfX$ & $q$ & $\bfa(q, \bfX)$ & $\bfb(q, \bfX)$& $\bfX$ &$q$&$\bfa(q, \bfX)$ & $\bfb(q, \bfX)$  \\
  \midrule
  $\bfL$ & $2$ & $1$ & $4$& 
  $\bfU$ &$2$  &$5/8$ & $1$  \\
     & $\geq3$ & $1$ & $9/5$&     &$\geq3$&$20/27$ & $1$  \\
  \midrule
  $\bfSp$ & $2$ & $1$ & $16/11$& 
  $\bfO^\pm$ &$2$  &$5/22$ & $12/11$  \\
     & $\geq3$ & $1$ & $8/7$&     &    $3$&$20/61$ & $54/71$  \\
    &           &   &       &     &$\geq4$&$47/128$ & $160/239$  \\
  \midrule
  $\bfO^-$ & $2$ & $1/2$ & $12/11$& 
  $\bfO^+$ &$2$  &$5/22$ & $8/11$  \\
     & $3$ & $1/2$ & $54/71$&     &    $3$&$20/61$ & $81/142$  \\
    &  $\geq4$ & $1/2$& $160/239$&     &$\geq4$&$47/128$ & $128/239$  \\
\bottomrule
\end{tabular}
\label{t:abLU}
\end{table}

\begin{proposition}\label{p:Lbds}
Assume that Hypothesis~$\ref{hyp}$ holds with $\bfX=\bfL$, but without the assumption that $d>8$ and allowing $e_1, e_2\geq1$. Take $G=\GL_d(q)$, and write $\cC_i=g_i^G$ so we may use the  expression in \eqref{e:cCux} for each $i$. Then the following hold, where $\omega(1,d;q)$ is as in \eqref{e:OmDel1}.
\begin{enumerate}[{\rm (a)}]
    \item  For $\delta\in \cD(d,q,e_i,\bfL)$, and with $\cC_i(\delta,\bfL)$ as in \eqref{e:cCux}, 
    \[
    c(d,q,e_i,\bfL) \coloneq |\cC_i(\delta,\bfL)| = \frac{|\GL_{e_i}(q)|}{q^{e_i}-1} = \frac{q^{e_i^2}\omega(1,e_i;q)}{q^{e_i}-1}.
    \]
    \item Recalling that $G=\GL_d(q)$, for each $i$, 
    \[
    |C_{G}(g_i)| = q^{(d-e_{i})^2} (q^{e_i}-1) \omega(1,d-e_i;q) \quad \mbox{and}\quad  |g_i^G|=  \frac{q^{e_i^2+2e_1e_2}\omega(1,d;q)}{(q^{e_i}-1) \omega(1,d-e_i;q)}.
    \]
    \item  For $g\in g_1^G$ and $N(d,q,e_1,\bfL)$ as in \eqref{e:ndex}, 
    \[
    |N(d,q,e_1,\bfL)| = \frac{q^{e_2^2+2e_1e_2}\omega(1,e_2;q)}{q^{e_2}-1} = q^{2e_1e_2}\cdot c(d,q,e_2,\bfL),
    \]
    and
    \[
    \frac{|C_{G}(g_1)|}{|N(d,q,e_1,\bfL)|} = \frac{(q^{e_1}-1)(q^{e_2}-1)}{q^{2e_1e_2}}.
    \]
    \item For  $\cD(d,q,e_1,\bfL)$ as in \eqref{e:dde}, $|\cD(d,q,e_1,\bfL)|= q^{2e_1e_2}\frac{\omega(1,d;q)}{\omega(1,e_1;q)\omega(1,e_2;q)} = q^{2e_1e_2} \Delta(e_1,d;q)$. Further, for $\bfa(q, \bfL),\bfb(q, \bfL)$ as in Table~$\ref{t:abLU}$,  then $\bfa(q, \bfL)\leq |\cU(d,q, e_1, \bfL)|\leq \bfb(q, \bfL)$ and
      \[
      \bfa(q, \bfL)\cdot q^{2e_1e_2}\leq |\mathcal{D}(d,q, e_1, \bfL)|\leq \bfb(q, \bfL)\cdot q^{2e_1e_2}.
      \]
\end{enumerate}
    
\end{proposition}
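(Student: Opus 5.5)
The argument is an orbit–stabiliser count in $G=\GL_d(q)$ driven by the partition \eqref{e:cCux}; throughout, $g_i$ is an $e_i$-stingray element of order $r_i$.

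\emph{Part (a).} Fix $\delta=(U,F)\in\cD(d,q,e_i,\bfL)$. Every $x\in\cC_i(\delta,\bfL)$ lies in $\GL(U)\times 1_F$. Its restriction $x|_U$ is an irreducible element of order $r_i$, where $r_i$ is a primitive prime divisor of $q^{e_i}-1$.
\begin{itemize}
\item The stabiliser $G_\delta=\GL(U)\times\GL(F)$ acts transitively on $\cC_i(\delta,\bfL)$. Indeed, two elements of $\cC_i$ with the same pair $(U_x,F_x)=\delta$ are conjugate by some $y\in G$. By Lemma~\ref{lem:unique}(a), $y$ maps $U_x$ to $U_{x'}$ and $F_x$ to $F_{x'}$, so $y\in G_\delta$.
\item The stabiliser of a point in this action is $C_{G_\delta}(x)=T\times\GL(F)$. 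Here $T\cong C_{q^{e_i}-1}$, by Lemma~\ref{lem:uperp}(c) applied within $\GL(U)$.
\end{itemize}
Hence $|\cC_i(\delta,\bfL)|=|\GL_{e_i}(q)|/(q^{e_i}-1)$. Rewriting $|\GL_e(q)|=q^{e^2}\omega(1,e;q)$ gives the second expression.

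\emph{Part (b).} The centraliser $C_G(g_i)=T\times\GL_{d-e_i}(q)$ is given by Lemma~\ref{lem:uperp}(c); substitute $|\GL_{d-e_i}(q)|=q^{(d-e_i)^2}\omega(1,d-e_i;q)$. The class size is $|G|/|C_G(g_i)|$, and the exponent simplifies via $d^2-(d-e_i)^2=e_i^2+2e_1e_2$ when $e_1+e_2=d$.

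\emph{Part (d).} $G$ acts transitively on complementary pairs $(U,U')$ with $\dim U=e_1$, with stabiliser $\GL_{e_1}(q)\times\GL_{e_2}(q)$. So $|\cD|=|\GL_d(q)|/(|\GL_{e_1}(q)||\GL_{e_2}(q)|)$, which equals $q^{2e_1e_2}\Delta(e_1,d;q)$ by \eqref{e:OmDel2}. The bounds $\bfa(q,\bfL)q^{2e_1e_2}\le|\cD|\le \bfb(q,\bfL)q^{2e_1e_2}$ then follow from the $\Delta(k,d;q)$ row of Table~\ref{t:Del} in Lemma~\ref{lem:Delbds}: the lower bound is $1$, and the upper bound is $4$ for $q=2$ and $9/5$ for $q\ge3$. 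The statement also bounds $|\cU(d,q,e_1,\bfL)|$ itself by $\bfa,\bfb$. That reads as a typo, and the intended quantity is presumably the normalised count $|\cU|/q^{e_1e_2}$, i.e.\ the Gaussian binomial $\Binom{d}{e_1}_q=q^{e_1e_2}\Delta(e_1,d;q)$. That normalised bound follows from the same computation, and I would state it in that form.

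\emph{Part (c) and the main obstacle.} Fix $g\in g_1^G$, so $(U_g,F_g)=(U_1,F_1)$. A conjugate $g'\in g_2^G$ makes $(g,g')$ a duo exactly when $U_{g'}\cap U_1=0$; $F_{g'}$ is unrestricted. So I count pairs $(U_{g'},F_{g'})\in\cD(d,q,e_2,\bfL)$ with $U_{g'}$ a complement of the $e_1$-space $U_1$, and multiply by $c(d,q,e_2,\bfL)$ from (a).
\begin{itemize}
\item The $e_2$-dimensional complements of $U_1$ are graphs of linear maps $F_1\to U_1$, so there are $q^{e_1e_2}$ choices for $U_{g'}$.
\item For each such $U_{g'}$, the $e_1$-dimensional complements $F_{g'}$ of $U_{g'}$ likewise number $q^{e_1e_2}$.
\end{itemize}
This gives $|N|=q^{2e_1e_2}c(d,q,e_2,\bfL)$. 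The ratio $|C_G(g_1)|/|N|$ then follows by dividing the formula of (b) by this, using $e_2^2+2e_1e_2$ versus $(d-e_1)^2=e_2^2$ and $\omega(1,d-e_1;q)=\omega(1,e_2;q)$. The one point to check carefully is exactly which subspace condition characterises a duo. By Definition~\ref{def:stingray}(d) it is only $U_{g_1}\cap U_{g_2}=0$, so $F_{g'}$ is free, and this is what produces the factor $q^{2e_1e_2}$ rather than $q^{e_1e_2}$.
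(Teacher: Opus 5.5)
Your proof is correct, and for parts (a) and (c) it takes a more self-contained route than the paper.

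The paper gets (a) by citing \cite[Theorem 3.7(b)]{GNP3}. It gets (c) globally: it quotes \cite[Theorem 7.2]{GNP3} for the total number $|g_1^G|\cdot|g_2^G|\cdot\omega(1,e_1;q)\omega(1,e_2;q)/\omega(1,d;q)$ of stingray duos. It then divides by $|g_1^G|$, using that every $g\in g_1^G$ lies in the same number of duos.

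You instead prove (a) by orbit--stabiliser for $G_\delta=\GL(U)\times\GL(F)$ acting on the fibre $\cC_i(\delta,\bfL)$, with point stabiliser $T\times\GL(F)$ from Lemma~\ref{lem:uperp}(c). You prove (c) locally. The duo condition constrains only $U_{g'}$, which must be a complement of $U_g$, giving $q^{e_1e_2}$ choices. The subspace $F_{g'}$ can be any complement of $U_{g'}$, giving another $q^{e_1e_2}$ choices. You then multiply by the fibre size from (a).

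The two counts agree. Your version makes it transparent where the factor $q^{2e_1e_2}$ comes from, and it avoids importing the duo-counting results of \cite{GNP3}. The paper's route is shorter on the page.

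You leave one point implicit: every fibre $\cC_i(\delta,\bfL)$ is nonempty. This is needed both for the orbit--stabiliser count and for multiplying by $c(d,q,e_2,\bfL)$ in (c). It follows at once from $G$-equivariance of $x\mapsto(U_x,F_x)$ together with transitivity of $G$ on $\cD(d,q,e_i,\bfL)$, which you already use in (d).

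Your reading of the bound on $|\cU(d,q,e_1,\bfL)|$ as a typo is right. The intended statement is
\[
\bfa(q,\bfL)\,q^{e_1e_2}\le|\cU(d,q,e_1,\bfL)|\le\bfb(q,\bfL)\,q^{e_1e_2}.
\]
The paper's own proof has the same slip: it writes $|\cU(d,q,e_2,\bfL)|=\Delta(e_1,d;q)$, omitting the factor $q^{e_1e_2}$.
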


\begin{proof}
    (a) By \cite[Theorem 3.7(b)]{GNP3}, $|\cC_i(\delta,\bfL)| = \frac{|\GL_{e_i}(q)|}{q^{e_i}-1}$, and part (a) follows, since $|\GL_{e_i}(q)|=q^{e_i^2}\omega(1,e_i;q)$.
    
    (b) By Lemma~\ref{lem:uperp}(c), $|C_{G}(g_i)| = (q^{e_i}-1)\cdot |\GL_{d-e_i}(q)|$, and the first equality follows. The second equality, follows from the fact that $|g_i^G|= |G|/|C_{G}(g_i)|$ and $d=e_1+e_2$, see also \cite[Theorem 3.7(a)]{GNP3}.

    (c) Since each element $g\in g_1^G$ lies in the same number of stingray duos in $g_1^G\times g_2^G$, it follows from \eqref{e:ndex} that the total number of stingray duos is $|g_1^G|\cdot |N(d,q,e_1,\bfL)|$. Further, by \cite[Theorem 7.2]{GNP3}, the number of stingray duos is equal to $|g_1^G|\cdot |g_2^G|\cdot \omega(1,e_1;q)\omega(1,e_2;q)/\omega(1,d;q)$. Therefore
    \[
    |N(d,q,e_1,\bfL)| = |g_2^G|\cdot \frac{\omega(1,e_1;q)\omega(1,e_2;q)}{\omega(1,d;q)}
    \]
    so the first expression for $|N(d,q,e_1,\bfL)|$ follows from part~(b), and the second from part~(a). Finally, using part~(b),
    \[
     \frac{|C_{G}(g_1)|}{|N(d,q,e_1,\bfL)|} =    q^{e_{2}^2} (q^{e_1}-1) \omega(1,e_2;q) \cdot\frac{q^{e_2}-1}{q^{e_2^2+2e_1e_2}\omega(1,e_2;q)}    = \frac{(q^{e_1}-1)(q^{e_2}-1)}{q^{2e_1e_2}}.
    \]

    (d) The cardinality $|\cD(d,q,e_1,\bfL)|$ is the number of ordered pairs $(U,F)$ such that $\dim(U)=e_1, \dim(F)=e_2$, and $V=U\oplus F$. As $G=\GL_d(q)$ is transitive on these pairs, this number is equal to the index $|\GL_d(q)|/(|\GL_{e_1}(q)|\cdot |\GL_{e_2}(q)|)$ and the claimed expression for $|\cD(d,q,e_1,\bfL)|$  follows. Similarly,
    $|\cU(d,q,e_2,\bfL)|=\Delta(e_1,d;q)$. Finally, the inequalities $\bfa(q, \bfL)\leq\Delta(e_1,d;q)\leq\bfb(q, \bfL)$ used in Table~\ref{t:abLU} follow from Lemma~\ref{lem:Delbds}.
\end{proof}

\subsection{Estimates for all classical types \texorpdfstring{$\bfX\ne \bfL$}{X not L} }

Now we prove an analogue of Proposition~\ref{p:Lbds} for the other classical types. The estimates for the orthogonal type $\bfO^\varepsilon$ require a rather delicate analysis of a quantity  depending on $\varepsilon=\pm1$, which we give in the preliminary Lemma~\ref{lem:gam}.

\begin{lemma}\label{lem:gam}
For $d=e_1+e_2$ and $q\ge 2$ as in Hypothesis~$\ref{hyp}$, and $\varepsilon \in \{-1,1\}$, define
 \begin{equation}\label{d:gam}
 \Gamma(\varepsilon) = 1 - \frac{q^{-e_1/2} +\varepsilon q^{-e_2/2}}{1+\varepsilon q^{-d/2}}.     
 \end{equation}
Then 
 $1\leqslant \Gamma(-1)< 1+\frac{1}{q}$, and if $d\geqslant 8$ then also $1-\frac{1}{q}- \frac{1}{q^3}\leqslant \Gamma(1)< 1$. 
 Moreover, for $d\ge 10$ we have $\frac{5}{11} \le \Gamma(1) < 1$ for $q=2$, and $\frac{40}{61} \le \Gamma(1) < 1$ for $q=3$.
\end{lemma}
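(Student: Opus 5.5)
We work directly with the quantities $x=q^{-e_1/2}$, $y=q^{-e_2/2}$ and $xy=q^{-d/2}$. Hypothesis~\ref{hyp} gives $2\le e_2\le e_1$, so $0<x\le y\le q^{-1}$. We may also use $e_i$ even in the cases where $\Gamma$ is relevant (types $\bfSp$, $\bfO^\pm$), but we do not need this. Writing $\Gamma(\eps)=1-\frac{x+\eps y}{1+\eps xy}$, we get the two factorisations
\[
\Gamma(-1)=\frac{1-x+y-xy}{1-xy}=\frac{(1-x)(1+y)}{1-xy},\qquad
\Gamma(1)=\frac{1-x-y+xy}{1+xy}=\frac{(1-x)(1-y)}{1+xy}.
\]
All the claimed bounds follow from these by elementary monotonicity arguments.

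\emph{Bounds for $\Gamma(-1)$.} We have $\Gamma(-1)-1=\frac{y-x}{1-xy}\ge0$ because $y\ge x$, which gives the lower bound $1$. For the upper bound, $\frac{y-x}{1-xy}<y\le q^{-1}$ is equivalent to $y-x<y-xy^2$, that is $x(1-y^2)>0$, which holds. Hence $\Gamma(-1)<1+\frac1q$.

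\emph{Bounds for $\Gamma(1)$.} The upper bound is immediate, since $(1-x)(1-y)<1<1+xy$. For the lower bound, the expression $\frac{(1-x)(1-y)}{1+xy}$ is decreasing in each of $x$ and $y$ on $[0,1)$: the partial derivative in $x$ has the sign of $-(1+xy)-(1-x)y<0$, and symmetrically in $y$. So it suffices to evaluate at the largest admissible values of $x$ and $y$ subject to $e_2\le e_1$ and $e_1+e_2=d$. The constraint couples $x$ and $y$ through $xy=q^{-d/2}$, so we treat this as follows. Write $\Gamma(1)=1-\frac{x+y}{1+xy}$ and fix the product $P=xy=q^{-d/2}$. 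Then $x+y$ is largest when $y$ is as large as possible, namely $e_2=2$, $y=q^{-1}$, $x=q^{-(d-2)/2}$. For $d\ge8$ this gives $x\le q^{-3}$, and we obtain
\[
\Gamma(1)\ge 1-(x+y)\ge 1-q^{-1}-q^{-3}.
\]
This is the first claimed lower bound.

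\emph{Sharper bounds for $d\ge10$, $q\in\{2,3\}$.} With $e_2=2$ and $d=10$ we have $x=q^{-4}$ and $y=q^{-1}$, so
\[
\Gamma(1)=\frac{(1-q^{-4})(1-q^{-1})}{1+q^{-5}}.
\]
For $q=2$ this equals $\frac{(15/16)(1/2)}{33/32}=\frac{15}{33}=\frac5{11}$. For $q=3$ it equals $\frac{(80/81)(2/3)}{244/243}=\frac{160}{244}=\frac{40}{61}$. These match the stated constants exactly, which confirms that the extremal case is $e_2=2$, $d=10$.

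The main point needing care is the justification that, for fixed $d$, the worst case is $e_2=2$, and that increasing $d$ only increases $\Gamma(1)$. For the first, with $P$ fixed, $x+y$ is maximised at the extreme ratio of $x$ to $y$, and the extreme admissible choice is $y=q^{-1}$. For the second, increasing $d$ decreases $x$ for fixed $y$, and $\Gamma(1)$ is decreasing in $x$. Together these give the lower bound for all $d\ge10$.
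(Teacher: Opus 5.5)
Your proof is correct and follows essentially the paper's route. For fixed $d$ you reduce to the extremal case $e_2=2$, $e_1=d-2$; your fixed-product argument is the paper's monotonicity of $f_2$ on $[d/2,d-2]$. You then evaluate at $d=10$. The differences are minor: you bound $\Gamma(-1)$ directly via $(y-x)/(1-xy)<y$ instead of by monotonicity in $e_1$, and you spell out why the extremal value increases with $d$, which the paper leaves implicit when it evaluates \eqref{eq:gammin} at $d=10$.
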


\begin{proof}
Since $e_2\le e_1$, the quantity
\[
1-\Gamma(\varepsilon)=\frac{q^{-e_1/2} +\varepsilon q^{-e_2/2}}{1+\varepsilon q^{-d/2}}
= \frac{q^{(d-e_1)/2} +\varepsilon q^{(d-e_2)/2}}{q^{d/2}+\varepsilon}
= \frac{q^{e_2/2} +\varepsilon q^{e_1/2}}{q^{d/2}+\varepsilon}
\]
which is positive for $\varepsilon=1$
and non-positive for $\varepsilon=-1$. In particular, 
$\Gamma(1)  < 1$ and $1\le \Gamma(-1)$. 
Consider first $\varepsilon=-1.$ Then 
$1-\Gamma(-1)=\frac{f_1(e_1)}{q^{d/2} -1}$, where  
$f_{1}(x) = q^{(d-x)/2}-q^{x/2}$. As  $d/2\leq e_1\leq d-2$, and 
as $f_1(x)$ is a decreasing function on the interval
$d/2\leq x\leq d-2$, it follows that
\[
1-\Gamma(-1)\geq \frac{f_1(d-2)}{q^{d/2}-1} = \frac{q-q^{(d-2)/2}}{q^{d/2}-1}
=-\frac{1}{q} \cdot \frac{q^{d/2}-q^2}{q^{d/2}-1} > -\frac{1}{q}. 
\]
Therefore $1\leq \Gamma(-1)<1+1/q$ as claimed.

Now let $\varepsilon=1.$ 
Then $1-\Gamma(1)=\frac{f_2(e_1)}{q^{d/2} +1}$, where 
$f_{2}(x) = q^{(d-x)/2}+q^{x/2}$.
 Now  $d/2\leq e_1\leq d-2$, and the function $f_2(x)$ is increasing on the interval $[d/2, d-2]$. 
Hence, 
\begin{equation}\label{eq:gammin}
1-\Gamma(1) = \frac{f_2(e_1)}{q^{d/2}+1}
\leq
\frac{q^{(d-2)/2}+q} {q^{d/2}+1} = \frac{1}{q}\left( 1 +\frac{q^2-1}{q^{d/2}+1} \right).
\end{equation}
As the right-hand side of \eqref{eq:gammin} is less than $
\frac{1}{q}+\frac{1}{q^3}
$ for $d\ge 8,$
we find $1-\frac{1}{q} - \frac{1}{q^3} < \Gamma(1) < 1 $, as claimed.
 For $d\ge 10$ and  $q\in \{2,3\}$
we evaluate the upper bound in \eqref{eq:gammin} to obtain for $q=2$ the bound
$1-\Gamma(1) \le \frac{1}{2} \cdot
\left(1 + \frac{3}{33}\right) = \frac{6}{11}. $ For $q=3$ we obtain
$1-\Gamma(1) \le \frac{1}{3} \cdot
\left(1 + \frac{8}{244}\right) = \frac{21}{61}. $
\end{proof}

\begin{proposition}\label{p:Xbds}
Assume that Hypothesis~$\ref{hyp}$ holds with $\bfX\ne \bfL$ but without the assumption that $d>8$ and allowing $e_1, e_2\geqslant 1$  with parity restrictions as in  Table~$\ref{tab:one}$. Take $G=\GX_d(q)$, and let $\cC_1=g_1^G$ and $\cC_2=g_2^G$, let $T$ be the torus with $|T|$ as in Table~$\ref{tab:one}$, and let   $\mathcal{U}(d,q^u, e_1, \bfX)$ be as in \eqref{e:ude} and $\mathcal{D}(d,q^u, e_1, \bfL)$ be as in \eqref{e:dde}. Then the following hold. 

\begin{enumerate}[{\rm (a)}]
    \item For $U\in \cU(d,q^u,e_i,\bfX)$ and with $\cC_i(U,\bfX)$ as in \eqref{e:cCux}, for each $i$, 
    \[
    c(d,q^u,e_i,\bfX)\coloneq|\cC_i(U,\bfX)|
    =\begin{cases}\openup=5pt
    \frac{|\GX_{e_i}(q)|}{|T|} & \mbox{if  $\bfX=\bfU$ or $\bfSp$,}\\
    \frac{|\GO_{e_i}^{-}(q)|}{|T|} & \mbox{ if $\bfX=\mathbf{O^\varepsilon}$.}
    \end{cases}
    \]
     \item For each $i$, $|g_i^G| =  c(d,q^u,e_i,\bfX)\cdot |\cU(d,q^u,e_i,\bfX)|$, and
      \[
    |C_G(g_i)|= \begin{cases}
    |T|\cdot |\GX_{d-e_i}(q)| & \mbox{if  $\bfX=\bfU$ or $\bfSp$,}\\
    |T|\cdot |\GO_{d-e_i}^{-\varepsilon}(q)| & \mbox{if $\bfX=\mathbf{O^\varepsilon}$, with $\varepsilon =\pm$.}
    \end{cases}     
      \]
      \item For $g\in g_1^G$, and  $N(d,q^u,e_1,\bfX)$ as in \eqref{e:ndex}, 
\[
|N(d,q^u,e_1,\bfX)|=     \mathbf{k}(d,q^u, e_1,\bfX)\cdot|\cU(d,q^u,e_1,\bfX)|\cdot c(d,q^u,e_2,\bfX)
\]
and setting  $(\alpha, \beta)=(1,1), (1/2, 1), (1/2,\varepsilon)$ for  $\bfX=\bfU, \bfSp, \bfO^\varepsilon$, respectively,
\[
\frac{|C_G(g_1)|}{|N(d,q^u,e_1,\bfX)|} = 
     \frac{(q^{\alpha e_1}+1)(q^{\alpha e_2}+\beta)}{\mathbf{k}(d,q^u, e_1,\bfX)\cdot|\cU(d,q^u,e_1,\bfX)|}
\]
and provided 
$(\bfX,e_1, e_2,q)\ne (\bfU,1,1, 2)$,  for each $e\in\{e_1,e_2\}$, 
        \[
    |\cD(d,q^u,e,\bfX)| = 
	\mathbf{k}(d,q^u,e,\bfX)\cdot |\cU(d,q^u,e,\bfX)|^2
\]    
where $1-3/(2q^u)\leq \mathbf{k}(d,q^u,e,\bfX)<1$, for all  $d, e$.

     \item Moreover,  
        \[
        \begin{array}{cl}
      |\mathcal{U}(d,q^u, e_1, \bfX)|  &= \left\{ \begin{array}{ll}
     q^{2e_1e_2}\cdot  \Delta(e_1,d; -q)&\mbox{if $\bfX=\bfU$}  \\
     q^{e_1e_2}\cdot  \Delta(e_1/2,d/2; q^2)&\mbox{if $\bfX=\bfSp$}  \\
     q^{e_1e_2}\cdot\frac{1}{2}\cdot  \Delta(e_1/2,d/2; q^2)\cdot \Gamma(\varepsilon)&\mbox{if $\bfX=\mathbf{O^\varepsilon}$}  \\
\end{array}
\right.
        \end{array}
        \]
    with $\Gamma(\varepsilon)$ as in \eqref{d:gam}. Also,  for $\bfa(q, \bfX)$ and $\bfb(q, \bfX)$ as in Table~$\ref{t:abLU}$, and  provided $(\bfX, d, e_1,q)\ne (\bfU, 2, 1,2)$, {and that $d$ is at least $3,4,10$ for $\bfX= \bfU, \bfSp, \mathbf{O^\varepsilon}$ respectively,} we have
  \[
        \bfa(q, \bfX)\cdot q^{u e_1e_2}\leq |\mathcal{U}(d,q^u, e_1, \bfX)|\leq \bfb(q, \bfX)\cdot  q^{u e_1e_2}.
        \]
\end{enumerate}

\end{proposition}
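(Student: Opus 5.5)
The plan mirrors the proof of Proposition~\ref{p:Lbds}, with Lemma~\ref{lem:uperp} replacing the linear-type facts. All counts are obtained by orbit--stabiliser in $G=\GX_d(q)$, using that $G$ is transitive on $\cU(d,q^u,e,\bfX)$ (Witt's theorem; for $\bfO^\eps$ we restrict to minus type $e$-subspaces, as Lemma~\ref{lem:uperp}(b) requires). The order of the parts is (b) first, then (a) by division, then (c) and (d).

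For part (b), Lemma~\ref{lem:uperp}(c) gives $C_G(g_i)=T\times\GX(F_{g_i})$ (resp.\ $T\times\GO^{-\eps}(F_{g_i})$), which is the stated order. The map $g\mapsto U_g$ is $G$-equivariant from $g_i^G$ onto $\cU(d,q^u,e_i,\bfX)$, and its fibres are the sets $\cC_i(U,\bfX)$, so $|g_i^G|=c(d,q^u,e_i,\bfX)\cdot|\cU(d,q^u,e_i,\bfX)|$. For part (a), the stabiliser of $U$ is $\GX(U)\times\GX(U^\perp)$ by Lemma~\ref{lem:uperp}(a). Since $g\in\cC_i(U,\bfX)$ acts trivially on $U^\perp$, this stabiliser acts on $\cC_i(U,\bfX)$ through $\GX(U)$, and the action is transitive because the class is a single $G$-class. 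Hence $c=|\GX(U)|/|C_{\GX(U)}(g|_U)|=|\GX_{e_i}(q)|/|T|$ (resp.\ $|\GO^-_{e_i}(q)|/|T|$), using $C_{\GX(U)}(g)=T$ from the proof of Lemma~\ref{lem:uperp}(c).

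For part (c), fix $g\in g_1^G$ and set $U_1=U_g$, $F_1=U_1^\perp$. A conjugate $g'\in g_2^G$ forms a duo with $g$ exactly when $U_{g'}\in\cU(d,q^u,e_2,\bfX)$ is a complement to $U_1$. Define $\mathbf k(d,q^u,e,\bfX)$ as the proportion of ordered pairs $(U,U')\in\cU(d,q^u,e,\bfX)\times\cU(d,q^u,d-e,\bfX)$ with $U\cap U'=0$. Then $|\cD(d,q^u,e,\bfX)|=\mathbf k\,|\cU(d,q^u,e,\bfX)|\cdot|\cU(d,q^u,d-e,\bfX)|$. For $U$ nondegenerate, taking perps gives a bijection $\cU(d,q^u,e,\bfX)\leftrightarrow\cU(d,q^u,d-e,\bfX)$, with minus type preserved in the orthogonal case because $e_1,e_2$ are even and $V$ has type $\eps$. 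This yields $|\cD|=\mathbf k\,|\cU|^2$. By transitivity on $\cU(d,q^u,e_1,\bfX)$, the number of complements $U'$ to the fixed $U_1$ equals $|\cD|/|\cU(d,q^u,e_1,\bfX)|=\mathbf k\,|\cU|$. Multiplying by $c(d,q^u,e_2,\bfX)$ gives $|N|$.

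The ratio $|C_G(g_1)|/|N|$ then follows by writing $|\GX_{e_2}(q)|/|\GX_{e_2}(q)|$-type cancellations. Concretely, $|T|$ for $U_2$ is $q^{\alpha e_2}+\beta$ and $|T|$ for $U_1$ is $q^{\alpha e_1}+1$ (Table~\ref{tab:one}; in the orthogonal case $F_{g_1}$ has type $-\eps$, which accounts for $\beta=\eps$, the $\GO^{-\eps}_{e_2}$ versus $\GO^-_{e_2}$ order ratio). I still need to check this carefully; it is routine bookkeeping with the order formulas from \cite{KL}.

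The main obstacle is the bound $1-3/(2q^u)\le\mathbf k<1$. For the strict upper bound, exhibit one pair $(U,U')$ that meets nontrivially. For the lower bound, I would count pairs $(U,U')$ with $U\cap U'\ne0$ by fixing $U$ and bounding the proportion of $U'\in\cU(d,q^u,d-e,\bfX)$ that contain some nonzero vector of $U$. Equivalently, $U'^\perp$ fails to be a complement of $U^\perp$, so it suffices to bound the probability that a random nondegenerate subspace of the given type meets a fixed $e$-subspace. This is done via counting $1$-spaces: the sum over $1$-spaces $\langle v\rangle\le U$ of the proportion of $U'$ containing $v$. That proportion is at most roughly $q^{-ue}\cdot(\text{const})$ per vector, giving a total of order $q^{-u}$ with the constant $3/2$ coming from the isotropic/nonisotropic split. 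The excluded case $(\bfU,1,1,2)$ signals where this estimate is tight.

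For part (d), $|\cU|$ is the index $|\GX_d(q)|/(|\GX_{e_1}(q)||\GX_{e_2}(q)|)$, or for $\bfO^\eps$ the index $|\GO^\eps_d|/(|\GO^-_{e_1}||\GO^{-\eps}_{e_2}|)$. Expanding the order formulas gives the $\Delta$ expressions, with the orthogonal factors $(1\mp q^{-e/2})$ collecting into $\tfrac12\Gamma(\eps)$. The numeric $\bfa,\bfb$ then follow from Lemma~\ref{lem:Delbds}, Table~\ref{t:Del} and Lemma~\ref{lem:gam}.
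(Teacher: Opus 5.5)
Your treatment of (a), (b) and (d) matches the paper. The fibres $\cC_i(U,\bfX)$ of $g\mapsto U_g$ are blocks of size $|G_U:C_G(g)|$, computed from Lemma~\ref{lem:uperp}(c), and $|\cU|$ is an index expanded as in \cite{PSer} and bounded by Lemmas~\ref{lem:Delbds} and~\ref{lem:gam}.

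The genuine gap is the inequality $\mathbf{k}(d,q^u,e,\bfX)\ge 1-3/(2q^u)$ in (c). The paper does not prove it; it imports it from \cite[Theorem~1.1]{GIM}. Your first-moment sketch cannot replace that result, because the union bound over the points of $U$ is exactly the expected number of points of $U\cap U'$. For $\bfX=\bfSp$, where $\Sp_d(q)$ is transitive on nonzero vectors, this expectation equals
\[
\frac{(q^{e}-1)(q^{d-e}-1)}{(q-1)(q^{d}-1)},
\]
whose limit $1/(q-1)$ exceeds $3/(2q)$ precisely when $q=2$. For instance, $(q,d,e)=(2,10,6)$ gives $945/1023>3/4$. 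For $\bfO^+$ with $q=2$ and $(e_1,e_2)=(8,2)$ the expectation is $51/62>3/4$. So at $q=2$ no union bound yields $\mathbf{k}\ge 1/4$; the overlaps must be controlled, which is what \cite{GIM} does. The earlier bounds of \cite{GNP1} needed $q\ge 3$ in the orthogonal case. Your \emph{isotropic/nonisotropic split} cannot be the source of the constant either, since every vector is isotropic in the symplectic case. You should cite \cite[Theorem~1.1]{GIM} here, as the paper does.

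Second, your perp-bijection argument for $|\cU(d,q^u,e,\bfX)|=|\cU(d,q^u,d-e,\bfX)|$ is false for $\bfX=\bfO^-$: in a minus-type space the perp of a minus-type subspace has plus type. Indeed part (d), through the asymmetry of $\Gamma(-1)$, gives
\[
\frac{|\cU(d,q,e_1,\bfO^-)|}{|\cU(d,q,e_2,\bfO^-)|}=\frac{(q^{e_1/2}-1)(q^{e_2/2}+1)}{(q^{e_1/2}+1)(q^{e_2/2}-1)}\ne 1\quad\mbox{when } e_1\ne e_2 .
\]
The paper's proof asserts the same equality without justification, so this defect is in the source as well. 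For $\bfO^-$ what actually holds is $|\cD|=\mathbf{k}\,|\cU(d,q,e,\bfO^-)|\,|\cU(d,q,d-e,\bfO^-)|$. Likewise $|N(d,q,e_1,\bfO^-)|=\mathbf{k}\,|\cU(d,q,e_2,\bfO^-)|\,c(d,q,e_2,\bfO^-)$, which is what the paper's proof derives before silently replacing $e_2$ by $e_1$. Written in terms of $|\cU(d,q,e_1,\bfO^\eps)|$, the centraliser ratio is
\[
\frac{(q^{e_1/2}+\eps)(q^{e_2/2}+1)}{\mathbf{k}\,|\cU(d,q,e_1,\bfO^\eps)|},
\]
which agrees with the stated formula only when $\eps=+$ or $e_1=e_2$. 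So the bookkeeping you deferred as routine is exactly where this discrepancy surfaces.
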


We remark on the final assertion in part (c): it was shown in~\cite[Theorem 1.1 and Table~1]{GNP1}   that  the proportion of subspace pairs $(U,U')$ in $\cU(d,q^u,e,\bfX)\times \cU(d,q^u,d-e,\bfX)$ with the property  $V=U\oplus U'$ is at least $1-c/q$, where $c$ is $5/3, 2, 43/16$ for $\bfX=\bfSp, \bfU, \bfO^\eps$, respectively.
However, the arguments in \cite{GNP1} for 
  ${\bf X}={\bf O}^\eps$ required $q\geq3$. Recently Glasby, Ihringer and Mattheus~\cite{GIM} proved that the proportion is at least $1-3/(2q^u)$ all $q\geq2$ and all types $\bfX\ne\bfL$ apart from $(\bfX,e_1,e_2,q)=(\bfU,1,1,2)$ (see also \cite{DBVV}). We will use this bound repeatedly.

\begin{proof}
  (a) and  (b)\quad By Lemma~\ref{lem:uperp}(c), for $g\in g_i^G$, $|C_G(g)|$ is as stated in part (b), and  $C_{G}(g)$ is contained in the stabiliser in $G$ of the nondegenerate $e_i$-subspace $U_g$, and hence stabilises also the decomposition $V=U_g\oplus F_g$. Thus the subset $\cC_i(U,\bfX)$, with $U=U_g\in \cU(d,q^u,e_i,\bfX)$, is a block of imprimitivity for the transitive conjugation action of $G$ on $g_i^G$. The number of blocks of imprimitivity is $|G:G_U|$ which is $|\cU(d,q^u,e_i,\mathbf{X)}|$, and the size of a block is $|G_U:C_{G}(g)|$ which, by Lemma~\ref{lem:uperp}(c), is $|\GX_{e_i}(q)|/|T|$ if  $\bfX\ne \mathbf{O^\varepsilon}$, and 
  $|\GO_{e_i}^{-}(q)|/|T|$ if  $\bfX=\mathbf{O^\varepsilon}$, 
  with $|T|$ as in Table~$\ref{tab:one}$. This proves both part (a) and part (b).

  (c) First we evaluate $|\cD(d,q^u,e,\bfX)|$. By \eqref{e:dde}, $|\cD(d,q^u,e,\bfX)|$ is the number  of pairs $(U, U')\in \cU(d,q^u,e,\bfX)\times \cU(d,q^u,d-e,\bfX)$, such that $V=U\oplus U'$. This number was proved in \cite[Theorem 1.1]{GIM}, provided $(\bfX,e_1, e_2,q)\ne (\bfU, 1,1,2)$,  to equal
$\mathbf{k}(d,q^u,e,\bfX)\cdot |\cU(d,q^u,e,\bfX)|\cdot |\cU(d,q^u,d-e,\bfX)|$, where $1-3/(2q^u)\leq \mathbf{k}(d,q^u,e,\bfX)<1$, for all $\bfX, d, e$. This yields the third equality in part (c) on noting that $|\cU(d,q^u,e,\bfX)|= |\cU(d,q^u,d-e,\bfX)|$. Now we prove the other two equalities.

Fix $g\in g_1^G$, and recall that $N(d,q^u,e_1,\bfX)$ (see \eqref{e:ndex}) is the set of $g'\in g_2^G$ such that $(g,g')$ is a stingray duo, or equivalently $(U_g, U_{g'})$ forms a decomposition of $V$, that is to say, $(U_g,U_{g'})\in \cD(d,q^u,e_1,\bfX)$. Since $g$ is given, so too is the subspace $U=U_g$, hence the number of $g'$ producing a stingray duo is equal to the number of $e_2$-subspaces $U'\in \cU(d,q^u,e_2,\bfX)$ such that $(U,U')\in \cD(d,q^u,e_1,\bfX)$, times the number $|\cC_2(U',\bfX)|=c(d,q^u,e_2,\bfX)$ of choices for $g'$ such that $U_{g'}=U'$. The number of such $U'$ is $|\cD(d,q^u,e_1,\bfX)|/|\cU(d,q^u,e_1,\bfX)|$, which we have just shown to be   $\mathbf{k}(d,q^u,e,\bfX)\cdot |\cU(d,q^u,e_2,\bfX)|$. Thus $|N(d,q^u,e_1,\bfX)| = \mathbf{k}(d,q^u,e,\bfX)\cdot |\cU(d,q^u,e_2,\bfX)| \cdot c(d,q^u,e_2,\bfX)$, as asserted in part (c).   

Finally, by Lemma~\ref{lem:uperp}(c), $C_{G}(g_1)=T\times \GX_{e_2}(q)$ if  $\bfX\ne\mathbf{O^\varepsilon}$ and $T\times \GO_{e_2}^{-\varepsilon}(q)$ if  $\bfX=\mathbf{O^\varepsilon}$, with $|T|=q^{e_1}+1, q^{e_1/2}+1$, or $q^{e_1/2}+1$, for $\bfX= \bfU, \bfSp,$ or $\mathbf{O^\varepsilon}$, respectively (see Table~\ref{tab:one}). By part (a),  $c(d,q^u,e_2,\bfX)$ is  $|\GX_{e_2}(q)|/|T'|$ if  $\bfX\ne \mathbf{O^\varepsilon}$ or 
  $|\GO_{e_2}^{-}(q)|/|T'|$ if  $\bfX=\mathbf{O^\varepsilon}$, 
  with $|T'|$ as in Table~$\ref{tab:one}$ (with $e=e_2$). The value for $|C_{G}(g_1)|/|N(d,q^u,e_1,\bfX)|$ now follows immediately if $\bfX= \bfU$ or $\bfSp,$ while if $\bfX=\mathbf{O^\varepsilon}$, then this quantity is 
  \[
  \frac{(q^{e_1/2}+1)\cdot |\GO_{e_2}^{-\varepsilon}(q)|\cdot  (q^{e_2/2}+1)}{\mathbf{k}(d,q^u, e_1,\mathbf{O^\varepsilon})\cdot|\cU(d,q^u,e_1,\mathbf{O^\varepsilon})|\cdot |\GO_{e_2}^{-}(q)|} =  \frac{(q^{e_1/2}+1)(q^{e_2/2}+\varepsilon)}{\mathbf{k}(d,q^u, e_1,\mathbf{O^\varepsilon})\cdot|\cU(d,q^u,e_1,\mathbf{O^\varepsilon})|} 
  \]
  as asserted, and the proof of part (c) is complete. 

(d) Let $\mathcal{U}\coloneq \mathcal{U}(d,q^u, e_1, \bfX)$. Suppose first that $\bfX= \bfU$, so $d\geq3$. Since both $e_1, e_2$ are odd (see Table~$\ref{tab:one}$) and  $e_1\geqslant e_2$, it follows that $e_1\geqslant 3$. 
Now $G$ is transitive on $\mathcal{U}$. Let $U\in \mathcal{U}(d,q^u, e_1, \bfU)$, so $G_{U}$  stabilises the decomposition  $V=U\perp U^\perp$ and hence $|\mathcal{U}|=|\GU_d(q)|/(|\GU_{e_1}(q)|\cdot |\GU_{e_2}(q)|)$.  Therefore, by the computation of this quantity given in \cite[proof of Proposition 3.3, page 529 ]{PSer}, we have $|\mathcal{U}| = q^{2e_1e_2}\Delta(e_1,d;-q)$, as required.  Also  the bounds on $|\cU|$ follow from the bounds in Table~\ref{t:Del} since $e_1\geq3$.

Now assume that $\bfX\in\{\bfSp, \bfO^\varepsilon\}$. Then the $e_i$ are even (see Table~$\ref{tab:one}$) so also $d$ is even and $\mathcal{U}$ is $\Binom{V}{e_1}$ or $\Binom{V}{e_1}_-^\varepsilon$, with $d$ at least $4, 10$, for $\bfX =\bfSp, \bfO^\varepsilon$, respectively.
For $U\in \mathcal{U}$, again $G_U$ is the stabiliser of the decomposition $V=U\perp U^\perp$, and $|\mathcal{U}|$ is $|\Sp_d(q)|/(|\Sp_{e_1}(q)|\cdot |\Sp_{e_2}(q)|)$ or $|\GO_d^\varepsilon(q)|/(|\GO_{e_1}^-(q)|\cdot |\GO_{e_2}^{-\varepsilon}(q)|)$, respectively. If $\bfX =\bfSp$, then the computation in \cite[proof of Proposition 3.3, p.\,530]{PSer} shows that $|\mathcal{U}| = q^{e_1e_2}\Delta(e_1/2,d/2; q^2)$, as required, and the bounds on $|\cU|$ follow from the bounds in Table~\ref{t:Del} and the fact noted in Lemma~\ref{lem:Delbds} that $\Delta(e_1/2,d/2; q^2)< 8/7$ for $q^2\geq 9$. 
Finally consider the case $\bfX = \bfO^\varepsilon$, so $d\geq10$. Here the proof of \cite[Proposition 3.3, pp. 530--531]{PSer} shows that
\begin{equation}\label{e:U-O}
|\mathcal{U}| = q^{e_1e_2}\cdot \frac{1}{2}\cdot\Delta(e_1/2,d/2; q^2)\cdot \Gamma(\varepsilon)
\end{equation}
with $\Gamma(\varepsilon)$ as in \eqref{d:gam}, and it remains to obtain the bounds on $|\cU|$.

If $\varepsilon = -1$ then, using the bounds in Lemma~\ref{lem:Delbds} and Lemma~\ref{lem:gam}, we have 
\[
\frac{1}{2} \cdot 1 \cdot 1 < \frac{1}{2}\cdot\Delta(e_1/2,d/2; q^2)\cdot \Gamma(-1)\le 
\frac{1}{2}\cdot \frac{1}{1-q^{-2}-q^{-4}}\cdot \left(1+\frac{1}{q}\right)
\]
which yields $\bfa(q,\bfO^-)\cdot q^{e_1e_2}\leqslant |\cU|\leqslant \bfb(q,\bfO^-)\cdot q^{e_1e_2}$ with $\bfa(q,\bfO^-), \bfb(q,\bfO^-)$ as in Table~\ref{t:abLU}. Similarly if $\varepsilon = 1$ then, since $d\geqslant10$, the bounds in Lemmas~\ref{lem:Delbds} and~\ref{lem:gam} give  
\[
\frac{1}{2} \cdot 1\cdot \left(1-\frac{1}{q}- \frac{1}{q^3}\right)  < \frac{1}{2}\cdot\Delta(e_1/2,d/2; q^2)\cdot \Gamma(1)\le 
\frac{1}{2}\cdot \frac{1}{1-q^{-2}-q^{-4}} \cdot 1.
\]
This yields $\bfa(q,\bfO^+)\cdot q^{e_1e_2}\leqslant |\cU|\leqslant \bfb(q,\bfO^+)\cdot q^{e_1e_2}$. These inequalities give $\bfb(q,\bfO^+)$ as in Table~\ref{t:abLU}, and also $\bfa(q,\bfO^+)$ for $q\geq4$. 
In the cases $q=2$ and $q=3$ we use the improved lower bounds on $\Gamma(1)$ from Lemma~\ref{lem:gam}, namely $5/11$ and $40/61$ to obtain the entries $5/22$ and $20/61$, respectively, for $\bfa(q,\bfO^+)$  in Table~\ref{t:abLU}. This completes the proof.
\end{proof}

\section{\texorpdfstring{$\Asch_1:$}{Asch1} Stabilisers of subspaces}\label{s:ac1}

We use the notation and assumptions from Hypothesis~\ref{hyp}.
In this section we find upper bounds for the proportion  ${\rm Prob}_1(g_1,g_2,\bfX)$ as in \eqref{e:asch2} of stingray duos $(g,g')\in g_1^G\times g_2^G$ for which $H=\langle g,g'\rangle$ is reducible on $V$. We call such duos  \emph{reducible stingray duos}, and otherwise they are called  \emph{irreducible stingray duos}. We deal with type $\bfL$ first and then all the other types. 

\subsection{Reducible duos for type \texorpdfstring{$\bfL$}{}}\label{sub:ac1-L}

Here we take $\cM_1(\bfL)$ as the set of all subspace stabilisers, as in \eqref{e:M1L}, 
so for each reducible stingray duo $(g,g')\in g_1^G\times g_2^G$, the subgroup $\langle g,g'\rangle$ is contained in at least one of these stabilisers. The proportion we need to estimate in this case is, by \eqref{e:asch2} with $i=1$, 
 \begin{align*}
  {\rm Prob}_1(g_1,g_2,\bfL) &=    \frac{\mbox{Number of reducible stingray duos in} \ g_1^{G}\times g_2^{G}}{ \mbox{Number of stingray duos in} \ g_1^{G}\times g_2^{G}} \\
  &= 1 - \frac{\mbox{Number of irreducible stingray duos in} \ g_1^{G}\times g_2^{G}}{ \mbox{Number of stingray duos in} \ g_1^{G}\times g_2^{G}}.   
 \end{align*}
Using a graph theoretic model, this proportion has been determined exactly and a usable upper bound  found in \cite{GNP3}. The bound holds for all positive $e_1, e_2$ with $d\geq 4$ (so we state this stronger result, even though we are assuming that $d>8$ for the rest of our analysis). 

\begin{proposition} \label{prop:redL} \cite{GNP3}
Assume that Hypothesis~$\ref{hyp}$ holds with $\bfX=\bfL$, and relax the conditions on the $e_i$ so that $1\leq e_2\leq e_1$ and $d=e_1+e_2\geq 4$.  Then 
\[
{\rm Prob}_1(g_1,g_2,\bfL)
< \frac{1}{q} + \frac{1}{q^2}.
\]
Moreover, we take 
\begin{equation}\label{e:M1L}
\mbox{$\cM_1(\bfL)$ as 
the set of subspace stabilisers, for all proper nontrivial subspaces of $V$.}   
\end{equation}
\end{proposition}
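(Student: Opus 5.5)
The plan is to count reducible stingray duos directly, using the subspace structure forced by Lemma~\ref{lem:unique}. Let $(g,g')$ be a stingray duo with $U=U_g$, $F=F_g$, $U'=U_{g'}$, $F'=F_{g'}$, so that $V=U\oplus U'$. Suppose $W$ is a proper nontrivial $H$-invariant subspace, where $H=\langle g,g'\rangle$. By Lemma~\ref{lem:unique}(b), either $W\le F$ or $U\le W$, and either $W\le F'$ or $U'\le W$. The case $U+U'\le W$ is impossible, since $U+U'=V$. So the possibilities are:
\begin{itemize}
\item[(i)] $U\le W\le F'$;
\item[(ii)] $U'\le W\le F$;
\item[(iii)] $W\le F\cap F'$.
\end{itemize}
Dualising (passing to the transposed action on $V^*$, where the roles of $U$ and $F^{0}$ interchange) turns case~(iii) into a case of type~(i) or~(ii) for the dual duo. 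So reducibility is governed by the relative positions of the four subspaces $U,F,U',F'$.

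\emph{Step 1: reduce to subspace configurations.} I would show that $H$ is reducible if and only if some subspace $W$ satisfies $U\le W\le F'$, or $U'\le W\le F$, or the dual statement. Since $U\cap F'$ and the like are themselves invariant, the cleanest route is the graph model of \cite{GNP3}. I would encode the configuration as a bipartite structure on flags and count, following \cite{GNP3}, the number of irreducible duos exactly as a sum over these configurations.

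\emph{Step 2: the bound.} Fix $g$, and hence fix $(U,F)$. The stingray duos with first entry $g$ are parametrised, via Proposition~\ref{p:Lbds}(a),(c), by pairs $(U',F')$ with $U'\cap U=0$, together with a factor $c(d,q,e_2,\bfL)$ that is independent of the pair. A bad event occurs when $F'\ge U$ (in the relevant minimal form), or $F\ge U'$, or one of the dual conditions holds. I would bound the proportion of pairs $(U',F')$ for which $F'$ contains some nonzero subspace of $U$, or $U'$ meets $F$ nontrivially in the relevant way. The dominant contributions should be $1$-dimensional coincidences. A random complement $F'$ of dimension $e_1$ contains a given vector with probability roughly $q^{-e_2}$, but the total over the relevant lines turns out to be of order $q^{-1}$, and the next order gives $q^{-2}$. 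Summing the two symmetric events plus lower-order terms, by a union bound over the dimension $k$ of the intersection (each term bounded by a geometric factor $q^{-k}$ with Gaussian-binomial ratios controlled by Lemma~\ref{lem:Delbds}), should give a total strictly below $q^{-1}+q^{-2}$.

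\emph{Main obstacle.} The hard step is making the union bound sharp enough to obtain exactly $q^{-1}+q^{-2}$ for all $d\ge4$ and all $e_2\ge1$, including $q=2$. Crude overcounting of the multiple configurations would exceed this constant. I would therefore first try to use the exact formula of \cite{GNP3} (the irreducible proportion expressed as an alternating sum over intersection dimensions) and show that its tail is dominated by the first two terms.
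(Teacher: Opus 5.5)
Your case split (i)--(iii) is correct, but what you deduce from it is not, and this is a genuine gap. You never use the dimensions: $\dim U=\dim F'=e_1$ and $\dim U'=\dim F=e_2$. So case (i) forces $U=F'$, case (ii) forces $U'=F$, and case (iii) occurs exactly when $F\cap F'\neq 0$, since any subspace of $F\cap F'$ is fixed pointwise by $H$. Hence $H$ is reducible if and only if $F\cap F'\neq0$, or $U=F'$, or $U'=F$. For fixed $g$, the last two events each have probability exactly $q^{-e_1e_2}$. The dominant event is $F\cap F'\neq 0$, of probability roughly $q^{-1}$.

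Your dualisation remark throws away precisely this main case, and it is false. On $V^*$ the spaces moved by $g$ and $g'$ are the annihilators $F^0$ and $(F')^0$, and $F^0\cap (F')^0=(F+F')^0\neq 0$ when $F\cap F'\neq 0$. So the dual pair is not a stingray duo at all. Nor is $U\cap F'$ invariant in general, as $g$ is irreducible on $U$.

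As a result, Step~2 bounds the wrong events. Since $\dim(U\cap F')\geq 2e_1-d=e_1-e_2$, the event ``$F'$ contains a nonzero subspace of $U$'' holds for \emph{every} duo when $e_1>e_2$, so your union bound is vacuous. Meanwhile $F\cap F'\neq 0$ is not among the events you bound.

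Even with the correct criterion, a heuristic union bound cannot deliver the constant, because it is nearly sharp. For a uniformly random $e_1$-space $F'$, the proportion with $F\cap F'\neq 0$ is $1-\Delta(e_1,d;q)^{-1}<q^{-1}+q^{-2}$ by Lemma~\ref{lem:Delbds}. Here, however, $F'$ is not uniform, being constrained by $U'\cap U=0$ and $V=U'\oplus F'$, and the two $q^{-e_1e_2}$ terms must be added. For $e_2=1$ an exact count gives the reducible proportion $q^{-1}-q^{-d}+2q^{1-d}-q^{2-2d}$. This exceeds $q^{-1}+q^{-2}$ for every $q$ when $d=3$, which illustrates why the hypothesis $d\geq 4$ is needed.

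The paper does no such computation. It cites \cite[Theorem~5.1]{GNP3}, which identifies ${\rm Prob}_1(g_1,g_2,\bfL)$ exactly as $1-P(e_1,e_2)$ via a graph-theoretic model, and \cite[Theorem~1.2]{GNP3}, which proves $1-P(e_1,e_2)<q^{-1}+q^{-2}$. Your fallback of invoking the exact formula from \cite{GNP3} is essentially this route. But the bound is already a theorem there, so you should cite both results rather than plan to re-derive a tail estimate from a formula whose shape you are guessing. The assertion about $\cM_1(\bfL)$ is immediate, since a reducible $H$ stabilises a proper nontrivial subspace.
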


\begin{proof}
The set $\cM_1(\bfL)$ comprises all the possible maximal reducible subgroups that contain elements from $g_1^G$. We now derive the probablity bound.  It follows from \cite[Theorem 5.1]{GNP3} that ${\rm Prob}_1(g_1,g_2,\bfL)$ is equal to a certain quantity $1-P(e_1, e_2)$, and by \cite[Theorem 1.2]{GNP3}, $1-P(e_1, e_2) < \frac{1}{q} + \frac{1}{q^2}$.
\end{proof}

\subsection{Reducible duos for classical types \texorpdfstring{$\bfX\ne\bfL$}{}}\label{sub:ac1-USO}

First we restrict the subspace stabilisers that can contain a reducible stingray duo. (This lemma does not require $d>8$.)

\begin{lemma}\label{lem:dim}
  Let $G, \bfX, e_1, e_2, g_1, g_2, g, g'$ be  as in Hypothesis~$\ref{hyp}$ with $\bfX\ne \bfL$, 
  and suppose that $H=\langle g, g'\rangle$ leaves invariant  a proper non-trivial  subspace $Z$ of $V$. Then $Z\in\{U_{g}, U_{g'} \}$ and $U_{g}=U_{g'}^\perp$, so $H$ leaves both $U_g$ and $U_{g'}$ invariant.
  \end{lemma}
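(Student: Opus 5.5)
We use Lemma~\ref{lem:unique}(b) for each generator in turn, and then the orthogonal decompositions of Lemma~\ref{lem:uperp}(a). Write $U=U_g$, $F=F_g$, $U'=U_{g'}$, $F'=F_{g'}$. Since $\bfX\ne\bfL$, Lemma~\ref{lem:uperp}(a) gives $V=U\perp F$ with $F=U^\perp$, and $V=U'\perp F'$ with $F'=U'^\perp$. The duo condition gives $V=U\oplus U'$. Since $Z$ is invariant under $H$, it is invariant under both $\langle g\rangle$ and $\langle g'\rangle$.

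First, applying Lemma~\ref{lem:unique}(b) to $g$ and to $g'$ gives four cases. If $U\le Z$ and $U'\le Z$, then $Z\ge U+U'=V$, which contradicts properness. Suppose next that $Z\le F$ and $Z\le F'$. Then $Z\le F\cap F'=U^\perp\cap U'^\perp=(U+U')^\perp=V^\perp=0$, using non-degeneracy of $V$, and this contradicts non-triviality. So, up to symmetry, we may assume $U\le Z\le F'$. Comparing dimensions, $\dim U=e_1$ and $\dim F'=d-e_2=e_1$, so $U=Z=F'$. The symmetric case gives $U'=Z=F$. In either case $Z\in\{U,U'\}$.

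Now suppose $U=F'=U'^\perp$. Then $U^\perp=U'^{\perp\perp}=U'$ by non-degeneracy, so $U'=U^\perp=F$ as well. This shows $U_g=U_{g'}^\perp$. Both subspaces are then invariant under $g$ and $g'$: $U_g$ and $F_g$ are $g$-invariant, and $U_{g'}$ and $F_{g'}$ are $g'$-invariant. Hence $H$ leaves both $U_g$ and $U_{g'}$ invariant.

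The only delicate point is the case $Z\le F\cap F'$. There we need $F=U^\perp$, which is exactly where the hypothesis $\bfX\ne\bfL$ is used; for type $\bfL$ the fixed spaces can intersect nontrivially. We also need $e_i\ge1$ so that each $U_{g_i}$ is nonzero, as Hypothesis~\ref{hyp} provides.
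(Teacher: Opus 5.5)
Your proof is correct and follows essentially the same route as the paper. It applies Lemma~\ref{lem:unique}(b) to both generators, uses $F_g\cap F_{g'}=(U_g+U_{g'})^\perp=0$ to rule out $Z$ lying in both fixed spaces, and uses the dimension squeeze $e_1\le\dim Z\le d-e_2$ (or its mirror) to force $Z=U_g=F_{g'}$ or $Z=U_{g'}=F_g$; your explicit observation that $U_g=F_{g'}$ and $U_{g'}=F_g$ makes the final $H$-invariance clause slightly more transparent than in the paper.
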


\begin{proof}
     We use the notation of Definition~\ref{def:stingray}(c) for the subspaces $U_g, F_g$ etc.
     By Lemma~\ref{lem:uperp}, $U_g^\perp = F_g$ and $U_{g'}^\perp=F_{g'}$. Since $(g,g')$ is a stingray duo we have  
  $\{0\}=V^\perp=(U_g\oplus  U_{g'})^\perp =U_g^\perp\cap U_{g'}^\perp=F_g\cap F_{g'}$, so 
  $V=F_g\oplus F_{g'}$. Now suppose that $Z$ is a proper non-trivial  $H$-invariant subspace. By Lemma~\ref{lem:unique}(b), either $Z\subseteq F_{g}$ or $U_{g}\subseteq Z$; and also either $Z\subseteq F_{g'}$ or $U_{g'}\subseteq Z$. 
  Since $F_g\cap F_{g'}=0$, $Z$ cannot be contained in both $F_g$ and $F_{g'}$ since $Z\ne 0$. Thus we must have either $U_{g}\subseteq Z$ or $U_{g'}\subseteq Z$. Suppose first that $U_g\subseteq Z$. If also  $U_{g'}\subseteq Z$, then $V=U_g\oplus U_{g'}\leq Z$ which is a contradiction since $Z$ is a proper subspace. Thus $Z\subseteq F_{g'}$.  The inclusion  $U_g\subseteq Z$ implies that $\dim(Z)\geq e_1$, while the inclusion $Z\subseteq F_{g'}$ implies that $\dim(Z)\leq \dim(F_{g'}) = d-e_{2} = e_1$. Thus $\dim(Z)=e_1$ and $Z=U_g=F_{g'}$, and also $U_{g}=U_{g'}^\perp$ since $U_{g'}^\perp=F_{g'}$. An identical argument in the case where $U_{g}\subseteq Z$ shows that $Z=U_{g'}=U_g^\perp$. 
\end{proof}

It follows from Lemma~\ref{lem:dim} that, if $H=\langle g,g'\rangle$ is reducible, then $U_g=U_{g'}^\perp=F_{g'}$ (which is nondegenerate by Lemma~\ref{lem:uperp}) and $H$ is contained in the stabiliser $G_{U_g}=G_{U_{g'}}$. 
If  $\bfX=\mathbf{O^\eps}$, then by Table~\ref{tab:one}, both $U_g$ and $U_{g'}$ have minus type, and since we have $U_{g}=F_{g'}$ and $V=U_{g'} \perp F_{g'}$, it follows that $\eps=+$ (see Line~5 of Table~\ref{tab:one}). In particular, 
$\mathrm{Prop}_1(g_1,g_2,\mathbf{O^-})=0$, and so we assume in the following discussion that in the orthogonal case $\bfX=\bfO^+$. Thus (as we state in Proposition~\ref{prop:redUSO} below), we may take the set $\cM_1(\bfX)$ of maximal reducible subgroups of $\GX_d(q)$ containing $(e_1,e_2)$-stingray duos to be the empty set if  $\bfX = \bfO^-$ and otherwise to consist of the stabilisers of nondegenerate $e_1$-subspaces (of minus type if $\bfX = \bfO^+$).    

Fix a stingray element $g\in g_1^G$ and set $U=U_g$ and $F=F_g$. Then the unique subgroup in $\cM_1(\bfX)$ containing $g$ is the stabiliser $G_U$, and the numerator for  $\mathrm{Prop}_1(g_1,g_2,\bfX)$ in  \eqref{e:asch3} is the number of elements $g'\in g_2^G$ such that $U_{g'}=F$ and $F_{g'}=U$, since all such elements yield reducible stingray duos $(g,g')$ in $G_U$. (Note that, since $F^\perp = U$, the equality $U_{g'}=F$ holds if and only if $(U_{g'}, F_{g'})=(F,U)$.) Thus the numerator of  $\mathrm{Prop}_1(g_1,g_2,\bfX)$ in \eqref{e:asch3} is the number of elements $g'\in g_2^G$ such that $U_{g'}=F$. By Proposition~\ref{p:Xbds}(a), this number is $c(d,q^u, e_2,\bfX)$, so
 \begin{equation}\label{e:redUSO}
    {\rm Prob}_1(g_1,g_2,\bfX)  = 
    \frac{c(d,q^u, e_2,\bfX)}{|N(d,q^u,e_1,\bfX)|}.
\end{equation}

\begin{proposition}\label{prop:redUSO}
Assume Hypothesis~$\ref{hyp}$ with $\bfX\ne \bfL$ and $d>8$.  Then the probability ${\rm Prob}_1(g_1,g_2,\bfX)=0$ if $\bfX= \mathbf{O^-}$, and for the other types
\[
{\rm Prob}_1(g_1,g_2,\bfX)
\leq  \left\{\begin{array}{ll}
     c_\bfU\, q^{-2e_1e_2}  &\mbox{if $\bfX= \bfU$; with $c_\bfU=2.56$ if $q=2$, and $\frac{81}{50}$ if $q\geq3$}  \\
     c_\bfSp\, q^{-e_1e_2} &\mbox{if $\bfX= \bfSp$;  with $c_\bfSp=4$ if $q=2$, and $2$ if $q\geq3$}\\
	 c_\bfO\, q^{-e_1e_2}  &\mbox{if $\bfX= \mathbf{O^+}$;  with $c_\bfO=17.6$ if $q=2$, and $6.1$ if $q\geq3$.}\\	
\end{array}
\right.
\]
Moreover we take $\cM_1(\bfO^-)=\emptyset$, and for the other types,
\begin{equation}\label{e:M1X}
    \mbox{$\cM_1(\bfX)$ is the stabilisers of all nondegen.\ $e_1$-subspaces (of $-$ type if $\bfX=\bfO^+$).}
\end{equation}
\end{proposition}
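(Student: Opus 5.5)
The plan is to begin from the exact formula \eqref{e:redUSO}, which the discussion preceding the statement already establishes. That discussion also gives ${\rm Prob}_1(g_1,g_2,\mathbf{O^-})=0$ and the choice \eqref{e:M1X} of $\cM_1(\bfX)$. These come from Lemma~\ref{lem:dim}: a reducible duo forces $U_g=F_{g'}$, and for $\bfX=\bfO^\eps$ both $U_g$ and $U_{g'}$ have minus type, which forces $\eps=+$. So the real work is to bound
\[
\frac{c(d,q^u,e_2,\bfX)}{|N(d,q^u,e_1,\bfX)|}.
\]
By Proposition~\ref{p:Xbds}(c), $|N(d,q^u,e_1,\bfX)|=\mathbf{k}(d,q^u,e_1,\bfX)\cdot|\cU(d,q^u,e_1,\bfX)|\cdot c(d,q^u,e_2,\bfX)$. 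The factor $c(d,q^u,e_2,\bfX)$ therefore cancels, and
\[
{\rm Prob}_1(g_1,g_2,\bfX)=\frac{1}{\mathbf{k}(d,q^u,e_1,\bfX)\cdot|\cU(d,q^u,e_1,\bfX)|}.
\]

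Next I would insert the lower bounds $\mathbf{k}\geq 1-3/(2q^u)$ from Proposition~\ref{p:Xbds}(c) and $|\cU|\geq \bfa(q,\bfX)\,q^{ue_1e_2}$ from Proposition~\ref{p:Xbds}(d). The hypotheses there hold: $d>8$ and $d$ even give $d\geq 10$, and the excluded case $(\bfU,2,1,2)$ cannot occur. This yields
\[
{\rm Prob}_1\leq \frac{q^{-ue_1e_2}}{(1-3/(2q^u))\,\bfa(q,\bfX)},
\]
with $u=2$ for $\bfU$ and $u=1$ otherwise.

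For the unitary case, $u=2$ and $1/(1-3/8)=8/5$. With $\bfa=5/8$ this gives $64/25=2.56$ when $q=2$. For $q\geq3$ the factor is $1/(1-1/6)=6/5$ and $\bfa=20/27$, giving $81/50$.

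For the symplectic case, $\bfa=1$. When $q=2$ the factor is $1/(1-3/4)=4$, and when $q\geq3$ it is at most $1/(1-1/2)=2$.

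For $\mathbf{O^+}$ and $q=2$ we have $\bfa=5/22$, giving $4\cdot 22/5=17.6$. For $q=3$ the factor is $2\cdot 61/20=6.1$. For $q\geq4$ it is $(8/5)\cdot(128/47)\approx 4.36<6.1$.

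The main obstacle is simply checking that the constants line up with Table~\ref{t:abLU} in every case, in particular the $q\geq4$ orthogonal case. I expect no conceptual difficulty beyond confirming that the cancellation of $c(d,q^u,e_2,\bfX)$ is legitimate. It is, because the same class $g_2^G$ appears in both numerator and denominator.
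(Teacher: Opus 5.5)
Your proposal is correct and is essentially the paper's own proof. You use Proposition~\ref{p:Xbds}(c) to cancel $c(d,q^u,e_2,\bfX)$ in \eqref{e:redUSO}, giving ${\rm Prob}_1(g_1,g_2,\bfX)=1/\bigl(\mathbf{k}(d,q^u,e_1,\bfX)\,|\cU(d,q^u,e_1,\bfX)|\bigr)$, then insert $\mathbf{k}\geq 1-3/(2q^u)$ and $|\cU|\geq \bfa(q,\bfX)\,q^{ue_1e_2}$ and read the constants off Table~\ref{t:abLU}; all your constant checks are correct, including $1024/235<6.1$ for $\mathbf{O^+}$ with $q\geq4$.
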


\begin{proof}
The choice of $\cM_1(\bfX)$ is justified in the discussion preceding the statement.
By Proposition~\ref{p:Xbds}(c),  
$|N(d,q^u,e_1,\bfX)| = \mathbf{k}(d,q^u,e_1,\bfX)\cdot$ $|\mathcal{U}(d,q^u, e_1,\bfX)|\cdot c(d, q^u,e_2,\bf{X})$ with $\mathbf{k}(d,q^u,e_1,\bfX)\geq 1-3/(2q^u)$.   
Thus, by \eqref{e:redUSO} and Proposition~\ref{p:Xbds}(c) and (d),
\[
  {\rm Prob}_1(g_1,g_2,\bfX)  = 
    \frac{1}{\mathbf{k}(d,q^u,e_1,\bfX)\cdot|\mathcal{U}(d,q^u, e_1,\bfX)|}
    \leq  \frac{1}{(1-3/(2q^u))\cdot q^{ue_1e_2}\cdot \bfa(q,\bfX)}
\]
and the bounds follow from the values of $\bfa(q,\bfX)$ in Table~\ref{t:abLU}. For example if $\bfX=\bfU$, then $(1-3/(2q^u))\cdot \bfa(q,\bfX)$ is $25/64$ if $q=2$ and, is at least $50/81$ if $q\geq3$.  
\end{proof}

\section{ \texorpdfstring{$\Asch_2$}{Asch2}: Stabilisers of decompositions}\label{s:ac2}

We use the notation and assumptions from Hypothesis~\ref{hyp}.
In this section we estimate the proportion  ${\rm Prob}_2(g_1,g_2,\bfX)$ of stingray duos $(g,g')\in g_1^G\times g_2^G$ for which $\langle g,g'\rangle$ is irreducible on $V$  but leaves invariant a direct decomposition $V=\bigoplus_{i=1}^b W_i$ with $b\geq2$. Our main result is  Proposition~\ref{prop:imprim}, and ts proof follows from  Lemma~\ref{lem:imprim}.

\begin{proposition}\label{prop:imprim}
Assume that Hypothesis~$\ref{hyp}$ holds.  Then  
\begin{enumerate}[{\rm (a)}]
    \item  ${\rm Prob}_2(g_1,g_2,\bfX)=0$ if any of the following holds:
    \begin{enumerate}[{\rm (i)}]
        \item  $\bfX=\bfSp$ or $(\bfX, q\ \mbox{parity})=(\bfO^\varepsilon, \mbox{even})$; or 
        \item $q^u=2$, or  $r_i >e_i+1$ for some $i$, or $d$ is odd;
    \end{enumerate}
    \item and otherwise
    $q^u>2$,   $r_i=e_i+1\geqslant3$ for each $i$, $d=e_1+e_2$ is even, and 
 \end{enumerate}   
\[
{\rm Prob}_2(g_1,g_2,\bfX) 
<  \left\{\begin{array}{ll}
    0.5\cdot e_1e_2\cdot q^{-2e_1e_2+e_1+e_2}  &\mbox{if $\bfX= \bfL$}\\
    1.46\cdot e_1e_2\cdot q^{-2e_1e_2+e_1+e_2}  &\mbox{if $\bfX= \bfU$}\\
	4.12\cdot e_1e_2\cdot q^{-e_1e_2+(e_1+e_2)/2}	     &\mbox{if $\bfX= \mathbf{O^\varepsilon}$ with $q$ odd.}\\	
\end{array}
\right.
\]
Moreover, we take $\cM_2(\bfX)=\emptyset$ if any of the conditions in part (a) hold, and otherwise
\begin{equation}\label{e:M2}
\cM_2(\bfX)=
    \left\{ G\cap(\GL_{1}(q^u)\wr S_d)\;\middle| 
    \begin{array}{l}
     \mbox{stabilising $\bigoplus_{i=1}^d W_i$ with $\dim(W_i)=1$,}\\
    \mbox{ and with $W_i$ nondegenerate if $\bfX\ne \bfL$}      
    \end{array}
    \right\}.
\end{equation}

 \end{proposition}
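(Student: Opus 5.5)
Suppose $(g,g')$ is an irreducible stingray duo with $H=\langle g,g'\rangle$ preserving a decomposition $V=\bigoplus_{i=1}^b W_i$, $b\ge2$, all $\dim W_i=m$, $d=bm$. The plan is to first establish the structural reduction (presumably Lemma~\ref{lem:imprim}) and then count.

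\textbf{Structural reduction.} Since $|g|=r_1$ is an $e_1$-ppd prime, consider the permutation induced by $g$ on the $b$ blocks. Suppose $g$ fixes every block. Then $g$ acts on each $W_i$. Since $U_g$ is irreducible of dimension $e_1\ge d/2$, Lemma~\ref{lem:unique}(b) forces $U_g\le W_i$ for a single $i$ and $g$ to be trivial on the others. The same holds for $g'$ if it fixes all blocks. In every such configuration some block, or a sum of blocks, is $H$-invariant, contradicting irreducibility. Hence $g$, or $g'$, moves blocks nontrivially, and so induces a permutation of order $r_j$ on the blocks.

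Next compare $r_j$ with $b$. A nontrivial $r_j$-cycle of $g$ on blocks produces a nonzero $g$-moved subspace of dimension at least $(r_j-1)m$ in $V(g-1)=U_g$, while $\dim U_g=e_j$ and $r_j\ge e_j+1$. This forces $m=1$, exactly one $r_j$-cycle, and $r_j=e_j+1$. Then $g$ cyclically permutes $e_j+1$ one-dimensional spaces and fixes the others. Irreducibility of $H$ should then force both elements to move blocks, giving $r_i=e_i+1$ for both $i$, and $b=d$ with $\dim W_i=1$. This justifies \eqref{e:M2} and part (a)(ii):
\begin{itemize}
\item if $q^u=2$, the group $\GL_1(2)\wr S_d=S_d$ acts reducibly on $V$, since it fixes the all-ones vector;
\item if some $r_i>e_i+1$, the dimension count above fails;
\item $r_i=e_i+1$ with $r_i$ odd forces $e_i$ even, so $d$ is even.
\end{itemize}

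For part (a)(i), in types $\bfSp$ and $\bfO$ with $q$ even the $W_i$ must be nondegenerate of dimension $1$, and such spaces do not exist. Hence these cases also give probability $0$.

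\textbf{Counting.} Fix $g\in g_1^G$ and apply Lemma~\ref{lem:fixg1}. The subgroups $M\in\cM_2(\bfX)$ containing $g$ correspond to frames $\{W_1,\dots,W_d\}$ compatible with $g$. Such a frame consists of an eigenframe of $U_g$ (essentially determined by $g$, since $g$ cyclically permutes $e_1+1$ lines spanning $U_g$ with sum relation) together with an arbitrary frame of $F_g$, of dimension $e_2$. The count of these $M$ should follow from Lemma~\ref{lem:countM}, via $|C_G(g)|/|C_M(g)|$.

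For each such $M$, bound $|\mathcal S_2(g,M)|$ by $|g_2^G\cap M|$. This is the number of elements of $M$ whose block permutation is an $(e_2+1)$-cycle, with suitable scalars, which is at most about $\binom{d}{e_2+1}e_2!\,(q^u-1)^{e_2}$. Divide by $|N(d,q^u,e_1,\bfX)|$, using Proposition~\ref{p:Lbds}(c) or Proposition~\ref{p:Xbds}(c) together with the constants $\bfa,\bfb$ of Table~\ref{t:abLU}. The dominant power should be $q^{-2e_1e_2+e_1+e_2}$ in the linear and unitary cases, and its square root in the orthogonal case, multiplied by a factor polynomial in $e_1e_2$.

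\textbf{Main obstacle.} The hardest step is controlling the combinatorial factors: the number of frames of $F_g$ (roughly $|\GL_{e_2}|/((q-1)^{e_2}e_2!)$) and the number of $g'\in M$. These must combine so that their product, divided by $|N|$, leaves only the factor $e_1e_2$ with the stated constants $0.5$, $1.46$ and $4.12$. I would first require $g'$ to move exactly the blocks meeting $U_g$ and $F_g$ in a way giving irreducibility. This restricts $g'$ to an $(e_2+1)$-cycle that shares at least one block with the cycle of $g$. Those choices should contribute the $e_1e_2$ factor. The constants would then come from the worst cases $q=3$ (linear), $q=2$ (unitary) and $q=3$ (orthogonal), using $d>8$.
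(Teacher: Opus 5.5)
\textbf{Structural reduction.} This part is essentially right, and your dimension count is a cleaner route than the paper's. Suppose $g_j$ has a nontrivial $r_j$-cycle on blocks of dimension $m$. That cycle contributes $(r_j-1)m$ dimensions to $V(g_j-1)=U_{g_j}$, so $(r_j-1)m\le e_j\le r_j-1$. This forces $m=1$, a unique nontrivial cycle, and $r_j=e_j+1$. Two small corrections:
\begin{itemize}
\item Your first step (``some block or sum of blocks is $H$-invariant'') is unnecessary. Transitivity of $H$ on the blocks already gives a block-moving element.
\item Once $m=1$, both elements must move blocks because $e_i\ge 2>m$, not because of irreducibility.
\end{itemize}
The paper gets $b=d$ from $r_1\mid b!$, and $r_2=e_2+1$ from $\dim(F_{g'}\cap Y_2)=1$; that is the same fact in different form.

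\textbf{The gap: the counting.} The stated constants depend entirely on this step. Your bounds are $|\mathcal{S}_2(g,M)|\le|g_2^G\cap M|\le\binom{d}{e_2+1}e_2!\,|\GX_1(q)|^{e_2}$ and $|C_M(g)|=|\GX_1(q)|^{e_2}(e_1+1)(e_2-1)!$. With these, the second factor in $\frac{|C_G(g_1)|}{|N(d,q^u,e_1,\bfX)|}\cdot\frac{|\mathcal{S}_2(g,M)|}{|C_M(g)|}$ is $\binom{d}{e_2+1}e_2/(e_1+1)$. This grows like $2^d/\sqrt d$ when $e_1=e_2=d/2$. For example, with $\bfX=\bfL$, $q=7$, $d=20$, $r_i=11$, it is about $1.5\cdot10^5$ rather than $e_1e_2/2=50$. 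So the bounds with factor $e_1e_2$ and constants $0.5$, $1.46$, $4.12$ do not follow.

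Your proposed refinement, that the $g'$-cycle shares at least one block with the $g$-cycle, is vacuous. The element $g$ fixes only $e_2-1$ blocks, so every $(e_2+1)$-set of blocks already contains at least two blocks cycled by $g$.

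\textbf{The missing idea.} It comes from the duo condition, not from irreducibility. Define:
\begin{itemize}
\item $Y_1$, the sum of the $e_1+1$ lines cycled by $g$;
\item $X_1$, the sum of the other $e_2-1$ lines;
\item $Y_2$, the sum of the $e_2+1$ lines cycled by $g'$.
\end{itemize}
Then $Y_1=U_g\oplus D_1$ with $D_1$ the diagonal line, $F_g=D_1\oplus X_1$, and $U_{g'}\le Y_2$. Since $V=U_g\oplus U_{g'}$, we get $V=Y_1+Y_2$. Hence $X_1\le Y_2$, and $Y_2$ contains \emph{exactly} two of the lines cycled by $g$.

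This gives $|\mathcal{S}_2(g,M)|\le\binom{e_1+1}{2}\,e_2!\,|\GX_1(q)|^{e_2}$: choose the pair, the cyclic order, and scalars with product $1$. The ratio with $|C_M(g)|$ is then exactly $e_1e_2/2$. Propositions~\ref{p:Lbds}(c) and~\ref{p:Xbds}(c), with $\mathbf{k}\ge 1-3/(2q^u)$ and the values of $\bfa$, then yield the constants.

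\textbf{Counting the subgroups $M\ni g$.} Your description of a frame through $g$ as ``an eigenframe of $U_g$ plus a frame of $F_g$'' is wrong. The lines cycled by $g$ span $U_g\oplus D_1$, not $U_g$. They are also far from determined by $g$: for $\bfX=\bfL$ and fixed $Y_1$ there are $(q^{e_1}-1)/(e_1+1)$ choices, and $D_1$ varies as well. Counting $M$ your way would undercount by roughly $q^{e_1}$. The number of $M\ni g$ must come from Lemma~\ref{lem:countM}. That lemma requires checking $g_1^G\cap M=g^M$. This holds because $\langle g\rangle$ is a Sylow $r_1$-subgroup of $M$, and the top group $S_d\le M$ induces all of $\mathrm{Aut}(\langle g\rangle)$.
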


To prove Proposition~\ref{prop:imprim}, we work with the following assumptions.

\begin{hypothesis}[for $\Asch_2$]\label{H:AC2}
Assume that Hypothesis~$\ref{hyp}$ holds and ${\rm Prob}_2(g_1,g_2,\bfX) > 0$. Let $(g,g')$ be a stingray duo in $g_1^G\times g_2^G$ such that $H\coloneq\langle g,g'\rangle$  leaves invariant $V=\bigoplus_{i=1}^b W_i$ with $b\geq2$, and $H\not\leq L$ for any $L\in \cM_1(\bfX)$ as in \eqref{e:M1L} or \eqref{e:M1X}.
\end{hypothesis}

These assumptions imply that the group $H$  is irreducible on $V$, and hence $H$ acts transitively on $\{ W_1,\dots, W_b\}$. Thus $\dim(W_i)=d/b$ for each $i$, and so $H$ is contained in a wreath product $\widehat{M}\coloneq\GL_{d/b}(q^u)\wr S_b < \GL_d(q^u)$ preserving the decomposition $V=\bigoplus_{i=1}^b W_i$. Let $M=G\cap \widehat{M}$, and let $B=\GL_{d/b}(q^u)^b$ be the base group and $T\cong S_b$ the top group of $\widehat{M}$, so  $H\leq M \leq \widehat{M}=B\rtimes T$.

Our first result Lemma~\ref{lem:imprim1} proves most of Proposition~\ref{prop:imprim}(a). 

\begin{lemma}\label{lem:imprim1}
Assume that Hypothesis~$\ref{H:AC2}$ holds for $\Asch_2$. Then
\begin{enumerate}[{\rm (a)}]
    \item $b=d=e_1+e_2 > 8$, $r_1=e_1+1$, $q^u\geq3$, and $g, g'\not\in B$;
   
    \item $\bfX\ne \bfSp$, and  either $\bfX= \bfL$, or the $W_i$ are nondegenerate and $\bfX\in\{\bfU, \mathbf{O^\varepsilon}\}$, with $q$  odd if $\bfX=\mathbf{O^\varepsilon}$;

    \item the $W_i$ are all isometric and, without loss of generality, for each $i$, $W_i=\langle w_i\rangle$ where $w_i$ is the $i^{\rm th}$ standard (row) basis vector in $V=\mathbb{F}_{q^u}^d$; the map $w_i\to w_j$ induces an isometry $W_i\to W_j$; and in particular,  $M$ contains the top group $T=S_d$ of $\widehat{M}=\GL_{d}(q^u)\wr S_d$.
\end{enumerate} 
In particular, all parts of Proposition~$\ref{prop:imprim}${\rm(a)}  are established except for the value of $r_2$.
\end{lemma}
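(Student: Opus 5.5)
We study how $g$ and $g'$ act on the block system $\{W_1,\dots,W_b\}$, using that each has prime order $r_i$ (an $e_i$-ppd) and large fixed space. First, $g\notin B$. Suppose $g\in B$. Then $g$ fixes every $W_j$. Since $g$ acts irreducibly on $U_g$ and trivially on $F_g$, each $g$-invariant $W_j$ satisfies either $W_j\le F_g$ or $U_g\le W_j$ (Lemma~\ref{lem:unique}(b)). As $V=\bigoplus W_j$ and $V\ne F_g$, some $W_j$ contains $U_g$; all other $W_k$ lie in $F_g$. Apply the same reasoning to $g'$. If both $g,g'\in B$, then $H$ fixes $W_j$, contradicting irreducibility. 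If $g\in B$ but $g'\notin B$, then $g'$ permutes the blocks nontrivially, with orbits of length $r_2$. The only block not fixed pointwise by $g$ is $W_j$, so $H$ would fix $W_j$ unless $W_j$ is moved. Conjugating $g$ by powers of $g'$ gives elements whose nontrivial spaces lie in distinct blocks. Their actions force $\dim$ constraints: $U_g\le W_j$ gives $e_1\le d/b$, hence $b\le d/e_1\le 2$. With $b=2$ and $e_1=e_2=d/2$, $g'$ swaps $W_1,W_2$, so $r_2=2$, contradicting Remark~\ref{rem:ppd}(b). The symmetric case is handled the same way. Therefore $g,g'\notin B$.

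Now take $g\notin B$. Its image in $T\cong S_b$ has order $r_1$, so it contains an $r_1$-cycle, say on $W_1,\dots,W_{r_1}$. Then $g$ acts nontrivially on $W_1\oplus\dots\oplus W_{r_1}$, and the difference space $V(g-1)=U_g$ has dimension at least $(r_1-1)d/b$. On a single cycle, $g^{r_1}$ acts on $W_1$ as an element $y$, and $\dim V(g-1)$ restricted to that cycle is $(r_1-1)(d/b)+\dim W_1(y-1)$. Since $\dim U_g=e_1$ and $r_1\ge e_1+1$ (Definition~\ref{def:stingray}(a)), we get $e_1\ge (r_1-1)d/b\ge e_1 d/b$. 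This forces $b=d$, $r_1=e_1+1$, a single $r_1$-cycle, and $y$ trivial. The same argument applied to $g'$ gives $b=d$ (and $r_2=e_2+1$, the part deferred as noted). Also $d>8$ by Hypothesis~\ref{hyp}.

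If $q^u=2$, then $\GL_1(2)=1$, so $\widehat M=S_d$ consists of permutation matrices. The permutation module has the invariant vector $\sum w_i$, so $H$ would be reducible, a contradiction. This proves (a).

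For (b) and (c), take $\bfX\ne\bfL$ and write $W_i=\langle w_i\rangle$. Irreducibility and transitivity of $H$ on the blocks show that the $W_i$ are either all nondegenerate or all totally singular. In the totally singular case, the form pairs blocks, and $M$ lies in the stabiliser of a decomposition $V=Y\oplus Y^*$ into totally singular halves. Here I expect the cycle structure to fail: a single $r_1$-cycle with $r_1$ odd must be compatible with the pairing involution on blocks. The nondegenerate-1-space case excludes $\bfSp$ because it has no nondegenerate 1-spaces, and it excludes $\bfO$ with $q$ even because such 1-spaces are nonsingular and the bilinear form is alternating. Hence in these cases we must be in the totally singular case. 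I would derive a contradiction by showing that $U_g$ meets $Y$ in a proper nonzero $g$-invariant piece, or equivalently that $g$ fixes a vector $\sum$ over the cycle combined with its dual pairing, giving $\dim U_g<e_1$. This totally singular exclusion is the main obstacle. Next, the isometry types of the $W_i$ agree by transitivity. By rescaling the $w_i$ using $\GL_1$-elements, which need not be in $G$, we can arrange that the maps $w_i\mapsto w_j$ are isometries. Then each permutation matrix preserves the form, so $T\le M$. For $\bfO$ with $q$ odd, this needs the $w_i$ to share a common norm modulo squares. This follows because $H$ transports $W_i$ to $W_j$ isometrically up to the $\GL_1$ scalar in $M$, which has norm $1$.
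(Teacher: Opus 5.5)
Your argument for part (a) is correct and differs from the paper's. The paper uses $r_1\mid b!$ together with $r_1>d/2$ and $b\mid d$. You instead bound $e_1=\dim V(g-1)$ below by $(r_1-1)d/b$ using an $r_1$-cycle of blocks. That yields $b=d$ and $r_1=e_1+1$ in one step, and applied to $g'$ it even gives $r_2=e_2+1$, which the paper only obtains later in Lemma~\ref{lem:imprim3}. There is one slip: the ``symmetric case'' $g'\in B$ is not handled the same way, since $U_{g'}\le W_j$ only gives $e_2\le d/b$, not $b\le 2$. Instead, once $b=d$ is known from $g$, the base group $B\cong\GL_1(q^u)^d$ has exponent $q^u-1$, which $r_2$ does not divide because $e_2\ge 2$.

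The genuine gap is in (b), and it carries over to (c). You leave the exclusion of totally singular summands open, and the route you sketch does not work. For an arbitrary $H$-invariant decomposition of $V$ into totally isotropic lines, nothing forces the non-orthogonality relation among the $W_i$ to be a perfect matching. So the form need not ``pair'' the blocks, and there is no parity clash with the odd $r_1$-cycle to exploit.

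Two further steps tacitly need the $W_i$ to be mutually orthogonal, which you never establish: your exclusion of nonsingular lines for $\bfO^\varepsilon$ with $q$ even, and your claim in (c) that permutation matrices preserve the form after rescaling. Without orthogonality the first fails, since a basis of nonsingular vectors certainly exists when $q$ is even.

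The missing input is the structure of the $\Asch_2$ decompositions themselves, from \cite[Table 4.2.A]{KL}. For $\bfX\ne\bfL$, either $V=W_1\perp\cdots\perp W_b$ with the $W_i$ nondegenerate and pairwise orthogonal, or $b=2$ and $W_1,W_2$ are totally singular. Since (a) gives $b=d>8$, the second option is excluded at once. Orthogonal nondegenerate lines then rule out $\bfSp$ (every line is isotropic) and $\bfO^\varepsilon$ with $q$ even (the polar form would vanish on $V$). The same orthogonality is precisely what makes your rescaling argument in (c) go through, as in \cite[Lemma 4.2.1]{KL}.
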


\begin{proof}
(a) Suppose that $g\in B$. Then $r_1$ divides $|B|$ and hence $r_1$ divides $|\GL_{d/b}(q^u)|$, whence $e_1\leq d/a$. 
   However, $e_1\geq d/2$ and it follows that $b=2$ and $e_1=e_2=d/2$. 
   By Definition~\ref{def:stingray}(a), the primitive prime divisor $r_2$ of $(q^{u})^{e_2}-1$ satisfies 
  $r_2= k_2e_2+1 \ge e_2+1\geq 3>b$, and hence also $g'\in B$, so $H\leq B$, which is reducible on $V$ and we have a contradiction. 
  
  Thus $g\not\in B$. 
This implies that the prime $r_1=|g|$ divides $|S_b|=b!$. Hence $r_1\leq b$, and since $r_1 = k_1e_1+1\geq e_1+1>d/2$ and $b$ divides $d$, we conclude that $b=d$ and $r_1=e_1+1$. Recall that $d>8$ by Hypothesis~\ref{hyp}. Now $r_2$ does not divide $q^u-1$ since $e_2\geq 2$, and hence also $g'\not\in B$. Finally, if $q^u=2$ then $H\leq \GL_{1}(2)\wr S_d=S_d$ which is reducible since it leaves invariant the diagonal vector $(1,\dots,1)$. Thus $q^u\geq3$.

(b) By \cite[Table 4.2.A]{KL}, the conditions in part (a) imply that, for $\bfX\ne \bfL$, the direct summands $W_i$ are nondegenerate, and so in particular $\bfX\ne \bfSp$, and if $\bfX= \bfO^\varepsilon$ then $q$ is odd (see, for example \cite[Proposition 2.5.1]{KL}).  

(c) The  assertions about the $W_i$ follow from the transitivity of $H$ on $\{W_1,\dots, W_d\}$ discussed above. Finally,  the fact that, without loss of generality, $M$ contains $T$ follow from \cite[Lemma 4.2.1]{KL} and its proof. 
\end{proof}

Next we examine in detail the actions of $g, g'$ on $V=\bigoplus_{i=1}^d W_i$ (see Hypothesis~\ref{hyp}). 
Set $U=U_{g}, F=F_g, U'=U_{g'}$ and $F'=F_{g'}$, so $V=U\oplus F= U'\oplus F'=U\oplus U'$. 

\begin{lemma}\label{lem:imprim2}
Assume that Hypothesis~$\ref{H:AC2}$ holds for $\Asch_2$. Replacing $(g,g')$ by an  $M$-conjugate if necessary, 
$g\in T$, the top group of 
  $\GL_{1}(q^u) \wr S_{d}$, and permutes $W_1,\dots,W_{e_1+1}$ cyclically, 
  fixing pointwise each of $W_{e_1+2},\dots, W_d$;
\begin{enumerate}[{\rm (a)}]
    \item setting $Y_1=\bigoplus_{i=1}^{e_1+1}W_i$, $X_1=\bigoplus_{i=e_1+2}^{e_1+e_2} W_i$, and $D_1=\langle d_1\rangle$ with $d_1=\sum_{i=1}^{e_1+1}w_i$, we have  $Y_1=U\oplus D_1$, $F=D_1\oplus X_1$,
    $V=Y_1\oplus X_1$, and $X_1\ne 0$; 
    \item $g'$ has a unique nontrivial cycle in its induced action on the $W_i$, say $(W_{j_1}, \dots, W_{j_k})$ with $k=r_2=k_2e_2+1$, and setting $Y_2\coloneq \bigoplus_{i=1}^{k}W_{j_i}$, we have $V=(F'\cap Y_1)\oplus Y_2$ and $X_1, U'\leq Y_2$.
\end{enumerate}
\end{lemma}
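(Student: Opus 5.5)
We plan to use Lemma~\ref{lem:imprim1} throughout: $b=d$, $r_1=|g|=e_1+1$, each $W_i=\langle w_i\rangle$ is one-dimensional, $M$ contains the top group $T=S_d$, and $g,g'\notin B$. The image $\bar g$ of $g$ in $S_d$ is nontrivial of prime order $r_1=e_1+1>d/2$, so it is a single $r_1$-cycle. After conjugating by an element of $T\le M$ we may take this cycle to be $(W_1,\dots,W_{e_1+1})$. The first step is to remove the base component. Write $g=bt$ with $b\in B$ diagonal and $t\in T$ the permutation matrix of this cycle. Conjugating by a suitable diagonal element $c\in B$ changes the coefficients $b_i$ along the cycle by coboundaries. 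This lets us arrange that $b_i=1$ for $1\le i\le e_1$, leaving the product $\lambda=\prod_{i\le e_1+1} b_i$ on the last entry, while the entries $b_i$ for $i>e_1+1$ are unchanged.

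The key step is to show that $\lambda=1$ and that $b_i=1$ for $i>e_1+1$. Since $g^{r_1}=1$, on $Y_1=\bigoplus_{i\le e_1+1}W_i$ we get $g^{r_1}|_{Y_1}=\lambda I$, so $\lambda=1$. Also $g$ acts on each $W_i$ with $i>e_1+1$ as the scalar $b_i$, and $b_i^{r_1}=1$. Because $r_1$ is an $e_1$-ppd with $e_1\ge2$, $r_1\nmid q^u-1$, which forces $b_i=1$. After the conjugation by an element of $M\cap B$ (we need $c\in G$; otherwise we adjust $c$ by a scalar or use that $G=\GX_d(q)$ contains the relevant diagonal isometries since the $W_i$ are nondegenerate and isometric), $g$ is the permutation matrix $t\in T$. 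Part (a) then follows by direct computation. The permutation action of $t$ on $Y_1$ is the regular module of $C_{r_1}$ over $\F_{q^u}$, and $r_1$ is coprime to $q$. Hence $Y_1=D_1\oplus U_0$, where $D_1$ is spanned by $d_1=\sum w_i$ and $U_0$ is the augmentation submodule of dimension $e_1$. Since $q^u$ has order $e_1$ mod $r_1$, $U_0$ is irreducible, so $U_0=U$ by Lemma~\ref{lem:unique}(a). The fixed space is $F=D_1\oplus X_1$, and $X_1\neq0$ because $d>e_1+1$ (as $e_2\ge2$).

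For part (b), we apply the same reasoning to $g'$. Its image in $S_d$ has prime order $r_2=k_2e_2+1$, and $g'\notin B$, so it is a product of $r_2$-cycles. Since $U'$ is the unique nontrivial irreducible piece, each nontrivial cycle contributes a nontrivial augmentation-type submodule, and at least one of dimension $r_2-1\ge e_2$. This means there is exactly one cycle $(W_{j_1},\dots,W_{j_k})$ with $k=r_2$. Repeating the diagonal adjustment shows $g'$ acts on $Y_2$ as a twisted permutation matrix, with trivial scalars off $Y_2$, so $F'\supseteq\bigoplus_{i\notin\{j_\ell\}}W_i$ and $U'\le Y_2$.

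The main obstacle is showing $X_1\le Y_2$ and $V=(F'\cap Y_1)\oplus Y_2$. If some $W_i\le X_1$ were outside $Y_2$, then $W_i$ would be fixed by both $g$ and $g'$. That would make $\langle g,g'\rangle$ reducible, or make $W_i\le F\cap F'$, contradicting $V=F\oplus F'$ from the proof of Lemma~\ref{lem:dim} (for type $\bfL$ we use $U\oplus U'=V$ and dimension counting instead). So $X_1\le Y_2$. The complementary summands $W_i$ with $i\notin\{j_\ell\}$ all lie in $Y_1$ and are fixed by $g'$. They therefore span a complement to $Y_2$ inside $F'\cap Y_1$. A dimension count shows this complement equals $F'\cap Y_1$ modulo $Y_2$, which gives the decomposition.
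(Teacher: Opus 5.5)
Your normalisation of $g$ and your proof of (a) are correct, but they differ from the paper's. The paper argues via Sylow: since $r_1\nmid|B|$ and $r_1>d/2$, $\langle g\rangle$ is a Sylow $r_1$-subgroup of $M$, so it is $M$-conjugate to the subgroup generated by an $r_1$-cycle of $T\le M$. This route never has to ask whether a diagonal conjugator lies in $G$.

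Your explicit route also works: you kill the base part along the cycle by a diagonal conjugation, then use $g^{r_1}=1$ and $r_1\nmid q^u-1$. The point you hedge on has a clean answer. Since $t\in T\le M$, we have $b=gt^{-1}\in M\cap B$, so the $b_i$ lie in $\GX_1(q)$. The $c_i$ you need are partial products of the $b_j^{-1}$, hence $c\in M\cap B$ directly.

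Your proof of $X_1\le Y_2$ via irreducibility of $H$ is also fine; the paper instead uses $V=U\oplus U'\le Y_1+Y_2$.

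Two small repairs are needed in (b).
\begin{enumerate}
\item For uniqueness of the nontrivial cycle, apply Lemma~\ref{lem:unique}(b) to the span of each nontrivial cycle: $U'$ lies in each such span, and distinct spans meet in $0$. Talking of ``augmentation-type submodules'' is not accurate for a twisted cycle.
\item Do not ``repeat the diagonal adjustment'' for $g'$, since that would disturb $g$. It is also unnecessary: the scalars of $g'$ off $Y_2$ have order dividing both $r_2$ and $q^u-1$, so they are trivial.
\end{enumerate}

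The genuine gap is the final step. Let $J=\{j_1,\dots,j_k\}$ and $Z=\bigoplus_{i\notin J}W_i$. Knowing that $Z\le F'\cap Y_1$ and that $Z$ is a complement to $Y_2$ gives $V=(F'\cap Y_1)+Y_2$. The sum is direct only if $F'\cap Y_1\cap Y_2=0$, and no dimension count yields this.

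In fact dimensions point the other way. From $F'=Z\oplus(F'\cap Y_2)$ one gets $\dim(F'\cap Y_2)=k-e_2$, while $\dim(Y_1\cap Y_2)=k-e_2+1$, both inside the $k$-space $Y_2$. So if $k_2\ge2$, dimensions alone would force $F'\cap Y_1\cap Y_2\ne0$.

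What is needed is the monomial computation. Write $g'|_{Y_2}=(x_{j_1},\dots,x_{j_k})(j_1,\dots,j_k)$. A fixed vector $y=\sum_\ell y_{j_\ell}w_{j_\ell}$ then satisfies $y_{j_{\ell+1}}=y_{j_\ell}x_{j_\ell}$, with all $x_{j_\ell}\ne0$. Hence a nonzero fixed vector in $Y_2$ has all $k$ coordinates nonzero. Since $0\ne X_1\le Y_2$, such a $y$ has a nonzero component in $X_1$, so $y\notin Y_1$. This gives $F'\cap Y_1\cap Y_2=0$, hence $F'\cap Y_1=Z$ and $V=(F'\cap Y_1)\oplus Y_2$.

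The paper's own proof of this lemma also passes over the point with a single ``Hence''. The missing computation is the one carried out at the start of Lemma~\ref{lem:imprim3}, which simultaneously shows $\dim(F'\cap Y_2)=1$, i.e.\ $k_2=1$.
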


\begin{proof}
 Since $e_1>1$, $r_1$ does not divide $|B|=(q^u-1)^d$, and since $r_1>d/2$, it follows that a Sylow $r_1$-subgroup of $M$ is a Sylow $r_1$-subgroup of  $\GL_1(q^u)\wr S_d$, and  
 has order $r_1$. Thus $\langle g\rangle \cong Z_{r_1}$ is a
  Sylow $r_1$-subgroup of $M$, and therefore, replacing $(g, g')$
  if necessary by a conjugate in $M$ we may assume that
  $g\in T=S_d$, and permutes $W_1,\dots,W_{e_1+1}$ cyclically, 
  fixing pointwise each of $W_{e_1+2},\dots, W_d$. 
  
  (a)  It follows from Lemma~\ref{lem:unique}(a) that $U$ is contained in the 
  $(e_1+1)$-subspace $Y_1$ (since $Y_1$ is
  $g$-invariant and $g$ acts on $Y_1$ non-trivially). Also the
  $(e_2-1)$-subspace $X_1\coloneq\bigoplus_{i=e_1+2}^dW_i$ is contained in $F$.
  Moreover $F\cap Y_1$ contains the  `diagonal' subspace
  $D_1$ of $Y_1$ and we have $F=D_1\oplus X_1$. Note that $\dim(X_1)=e_2-1\geq1$ since $e_2\geq2$, so $X_1\ne 0$. Finally since $\dim(Y_1)=e_1+1$ it follows that  $Y_1=U\oplus D_1$, and part (a) is proved.
  
(b) Recall from Definition~\ref{def:stingray}(a) that $r_2= k_2e_2+1 \ge e_2+1$ for some integer $k_2\ge1$. 
We consider the action of $g'$ on $V$. 
By Lemma~\ref{lem:imprim1}, $g'\not\in B$, and so $g'$ induces at least one nontrivial cycle on $\{W_1,\dots,W_d\}$. 
  Let $(W_{j_1}, \dots, W_{j_k})$ be a $g'$-cycle with $k\ge 2$.
  Then $g'$ leaves the $k$-subspace $Y_2\coloneq \bigoplus_{i=1}^{k}W_{j_i}$ invariant,
  and $g'$ acts non-trivially on $Y_2$. Again applying Lemma~\ref{lem:unique}(a), we see first that
  $U'\le Y_2$, and also that $g'$ has a unique nontrivial cycle
  on the $W_i$. Thus $\dim(Y_2)=k=r_2=k_2e_2+1$ since $|g'|=r_2$.
 Now the facts that $V=U\oplus U'$, $U\leq Y_1$ (from part (a)), and $U'\leq Y_2$, imply that $V=Y_1+Y_2$.
  As each of $Y_1, Y_2, X_1$ is a direct sum of some of the $W_i$ and $X_1\cap Y_1=0$ (by part (a)), we
  conclude that $X_1\subseteq Y_2$. 
  Hence $F'\cap Y_1$ is the direct
  sum of each $W_i$ with $i\not\in\{j_1,\dots, j_k\}$, and
  $V=(F'\cap Y_1)\oplus Y_2$. 
\end{proof}

Our next tasks are (i)~to show that $r_2=e_2+1$, {(ii)}~to get a better description of $Y_2$ and $F'$, and {(iii)}~to derive some extra information as preparation for estimating ${\rm Prob}_2(g_1,g_2, \bfX)$.

\begin{lemma}\label{lem:imprim3}
Assume that Hypothesis~$\ref{H:AC2}$ holds for $\Asch_2$ and $X_1, Y_1, Y_2$ as in Lemma~$\ref{lem:imprim2}$. Then
\begin{enumerate}[{\rm (a)}]
    \item $r_2=e_2+1$, $Y_1\cap Y_2=W_i \oplus W_{i'}$ for some $i,i'$ with  $i<i'\leq e_1+1$, and $Y_1=(F'\cap Y_1)\oplus W_i \oplus W_{i'}$, and $Y_2=W_i \oplus W_{i'}\oplus X_1$. In particular $d=e_1+e_2$ is even, and $Y_2=  \bigoplus_{\ell=1}^{r_2} W_{j_\ell}$;
    \item $V=(F'\cap Y_1)\oplus Y_2$,  $g'$ fixes $F'\cap Y_1$ pointwise, and $g'|_{Y_2}=(x_{j_1},\dots, x_{j_{r_2}})(j_1, \dots ,j_{r_2})$, where the $x_{j_i}\in\GL_1(q^u)$ and the product $x_{j_1}\dots x_{j_{r_2}}=1$. 
    \item If $\{i,i'\}=\{j_s,j_t\}$ with $s<t$, then $x_{j_s}\dots x_{j_{t-1}}\ne 1$, so in particular $g'\not\in T=S_d$.
\end{enumerate}
Moreover, Proposition~$\ref{prop:imprim}${\rm(a)}  is proved, and for the subgroup $M=G\cap (\GL_1(q^u)\wr S_d)$ containing $H=\langle g, g'\rangle$, we have $g^M=g_1^G\cap M$.
\end{lemma}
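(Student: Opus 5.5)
The plan is a dimension count using the decompositions from Lemma~\ref{lem:imprim2}, followed by a direct analysis of the monomial matrix $g'|_{Y_2}$.

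\textbf{Part (a).} By Lemma~\ref{lem:imprim2}, $g'$ has a unique nontrivial cycle on the $W_i$, of length $r_2=k_2e_2+1$. Also $Y_2\supseteq X_1\oplus U'$, and $V=(F'\cap Y_1)\oplus Y_2$. Since $X_1$ is spanned by $W_i$ with $i>e_1+1$, and all such $W_i$ lie in $Y_2$, we get $Y_2=X_1\oplus(Y_1\cap Y_2)$ as a sum of standard summands. Hence $r_2=\dim Y_2=(e_2-1)+|Y_1\cap Y_2|$, counting summands.

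Next I bound $c:=|Y_1\cap Y_2|$ from both sides.
\begin{itemize}
\item \emph{Upper bound.} $U'$ has dimension $e_2$ inside $Y_2$, and $U\cap U'=0$ with $U\le Y_1$. Consider $F\cap Y_2=X_1\oplus(D_1\cap Y_2)$. The subspace $D_1\cap Y_2$ is zero unless $Y_1\le Y_2$. If $Y_1\le Y_2$, then $Y_2=V$, so $r_2=d$; since $r_2\equiv 1\pmod{e_2}$ we have $e_2\mid e_1-1$. In that case $g'$ would be a $d$-cycle times a diagonal matrix, and I would need to rule it out separately (see below).
\item \emph{Lower bound.} $H$ is irreducible, so $Y_2\not\le F$, which forces $c\ge1$. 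If $c=1$, say $Y_1\cap Y_2=W_i$, then $Y_2=W_i\oplus X_1$ is a sum of $e_2$ summands, which contradicts $r_2\ge e_2+1$. So $c\ge2$, and hence $r_2\ge e_2+1$.
\item \emph{Pinning down $c$.} To force $c=2$, I would use $V=U\oplus U'$. Projecting along $Y_1$ shows that $U'$ maps onto $V/Y_1\cong X_1$. Then $U'\cap Y_1$ has dimension $e_2-(e_2-1)=1$ and lies in $Y_1\cap Y_2$, and it meets $U$ trivially. Since $r_2=k_2e_2+1$ with $c=r_2-e_2+1$, either $k_2=1$ (so $c=2$) or $c\ge e_2+2$.
\end{itemize}
To exclude $k_2\ge2$, I would exploit the structure of $g'$. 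Because $F'$ has dimension $e_1$ and $F'\cap Y_1$ has dimension $e_1+1-c$, the subspace $F'\cap Y_2$ has dimension $c-1$. On the other hand, the fixed space of a monomial $r_2$-cycle on $Y_2$ has dimension at most $1$. This forces $c\le 2$, so $c=2$ and $r_2=e_2+1$. The same fixed-space argument also kills the case $Y_1\le Y_2$ flagged above.

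Once $r_2=e_2+1$, we have $e_2+1=r_2$ prime and $e_1+1=r_1$ prime. Both $e_i$ are at least $2$, so both $r_i$ are odd primes and both $e_i$ are even, giving $d$ even.

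\textbf{Part (b).} Write $g'|_{Y_2}$ as a monomial matrix $(x_{j_1},\dots,x_{j_{r_2}})(j_1,\dots,j_{r_2})$. Its $r_2$-th power is the scalar $x_{j_1}\cdots x_{j_{r_2}}$, and this equals $1$ since $|g'|=r_2$.

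\textbf{Part (c).} Suppose the partial product $x_{j_s}\cdots x_{j_{t-1}}$ equals $1$. Then I would construct an explicit vector of $Y_1\cap Y_2=W_i\oplus W_{i'}$, a suitable combination of $w_i$ and $w_{i'}$, that is fixed by $g'$ up to the appropriate cocycle condition. I expect this to produce either a common invariant subspace of $g$ and $g'$, contradicting irreducibility, or a nonzero vector in $U\cap U'$. The cleanest route is to compute $F'\cap Y_2$, which is spanned by $\sum_\ell(x_{j_1}\cdots x_{j_{\ell-1}})^{-1}w_{j_\ell}$, and show it lies in $U$ exactly when the partial product is $1$. Membership in $U$ is tested via coordinate sum zero on $Y_1$.

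\textbf{Final claims.} Part (a) of Proposition~\ref{prop:imprim} then follows together with Lemma~\ref{lem:imprim1}. For $g^M=g_1^G\cap M$, note that every element of $g_1^G\cap M$ has order $r_1$. It therefore generates a Sylow $r_1$-subgroup of $M$, as in the proof of Lemma~\ref{lem:imprim2}, and these subgroups are $M$-conjugate. The $e_1$ generators of a Sylow subgroup that lie in $g_1^G$ are permuted transitively by its normaliser in $T$ (by Lemma~\ref{lem:uperp}(c), via the power maps realised by $N_{S_{r_1}}(C_{r_1})$). The main obstacle I expect is the dimension bookkeeping in part (a) that rules out $k_2\ge2$.
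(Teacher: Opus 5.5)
Part (a) is essentially the paper's argument. You compare $\dim(F'\cap Y_2)=c-1=(k_2-1)e_2+1$ with the fact that a monomial matrix whose permutation part is a single $r_2$-cycle has at most a one-dimensional fixed space. In (b), getting $x_{j_1}\cdots x_{j_{r_2}}=1$ from $(g')^{r_2}=1$ is a neat shortcut; the paper instead reads it off from the existence of a nonzero fixed vector. Your justification of $g^M=g_1^G\cap M$ is also fine. One minor slip: $X_1+U'$ need not be direct, since $\dim X_1+\dim U'=2e_2-1$ can exceed $\dim Y_2$, but you never use this.

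The gap is in part (c). By (a), $F'\cap Y_2$ is spanned by a vector $y$ all of whose $r_2$ coordinates are nonzero. So $y$ has nonzero components in $X_1\neq 0$, and since $U\le Y_1$ and $Y_1\cap X_1=0$, $y$ never lies in $U$, whatever the $x_j$.

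Your test ``coordinate sum zero on $Y_1$'' also targets the wrong condition. Applied to the $Y_1$-part $y_iw_i+y_{i'}w_{i'}$, it detects $y_{i'}=-y_i$, i.e.\ partial product $-1$ rather than $1$. Moreover, even a vector of $F'$ lying in $U$ would give neither an $H$-invariant subspace nor a nonzero element of $U\cap U'$, because $F'\cap U'=0$. Your other suggestion, a suitable combination of $w_i$ and $w_{i'}$, cannot be $g'$-fixed: every nonzero $g'$-fixed vector of $Y_2$ is a multiple of $y$.

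The missing idea is as follows. The partial product equals $1$ exactly when $y$ takes the same value $c'$ on $W_i$ and $W_{i'}$. In that case $y$ extends consistently to a vector $v$ by giving it the constant value $c'$ on the remaining summands $W_\ell$ with $\ell\le e_1+1$, $\ell\notin\{j_1,\dots,j_{r_2}\}$, which span $F'\cap Y_1$. This $v$ is constant on $Y_1$, so $v\in D_1\oplus X_1=F$. Also $v\in (F'\cap Y_1)\oplus\langle y\rangle=F'$. Hence $\langle v\rangle$ is $H$-invariant, contradicting irreducibility.

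So the condition to test is $F\cap F'\neq 0$, not membership in $U$. This is the paper's argument, and it gives $g'\notin T$ at once, since $g'\in T$ forces every $x_j=1$.
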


\begin{proof}
  By Lemma~\ref{lem:imprim2}(b), $U' \subseteq Y_2$, and $e_2=\dim(U') <\dim(Y_2)=k=r_2=k_2e_2+1$, so $\dim(F'\cap Y_2)=(k_2-1)e_2+1\geq 1$. 
  Let $0\ne y\in F'\cap Y_2$, say
  $y=\sum_{i=1}^\ell y_{j_i}w_{j_i}$ with the $y_{j_i}\in\mathbb{F}_{q^u}$. Since $y\ne0$, we may assume without loss of generality that $y_{j_1}=c\ne 0$. 
  By Lemma~\ref{lem:imprim2}(b), $g'$ fixes pointwise the first summand of $V=(F'\cap Y_1)\oplus Y_2$, and we consider its action on the second summand $Y_2$, namely $g'|_{Y_2}\le \GL_1(q^u)
   \wr S_k$ which has the form $g'|_{Y_2}=(x_{j_1},\dots, x_{j_k})(j_1, \dots ,j_k)$, where the $x_{j_i}\in\GL_1(q^u)$. 
   Thus we have 
   \[
   y = y^{g_2} = y_{j_k}x_{j_k} w_{j_1} + \sum_{i=1}^{k-1} y_{j_i}x_{j_i}
   w_{j_{i+1}},
   \] 
   and this implies that $c=y_{j_1}=y_{j_k}x_{j_k}$,  and for $2\leq i\leq k$,   $y_{j_i}=y_{j_{i-1}}x_{j_{i-1}}$. The last $k-1$ equations imply that $y_{j_i}=cx_{j_1}\cdots x_{j_{i-1}}$ for all $i>1$, so in particular 
   \[
   c=y_{j_1}=y_{j_k}x_{j_k}=cx_{j_1}\cdots x_{j_{k}}.
   \]
   Since $c\ne 0$, we conclude that each of the entries $x_{j_i}$ in $g'|_{Y_2}$ is non-zero and hence that each of the $y_{j_i}\ne 0$; and also $x_{j_1}\cdots x_{j_{k}}=1$. This argument can be applied for each non-zero $y'\in F'\cap Y_2$, and we see that $y'$ has the same form as $y$ with some possibly different non-zero scalar $c'$ in place of $c$. Thus $y'\in\langle y\rangle$, and this implies that $F'\cap Y_2=\langle y\rangle$ of dimension $1$. Hence $1=\dim(F'\cap Y_2)=(k_2-1)e_2+1$, so that $r_2=e_2+1$. Since $V=Y_1\oplus X_1$ (by Lemma~\ref{lem:imprim2}), and all of $Y_1, Y_2, X_1$ are direct sums of some of the $W_j$, it follows that $Y_2\cap Y_1 = W_i \oplus W_{i'}$ for some $i, i'\in\{j_1,\dots,j_{r_2}\}$ with $i<i'\leq e_1+1$, and 
   $Y_2=W_i \oplus W_{i'}\oplus X_1$ and $Y_1=(F'\cap Y_1)\oplus W_i \oplus W_{i'}$. Since each $r_i=e_i+1$ and $e_i\geq 2$, it follows that each $r_i$ is an odd prime and each $e_i$ is even, so $d=e_1+e_2$ is even. This proves parts (a) and (b).
   
   We may write $\{i,i'\}=\{j_s,j_t\}$ with $s<t$, so that in the previous paragraph $c'\coloneq y_{j_s}=cx_{j_1}\dots x_{j_{s-1}}$ and $y_{j_t}=cx_{j_1}\dots x_{j_{t-1}}$. Suppose that $x_{j_s}\dots x_{j_{t-1}}=1$. Then $y_{j_t}=cx_{j_1}\dots x_{j_{s-1}} =y_{j_s}=c'$. Consider the vector $v\in V$ such that $v|_{Y_2}=y$ as above and $v|_{Y_1}=(c',\dots,c')$ (a constant vector, lying in $D_1$ as in Lemma~\ref{lem:imprim2}(a)). Note that, as $Y_2\cap Y_1 = W_{j_s} \oplus W_{j_t}$ and we have $v_i=v_{i'}=c'$, it follows that $v$ is well defined. Then $v$ is fixed by $g$ (since $F=D_1\oplus X_1$) and by $g'$ (as shown above). Hence $H=\langle g,g'\rangle$ leaves $\langle v\rangle$ invariant, which is a contradiction. Thus $x_{j_s}\dots x_{j_{t-1}}\ne 1$, so in particular $g'$ does not lie in the top group $S_d$. Thus part (c) is proved.

  Proposition~\ref{prop:imprim}(a) follows from  Lemma~\ref{lem:imprim1} and part (a) of this result. Finally,  each element of $g_1^G\cap M$ generates a Sylow $r_1$-subgroup of $M$, and hence is conjugate in $M$ to the element $g$.
  \end{proof}

Finally we obtain the bounds for Proposition~\ref{prop:imprim}  for the types  $\bfX=\bfL, \bfU$, and  $\mathbf{O^\eps}$ (with $\varepsilon=\pm$ and $q$ odd). First we obtain upper bounds for ${\rm Prob}_2(g_1,g_2,\bfX)$ which hold for all $d=e_1+e_2$ with $e_1\geq e_2\geq 1$ and the $e_i$ even in the orthogonal case; the condition $d>8$ is used only to obtain the constant coefficients in the final bounds for the unitary and orthogonal cases.

\begin{lemma}\label{lem:imprim}
Suppose that $\bfX\in\{\bfL, \bfU, \mathbf{O^\varepsilon}\}$ with $q$ odd if $\bfX=\mathbf{O^\varepsilon}$, and suppose that $d>8$. Then, setting $e_0=-2e_1e_2+e_1+e_2$, 
 
\[
 {\rm Prob}_2(g_1,g_2,\bfX) \leq  \left\{ \begin{array}{cll}\displaystyle 
 \frac{(q^{e_1}-1)(q^{e_2}-1)e_1e_2}{2q^{2e_1e_2}} &< 0.5\cdot e_1e_2\cdot q^{e_0}& \mbox{if $\bfX=\bfL$}\\[10pt]
 \displaystyle\frac{(q^{e_1}+1)(q^{e_2}+1)e_1 e_2}{2\cdot \mathbf{k}(d,q^2, e_1,\bfU)\cdot 
 |\mathcal{U}(d, q^2, e_1, \bfU)|} &< 1.452\cdot e_1e_2\cdot q^{e_0}& \mbox{if $\bfX=\bfU$}\\[10pt]
 \displaystyle\frac{(q^{e_1/2}+1)(q^{e_2/2}+1)\cdot  e_1 e_2}{2\cdot \mathbf{k}(d, q, e_1,\mathbf{O^\varepsilon})\cdot |\mathcal{U}(d, q, e_1, \mathbf{O^\varepsilon})|}&< 4.12\cdot e_1e_2\cdot q^{e_0/2} & \mbox{if $\bfX=\mathbf{O^\varepsilon}$}
\end{array}\right.
\]
and Proposition~$\ref{prop:imprim}$ is proved.
\end{lemma}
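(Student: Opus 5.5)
We bound ${\rm Prob}_2$ through Lemma~\ref{lem:fixg1}. Fix $g\in g_1^G$ and sum $|\mathcal{S}_2(g,M)|$ over the subgroups $M\in\cM_2(\bfX)$ that contain $g$.

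\emph{Step 1: the number of $M\in\cM_2(\bfX)$ containing $g$.} By Lemma~\ref{lem:imprim3}, $g_1^G\cap M=g^M$. We may take $N_G(M)=M$; where this fails, we count decompositions instead, which gives the same count. Lemma~\ref{lem:countM} then says the number of such $M$ is $|C_G(g)|/|C_M(g)|$.

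\emph{Step 2: bounding $|\mathcal{S}_2(g,M)|$.} By Lemma~\ref{lem:imprim2}, inside $M$ the element $g$ is a permutation of $W_1,\dots,W_{e_1+1}$. By Lemma~\ref{lem:imprim3}, each $g'\in\mathcal{S}_2(g,M)$ is determined by three pieces of data:
\begin{itemize}
\item the pair $\{i,i'\}\subseteq\{1,\dots,e_1+1\}$ with $Y_1\cap Y_2=W_i\oplus W_{i'}$, giving $\binom{e_1+1}{2}$ choices;
\item the $r_2$-cycle on $\{i,i'\}\cup\{e_1+2,\dots,d\}$, giving at most $(e_2)!$ choices;
\item the scalars $x_{j}$ with product $1$, giving at most $(q^u-1)^{e_2}$ choices; in the unitary and orthogonal cases the isometry conditions cut this down to $(q+1)^{e_2}$ or $2^{e_2}$ choices respectively.
\end{itemize}
This crude product looks too large. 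The better move is to use $C_M(g)$: conjugating by $C_M(g)$ preserves membership in $\mathcal{S}_2(g,M)$, so orbits can be counted. We have $C_M(g)\geq\langle g\rangle\times(\GL_1(q^u)\wr S_{e_2-1})\cap G$ together with diagonal scalars on $Y_1$. Using this, $C_M(g)$ moves the cycle structure on $X_1$ and normalises the scalars. What remains is essentially the choice of $\{i,i'\}$ relative to the cycle of $g$ and the position of $X_1$ in the cycle. The aim is to show
\[
\frac{|C_G(g)|}{|C_M(g)|}\,|\mathcal{S}_2(g,M)|\le \frac{e_1e_2}{2}\,|C_G(g)|\cdot c,
\]
with $c=1$ for $\bfL$, and then to divide by $|N(d,q^u,e_1,\bfX)|$. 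The expected numerator is $e_1e_2/2\cdot(\text{correction})$. The factor $e_1e_2/2$ should arise as $(e_1+1)e_1/2$ choices for $\{i,i'\}$, times the $e_2$ positions at which the pair can be inserted in the cycle, divided by the $|\langle g\rangle|=e_1+1$ rotations. This bookkeeping is the main obstacle. In particular, it must be checked that $|C_M(g)|$ exactly absorbs the choices of cycle and of scalar, i.e.\ that $C_M(g)$ acts transitively on the elements $g'$ with a given combinatorial pattern. One also has to handle the case distinctions $\bfX=\bfU,\bfO^\eps$, where $C_M(g)$ contains only isometric scalars.

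\emph{Step 3: dividing by the denominator.} With $|\bigcup\mathcal{S}_2(g,M)|\le \tfrac{e_1e_2}{2}|C_G(g)|$, we divide by $|N|$.
\begin{itemize}
\item For $\bfL$, Proposition~\ref{p:Lbds}(c) gives $|C_G(g_1)|/|N|=(q^{e_1}-1)(q^{e_2}-1)q^{-2e_1e_2}$. This yields the first bound, and $<0.5e_1e_2q^{e_0}$ follows trivially.
\item For $\bfU$ and $\bfO^\eps$, Proposition~\ref{p:Xbds}(c) gives $|C_G(g_1)|/|N|=(q^{\alpha e_1}+1)(q^{\alpha e_2}+\beta)/(\mathbf{k}|\cU|)$, with $\beta\le 1$.
\end{itemize}
The numeric constants then follow from $\mathbf{k}\geq 1-3/(2q^u)$, the lower bounds $\bfa(q,\bfX)q^{ue_1e_2}$ for $|\cU|$ from Table~\ref{t:abLU}, and $(q^{e_i}+1)\le q^{e_i}(1+q^{-e_i})$ with $e_i\geq2$, $d>8$, $q\geq3$ where relevant.
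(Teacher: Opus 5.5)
Your Steps~1 and~3 follow the paper. Step~2, however, carries the whole content of the lemma, and you never complete it. The cause is a misjudgement: the product you wrote down and then set aside as ``too large'' is exactly the paper's bound,
\[
|\mathcal{S}_2(g,M)|\le\binom{e_1+1}{2}\cdot e_2!\cdot|\GX_1(q)|^{e_2},\qquad |\GX_1(q)|=q-1,\ q+1,\ 2\ \mbox{ for }\ \bfL,\ \bfU,\ \mathbf{O^\varepsilon}.
\]
This bound only ever enters divided by $|C_M(g)|$. The subgroup of $C_M(g)$ you yourself described has order $(e_1+1)\,(e_2-1)!\,|\GX_1(q)|^{e_2}$: it is $\langle g\rangle\times S_{e_2-1}$ in the top group, together with the base elements that are constant on $W_1,\dots,W_{e_1+1}$ and arbitrary on the other $e_2-1$ summands. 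It is in fact all of $C_M(g)$, since a centralising element must project into $C_{S_d}(g)$. A lower bound already suffices, though, because $|C_M(g)|$ sits in the denominator of the count $|C_G(g_1)|/|C_M(g)|$ from Lemma~\ref{lem:countM}. Dividing gives
\[
\frac{|\mathcal{S}_2(g,M)|}{|C_M(g)|}\le\frac{\tfrac12 e_1(e_1+1)\,e_2!}{(e_1+1)\,(e_2-1)!}=\frac{e_1e_2}{2},
\]
which is precisely the inequality you set as your aim, with no orbit analysis at all.

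The orbit argument you put in its place stops at the decisive point, and the transitivity it requires is false in general. First, the scalar matrices of $M$ lie in $C_M(g)$ and act trivially by conjugation. Second, suppose $c\in C_M(g)$ conjugates one admissible $g'$ to another with the same pair $\{i,i'\}$ and the same $r_2$-cycle. Then $c$ fixes $W_i$ and $W_{i'}$, since no nontrivial power of the $(e_1+1)$-cycle of $g$ stabilises a $2$-set. Also the diagonal part of $c$ is constant on $W_1,\dots,W_{e_1+1}$. Hence the product of the $x_j$ along the $g'$-path from $W_i$ to $W_{i'}$ (the quantity in Lemma~\ref{lem:imprim3}(c)) is unchanged, because it telescopes. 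For $\bfX=\bfL$ with $q\ge4$, this product takes $q-2\ge2$ values on elements of $\mathcal{S}_2(g,M)$ with a fixed pattern. So $C_M(g)$ does not `exactly absorb' the scalar choices. The inequality you want is still true, but only through the direct count above.

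With that count in place, Step~3 goes through as in the paper. To obtain the factors $1.134$ and $1.35$ behind the constants $1.452$ and $4.12$, you must use the restrictions on $(e_1,e_2)$ forced by $d\ge10$ and the parity of the $e_i$. For instance, $e_2=3$ forces $e_1\ge7$ for $\bfU$, and $e_2=2$ forces $e_1\ge8$ for $\mathbf{O^\varepsilon}$. Finally, $M$ is self-normalising, so the hedge in Step~1 is unnecessary.
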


\begin{proof}
Recall that  $G=\GX_d(q)$ where, if $\bfX=\mathbf{O^\varepsilon}$, then $d$ is even, $q$ is odd, and $G=\GO_d^\varepsilon(q)$ with $\varepsilon\in\{ +,-\}$.  
It follows from \eqref{e:M2}, that  $\cM_2(\bfX)$ consists of a single $G$-conjugacy class, and that each $M\in\cM_2(\bfX)$ is self-normalising in $G$ and satisfies $M=\GX_1(q)\wr S_d$,  see \cite[Table  4.2.A]{KL}. If $\bfX=\bfL$ or $\bfU$, then $\GX_1(q)\cong C_{q-\tau}$, where $\tau=1$ if $\bfX=\bfL$, and $\tau=-1$ if $\bfX=\bfU$, while  if $\bfX=\mathbf{O^\varepsilon}$, then $\GX_1(q)\cong C_{2}$, see for example, \cite[Proposition 2.5.5]{KL}. 

We may choose $M\in\cM_2(\bfX)$ stabilising the decomposition $\bigoplus_{j=1}^d W_j$ (with the $W_j$ as in Lemma~\ref{lem:imprim2}) and $g\in g_1^G\cap M$ of order $r_1=e_1+1$ such that $g$ lies in the top group $T$ of $M$ and induces $(1,2,\dots, r_1)$ on the set of subspaces $\{ W_1,\dots,W_{r_1}\}$ and fixes each $W_j$ pointwise with $j>r_1$ (Lemma~\ref{lem:imprim2}). Thus 
\begin{equation}\label{e:cmg}
    |C_M(g)|=|\GX_1(q)|^{e_2}\cdot r_1\cdot (d-r_1)!
    =|\GX_1(q)|^{e_2}\cdot (e_1+1)\cdot (e_2-1)!
\end{equation}
since $C_T(g)=\langle g\rangle\times S_{d-r_1} \cong C_{r_1} \times S_{d-r_1}$ and $g$ centralises precisely a subgroup of order $|\GX_1(q)|^{e_2}$ of the base group of $M$ (which can be seen from Lemma~\ref{lem:imprim2}(a)). 
By Lemma~\ref{lem:imprim3},  $g^M=g_1^G\cap M$, and hence, by Lemma~\ref{lem:countM}, $g$ lies in precisely $|C_G(g_1)|/|C_M(g)|$ conjugates of $M$ in $G$. Thus, by  \eqref{e:asch3} and \eqref{e:ndex},  
\begin{equation}\label{e:imprimL}
    {\rm Prob}_2(g_1,g_2,\bfX) \leq   
\frac{|C_G(g_1)|}{|C_M(g)|}\cdot 
\frac{|\mathcal{S}_2(g,M)|}{|N(d,q^u,e_1,\bfX)|} = \frac{|C_G(g_1)|}{|N(d,q^u,e_1,\bfX)|}\cdot 
\frac{|\mathcal{S}_2(g,M)|}{|C_M(g)|},
\end{equation}
with $\mathcal{S}_2(g,M)$ as in \eqref{e:siM}, namely the set of all $g'\in g_2^G\cap M$ such that $(g,g')$ is a stingray duo and $\langle g,g'\rangle$ is irreducible on $V$. By Lemma~\ref{lem:imprim3}, $r_2=e_2+1$ and for each such $g'$ there is an $r_2$-subset $J=\{j_1,\dots, j_{r_2}\}\subset \{1,\dots,d\}$ such that $g'$ induces an $r_2$-cycle on the $W_j$ in $Y_2\coloneq\bigoplus_{j\in J} W_j$, and fixes each $W_j$ pointwise where $j\not\in J$. Moreover $J=\{i,i'\}\cup\{r_1+1,\dots,d\}$ for distinct $i,i'\in\{1,\dots, r_1\}$, so that $\{i,i'\}=\{j_s,j_t\}$ with $s<t$, and  the restriction  $g'|_{Y_2} = (x_{j_1},\dots, x_{j_{r_2}})(j_1,\dots, j_{r_2})$ for some $x_{j_i}\in\GX_1(q)$ such that  $x_{j_1}\dots x_{j_{r_2}}=1$ and $x_{j_s}\dots x_{j_t}\ne1$. 

This information allows us to estimate $|\mathcal{S}_2(g,M)|$. (We give an over-estimate as we ignore the second restriction on the $x_j$.) There are $\binom{r_1}{2}$ choices of the pair $\{i,i'\}$, and for each pair the number of $r_2$-cycles $(j_1,\dots, j_{r_2})$ is $(r_2-1)!$, and the number of choices of the tuple  $(x_{j_1},\dots, x_{j_{r_2}})$ such that $x_{j_1}\dots x_{j_{r_2}}=1$ is $|\GX_1(q)|^{r_2-1}$. Thus, since $r_1=e_1+1$ and $r_2=e_2+1$, this gives
\[
|\mathcal{S}_2(g,M)| \leq \binom{r_1}{2}\cdot (r_2-1)! \cdot |\GX_1(q)|^{r_2-1} = \frac{e_1(e_1+1)}{2}\cdot (e_2)!\cdot |\GX_1(q)|^{e_2}.
\]
Using \eqref{e:cmg}, this yields
\[
\frac{|\mathcal{S}_2(g,M)|}{|C_M(g)|} \leq \frac{e_1(e_1+1)/2 \cdot (e_2)!\cdot |\GX_1(q)|^{e_2}}{ |\GX_1(q)|^{e_2}\cdot (e_1+1)\cdot (e_2-1)!} = \frac{e_1e_2}{2}.
\]

Recall that $G=\GX_d(q)$, so the value of $|C_{C}(g_1)|/|N(d,q^u, e_1,\bfX)|$ is given by Propositions~\ref{p:Lbds}(c) and~\ref{p:Xbds}(c). Thus, substituting into \eqref{e:imprimL} we obtain the required bounds as follows. First, if $\bfX=\bfL$, then
\[
 {\rm Prob}_2(g_1,g_2,\bfL) \leq  
 \frac{(q^{e_1}-1)(q^{e_2}-1)}{q^{2e_1e_2}}\cdot \frac{e_1 e_2}{2} < q^{-2e_1e_2+e_1+e_2}\cdot \frac{e_1 e_2}{2}
 \]
as in the statement. Next let $\bfX=\bfU$. Then, using $\mathbf{k}(d,q^2, e_1,\bfU)\geq 1-3/(2q^2)\geq 5/8$, and $|\cU(d,q^2,e_1,\bfU)|\geq (5/8)\cdot  q^{2e_1e_2}$ (see Proposition~\ref{p:Xbds}), we have
\[
 {\rm Prob}_2(g_1,g_2,\bfU) \leq  
 \frac{(q^{e_1}+1)(q^{e_2}+1)}{\mathbf{k}(d, q^2, e_1,\bfU)\cdot|\cU(d, q^2,e_1,\bfU)|}\cdot \frac{e_1 e_2}{2}\leq \frac{e_1e_2}{q^{2e_1 e_2-e_1-e_2}}\cdot \frac{(1+q^{-e_1})(1+q^{-e_2})}{(5/8)\cdot(5/8)\cdot 2}.
 \]
 As $d$ is even and $d > 8$, and as each of the $e_i$ is odd and at least $3$, either $e_1\geq e_2\geq5$, or $e_1\geq7, e_2=3$, and hence
\[
(1+q^{-e_1})(1+q^{-e_2})\leq \max\left\{\frac{33}{32}\cdot\frac{33}{32},\, \frac{129}{128}\cdot\frac{9}{8}\right\} = \frac{1161}{1024} < 1.134. 
\]
Thus 
\[
 {\rm Prob}_2(g_1,g_2,\bfU) <  
  e_1e_2\cdot q^{-2e_1 e_2+e_1+e_2} \frac{1.134}{(5/8)\cdot(5/8)\cdot 2} < 1.452 \cdot  e_1e_2\cdot q^{-2e_1 e_2+e_1+e_2}
 \]
as asserted. Finally consider $\bfX=\mathbf{O^\varepsilon}$, so $q$ is odd and the $e_i$ are even. Here we use $\mathbf{k}(d, q, e_1,\mathbf{O^\varepsilon})\geq 1-3/(2q)\geq 1/2$, and $|\cU(d,e_1,\bfU)|\geq (20/61)\cdot  q^{e_1e_2}$  (see Proposition~\ref{p:Xbds}), and we have
\[
{\rm Prob}_2(g_1,g_2,\mathbf{O^\eps})\leq
 \frac{(q^{e_1/2}+1)(q^{e_2/2}+\varepsilon)}{\mathbf{k}(d, q, e_1,\mathbf{O^\varepsilon})\cdot|\cU(d, q, e_1,\mathbf{O^\varepsilon})|}\cdot \frac{e_1 e_2}{2}
 \leq \frac{e_1e_2 (1+q^{-e_1/2})(1+q^{-e_2/2})}{(1/2)\cdot(20/61)\cdot 2\cdot q^{e_1 e_2-(e_1+e_2)/2}}
\]
As $q\geq3$, $d$ is even and $d > 8$, either $e_1\geq e_2\geq6$, or $e_1\geq8, e_2=2$, or $e_1\geq6, e_2=4$,  and hence
\[
(1+q^{-e_1/2})(1+q^{-e_2/2})\leq \max\left\{\frac{28}{27}\cdot\frac{28}{27},\, \frac{82}{81}\cdot\frac{4}{3},\, \frac{28}{27}\cdot\frac{10}{9}\right\} = \frac{328}{243} =1.349... < 1.35 
\]
Thus
we have 
\[
{\rm Prob}_2(g_1,g_2,\mathbf{O^\eps}) < 
\frac{1.35\cdot q^{-e_1e_2+(e_1+e_2)/2}\cdot  e_1 e_2}{(1/2)\cdot (20/61)\cdot 2} < 4.12\cdot e_1e_2\cdot q^{-e_1e_2+(e_1+e_2)/2}
\]
completing the proof of this lemma, and we observe that Proposition~\ref{prop:imprim} follows immediately from this result and Lemma~\ref{lem:imprim3}.
\end{proof}

\section{\texorpdfstring{$\Asch_3:$}{Asch3} Stabilisers of extension fields}

We use the notation and assumptions from Hypothesis~\ref{hyp}.
In this section we estimate the proportion  ${\rm Prob}_3(g_1,g_2,\bfX)$ of stingray duos $(g,g')\in g_1^G\times g_2^G$ for which $\langle g,g'\rangle$ preserves an `extension field structure' on $V$, that is to say, for some prime $b$ dividing $d$, the space $V$ will be identified with $\mathbb{F}_{q^{u b}}^{d/b}$, and for some such identification the group $\langle g,g'\rangle$ is contained in $G\cap (\GL_{d/b}(q^{u b}).b)$.
Our main result is  Proposition~\ref{prop:extn}, and a formal proof is given at the end of the subsection after several preliminary results.

\begin{proposition}\label{prop:extn}
Assume that Hypothesis~$\ref{hyp}$ holds with $d>8$.   Then 
$ {\rm Prob}_3(g_1,g_2,\bfX)=0$ if and only if one of the lines of Table~$\ref{t:Prob3Zero}$ holds,
 and otherwise
 \[
 {\rm Prob}_3(g_1,g_2,\bfX) < c_{\bfX} \cdot q^{-e_1e_2/\alpha},
 \]
 where $c_{\bfX}$ and $\alpha$ are given in Table~$\ref{t:Prob3Res}$.
Moreover we take $\cM_3(\bfX)=\emptyset$ if the conditions of Table~$\ref{t:Prob3Zero}$ hold, and otherwise
 \begin{quote}
     $\cM_3(\bfX)$ is  the union, over primes $b$ dividing $\gcd(e_1, e_2)$, of the $G$-conjugacy classes
     of groups $M$ occurring in Table~$\ref{t:asch3}$.
 \end{quote}
\end{proposition}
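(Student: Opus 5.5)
The plan follows the template used for $\Asch_2$. First I determine which extension-field subgroups can contain a stingray duo $(g,g')$ whose group $H=\langle g,g'\rangle$ lies in no member of $\cM_1(\bfX)\cup\cM_2(\bfX)$. The relevant overgroups are $M=G\cap(\GL_{d/b}(q^{ub}).b)$ for primes $b\mid d$, together with the semilinear unitary and orthogonal variants listed in \cite[Table 4.3.A]{KL}.

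The first step is to show that $g$ and $g'$ lie in the $\mathbb{F}_{q^{ub}}$-linear subgroup $M_0=M\cap\GL_{d/b}(q^{ub})$, and that $b\mid\gcd(e_1,e_2)$.
\begin{itemize}
\item Since $|g_i|=r_i$ is an $e_i$-ppd prime, $r_i\ge e_i+1\ge3$, and $r_i\ne b$ unless $r_i=b$ divides $d$. The case $r_i=b$ forces $b=e_i+1>d/2$ or a similarly small configuration, which I expect to rule out directly using $d>8$ and $e_1\ge d/2$.
\item Hence $g_i\in M_0$. Then $U_{g_i}$ is an $\mathbb{F}_{q^{ub}}$-subspace, being the unique nontrivial irreducible $\langle g_i\rangle$-submodule (Lemma~\ref{lem:unique}(a)) and the image of $g_i-1$. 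So $b\mid e_i$ for $i=1,2$.
\item Conversely, $g_i$ acts irreducibly over $\mathbb{F}_{q^u}$ on $U_{g_i}$, so $g_i$ is an $(e_i/b)$-stingray element of $\GL_{d/b}(q^{ub})$.
\end{itemize}
Checking the parity and type constraints of Table~\ref{tab:one} in the extension-field classical group, for example a unitary group over $\mathbb{F}_{q^{b}}$ inside $\Sp$ or $\bfO$ when $b=2$, will produce the list of cases in Table~\ref{t:Prob3Zero} where no such $M$ exists and ${\rm Prob}_3=0$. It will also produce the explicit conjugacy classes in Table~\ref{t:asch3}, which justifies the choice of $\cM_3(\bfX)$.

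Next I count. For each prime $b\mid\gcd(e_1,e_2)$ and each class in Table~\ref{t:asch3}, I would check that $g^M$ is a union of at most $b$ (in fact a bounded number of) $M$-classes of $g_1^G\cap M$, and apply Lemma~\ref{lem:countM}, or a variant that multiplies by this small number of classes. This gives the number of conjugates of $M$ containing $g$ as $|C_G(g_1)|/|C_M(g)|$, where $C_M(g)$ is the torus-times-smaller-classical-group centraliser over $\mathbb{F}_{q^{ub}}$ supplied by Lemma~\ref{lem:uperp}(c). For $|\mathcal S_3(g,M)|$ I bound by the number of $g'\in g_2^G\cap M$ forming a duo with $g$. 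By Propositions~\ref{p:Lbds} and~\ref{p:Xbds}, applied inside the extension-field group, this is roughly $q^{u\,e_1e_2\cdot 2/b}$ times the class size $c(d/b,q^{ub},e_2/b,\cdot)$, possibly multiplied by $b$ for Galois twists. Feeding these into \eqref{e:asch3} via the analogue of \eqref{e:imprimL}:
\begin{itemize}
\item the centraliser ratios cancel up to torus factors;
\item the main term becomes $q^{2e_1e_2/b}/q^{2e_1e_2}$ for $\bfL$, with analogous exponents for the other types;
\item this gives an exponent $-e_1e_2(1-1/b)$ times the appropriate field factor.
\end{itemize}

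Finally I sum over primes $b\mid\gcd(e_1,e_2)$. The term $b=2$ dominates, giving $q^{-e_1e_2/\alpha}$ with $\alpha=2$ or $4$ according to type. The remaining terms are absorbed using $e_1e_2\ge 2(d-2)>14$ and a geometric series bound, which together with the constants $\bfa,\bfb$ and $\mathbf{k}\ge 1-3/(2q^u)$ yields $c_\bfX$.

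I expect the main obstacle to be the first step for $\bfX\ne\bfL$. There one must pin down exactly which semilinear overgroups, such as $\GU_{d/2}(q).2$ inside $\Sp_d(q)$ or $\GO^\pm_d(q)$, and $\GO_{d/2}(q^2).2$ or $\GU_{d/2}(q).2$ inside orthogonal groups, actually contain conjugates of both $g_1$ and $g_2$ with complementary $U$'s. One must also determine how many $M$-classes $g_1^G\cap M$ splits into, because the conditions on the types of $U_{g_i}$ must be rechecked in each case. I would handle this with \cite[Section 4.3]{KL} case by case, deriving Table~\ref{t:Prob3Zero} from the parity obstructions.
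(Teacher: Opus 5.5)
Your outline follows the paper's route (Lemmas~\ref{lem:extn1}--\ref{lem:extn3}). You place $g,g'$ in $Y=G\cap\GL_{d/b}(q^{ub})$, deduce $b\mid\gcd(e_1,e_2)$, read off $M$ from \cite[Table 4.3.A]{KL}, bound
\[
{\rm Prob}_3(g_1,g_2,\bfX;b)\le\frac{|C_G(g_1)|}{|N(d,q^u,e_1,\bfX)|}\cdot\frac{b\,|N(d/b,q^{ub},e_1/b,\bfY)|}{|C_Y(g)|}
\]
via Lemma~\ref{lem:countM}, and sum over $b$. However, your last step is miscomputed, and as written it does not prove the statement. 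You conclude $\alpha\in\{2,4\}$, which gives a bound strictly weaker than the claimed $\alpha=1$ for $\bfL,\bfU$ and $\alpha=2$ for $\bfSp,\bfO^\eps$. The correct bookkeeping is as follows.
\begin{enumerate}
\item[] For $\bfL$, Proposition~\ref{p:Lbds}(c) applied to $G$ and to $Y$ gives exactly $b\,q^{-2e_1e_2(1-1/b)}$, so $b=2$ contributes $2q^{-e_1e_2}$.
\item[] For $\bfU$ only odd $b$ occur. The contribution $\asymp b\,q^{-2e_1e_2(1-1/b)}\le 3q^{-4e_1e_2/3}$ is absorbed into $q^{-e_1e_2}$.
\item[] For $\bfSp$ and $\bfO^\eps$, $|\cU(d,q,e_1,\bfX)|\asymp q^{e_1e_2}$, while the count in $Y$ is $\asymp q^{e_1e_2/b}$. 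This includes the $\GU_{d/2}(q).2$ lines, where $|\cU(d/2,q^2,e_1/2,\bfU)|\asymp q^{e_1e_2/2}$. The torus factors cancel exactly, so $b=2$ gives $q^{-e_1e_2/2}$.
\end{enumerate}
Relatedly, your estimate $q^{u e_1e_2\cdot 2/b}$ for the duo count inside $Y$ is right only for $\bfL$. In general it is $(q^u)^{\beta e_1e_2/b}$, with $\beta=2$ for $\bfL$ and $\beta=1$ otherwise.

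To reach the tabulated constants and the case list you also need four further points.

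(i) You must show $g_1^G\cap M=g^M$ exactly, not merely a union of at most $b$ classes; an extra factor $b$ would roughly double $c_\bfL=2.05$. The paper gets this from the torus argument. The $e_1$ elements of $g_1^G$ in the cyclic torus $T\ni g$ all lie in $T_Y$ and form $b$ orbits of $N_Y(T_Y)$, permuted transitively by $N_M(T_Y)$. The same argument shows $g_2^G\cap M$ is exactly $b$ classes of $Y$, which is the source of the factor $b$.

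(ii) Your size argument excludes $r_1=b$ but not $r_2=b$, since $r_2\ge e_2+1$ can be small. Use instead $b\mid e_1$ (from $g$) and $b\mid d$ to get $b\mid e_2$, hence $r_2>e_2\ge b$.

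(iii) Lemma~\ref{lem:uperp}(b) permits $e=1$ in orthogonal groups, so parity alone does not exclude $e_2/b=1$ with $M$ of type $\GO^\circ_{d/2}(q^2).2$. The paper rules this out because a $1$-stingray element of an orthogonal group is a reflection, of order $2\ne r_2$.

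(iv) For odd $b$, the bound $e_1e_2\ge 2(d-2)$ loses too much; for instance it gives about $2.12$ rather than $2.05$ for $\bfL$ with $q=2$. Use $b\mid e_2$ instead, so $e_2\ge3$, and $e_2\ge 6$ for $\bfSp,\bfO^\eps$. Combine this with the monotonicity of $x\,q^{m/x}$ as in Lemma~\ref{lem:c3boundV2}. This bounds the odd-$b$ terms by $3.01\,q^{-(d-3)}$ (for $\bfL,\bfU$), respectively $3.01\,q^{-(d-6)}$ (for $\bfSp,\bfO^\eps$), times the leading power $q^{-e_1e_2/\alpha}$.
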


{
 \begin{table}
 \caption{Constants for ${\rm Prob}_3(g_1,g_2,\bfX)$ in Proposition~\ref{prop:extn}}
\begin{tabular}{lllr}
  \toprule
$\bfX$ & conditions & $\alpha$ & $c_{\bfX}$ \\
    \midrule
$\bfL$   & &  $1$& $2.05$\\ 
\midrule
$\bfU$   & & $1$& $0.07$ \\ 
\midrule
$\bfSp$   & $e_1\equiv e_2\equiv 0\pmod{4}$   & $2$ & $9.42$\\ 
  & $e_1\equiv e_2 \equiv 2 \pmod{4}$; {$q$ even} & $2$ & 0.22\\ 
  & $e_1\equiv e_2\equiv 2\pmod{4}$; $ q\ge 5$ odd  & $2$ & $3.0$\\ 
  & $e_1\equiv e_2\equiv 2\pmod{4}$; {$q=3$} & $2$ & $4.02$ \\
\midrule
$\mathbf{O^+}$ &  $e_1\equiv e_2\equiv 2\pmod{4}$;  $e_2\ge 6$ & $2$ & $35.64$\\ 
 &  $e_1\equiv e_2\equiv 2\pmod{4}$;   $q=2$ and $e_2=2$ & $2$ & $25.62$\\ 
 &  $e_1\equiv e_2\equiv 2\pmod{4}$;   $q\geq 4$ and $e_2=2$ & $2$ & $8.02$\\ 
 & $e_1\equiv e_2\equiv 0\pmod{4}$ 
 & $2$ & $19.31$ \\ 
\midrule
$\mathbf{O^-}$ &  $e_1\equiv e_2\equiv 0\pmod{4}$ &$2$ & $10.94$ \\ 
&  $e_1\equiv e_2\equiv 2\pmod{4}$&$2$ & $0.26$ \\ 
\bottomrule
\end{tabular}
\label{t:Prob3Res} 
\end{table}
}

 \begin{table}
\caption{Necessary and sufficient conditions for ${\rm Prob}_3(g_1,g_2,\bfX)=0$}
\begin{tabular}{ll}
  \toprule
$\bfX$ & ${\rm Prob}_3(g_1,g_2,\bfX)=0$ if and only if\\
    \midrule
$\bfL$   & $\gcd(e_1,e_2)=1$  \\
$\bfU$   & $\gcd(e_1,e_2)=1$  \\
$\bfSp$   & $\gcd(e_1,e_2)=2$, and if $\frac{e_1}{2}, \frac{e_2}{2}$ are both odd then $q$ is even  \\
$\mathbf{O^\varepsilon}$ & $\gcd(e_1,e_2)=2$, and if $\frac{e_1}{2}, \frac{e_2}{2}$  are both odd then $\varepsilon = -$ \\ 
\bottomrule
\end{tabular}
\label{t:Prob3Zero}
\end{table}

 \begin{table}
\caption{Possibilities for $M$ in $\cM_3(\bfX)$, for  Proposition~\ref{prop:extn}}
\begin{tabular}{clllll}
  \toprule
Line&$\bfX$ & $M\ge$ & Conditions&& Consequences\\
    \midrule
1&$\bfL$   & $\GL_c(q^b).b$ & & &$c>1$ \\
2&$\bfU$   & $\GU_c(q^b).b$ & $b$, $e_1/b, e_2/b$ odd & &{$d\ge 10$; $d$,} $c$ even, $e_2 \ge 3$\\
3&$\bfSp$   & $\Sp_c(q^b).b$ &  $e_1/b, e_2/b$  even & & {$d\ge 12$;} $c\ge4$ even, $e_2\ge 4$\\
4&$\bfSp$   & $\GU_{d/2}(q).2$  & $b=2$; $d/2$ even, and && $d=2c\geq12$, and $q\ge 5$ or  \\ 
& & & $q$, $e_1/2,e_2/2$ odd && $e_2 \ge 6$ with $q= 3$\\ 
5&$\mathbf{O^{\varepsilon}}$  & $\GO^\varepsilon_c(q^b).b$ &  $\varepsilon=\pm$, $e_1/b, e_2/b$  even &&{$d\ge 12$;} $c\ge4$ even, $e_2\ge 4$\\
6&$\mathbf{O^{+}}$   & $\GU_{d/2}(q).2$ & $b=2$, $d/2$ even, and&& {$d=2c\ge 12$; $(e_2,q) \neq (2,3)$ }\\
&&& $e_1/2, e_2/2$  odd && and {$e_2=2$ or $e_2\ge 6$}\\
\bottomrule
\end{tabular}
\label{t:asch3}
\end{table}

To prove Proposition~\ref{prop:extn}, we work with the following assumptions.

\begin{hypothesis}[for $\Asch_3$]\label{H:AC3}
Assume that Hypothesis~$\ref{hyp}$ holds and 
${\rm Prob}_3(g_1,g_2,\bfX)> 0$. Let $(g,g')$ be a stingray duo in $g_1^G\times g_2^G$ such that $H\coloneq\langle g,g'\rangle$  leaves invariant an extension field structure $V=\mathbb{F}_{q^{u b}}^{c}$, where $d=bc$ for some prime $b$ dividing $d$, and $H\leq M\coloneq G\cap (\GL_{c}(q^{u b}).b)$, while $H\not\leq L$ for any $L\in \cM_1(\bfX)\cup\cM_2(\bfX)$  (as in \eqref{e:M1L}, \eqref{e:M1X}, or \eqref{e:M2}). 
\end{hypothesis}
 
 First we derive some restrictions on these parameters. Note that the proof of the following result uses the fact that $e_2\geq2$.
 
 \begin{lemma}\label{lem:extn1} 
Assume that Hypothesis~$\ref{H:AC3}$ holds. Then 
 \begin{enumerate}[{\rm (a)}]
     \item  $b$ divides $\gcd(e_1, e_2)$, and in particular $b<d$;
     \item $g, g'\in G\cap \GL_c(q^{u b})<M$, and moreover, $(g,g')$ is a ppd $(e_1/b, e_2/b)$-stingray duo in $G\cap\GL_c(q^{u b}) < M$;
     \item the group $M$ satisfies one of the lines of Table~$\ref{t:asch3}$, and in each case, $M$ is self-normalising in $G$,  $g_1^G\cap M = g^M$ and $g_2^G\cap M = (g')^M$. 
 \end{enumerate}
 In particular the content of the set $\cM_3(\mathbf{X)}$ given in Proposition~$\ref{prop:extn}$ is justified.  
 \end{lemma}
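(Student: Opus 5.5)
The approach is to work first with the $\F_{q^{ub}}$-linear subgroup $N\coloneq G\cap\GL_c(q^{ub})$, which has index dividing $b$ in $M$, and then to read off the possible types of $M$ from the classification of $\Asch_3$-subgroups in \cite[Section 4.3]{KL}.

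\emph{Part (a).} Since $b$ is prime, $M/N$ has order dividing $b$. Each element $g,g'$ has prime order $r_i$. By Remark~\ref{rem:ppd}(b) each $r_i$ is odd, and $r_i\geq e_i+1\geq 3$. We have $r_1>d/2\geq b$. If instead $b\geq r_1$, then $b=r_1=e_1+1$ divides $d$, which is impossible since $e_1+1>d/2$ and $e_1+1<d$. Hence $r_1\neq b$, and likewise $r_2\neq b$ unless $b=e_2+1$. I will rule out that last case by noting that an element of prime order $b$ in $M\setminus N$ acts as a field automorphism, so its fixed space has dimension at least $d/b$ over $\F_{q^u}$. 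On the other hand the fixed space of $g'$ has dimension $e_1$, and its moved space $U_{g'}$ would have to be $\F_{q^{ub}}$-semilinearly irreducible; I expect this to give a contradiction by comparing dimensions. I would check this directly but expect it to be routine.

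Granting that $g,g'\in N$, each $g_i$ acts $\F_{q^{ub}}$-linearly. Its subspaces $U_{g}$ and $F_g$ are $\F_{q^{ub}}$-subspaces, because $F_g=\ker(g-1)$ and $U_g=\im(g-1)$ (using Lemma~\ref{lem:unique}). So $b$ divides $\dim_{\F_{q^u}}U_g=e_1$, and similarly $b$ divides $e_2$. This proves (a). It also gives (b): over $\F_{q^{ub}}$ the element $g$ is irreducible on $U_g$ of dimension $e_1/b$. Moreover $r_1$ is a primitive prime divisor of $(q^{ub})^{e_1/b}-1$, since it is one of $q^{ue_1}-1$ and the order of $q^{ub}$ modulo $r_1$ is $e_1/b$. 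Finally $U_g\cap U_{g'}=0$ persists.

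\emph{Part (c).} I would run through \cite[Table~4.3.A]{KL} for each type $\bfX$. Over the extension field, the form and the stingray condition (via Lemma~\ref{lem:uperp}(b) and Table~\ref{tab:one} applied to the subfield group) force the parity conditions on $e_i/b$ listed in Table~\ref{t:asch3}.

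\begin{itemize}
\item For $\bfU$, a unitary structure over $\F_{q^{2b}}$ requires $b$ odd, and the stingray parity requires $e_i/b$ odd.
\item For $\bfSp$ and $\bfO^\eps$, the symplectic or orthogonal extension-field subgroups require $e_i/b$ even, since stingray moved spaces in these types have even dimension.
\item The $\GU_{d/2}(q).2$ subgroups arise with $b=2$ and $e_i/2$ odd. For $\bfSp$ with $q$ even, these lie inside $\Asch_1$ or an orthogonal group, as in Remark~\ref{r:MX-Sp}, so they are discarded. For $\bfO^-$ they do not occur when $d/2$ is even, since this would force $\eps=+$.
\end{itemize}

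The "Consequences" column then follows from $d>8$, $e_2\geq2$, the divisibility $b\mid e_i$, and the existence of ppds (Remark~\ref{rem:ppd}(a)). This last point excludes $(e_2,q)=(2,3)$ in the unitary-over-$\F_q$ cases, because $e_2/2=1$ over $\F_{q^2}$ needs a ppd of $q^2-1$ (that is, $q+1$ not a power of $2$).

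Self-normalisation of $M$ in $G=\GX_d(q)$ comes from the maximality results of \cite{KL}. For the conjugacy statements, $g$ generates a Sylow $r_1$-subgroup of $N$ contained in the cyclic torus of Lemma~\ref{lem:uperp}(c) for $N$. By Sylow's theorem, any element of $g_1^G\cap M$ is $N$-conjugate into that torus. The $e_1$ Galois conjugates there are then fused by the normaliser, and Lemma~\ref{lem:uperp}(c) applied within $N$ gives that this normaliser acts transitively on them by conjugation. The same argument applies to $g'$.

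I expect the main obstacle to be the case analysis in (c): matching the possible $\Asch_3$ structures with the stingray parity restrictions, and correctly excluding the $\GU_{d/2}(q).2$ cases for $\bfO^-$ and for $\bfSp$ with $q$ even. The fusion statement $g_1^G\cap M=g^M$ also needs care, because the $G$-class may split into several $N$-classes.
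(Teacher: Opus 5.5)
\textbf{There are genuine gaps, in (a)/(b) and in the fusion step of (c).}

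\textbf{The main gap: you never prove $g'\in N=G\cap\GL_c(q^{ub})$.}

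Your list of bad cases is incomplete. The equality $r_2=b$ only requires the prime $r_2=k_2e_2+1$ to divide $d$, and nothing like the inequality $r_1>d/2$ is available for $r_2$. For instance $e_2=2$, $q=4$, $r_2=b=5$, $d=10$ is not excluded a priori, although $5\ne e_2+1$.

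Your suggested dimension comparison also points the wrong way. A \emph{lower} bound $d/b$ on the fixed space of an element of $M\setminus N$ is compatible with $\dim F_{g'}=e_1\ge d/2$. A contradiction would need an upper bound, in fact the exact value $d/b$, which requires a Lang/Hilbert~90 type argument.

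None of this is needed; the missing idea is the order of deductions used in the paper. From $r_1\ne b$ alone you get $g\in N$, so $U_g=\im(g-1)$ is an $\F_{q^{ub}}$-subspace and $b\mid e_1$, which already gives $b<d$. Since $b\mid d=e_1+e_2$, also $b\mid e_2$. Hence $r_2\ge e_2+1>b$, and $g'\in N$ because $|M:N|$ divides $b$.

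Relatedly, your opening ``$r_1>d/2\ge b$'' presupposes $b\ne d$, and ``$b\ge r_1$ implies $b=r_1$'' is not a valid step. What you need, and what is true, is only $r_1\ne b$: if $r_1=b$ then $r_1\mid d$ and $r_1>d/2$ force $r_1=d=e_1+1$, i.e.\ $e_2=1$.

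\textbf{The fusion step in (c) does not work as written.}

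The claim that $\langle g\rangle$ is a Sylow $r_1$-subgroup of $N$ is false when $e_1=e_2$. Then $e_1/b=c/2$, so $r_1$ divides both $(q^{ub})^{c/2}-1$ and $(q^{ub})^{c}-1$, and $|N|_{r_1}\ge r_1^2$ (e.g.\ $N\ge \GL_{c/2}(q^{ub})^2$ in type $\bfL$). So Sylow's theorem does not move an arbitrary $h\in g_1^G\cap M$ into the torus $T_N$ of $g$. For $e_1>e_2$ the Sylow $r_1$-subgroup of $N$ is cyclic and lies in a conjugate of $T_N$, so a corrected version of your argument works there. In general, conjugate tori instead: each such $h$ lies in $N$ and is an $(e_1/b)$-stingray element there, so its torus is $N$-conjugate to $T_N$.

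Lemma~\ref{lem:uperp}(c) applied inside $N$ only gives transitivity of $N_N(T_N)$ on the $e_1/b$ elements of $g^N\cap T_N$. But $g_1^G\cap T_N=g_1^G\cap T$ has $e_1$ elements, forming $b$ orbits of $N_N(T_N)$. To fuse these you need an element of $M\setminus N$ normalising $T_N$ (the field automorphism). This is the content of the paper's observation that $|N_M(T_Y):N_Y(T_Y)|=b$ and that $N_M(T_Y)$ permutes the $b$ orbits cyclically.

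\textbf{A smaller slip.} In orthogonal type, Lemma~\ref{lem:uperp}(b) does allow moved spaces of dimension $1$. So ``$e_i/b$ even'' needs the remark that a $1$-stingray element of an orthogonal group is a reflection, of order $2\ne r_2$. This is exactly what removes the $\GO_{d/2}(q^2).2$ line ($qd/2$ odd) of \cite[Table~4.3.A]{KL}, which your case analysis does not mention.
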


 \begin{proof}
 (a) Suppose first that $r_1= d$. Since $e_1\geq d/2$ and $r_1=k_1e_1+1$ for some positive integer $k_1$, we conclude that $r_1=e_1+1=d$. However, this is a contradiction since $e_1\leq d-2$. Hence $r_1\ne d$. Suppose next that $b=d$, so $M\leq \GL_1(q^{u d}).d$. Since $r_1\ne d$, it follows that $g\in\GL_1(q^{u d})$, and hence that $r_1\mid (q^{u d}-1)$. Thus $r_1$ divides $(q^u)^{\gcd(e_1,d)}-1$, and as $r_1$ is a primitive prime divisor of $q^{u e_1}-1$, this implies that $e_1\mid d$.  Since $d=b$ is prime, this means that $e_1=d$, which is a contradiction since $e_1\leq d-2$. Thus $b<d$. 
 
In particular, $b\leq d/2\leq e_1<r_1$, and hence $g\in\GL_c(q^{u b})$. 
This implies that the image $\im(g-1)$ (which is an $e_1$-dimensional $\F_{q^u}$-subspace) is also an $\F_{q^{u b}}$-subspace, and hence $b\mid e_1$. Since $d=e_1+e_2$, we also have $b\mid e_2$, and part (a) is proved. 

(b) Now $r_1\geq e_1+1$ and $b\mid e_1$ by part (a), so $r_1>e_1\geq b$. Hence $g\in G\cap \GL_c(q^{u b})<M$. Similarly $b\mid e_2$, so $r_2>e_2\geq b$, and thus $g'\in G\cap\GL_c(q^{u b})$ as $r_2$ is prime. It follows, from the definition of a primitive prime divisor, that $r_i$ is a primitive prime divisor of $(q^{u b})^{e_i/b}-1$, for each $i$. Hence  $(g, g')$ is  a ppd $(e_1/b, e_2/b)$-stingray duo in $G\cap \GL_c(q^{u b})$, proving part~(b).

 (c) Suppose that the isometry group listed in the second column of~\cite[Table 4.3.A]{KL}
 has type ${\bf Y}\in\{{\bf L},{\bf U},{\bf Sp},{\bf O}\}$ acting on a vector space $(\F_{q^{ub}})^{d/b}$. We begin by discussing the columns of Table~\ref{t:asch3} labeled `$\bfX$' and `$M\ge$'. 
 We show below that Line~6 of~\cite[Table 4.3.A]{KL} does not arise in Table~\ref{t:asch3}; and that in Line~7 of~\cite[Table 4.3.A]{KL} the quantity $d/2$ is even, so the condition $\varepsilon=(-)^{d/2}$ in \cite[Table 4.3.A]{KL} becomes $\varepsilon=+$.   Given these facts, it follows from~\cite[Table 4.3.A]{KL} and the containment $I_\sharp\le I$ 
 on~\cite[p.\,111,\;line\,$-3$]{KL}, that $M$ contains $Y\cdot b$ where $Y={\bf GY}_{d/b}(q^{ub})$ is the full isometry group, and $b$ denotes a field automorphism. It turns out that $Y\cdot b=M$, although
 we will only need $Y\cdot b\le M$. This verifies the `$\bfX$' and `$M\ge$' columns of Table~\ref{t:asch3}.
By part (a), $b<d$ so $c>1$, as noted in particular in Line~1 of Table~\ref{t:asch3}.

Next we deal with the parity conditions claimed for the $e_i/b$, and also we show that Line~6 of \cite[Table 4.3.A]{KL} does not arise. 
The parity conditions  are consequences of the facts that $g, g'$ are $(e_i/b)$-stingray elements of $G\cap\GL_c(q^{u b})$, for the appropriate $e_i$ by part (b), and the parity restrictions given by Lemma~\ref{lem:uperp}(b). We argue as follows. 
The facts that $e_1/b, e_2/b$ are both even in Line~3, and both odd in Lines 2 and 4, of Table~\ref{t:asch3}, follow directly from Lemma~\ref{lem:uperp}(b) applied to $M$. In the remaining lines of Table~\ref{t:asch3}, and also in Line 6 of~\cite[Table 4.3.A]{KL}, we have $\bfX=\mathbf{O^\varepsilon}$, so $e_1,e_2$ are even and so $d$ is even, and hence $\varepsilon=\pm$. In Line~5 of Table~\ref{t:asch3}, and also in Line 6 of~\cite[Table 4.3.A]{KL}, $M$ is an orthogonal group and so, by Lemma~\ref{lem:uperp}(b), either $e_i/b=1$ or $e_i/b$ is even. 
 If $e_i/b$ is odd, for some~$i$, then $e_i/b=1$ and so $b=2$ (since $e_i$ is even) and $e_i=2$. This implies that $i=2$ and $e_2=2$, and also that $e_1/2>1$ (since $d>8$) so $e_1/2$ is even and $d/2=e_1/2+e_2/2$ is odd. Thus for Line 5 of  Table~\ref{t:asch3} the $e_i/b$ are both even, while for Line~6 of \cite[Table 4.3.A]{KL} (which does not appear in Table~\ref{t:asch3}) we do have $M$ of type  $\GO^\circ_{d/2}(q^2).2$ with $(d/2)q$  odd. In this case, since $e_2/b=1$,  $g'$ would be a ppd $1$-stingray element of ${\GO^\circ}_{d/2}(q^2)$ of odd prime order $r_2\geq e_2+1$. Therefore $g'$ would fix pointwise a non-degenerate $\F_{q^2}$-subspace of $\F_{q^2}^{d/2}$ of co-dimension $1$, and hence $g'$ would act as a reflection, and so would have order 2, which is a contradiction. Thus Line~6 of \cite[Table 4.3.A]{KL} does not arise.
Finally in  Line 6 of Table~\ref{t:asch3} (which is  Line~7 of \cite[Table 4.3.A]{KL}), $b=2$ and $M$ is a unitary group so $e_1/2, e_2/2$ are both odd by Lemma~\ref{lem:uperp}(b). Thus the parities of the $e_i/b$ are as stated in Table~\ref{t:asch3}.

In all the Lines 2--6 of Table~\ref{t:asch3}, $e_1/b, e_2/b$ have equal parity,  and hence $c=d/b=e_1/b+e_2/b$ and $d$ are even. In particular in Lines 4 and 6, where $b=2$ we have $d/2$ even, so all assertions in the `Conditions' column are valid. 

Finally we verify the assertions in the `Consequences' column.  We already observed that $c>1$ in Line 1, and that $c=d/b$ is even in all other Lines.  For Line 2, we have $e_2\geq b\geq3$, and since $d>8$ and $d$ is even we also have $d\geq10$.  In Lines 3 and 5, each $e_i/b$ is even and hence $c=e_1/b+e_2/b\geq 4$ and $e_1\geq e_2\geq4$; also since $d>8$, either $b\geq3$ or $b=2$ and $c\geq 6$ (as $c$ is even), and in either case $d\geq12$.     Lastly in Lines 4 and~6 we have $b=2$ and each $e_i/b$ is odd; since $c=d/2$ is even and $d>8$, it follows that either $e_1\geq e_2\geq 6$ or $e_2=2$ with $e_1\geq 10$, and in either case $d\geq12$. Moreover, in Line~4, $q$ is odd and if $e_2=2$ then we must have $q\geq5$ since $3^2-1$ has  no primitive prime divisor. Also in Line 6, $(e_2,q)\ne (2,3)$ since  $3^2-1$ has  no primitive prime divisor. Thus all assertions in the `Consequences' column are valid, and hence all entries in Table~\ref{t:asch3} are valid. This proves the first assertion of part (c).

 Recall that $G=\GX_{bc}(q)$. In each line of Table~\ref{t:asch3}, $Y\coloneq G\cap\GL_c(q^{u b})$ is a classical group, which is normal in  $M$ of index $b$, and $M=N_G(M)=N_G(Y)$. By part (b),  $(g,g')$ is a ppd $(e_1/b, e_2/b)$-stingray duo in $Y$. Consider first the element $g$ (similar comments apply to $g'$). By Lemma~\ref{lem:uperp}(c) applied to $Y$, there is a unique cyclic torus $T_Y$ in $Y$ containing $g$ such that $C_Y(g)=C_Y(T_Y)$, and there is a single $Y$-conjugacy class of such tori. There are $|Y:N_Y(T_Y)|$ of these tori, and each of them contains exactly $|g_1^G\cap M|/|Y:N_Y(T_Y)|$ conjugates of $g_1$.  Now applying Lemma~\ref{lem:uperp} to $G$, there is a unique cyclic torus $T$ of $G$ containing $g$ and $C_G(g)=C_G(T)$, with the form given in Lemma~\ref{lem:uperp}(c). It follows that $T_Y$ is contained in $T$. Again applying Lemma~\ref{lem:uperp}(c) to $G$ and $Y$, $g^G\cap T$ has size $e_1$ and $N_G(T)$ acts transitively on this set by conjugation; and $g^Y\cap T_Y$ has size $e_1/b$ and $N_Y(T)$ acts transitively on this set by conjugation. Now $|N_M(T_Y):N_Y(T_Y)|=b$ and $g^M\cap T_Y$ splits into $b$  orbits of $N_Y(T_Y)$, each of size $e_1/b$, which are permuted cyclically by $N_M(T)$. It follows that $T_Y$ contains $e_1$ conjugates of $g_1$  and hence $g_1^G\cap T = g_1^G\cap T_Y$, and $N_M(T_Y)$ acts transitively by conjugation on this set. It follows that $g_1^G\cap M = g^M$. An analogous argument yields $g_2^G\cap M = (g')^M$.

 Finally, part (c) implies that the content of the set $\cM_3(\bfX)$ given in Proposition~\ref{prop:extn} is justified.
 \end{proof}

For a given $\bfX$, and for each prime $b$ dividing $\gcd(e_1, e_2)$, let $\cM_3(\bfX,b)$ denote the set of subgroups in $\cM_3(\bfX)$ occurring in Table~$\ref{t:asch3}$ for this particular prime $b$. Then  

\begin{equation}\label{e:extn}
\cM_3(\bfX) = \bigcup_b \cM_3(\bfX,b)\quad \mbox{and}\quad  {\rm Prob}_3(g_1,g_2,\bfX) \leq  \sum_b  {\rm Prob}_3(g_1,g_2,\bfX;b),
\end{equation}
where $ {\rm Prob}_3(g_1,g_2,\bfX;b)$ is
the contribution to \eqref{e:asch3} from subgroups $M\in\cM_3(\bfX,b)$ containing the element $g$. We show for each $b$ in Lemma~\ref{lem:extn2}(a), that $\cM_3(\bfX,b)$ contains at most one $G$-conjugacy class.

\begin{lemma}\label{lem:extn2}
Assume that Hypothesis~$\ref{H:AC3}$ holds and let $\cM_3(\bfX,b)$ and ${\rm Prob}_3(g_1,g_2,\bfX;b)$ be as in \eqref{e:extn}. Then
 \begin{enumerate}[{\rm (a)}]
     \item for each $\bfX$ and each prime $b$ dividing $\gcd(e_1, e_2)$, either $\cM_3(\bfX,b)=\emptyset$, or $\cM_3(\bfX,b)$ is a single $G$-conjugacy class; and 
     \item for each $\bfX$, $ {\rm Prob}_3(g_1,g_2,\bfX)=0$ if and only if the conditions in Table~$\ref{t:Prob3Zero}$ hold. 
     \item Moreover, suppose that $\bfX\in\{\bfSp, \mathbf{O^\varepsilon} \}$, $b=2$, and $ {\rm Prob}_3(g_1,g_2,\bfX;2)>0$. Then one of the following holds:
     \begin{enumerate}[\rm (i)]
         \item $e_1\equiv e_2\equiv 0\pmod{4}$, and $\cM_3(\bfX,2)$ is the $G$-conjugacy class of subgroups $M$ in Line~$3$ of Table~$\ref{t:asch3}$ if $\bfX=\bfSp$, or in Line~$5$ of Table~$\ref{t:asch3}$ if  $\bfX=\mathbf{O^\varepsilon}$.

         \item $e_1\equiv e_2\equiv 2\pmod{4}$, and $\cM_3(\bfX,2)$ is the $G$-conjugacy class of subgroups $M$ in Line~$4$ of Table~$\ref{t:asch3}$ with $q$ odd if $\bfX=\bfSp$, or in Line~$6$ of Table~$\ref{t:asch3}$ with $\varepsilon =+$ if  $\bfX=\mathbf{O^\varepsilon}$.
         
     \end{enumerate}
 \end{enumerate}
 In particular,  the first assertion in Proposition~$\ref{prop:extn}$ about when $\mathcal{M}_3(\bfX)$ is empty follows from part~{\rm (b)}.
\end{lemma}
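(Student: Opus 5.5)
Our plan is to read everything off Lemma~\ref{lem:extn1} and Table~\ref{t:asch3}, treating each prime $b$ dividing $\gcd(e_1,e_2)$ separately.

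For part (a), fix $\bfX$ and $b$. By Lemma~\ref{lem:extn1}(c), any $M\in\cM_3(\bfX,b)$ is one of the groups in Table~\ref{t:asch3} for this $b$. For $\bfX\in\{\bfL,\bfU\}$ there is exactly one line, and for $\bfSp$ or $\bfO^\varepsilon$ with $b$ odd only Lines~3 or~5 can occur. For $b=2$ the parity conditions on $e_1/2,e_2/2$ separate Line~3 from Line~4, and Line~5 from Line~6. Hence for each $(\bfX,b)$ at most one line of Table~\ref{t:asch3} applies. For each line we then use \cite[Table~4.3.A]{KL} together with \cite[Proposition~4.3.x]{KL}: the relevant extension-field subgroups of $\GX_d(q)=G$ form a single $G$-conjugacy class. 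This holds because $G$ is the full isometry group, so there is no splitting into several $\Omega$-classes. This gives (a).

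For part (b), we show that ${\rm Prob}_3(g_1,g_2,\bfX)>0$ exactly when some prime $b\mid\gcd(e_1,e_2)$ admits a line of Table~\ref{t:asch3} and a non-generating duo of the required kind exists.

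The necessity direction comes straight from Lemma~\ref{lem:extn1}. If ${\rm Prob}_3>0$, then some $b$ divides $\gcd(e_1,e_2)$ and the conditions of some line hold. For $\bfL$ and $\bfU$ this forces $\gcd(e_1,e_2)>1$. For $\bfSp$ and $\bfO^\varepsilon$, where the $e_i$ are even, we need either an odd prime $b$ or $b=2$ with suitable parity. If $b$ is odd, or if $e_1/2,e_2/2$ are both even, then $\gcd(e_1,e_2)>2$. If $\gcd(e_1,e_2)=2$ and $e_1/2,e_2/2$ are both odd, only Line~4 (which needs $q$ odd) or Line~6 (which needs $\varepsilon=+$) remains. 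This is exactly the complement of Table~\ref{t:Prob3Zero}.

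Sufficiency is the main obstacle. Whenever a line applies, we must exhibit a stingray duo $(g,g')$ inside $Y=G\cap\GL_c(q^{ub})$ such that $\langle g,g'\rangle$ is contained in no member of $\cM_1(\bfX)\cup\cM_2(\bfX)$, and such that the conjugacy classes match $g_1^G,g_2^G$. The construction we would try is as follows.
\begin{itemize}
\item Take $(h,h')$ to be an $(e_1/b,e_2/b)$-ppd stingray duo in $Y$ that generates $Y$ (or at least a subgroup irreducible on $\F_{q^{ub}}^c$), using the $\F_{q^{ub}}$-linear structure; for instance, choose the subspaces in general position.
\item Check that $(h,h')$ is a stingray duo over $\F_{q^u}$ with the correct subspace types, and that $h\in g_1^G$ and $h'\in g_2^G$. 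This follows from Lemma~\ref{lem:extn1}(c) and uniqueness of the torus in Lemma~\ref{lem:uperp}(c).
\item Check irreducibility over $\F_{q^u}$. Lemma~\ref{lem:dim} (or, for $\bfL$, the subspace criterion of Lemma~\ref{lem:unique}) shows that reducibility would force $U_h=U_{h'}^{\perp}$, and this is avoided by the choice of general position.
\item Check that $\langle h,h'\rangle$ lies in no member of $\cM_2(\bfX)$. By Lemma~\ref{lem:imprim3}, such containment needs $r_i=e_i+1$, but $r_i\equiv1\pmod{e_i/b}$ with $r_i\ge e_i+1$ leaves little room, so we would rule this out by a Sylow or order argument comparing $|Y|$ with $|\GL_1(q^u)\wr S_d|$.
\end{itemize}
We expect the irreducibility and non-imprimitivity checks for small residual cases, such as $e_2=2$ in Line~6 and $q=2$, to be the delicate part. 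There we would fall back on the explicit conditions listed in the ``Consequences'' column.

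For part (c), we restrict the analysis of (a) to $b=2$ and read off the two parity cases. If $e_1\equiv e_2\equiv0\pmod4$, then $e_i/2$ is even, giving Line~3 or Line~5. If $e_1\equiv e_2\equiv2\pmod4$, then $e_i/2$ is odd, giving Line~4 (with $q$ odd) or Line~6 (with $\varepsilon=+$). Mixed residues cannot occur, since the lines require the $e_i/b$ to have equal parity.
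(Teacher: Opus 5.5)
Your treatment of (a), of (c), and of the implication ``conditions of Table~\ref{t:Prob3Zero} hold $\Rightarrow$ ${\rm Prob}_3(g_1,g_2,\bfX)=0$'' is the paper's argument. Everything is read off from Lemma~\ref{lem:extn1}(c) and the parity conditions in Table~\ref{t:asch3}, with one $G$-class per line.

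For the converse, the paper uses a single sentence. When the conditions of a line hold, the index-$b$ classical subgroup $Y$ of $M$ contains ppd stingray duos from $g_1^G\times g_2^G$, and this is taken to give ${\rm Prob}_3(g_1,g_2,\bfX;b)>0$. The paper does not check that some such duo avoids $\cM_1(\bfX)\cup\cM_2(\bfX)$. You are right to insist on that check, and your irreducibility step is sound. For $\bfX\ne\bfL$ use Lemma~\ref{lem:dim}; for $\bfL$ use Lemma~\ref{lem:unique}(b), where reducibility means $F_h\cap F_{h'}\ne0$, $U_h=F_{h'}$ or $U_{h'}=F_h$. In both cases reducibility is a condition on $\F_{q^{ub}}$-subspaces, which general position inside $Y$ avoids.

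The gap is your $\cM_2$ step, which would not work as described.

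\emph{The congruence remark excludes nothing.} Since $r_i$ is a ppd of $(q^u)^{e_i}-1$, you already have $r_i\equiv1\pmod{e_i}$. Moreover $r_i=e_i+1$ really does occur together with a line of Table~\ref{t:asch3}. For example, take $\bfX=\bfL$, $q=17$, $(e_1,e_2)=(10,6)$, $b=2$ and $(r_1,r_2)=(11,7)$; here $\cM_2(\bfL)\ne\emptyset$.

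\emph{The order comparison is aimed at the wrong group.} Comparing $|Y|$ with $|\GL_1(q^u)\wr S_d|$ only shows that $Y$ is not monomial. It says nothing about the two-generated group $\langle h,h'\rangle$ unless you already know that this group contains the quasisimple part of $Y$. That is a generation theorem for stingray duos in $Y$ of precisely the kind the paper is proving, and it is not available in general (e.g.\ when $d/b\le 8$ or $e_2/b=1$).

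\emph{The missing idea is that irreducibility alone suffices.} Suppose an irreducible $\langle h,h'\rangle$ lay in a member of $\cM_2(\bfX)$.
\begin{enumerate}
\item By Lemmas~\ref{lem:imprim1}--\ref{lem:imprim3}, $h$ and $h'$ induce on the $d$ summands an $r_1$-cycle and an $r_2$-cycle whose supports meet in exactly two points.
\item Since $e_1\ge5$, Lemma~\ref{l:natAn} then shows that $\langle h,h'\rangle$ maps onto $A_d$.
\item On the other hand $\langle h,h'\rangle\le Y\le\GL_{d/b}(q^{ub})$. Let $B$ be the diagonal base group. The eigenspaces of the abelian normal $p'$-subgroup $\langle h,h'\rangle\cap B$ are $\F_{q^{ub}}$-subspaces, and there are at most $d/b$ of them.
\item Since $|A_d|>(d/b)!$, the perfect group $\langle h,h'\rangle^{(\infty)}$ fixes each of these eigenspaces. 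On one of them it induces a copy of $A_d$ in $\mathrm{PGL}_m(q^{ub})$ with $m\le d/b\le d/2<d-2$.
\item This contradicts the lower bound $d-2$ for nontrivial projective representations of $A_d$ with $d\ge9$ (see \cite[\S5.3]{KL}).
\end{enumerate}
Hence every irreducible duo in $Y$ avoids $\cM_2(\bfX)$, and general position is all you need.
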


\begin{proof}
(a)\quad We note first that, for each line of Table~\ref{t:asch3}, there is a single $G$-conjugacy class of such subgroups $M$ and, if the conditions for this line all hold then the index $b$ normal classical subgroup of $M$ contains  
 ppd $(e_1/2, e_2/2)$-stingray duos in $g_1^G\times g_2^G$, so ${\rm Prob}_3(g_1,g_2,\bfX;b)>0$.
 Conversely, by Lemma~\ref{lem:extn1}, if ${\rm Prob}_3(g_1,g_2,\bfX)>0$ then there exists a prime $b$ such that $b$ divides $\gcd(e_1, e_2)$ and exactly one of the lines of Table~\ref{t:asch3} holds for each such $b$ (note the different parity conditions on the $e_i/b$ when there are two lines for $\bfX=\bfSp$ or $\mathbf{O^\varepsilon}$).
 In particular part (a) follows from these comments. 
 
(b) and (c).\quad   If there exists an odd prime  $b$ dividing $\gcd(e_1, e_2)$, then all the conditions of Line~1, 2, 3, or 5 of Table~\ref{t:asch3} hold (according to the the type $\bfX$), 
 and so, by Lemma~\ref{lem:extn1},   ${\rm Prob}_3(g_1,g_2,\bfX;b)>0$.  
 In particular if $\bfX=\bfU$, then the $e_i$ are odd, and so we conclude from the previous paragraph that ${\rm Prob}_3(g_1,g_2,\bfX)>0$ if and only if $\gcd(e_1, e_2)>1$, proving part (b) for $\bfX=\bfU$. Now consider $\bfX=\bfL$. We have seen that if $\gcd(e_1, e_2)$ is divisible by an odd prime then ${\rm Prob}_3(g_1,g_2,\bfX)>0$. Suppose next that both $e_i$ are even. Then $b=2$ divides $\gcd(e_1, e_2)$, and the group $M=\GL_{d/2}(q^2).2$ in Line~1 of Table~\ref{t:asch3} yields stingray duos proving ${\rm Prob}_3(g_1,g_2,\bfX;2)>0$. These observations together with the previous paragraph show that  ${\rm Prob}_3(g_1,g_2,\bfL)>0$ if and only if $\gcd(e_1, e_2)>1$, proving part (b) for $\bfX=\bfL$. 

 For the rest of the proof we may assume that $\bfX=\bfSp$ or $\mathbf{O^\varepsilon}$ for some $\varepsilon=\pm$, so the $e_i$ are both even. By the previous paragraph,  ${\rm Prob}_3(g_1,g_2,\bfX)>0$ if $\gcd(e_1, e_2)$ has an odd prime divisor. Hence to complete the proof of part (b) we may assume that $\gcd(e_1, e_2)=2^f$ for some $f\geq1$, and to prove part (c) we {must consider} $b=2$.

Suppose first that $e_1\equiv e_2\equiv 0\pmod{4}$. Then the conditions of Line~3 or 5 of Table~\ref{t:asch3} hold, and so, as observed in the first paragraph of the proof,  ${\rm Prob}_3(g_1,g_2,\bfX;2)>0$, and $\cM_3(\bfX,2)$ is the $G$-conjugacy class of subgroups $M$ in Line~$3$ or $5$ of Table~\ref{t:asch3}. 
Next suppose that $e_1\equiv e_2\equiv 2\pmod{4}$, so both $e_1/2$ and $e_2/2$ are odd. For $\bfX=\bfSp$, if $q$ is even then ${\rm Prob}_3(g_1,g_2,\bfSp;2)=0$  by Lemma~\ref{lem:extn1}, while if $q$ is odd then the conditions of Line~4 of Table~\ref{t:asch3} hold, and as observed in the first paragraph above,  ${\rm Prob}_3(g_1,g_2,\bfSp;2)>0$ and $\cM_3(\bfSp,2)$ is the $G$-conjugacy class of subgroups $M$ in Line~$4$ of Table~\ref{t:asch3}. 
For $\bfX=\mathbf{O^\varepsilon}$, if $\varepsilon = -$ then ${\rm Prob}_3(g_1,g_2,\bfX;2)=0$ as none of the lines of Table~\ref{t:asch3} holds, while if $\varepsilon = +$ then the conditions of Line~6 of Table~\ref{t:asch3} hold, and as observed in the first paragraph above,  ${\rm Prob}_3(g_1,g_2,\bfX;2)>0$ and $\cM_3(\bfX,2)$ is the $G$-conjugacy class of subgroups $M$ in Line~$6$ of Table~\ref{t:asch3}. 
The remaining possibility is that 
$e_1/2, e_2/2$ have different parities; 
in this case we have ${\rm Prob}_3(g_1,g_2,\bfX;2)=0$  by Lemma~\ref{lem:extn1}. This completes the proofs of parts (b) and~(c).
\end{proof}

\begin{lemma}\label{lem:extn3}
Assume that Hypothesis~$\ref{H:AC3}$ holds  with $d>8$, and suppose  that,  for some prime $b$ dividing $\gcd(e_1, e_2)$, 
${\rm Prob}_3(g_1,g_2,\bfX;b)>0$   (see \eqref{e:extn}). Then
\[
 {\rm Prob}_3(g_1,g_2,\bfX;b) <  \left\{ \begin{array}{ll}  
 b\cdot  q^{-2e_1e_2(1-1/b)}& \mbox{if $\bfX=\bfL$}\\
 2.56\cdot b\cdot q^{-2e_1e_2(1-1/b)}& \mbox{if $\bfX=\bfU$}\\
 4.6\cdot b\cdot q^{-e_1e_2(1-1/b)} & \mbox{if $\bfX=\bfSp$ with $\frac{e_1}{b}, \frac{e_2}{b}$ even}\\
{1.45}\cdot b\cdot q^{-e_1e_2/2} & \mbox{if $\bfX=\bfSp$ with $\frac{e_1}{2}, \frac{e_2}{2}, q$  odd, $q\geq5$, $b=2$}\\
{2}\cdot b\cdot q^{-e_1e_2/2} & \mbox{{if $\bfX=\bfSp$ with $\frac{e_1}{2}, \frac{e_2}{2}$  odd, $q=3, e_2\geq6$, $b=2$}}\\
5.34\cdot b\cdot q^{-e_1e_2(1-1/b)} & \mbox{if $\bfX=\mathbf{O^-}$ with $\frac{e_1}{b}, \frac{e_2}{b}$ even}\\
9.43\cdot b\cdot q^{-e_1e_2(1-1/b)} & \mbox{if $\bfX=\mathbf{O^+}$ with $\frac{e_1}{b}, \frac{e_2}{b}$ even}\\
17.6\cdot b\cdot  q^{-e_1e_2/2} & \mbox{if $\bfX=\mathbf{O^+}$ with $\frac{e_1}{2}, \frac{e_2}{2}$ odd, $b=2$.}\\
\end{array}\right.
\]
Moreover, if $\bfX=\bfO^+$ with $e_2=2$, $e_1=d-2$ and $b=2$, then $d\ge 12$, $q\ne 3$, and ${\rm Prob}_3(g_1,g_2,\bfX;2) < 12.585\cdot b\cdot q^{-(d-2)}$ if $q=2$, and  ${\rm Prob}_3(g_1,g_2,\bfX;2) < 4.005\cdot b\cdot q^{-(d-2)}$ if $q\geq 4.$
\end{lemma}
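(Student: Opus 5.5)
We follow the template already used for $\Asch_2$ in Lemma~\ref{lem:imprim}. By Lemma~\ref{lem:extn2}(a), $\cM_3(\bfX,b)$ is a single $G$-conjugacy class of self-normalising subgroups $M$. By Lemma~\ref{lem:extn1}(c), $g_1^G\cap M=g^M$ and $g_2^G\cap M=(g')^M$. So Lemma~\ref{lem:countM} says that $g$ lies in exactly $|C_G(g_1)|/|C_M(g)|$ members of $\cM_3(\bfX,b)$, and \eqref{e:asch3} gives
\[
{\rm Prob}_3(g_1,g_2,\bfX;b)\le \frac{|C_G(g_1)|}{|N(d,q^u,e_1,\bfX)|}\cdot\frac{|\mathcal{S}_3(g,M)|}{|C_M(g)|}.
\]
The first factor is known explicitly from Propositions~\ref{p:Lbds}(c) and~\ref{p:Xbds}(c). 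For the second factor we use the crude bound $|\mathcal{S}_3(g,M)|\le |N_M|$, where $N_M$ is the set of $g'\in g_2^G\cap M$ such that $(g,g')$ is a stingray duo.

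To handle $N_M$, we use Lemma~\ref{lem:extn1}(b): every such $g'$ lies in the classical group $Y=G\cap\GL_c(q^{ub})$ and forms a ppd $(e_1/b,e_2/b)$-stingray duo with $g$ in $Y$. Hence $|N_M|$ is the corresponding quantity $|N(c,q^{ub},e_1/b,\bfY)|$ for $Y$, multiplied by the number of $Y$-classes into which $g_2^G\cap Y$ splits. That number is at most $b$, because $M/Y$ has order $b$. Also $C_M(g)\ge C_Y(g)$. Applying Propositions~\ref{p:Lbds}(c) and~\ref{p:Xbds}(c) to $Y$ as well, we get
\[
\frac{|N_M|}{|C_M(g)|}\le b\cdot\frac{|N(c,q^{ub},e_1/b,\bfY)|}{|C_Y(g)|},
\]
which is again an explicit ratio. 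The bound is therefore the ratio of two expressions of the form $(\text{torus orders})/(\mathbf{k}\cdot|\cU|)$: one for $G$ and one for $Y$. The torus orders nearly cancel, since the tori of $g$ in $G$ and in $Y$ coincide up to the bounded factors appearing in Table~\ref{tab:one}. What remains is essentially $b\,|\cU_Y|/|\cU_G|$. For $\bfL$ this is about $b\,q^{2e_1e_2/b}/q^{2e_1e_2}=b\,q^{-2e_1e_2(1-1/b)}$. The other same-type lines of Table~\ref{t:asch3} behave the same way: for $\bfU$ the exponent is $2e_1e_2(1-1/b)$, and for $\bfSp$ and $\bfO^\eps$ it is $e_1e_2(1-1/b)$. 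For the $\GU_{d/2}(q).2$ lines (Lines 4 and 6), $|\cU_Y|\approx q^{2(e_1/2)(e_2/2)}=q^{e_1e_2/2}$ against $|\cU_G|\approx q^{e_1e_2}$, which gives $q^{-e_1e_2/2}$.

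The constants come from the bounds $\bfa(q,\cdot)$ and $\bfb(q,\cdot)$ in Table~\ref{t:abLU}, from $\mathbf{k}\ge 1-3/(2q^u)$, and from factors such as $(1+q^{-e_i/2})$. These factors are bounded using the minimal values of $e_i$ and $d$ recorded in the `Consequences' column of Table~\ref{t:asch3}. For the subfield group $Y$ the field size is $q^b\ge4$, so the better constants apply there, while the worst-case constants $\bfa(q,\bfX)$ are used for $G$; in particular $q=2,3$ are treated separately for $\bfO^+$. For the final assertion ($\bfO^+$, $e_2=2$, $b=2$) we take $Y$ unitary with $e_2/2=1$. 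Here $d\ge12$ and $q\ne3$ come from Table~\ref{t:asch3}, and the exponent $e_1e_2/2=d-2$; the sharper constants are obtained by evaluating $(q^{e_1/2}+1)(q+1)$ exactly rather than bounding it crudely.

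I expect the main obstacle to be the bookkeeping of constants, especially the orthogonal cases. There $|\cU|$ involves $\Gamma(\pm1)$, and $\mathbf{k}$ is only bounded by $1/4$ when $q=2$ (for $Y$ one has $q^b\ge4$, so this is milder). A further subtlety is checking that the unitary subgroup $Y$ in Lines 4 and 6 has its stingray data ($e_i/2$ odd, tori of order $q^{e_i}+1$) matching the torus orders in $G$, so that the cancellation really occurs. I would first carry out the $\bfL$ case cleanly, as a model for the others.
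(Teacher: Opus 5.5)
For the eight main bounds your plan is the paper's proof. You use Lemma~\ref{lem:countM} to count the conjugates of $M$ through $g$, bound $|\mathcal{S}_3(g,M)|\le b\,|N(d/b,q^{ub},e_1/b,\bfY)|$ via $Y$, and apply Propositions~\ref{p:Lbds}(c) and~\ref{p:Xbds}(c) to both $G$ and $Y$. Two corrections are worth making.
\begin{itemize}
\item The torus factors cancel exactly, not ``nearly'': in every line of Table~\ref{t:asch3}, e.g.\ $(q^b)^{e_1/2b}+1=q^{e_1/2}+1$. So no factors $(1+q^{-e_i/2})$ ever enter.
\item For $Y$ you only need $\mathbf{k}_Y<1$, since it sits in the numerator; no lower bound on $\mathbf{k}_Y$ is used.
\end{itemize}
The bound is therefore exactly $b\,\mathbf{k}_Y|\cU_Y|/(\mathbf{k}_G|\cU_G|)$, and the constants come only from the $\bfb$-value for $Y$ and from $(1-3/(2q^u))\,\bfa(q,\bfX)$ for $G$.

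The gap is in the final assertion ($\bfO^+$, $e_2=2$, $b=2$). Your proposed source of the sharper constants is evaluating $(q^{(d-2)/2}+1)(q+1)$ exactly. That gains nothing, because this factor appears identically for $G=\GO^+_d(q)$ and for $Y=\GU_{d/2}(q)$ and cancels. With $\mathbf{k}_Y<1$, $|\cU_Y|<q^{d-2}$ and the table value $\bfa(q,\bfO^+)$ you only reach $\frac{2}{(1-3/(2q))\,\bfa(q,\bfO^+)}\,q^{-(d-2)}$. That is $35.2\,q^{-(d-2)}$ for $q=2$ and $2048/235\approx 8.71$ times $q^{-(d-2)}$ for $q=4$, both above the claimed $2\cdot 12.585=25.17$ and $2\cdot 4.005=8.01$.

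What is missing is a better lower bound for $|\cU(d,q,d-2,\bfO^+)|$ itself. By \eqref{e:U-O} it equals $q^{2(d-2)}\cdot\frac12\Delta(\frac{d-2}{2},\frac d2;q^2)\,\Gamma(1)$, and for $e_2=2$ both factors can be computed exactly:
\[
\Delta=\frac{1-q^{-d}}{1-q^{-2}},\qquad \Gamma(1)=\frac{1-(q-1)q^{-d/2}-q^{-1}}{1+q^{-d/2}},
\]
so that
\[
\tfrac12\Delta\,\Gamma(1)=\frac{(1-q^{-d/2})\,(1-(q-1)q^{-d/2}-q^{-1})}{2(1-q^{-2})}.
\]
The table entry $\bfa(q,\bfO^+)$ used only $\Delta>1$ and $d\ge 10$. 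Recovering the factor $(1-q^{-2})^{-1}$ (which is $4/3$ at $q=2$) and using $d\ge 12$ is what brings the constant below $25.17$ for $q=2$ and below $8.01$ for $q\ge 4$.
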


\begin{proof}
By \eqref{e:extn}, 
$ {\rm Prob}_3(g_1,g_2,\bfX)\leq \sum_b  {\rm Prob}_3(g_1,g_2,\bfX;b)$, where the sum is over all primes $b$ dividing $\gcd(e_1, e_2)$. Suppose from now on that  $b$ is a prime  dividing $\gcd(e_1, e_2)$ and $ {\rm Prob}_3(g_1,g_2,\bfX;b)>0$. Then by Lemma~\ref{lem:extn2}, $\cM_3(\bfX,b)$ is a single $G$-conjugacy class, and by Lemma~\ref{lem:extn1}, each $M\in\cM_3(\bfX,b)$ is self-normalising in $G$ and the conditions of exactly one Line~$j$ of Table~\ref{t:asch3} hold; and also for each $i=1, 2$, $g_i^G\cap M$ is a single $M$-conjugacy class.

We use the notation of this section; in particular $g\in g_1^G$  lying in a chosen $M\in\cM_3(\bfX,b)$.  Then by Lemma~\ref{lem:countM}, the number of $L\in\cM_3(\bfX,b)$ such that $g\in L$ is $|C_G(g_1)|/|C_M(g)|$. Thus, by \eqref{e:asch3}, 
\begin{equation}\label{e:extn2}
    {\rm Prob}_3(g_1,g_2,\bfX;b) \leq   
\frac{|C_G(g_1)|}{|C_M(g)|}\cdot 
\frac{|\mathcal{S}_3(g,M)|}{|N(d,q^u,e_1,\bfX)|} =
\frac{|C_G(g_1)|}{|N(d,q^u,e_1,\bfX)|}\cdot \frac{|\mathcal{S}_3(g,M)|}{|C_M(g)|},
\end{equation}
with $|N(d,q^u, e_1,\bfX)|$  as in \eqref{e:ndex}, and  $\mathcal{S}_3(g,M)$ as in \eqref{e:siM}, namely  the set of all $g'\in g_2^G\cap M$ such that $(g,g')$ is a stingray duo and $\langle g,g'\rangle$ is irreducible and primitive on $V$. The first factor in the last expression in \eqref{e:extn2} is determined in Proposition~\ref{p:Lbds}(c) for $\bfX\ne\bfL$ or~\ref{p:Xbds}(c) for $\bfX=\bfL$.
We need to estimate the second factor in this expression.

As in the proof of Lemma~\ref{lem:extn1}(c), let $Y$ be the normal classical subgroup of $M$ of index $b$. Applying Lemma~\ref{lem:uperp} to $Y$ and to $G$, there is a unique cyclic torus $T_Y$ in $Y$ containing $g$ such that $C_Y(g)=C_Y(T_Y)$, and  a unique cyclic torus $T$ in $G$ containing $g$ such that $C_G(g)=C_G(T)$. Moreover (see the proof of Lemma~\ref{lem:extn1}(c)), $T_Y\leq T$ and $C_M(g) = C_Y(T_Y)=C_Y(g)$.

To understand the set $\mathcal{S}_3(g,M)$, we note that, by Lemma~\ref{lem:extn1}, $g_2^G\cap M$ is a single $M$-conjugacy class $(g')^M$, and, as discussed in the proof of Lemma~\ref{lem:extn1}(c), $(g')^M$ splits into $b$ conjugacy classes of $Y$ which are permuted cyclically by $N_M(T_Y)$. Thus the number of $h\in g_2^G\cap M$ such that $(g,h)$ is a stingray duo is $b$ times the number of such elements $h\in (g')^Y$. This latter number is 
$|N(d/b,q^{ub}, e_1/b,\bfY)|$, where $Y$ is classical of type $\bfY$. Notice that, in each line of Table~\ref{t:asch3}, $Y$ is the full isometry group of type $\bfY$, and hence the ratio $|C_Y(g)|/|N(d/b,q^{ub}, e_1/b,\bfY)|$ is given by Proposition~\ref{p:Lbds} or~\ref{p:Xbds} applied to $Y$.
Thus \eqref{e:extn2} becomes
\[
    {\rm Prob}_3(g_1,g_2,\bfX;b) \leq   
\frac{|C_G(g_1)|}{|N(d,q^u,e_1,\bfX)|}\cdot \frac{b\cdot|N(d/b,q^{ub}, e_1/b,\bfY)|}{|C_Y(g)|},
\]
and we substitute the values of the two factors given by Proposition~\ref{p:Lbds} or~\ref{p:Xbds} applied to $G$ and $Y$. If $\bfX=\bfL$, this yields 
\[
 {\rm Prob}_3(g_1,g_2,\bfL;b)\leq  \frac{(q^{e_1}-1)(q^{e_2}-1)}{q^{2e_1e_2}}\cdot
 \frac{b\cdot(q^b)^{2(e_1/b)(e_2/b)}}{ ((q^b)^{e_1/b}-1)((q^b)^{e_2/b}-1)} = b\, q^{-2e_1e_2(1-1/b)}
\]
proving the lemma for type $\bfL$.
Next, consider $\bfX=\bfU$. The bound $5/8\leq 1-3/(2q^2)\leq  \mathbf{k}(d,q^2,e_1,\bfU)$ holds as $d>8$, and $\mathbf{k}(d/b, q^{2b}, e_1/b,\bfU) < 1$ holds for all $b\mid e_1$. Therefore ${\rm Prob}_3(g_1,g_2,\bfU;b)$ is at most
 \begin{align*}
 &\ \ \frac{(q^{e_1}+1)(q^{e_2}+1)}{\mathbf{k}(d,q^2,e_1,\bfU)\cdot |\mathcal{U}(d,q^2,e_1,\bfU|}\cdot
 \frac{b\cdot\mathbf{k}(d/b, q^{2b}, e_1/b,\bfU)\cdot |\mathcal{U}(d/b,q^{2b}, e_1/b,\bfU)|}{ ((q^b)^{e_1/b}+1)((q^b)^{e_2/b}+1)}\\ 
 &<
\frac{b\cdot|\mathcal{U}(d/b,q^{2b}, e_1/b,\bfU)|}{ (5/8)\cdot |\mathcal{U}(d,q^2,e_1,\bfU)|}.
 \end{align*} 
Now the upper and lower bounds in Proposition~\ref{p:Xbds} give
\[
 {\rm Prob}_3(g_1,g_2,\bfX;b) <  =
\frac{b\cdot (q^b)^{2(e_1/b)(e_2/b)}}{ (5/8)\cdot(5/8)\cdot q^{2e_1e_2}} 
= \frac{64}{25}\cdot b\cdot q^{-2e_1e_2(1-1/b)}=2.56\cdot b\cdot q^{-2e_1e_2(1-1/b)}
\]
which is the required bound for type $\bfU$.
 Next we consider $\bfX=\bfSp$ or $\mathbf{O^\varepsilon}$ for the cases where both $e_1/b$ and $e_2/b$ are even. Then,  using the bounds $1/4\leq 1-3/(2q)\leq  \mathbf{k}(d,q,e_1,\bfX) < 1$, and setting $\nu=1$ if $\bfX=\bfSp$ and $\nu=\varepsilon$ if $\bfX=\mathbf{O^\varepsilon}$, Proposition~\ref{p:Xbds}(c) shows that ${\rm Prob}_3(g_1,g_2,\bfX;b)$ is at most
 \begin{align*}
 &\phantom{\leq\;\,}\frac{(q^{e_1/2}+1)(q^{e_2/2}+\nu)}{\mathbf{k}(d,q,e_1,\bfX)\cdot |\mathcal{U}(d,q,e_1,\bfX)|}\cdot
 \frac{b\cdot\mathbf{k}(d/b,q^b,e_1/b,\bfX)\cdot |\mathcal{U}(d/b,q^b, e_1/b,\bfX)|}{ ((q^b)^{e_1/2b}+1)((q^b)^{e_2/2b}+\nu)}\\ 
 &<
\frac{b\cdot |\mathcal{U}(d/b,q^b,e_1/b,\bfX)|}{ (1/4)\cdot |\mathcal{U}(d,q,e_1,\bfX)|}.
 \end{align*}
Then the upper and lower bounds in Proposition~\ref{p:Xbds} show that 
\[
 {\rm Prob}_3(g_1,g_2,\bfX;b) < 
\frac{b\cdot \bfb(q^b,\bfX)\cdot (q^b)^{(e_1/b)(e_2/b)}}{ (1/4)\cdot \bfa(q,\bfX)\cdot q^{e_1e_2}} 
= \frac{4\cdot b\cdot \bfb(q^b,\bfX)}{\bfa(q,\bfX)}\cdot q^{-e_1e_2(1-1/b)}.
\]
By Table~\ref{t:abLU}, using $q\geq2$ and $q^b\geq4$, the factor $4\cdot \bfb(q^b,\bfX)/\bfa(q,\bfX)$ is  $4(8/7)/1=32/7< 4.6$ if $\bfX=\bfSp$, and is 
$4(160/239)/(1/2)<5.34$ if $\bfX=\mathbf{O^-}$, and $4(128/239)/(5/22)  < 9.43$ if $\bfX=\mathbf{O^+}$. Thus the required bounds hold in these cases.

For the remaining two cases (Lines 4 and 6 of Table~\ref{t:asch3}),  $\bfX=\bfSp$ or $\mathbf{O^+}$, $b=2$, both $e_1/2$ and $e_2/2$ are odd, and the classical group $Y=\GU_{d/2}(q)$. Further, by Table~\ref{t:asch3}, $d\geq 12$ and $d/2$ is even.  Now,  using the bounds $1-3/(2q)\leq  \mathbf{k}(d,q,e_1,\bfX) < 1$, and then using the upper and lower bounds in Proposition~\ref{p:Xbds}(c)--(d), shows that  
${\rm Prob}_3(g_1,g_2,\bfX;2)$ is at most 
 \begin{align*}
{} &\ \    \frac{(q^{e_1/2}+1)(q^{e_2/2}+1)}{\mathbf{k}(d,q,e_1,\bfX)\cdot |\mathcal{U}(d,q,e_1,\bfX)|}\cdot
 \frac{2\cdot\mathbf{k}(d/2,q^{2}, e_1/2,\bfU)\cdot |\mathcal{U}(d/2,q^{2}, e_1/2,\bfU)|}{ (q^{e_1/2}+1)(q^{e_2/2}+1)}\\ 
 &<
\frac{2\cdot |\mathcal{U}(d/2,q^{2}, e_1/2,\bfU)|}{ (1-3/(2q))\cdot |\mathcal{U}(d,q, e_1,\bfX)|} 
< \frac{2\cdot  q^{2(e_1/2)(e_2/2)}}{ (1-3/(2q))\cdot \bfa(q,\bfX)\cdot q^{e_1e_2}} \\
&= \frac{2}{(1-3/(2q))\cdot \bfa(q,\bfX)}\cdot q^{-e_1e_2/2}.
 \end{align*} 
For $\bfX=\bfSp$,  if $q\geq 5$, then
\[
\frac{2}{(1-3/(2q))\cdot \bfa(q,\bfSp)}\leq \frac{2}{(7/10)\cdot 1} < 2.9, 
\]
and hence ${\rm Prob}_3(g_1,g_2,\bfSp;2) < 2.9 \cdot q^{-e_1e_2/2}$. On the other hand, in Line~4 of Table~\ref{t:asch3}, we may also have $q=3$ with $e_2\geq 6$, and here we have  
\[
\frac{2}{(1-3/(2q))\cdot \bfa(q,\bfSp)}\leq {\frac{2}{(1/2)\cdot 1}} =4 
\]
yielding ${\rm Prob}_3(g_1,g_2,\bfSp;2) < 4 \cdot q^{-e_1e_2/2}$.

Now we consider $\bfX=\mathbf{O^+}$. Here $d\geq12$ and $q\geq 2$, and so 
\[
\frac{2}{(1-3/(2q))\cdot \bfa(q,\mathbf{O^+})}\leq \frac{2}{(1/4)\cdot (5/22)} = 35.2,
\]
and our general bound is ${\rm Prob}_3(g_1,g_2,\mathbf{O^+};2) < 35.2 \cdot q^{-e_1e_2/2}$ holds. 
For the special case where $e_2=2$, $e_1=d-2$ we note that $q\ne 3$ since $3^2-1$ has no primitive prime divisor, and for $q\ne 3$ we use the  expression for $|\mathcal{U}(d,q,d-2,\bfO^+)|$  in Equation~\eqref{e:U-O}, namely
\[
|\mathcal{U}(d,q,d-2,\bfO^+)| = q^{2(d-2)} \cdot \bfa^*(q,\mathcal{O}^+), \ \mbox{where}\   \bfa^*(q,\mathcal{O}^+)= \frac{1}{2}\cdot \Delta\left( \frac{d{-}2}{2},\frac{d}{2}; q^2\right) \cdot \Gamma(1)
\]
and arguing as above we have 
\[
{\rm Prob}_3(g_1,g_2,\bfO^+;2)< \frac{2}{(1-3/(2q))\cdot \bfa^*(q,\mathbf{O^+})}\cdot q^{-(d-2)}.
\]
Next we use the definition of $\Delta(k,d;q)$ in Equation~\eqref{e:OmDel2}, 
and the definition of $\Gamma(1)$  in Equation~\eqref{d:gam}. Since $d/2$ is even, these yield:
\begin{align*}
\bfa^*(q,\bfO^+) &= \frac{1}{2} \cdot \Delta\left( \frac{d{-}2}{2},\frac{d}{2}; q^2\right) \cdot \Gamma(1)
= \frac{1}{2}\cdot
\frac{\omega(\frac{d}{2},\frac{d}{2};q^2)}{\omega(1,1;q^2)} 
 \cdot
\left(
1 - \frac{q^{-(d-2)/2} +q^{-1}}{1+ q^{-d/2}}\right) \\
&= \frac{1}{2}\cdot 
\frac{(1 - q^{-d})}{(1-q^{-2})}
\cdot
\left( \frac{1-(q-1)q^{-d/2} -q^{-1}}{1+ q^{-d/2}}\right)\\
 &= \frac{1}{2}\cdot
\frac{(1 - q^{-d/2})}{(1-q^{-2})}
\cdot 
 (1-(q-1)q^{-d/2} -q^{-1}).
\end{align*}
Therefore,
\begin{align*}
\frac{2}{(1-3/(2q))\cdot \bfa^*(q,\bfX)} &=
\frac{2}{(1-3/(2q))}\cdot \frac{2(1-q^{-2})}
{ (1 - q^{-d/2}) (1-(q-1)q^{-d/2} -q^{-1})}.
\end{align*}
For $q=2$ this expression is less than $25.17$ and for $q\geq 4$ it is less than $8.01$, and we have proved the final assertions of the lemma.
\end{proof}

We need a technical lemma before we prove Proposition~\ref{prop:extn}.

\begin{lemma}\label{lem:c3boundV2}
Let $q,d, e_1, e_2\in\mathbb{Z}$ satisfy $q\ge2$, $d=e_1+e_2$ and $2\le e_2 \le e_1\le d-2$.  Let $B$ be the set of odd prime divisors of $\gcd(e_1,e_2).$ Then the following are true.
\begin{enumerate}[{\rm (a)}]
\item For  $m\geq 3$, the function 
$h(x) = x q^{m/x}$ is strictly decreasing for $x\in [1, \sqrt{m}].$ 
\item The bound $\sum_{b\in B} b\cdot q^{e_1e_2/b}<
 3.01\cdot q^{e_1e_2/3}$ holds.
\item The bound $q^{-e_1e_2}\sum_{b\in B} b\cdot q^{e_1e_2/b}<q^{-e_1e_2/2}\cdot 3.01\cdot q^{-(d-3)/2}$ holds. Moreover, if either $e_2\ne 3$ or $3\nmid e_1$, then $q^{-e_1e_2}\sum_{b\in B} b\cdot q^{e_1e_2/b}<q^{-e_1e_2/2}\cdot 3.01\cdot q^{-(d-6)}$.
 \end{enumerate}
\end{lemma}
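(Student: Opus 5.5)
The plan is to prove (a) by calculus, then bound the sum in (b) by its $b=3$ term using the fact that the primes in $B$ divide $e_2$, and finally derive (c) from (b) by comparing exponents. Throughout, write $m\coloneq e_1e_2$.

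For part (a), I will differentiate $\log h(x)=\log x+(m/x)\log q$. This gives $(\log h)'(x)=x^{-2}(x-m\log q)$, which is negative precisely when $x<m\log q$. Since $q\ge2$, it suffices to have $x<m\log 2$. For $x\le\sqrt m$ this holds as soon as $\sqrt m\geq 1/\log 2\approx1.443$, which is true for $m\ge3$. So $h$ is strictly decreasing on $[1,\sqrt m]$.

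For part (b), the case $B=\emptyset$ is trivial, so assume $B\neq\emptyset$. Every $b\in B$ divides $e_2\le e_1$, so the product $\prod_{b\in B}b$ divides $e_2$. Hence $m\ge e_2^2\ge\bigl(\prod_{b\in B}b\bigr)^2$, and in particular $3\le b\le\sqrt m$ for every $b\in B$. I will compare each term $h(b)=bq^{m/b}$ with $3q^{m/3}$, using
\[
\frac{h(b)}{3q^{m/3}}=\frac b3\, q^{-m(b-3)/(3b)} .
\]
I then split into two cases.

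\emph{Case $3\in B$.} The term $b=3$ contributes exactly $3q^{m/3}$. Any other $b\in B$ satisfies $b\ge5$ and $3b\mid e_2$, so $m\ge 9b^2$. Then the ratio is at most $\frac b3\,2^{-3b(b-3)}$, which is at most $\frac53 2^{-30}$ for $b=5$ and decays super-exponentially in $b$. Summing over odd primes $b\ge5$ gives far less than $0.01$.

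\emph{Case $3\notin B$.} Every $b\in B$ satisfies $b\ge5$ and $m\ge b^2\ge25$. The ratio is at most $\frac b3\,2^{-b(b-3)/3}$, which for $b=5$ is about $0.165$, with a rapidly convergent tail. Adding these gives a total below $1<3.01$.

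The main obstacle is keeping this tail estimate clean. I will use the crude bound $m\ge b^2$ (or $m\ge 9b^2$ in the first case) together with monotonicity from (a), so that $h(b)\le h(b_0)$ for $b\ge b_0$ in the relevant range, and dominate the remaining sum by a geometric series.

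For part (c), multiplying (b) by $q^{-m}$ gives the bound $3.01\,q^{-2m/3}$. The first claim then requires $m/6\ge(d-3)/2$, that is, $e_1e_2-3e_1-3e_2+9\ge0$, i.e.\ $(e_1-3)(e_2-3)\ge0$. This holds because $B\ne\emptyset$ forces $e_2\ge3$, and when $B=\emptyset$ the left side is $0$. The second claim similarly requires $m/6\ge d-6$, i.e.\ $(e_1-6)(e_2-6)\ge0$, which holds when $e_2\ge6$. It remains to treat $B\neq\emptyset$ with $3\le e_2\le5$:
\begin{itemize}
\item $e_2=4$ gives $B=\emptyset$, which is trivial.
\item $e_2=3$ with $3\nmid e_1$ also gives $B=\emptyset$.
\item $e_2=5$ forces $B=\{5\}$ and $m=5e_1$. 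Here I use the exact sum $5q^{m/5}$ instead of (b). Then $q^{-m}\cdot5q^{m/5}\le3.01\,q^{-m/2-(d-6)}$ reduces to $\tfrac32e_1\ge e_1-1+\log_q(5/3.01)$, which holds since $\log_q(5/3.01)<1$.
\end{itemize}
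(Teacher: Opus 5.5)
Your proof is correct, and it differs from the paper's only in part (b).

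Parts (a) and (c) follow the paper. Part (a) is the same derivative computation. In (c) you use the same inequalities $e_1e_2\ge 3(d-3)$ (for $e_2\ge 3$) and $e_1e_2\ge 6(d-6)$ (for $e_2\ge 6$). You also handle the same exceptional case $e_2=b=5$, where both you and the paper replace (b) by the exact sum $5q^{e_1e_2/5}$.

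The paper's route for (b) is as follows. When $|B|\ge 2$, it uses the monotonicity in (a) to bound the smallest prime's term by $3q^{m/3}$ and each remaining term by $5q^{m/5}$. It counts the primes in $B$ via $5^{\ell-1}\le e_2/3$. It then needs a second calculus argument to show that $f(e_2,q)=3+5\log_5(e_2/3)\,q^{-2e_2^2/15}$ is decreasing in $e_2$, before evaluating at $e_2=15$.

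Your route for (b) compares every term directly with $3q^{m/3}$, using only $m\ge b^2$ (or $m\ge 9b^2$ when $3\in B$). You then dominate the ratios by the super-exponentially convergent series $\sum_b \frac b3 2^{-b(b-3)/3}$ or $\sum_b \frac b3 2^{-3b(b-3)}$. This needs neither the prime count nor the monotonicity of $f$. It does not actually use (a) at all, despite your remark that you would invoke it. It is therefore shorter and more transparent, while giving the same essentially sharp constant $3+\varepsilon$.

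One small wording point: in the case $3\in B$, your ratios are normalised by $3q^{m/3}$. For the total to stay below $3.01\,q^{m/3}$, their sum must be below $1/300$, not merely below $0.01$. Your estimate $\frac53 2^{-30}+\frac73 2^{-84}+\cdots$ is far below either threshold, so nothing breaks.
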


\begin{proof}
(a) The derivative of $h(x)$ is $h'(x) = q^{m/x}(1-\frac{m}{x}\log q)$. Since $q\geq2$ and $m\geq3$, we have
 $\sqrt{m}<m\log q$, and it follows that $h'(x)<0$ for all $x\in [1, \sqrt{m}]$. 
 Therefore, $h(x)$ is decreasing for $x\in [1, \sqrt{m}].$

\noindent
(b) Set $m\coloneq e_1e_2\geq4$. This bound holds if $B=\emptyset$ as the  sum is zero, and if $B=\{b\}$ has one element because $b\ge3$ and $bq^{m/b}\leq 3q^{m/3}$ by part~(a). Suppose now that $B=\{b_1,\dots,b_\ell\}$ where $\ell\ge2$ and $3\leq b_1<\cdots<b_\ell$. Then $15\le b_1b_2\le \gcd(e_1,e_2)\le e_2\leq d/2$ and hence $d\geq30$. Since $b_1\cdots b_\ell\mid\gcd(e_1,e_2)\mid e_2$, we have, using part (a), that $\sum_{b\in B} b\cdot q^{m/b}\leq b_1q^{m/b_1}+(\ell-1)b_2q^{m/b_2}\leq 3q^{m/3}+(\ell-1)5q^{m/5}$. Now $b_1\cdot b_2^{\ell-1}\leq b_1\cdots b_\ell\leq e_2$, so 
$5^{\ell-1}\leq b_2^{\ell-1}\leq e_2/b_1\leq e_2/3$, and hence 
$\ell-1\leq\log_5(e_2/3)$. Therefore
 \[
 \sum_{b\in B} b\cdot q^{m/b}\leq q^{m/3}\left(3+\log_5\left(\frac{e_2}{3}\right)\cdot 5\cdot q^{-2e_1e_2/15}\right)\leq q^{m/3}f(e_2,q)
 \]
 where $f(e_2,q)=3+5\log_5(e_2/3)q^{-2e_2^2/15}$. Now $f(e_2,q)$ is clearly a decreasing function of $q$ so $f(e_2,q)\leq f(e_2,2)$ and we will show that $f(e_2,2)$ is a decreasing function of $e_2$. Part  (b) will follow from this since it implies that $f(e_2,q)\leq f(15,2)=3+5\cdot 2^{-30}<3.01$. 
 
 Now we prove that $f(e_2,2)$ is a decreasing function of $e_2$. Since $\log_5(e_2/3)=\ln(e_2)/\ln(5)-\log_5(3)$ where $\ln$ denotes the natural log, it suffices to show that $h(e_2) :=\ln(e_2)\cdot2^{-2e_2^2/15}$ is a decreasing function of $e_2$.  Setting $c=2^{-2/15}$ (a constant), we have $h(e_2)=\ln(e_2)\cdot c^{e_2^2}$, so  $h'(e_2)=c^{e_2^2}\left(2e_2\ln(e_2)\ln(c)+\frac{1}{e_2}\right)$ and the condition $h'(e_2)<0$ is equivalent to $2e_2^2\ln(e_2)\ln(c)+1<0$ and this is true because $2(15)^2\ln(15)\ln(2^{-2/15})+1=-60\ln(15)\ln(2)+1<0$. In summary, $\sum_{b\in B} b\cdot q^{m/b} 
 \leq 3.01\cdot q^{m/3}$ holds as claimed.
 
 (c)~The stated inequality holds when $e_2=2$ because then $B$ is empty and the sum is zero. So suppose that $e_2\geq3$ and set $m\coloneq e_1e_2$.
 Then by part (b), $q^{-m}\sum_{b\in B} b\cdot q^{m/b} 
 \leq q^{-m/2}\cdot 3.01\cdot q^{-m/6}$. However,  $q^{-m/6}=q^{-e_1e_2/6}\le q^{-(d-3)3/6}=q^{-(d-3)/2}$ which proves the first inequality of part~(c). Now suppose that either $e_2\ne3$ or $3\nmid e_1$, and  consider the second bound. Observe that if $e_2\geq6$, then $e_1e_2/6\ge(d-6)6/6=d-6$ and the claimed bound holds by the argument just given, so we may suppose that $e_2\leq 5$. Also if $|B|=0$, then the sum is zero and the bound holds; while if $|B|\ge2$, then we would have $e_2\geq15$ which is not the case.
 Hence $|B|=1$, say $B=\{b\}$, where $b$ is an odd prime dividing $\gcd(e_1,e_2)$. Since $e_2\leq 5$ it follows that $e_2=b\in\{3,5\}$. As $b\mid e_1$, the possibility $e_2=b=3$ does not arise by our assumptions. Hence $e_2=b=5$, and $\sum_{b\in B} bq^{e_1e_2/b}=5q^{e_1e_2/5}$, 
 so that $q^{-e_1e_2}\sum_{b\in B} bq^{e_1e_2/b}=q^{-e_1e_2/2}5q^{-3e_1e_2/10}$.
 Note that $5q^{-3e_1e_2/10}=5q^{-3(d-5)/2}$ and that the inequality $5q^{-3(d-5)/2}\leq 3q^{-d+6}$ (which is sufficient to obtain the second bound) is equivalent to 
 $q^{(d-3)/2}\geq 5/3$. Since $d\geq2e_2=10$ we have $q^{(d-3)/2}\geq 2^{7/2}> 5/3$, and hence the second bound holds and part (c) is proved.    
\end{proof}

\medskip\noindent
\textit{Proof of Proposition~$\ref{prop:extn}$.}\quad Necessary and sufficient conditions for  ${\rm Prob}_3(g_1,g_2,\bfX)=0$ follow from Lemma~\ref{lem:extn2}(b). Assume that ${\rm Prob}_3(g_1,g_2,\bfX)>0$. Then by \eqref{e:extn},
we have ${\rm Prob}_3(g_1,g_2,\bfX)\leq \sum_b {\rm Prob}_3(g_1,g_2,\bfX;b)$, where the sum is over the primes $b$ dividing $\gcd(e_1,e_2)$. An upper bound for each  ${\rm Prob}_3(g_1,g_2,\bfX;b)$ is given in Lemma~\ref{lem:extn3}, and our task is to bound the sum of these quantities for each type $\bfX$. We begin with the type $\bfX= \bfL$, as it is simplest and illustrates the general method. Here, by Lemma~\ref{lem:extn3}
\[
{\rm Prob}_3(g_1,g_2,\bfL)\leq q^{-2e_1e_2}\ \sum_b b\cdot q^{2e_1e_2/b},
\] 
summing over primes $b$ dividing $\gcd(e_1, e_2).$  By separating out the term for $b=2$ and applying Lemma~\ref{lem:c3boundV2}(c) (with $q^2$ in place of $q$) to the sum over the odd primes $b$ for $d\ge 9$, we find  
\[
{\rm Prob}_3(g_1,g_2,\bfL)\leq q^{-2e_1e_2}\ \sum_{b} b\cdot q^{2e_1e_2/b} 
\leq 
\left(2+ 3.01\cdot  
q^{-(d-3)} \right)\cdot q^{-e_1e_2} <
2.05 \cdot q^{-e_1e_2}. 
\]
 The argument for $\bfX= \bfU$ is almost identical except that in this case the $e_i$ are both odd, so $d\geq10$ and all the primes $b$ are odd, and there is an additional constant factor $2.56$ by Lemma~\ref{lem:extn3}.
 By Lemma~\ref{lem:c3boundV2}(c) (with $q^2$ in place of $q$)  and  $d\ge 10$, 
 \[
 {\rm Prob}_3(g_1,g_2,\bfU)\leq 2.56\cdot 
 3.01 
q^{-(d-3)}
\cdot q^{-e_1e_2}
 \leq 0.07\cdot
 q^{-e_1e_2}.
 \]
 The remaining types are $\bfX= \bfSp$ and $\bfX= \mathbf{O^\varepsilon}$, and Lemma~\ref{lem:extn3} provides an upper bound for ${\rm Prob}_3(g_1,g_2,\bfX;b)$ for each type and for each prime $b$, and in some cases two different upper bounds for ${\rm Prob}_3(g_1,g_2,\bfX;b)$ when $b=2$ depending on the (necessarily equal) parities of $e_1/2$ and $e_2/2$, and the field size $q$, 
  see also Table~\ref{t:asch3}. We define constants $c(\bfX)$ such that 
 ${\rm Prob}_3(g_1,g_2,\bfX;b) < c(\bfX) \cdot b \cdot q^{-e_1e_2(1-1/b)}$ for odd primes $b$, and constants $c(\bfX)'$ (depending on $q$ and the parities of $e_1/2, e_2/2$) such that 
 ${\rm Prob}_3(g_1,g_2,\bfX;2)
 < c(\bfX)'  \cdot q^{-e_1e_2/2}$.
 Thus the upper bounds
 for ${\rm Prob}_3(g_1,g_2,\bfX)$ also depend on $q$ and these parities.
Using Lemma~\ref{lem:extn3}, we set $c(\bfSp)=4.6$, and
\[
c(\bfSp)'= \left\{ \begin{array}{ll}  
4.6 & \mbox{if $e_1\equiv e_2\equiv 0\pmod{4}$} \\
1.45  & \mbox{if $e_1\equiv e_2\equiv 2\pmod{4}$, $q$ is odd, and $q\geq5$} \\
0  & \mbox{if $e_1\equiv e_2\equiv 2\pmod{4}$, $q$ is even} \\
2  & \mbox{if $e_1\equiv e_2\equiv 2\pmod{4}$, $q=3$} \\
\end{array}\right.
\]
Likewise we set  $c(\mathbf{O^+})=9.43$ and $c(\mathbf{O^-})=5.34$,
and  
\[
c(\mathbf{O^\varepsilon})'= \left\{ \begin{array}{ll}  
c(\mathbf{O^\varepsilon}) & \mbox{if $e_1\equiv e_2\equiv 0\pmod{4}$} \\
0  & \mbox{if $e_1\equiv e_2\equiv 2\pmod{4}$, and  $\varepsilon=-$} \\
17.6  & \mbox{if $e_1\equiv e_2\equiv 2\pmod{4}$, $\varepsilon=+$, and  $e_2\geq 6$} \\
12.585  & \mbox{if $e_1\equiv 2\pmod{4}$, $e_2=2$, $\varepsilon=+$, and $q=2$} \\
4.005  & \mbox{if $e_1\equiv 2\pmod{4}$, $e_2=2$, $\varepsilon=+$, and $q\geq4$} \\
\end{array}\right.
\]

 Then we have by Equation~\eqref{e:extn} and Lemma~\ref{lem:extn3}
 \[
{\rm Prob}_3(g_1,g_2,\bfX)\leq 
 \sum_{b} {\rm Prob}_3(g_1,g_2,\bfX;b) \le
c(\bfX)'\cdot 2\cdot q^{-e_1e_2/2}\  + c(\bfX)\cdot q^{-e_1e_2}\ \sum_{b>2} b\cdot q^{e_1e_2/b},
\]
where we sum over odd primes $b$ dividing $\gcd(e_1,e_2).$
We apply Lemma~\ref{lem:c3boundV2}(c) 
  noting that  $d\ge 12$ (see Table~\ref{t:asch3}) and that $e_2 \neq 3$ to obtain
  \begin{align*}
{\rm Prob}_3(g_1,g_2,\bfX)&\leq \left(2\cdot c(\bfX)'
+ c(\bfX) \cdot 3.01 \cdot
q^{- (d-6)} \right)
\cdot q^{-e_1e_2/2}.
\end{align*}
Evaluating the right hand side taking $q^{-(d-1)}\leq q_{\rm min}^{-6}$, where $q_{\rm min}$ is the least value for $q$ in each case, yields the constants in Table~\ref{t:Prob3Res}, completing the proof of Proposition~\ref{prop:extn}.

\section{\texorpdfstring{$\Asch_4$}{Asch4} and \texorpdfstring{$\Asch_7:$}{Asch7}  Stabilisers of tensor decompositions}

In this section we consider together the Aschbacher categories  $\Asch_4$ and $\Asch_7$,  
and we prove that the proportions  ${\rm Prob}_4(g_1,g_2,\bfX) = {\rm Prob}_7(g_1,g_2,\bfX) = 0$ whenever $d>4$. Recall that $G=\GX_d(q)$. For these categories, each maximal subgroup is the stabiliser of a tensor decomposition $V=U_1\otimes U_2\otimes\dots\otimes U_t$, where either (i) $t=2$ and $U_1, U_2$ are not isometric, say of dimensions $d_1, d_2\geq2$, and $d=d_1d_2$, or (ii) $t\geq2$ and the $U_i$ are isometric of the same dimension, say $b\geq2$, and $d=b^t$. In case (i) the stabiliser is $G\cap (\GL_{d_1}(q^u)\circ \GL_{d_2}(q^u))$ and is in category $\Asch_4$, while in  case (ii) the stabiliser is $G\cap ((\GL_{b}(q^u)\circ \dots \circ \GL_{b}(q^u))\rtimes S_t)$ and is in category $\Asch_7$. 

\begin{proposition}\label{p:tensor}
Assume that Hypothesis~$\ref{hyp}$ holds.   Then  $\cM_4(\bfX) = \cM_7(\bfX) =\emptyset$ and 
$$ {\rm Prob}_4(g_1,g_2,\bfX)= {\rm Prob}_7(g_1,g_2,\bfX)=0.$$ 
\end{proposition}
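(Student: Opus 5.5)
The plan is to show directly that no ppd stingray element $g\in g_1^G$ can lie in the stabiliser in $\GL_d(q^u)$ of a nontrivial tensor decomposition of $V$. Then no $M$ from $\Asch_4\cup\Asch_7$ contains any $H=\langle g,g'\rangle$, so we may take $\cM_4(\bfX)=\cM_7(\bfX)=\emptyset$ and both probabilities vanish. Since each such $M\le G$ is contained in the corresponding tensor stabiliser in $\GL_d(q^u)$, it suffices to work there and ignore the form. We use two facts from Hypothesis~\ref{hyp}: $g$ has prime order $r_1$, a primitive prime divisor of $q^{ue_1}-1$ with $r_1\ge e_1+1$ and $e_1\ge d/2$; and $\dim F_g=e_2\ge 2$.

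\emph{Case $\Asch_4$.} Suppose $V=U_1\otimes U_2$ with $d=d_1d_2$ and $2\le d_1,d_2$, so $d_i\le d/2$, and $g\in \GL(U_1)\circ\GL(U_2)$. Lift $g$ to a preimage in $\GL(U_1)\times\GL(U_2)$ and replace it by a suitable power so that the lift $(a,b)$ has $r_1$-power order. Then $r_1$ divides $|\GL_{d_1}(q^u)|\,|\GL_{d_2}(q^u)|$. By the ppd property this forces $e_1\le \max\{d_1,d_2\}\le d/2$, hence $e_1=d/2=\max d_i$ and the other factor has dimension $2$. As $r_1\ge e_1+1>3$ (indeed $e_1\ge 5$ since $d>8$), $r_1\nmid|\GL_2(q^u)|$, so $b$ is scalar and $g$ acts as $a\otimes 1$ up to a scalar. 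Because $r_1$ is coprime to $q^u-1$, $a$ has order $r_1$ modulo scalars. Since $\dim U_1=e_1$ and $r_1$ is an $e_1$-ppd, $a$ acts irreducibly on $U_1$, and any $\lambda$-eigenspace of $a\otimes 1$ is (the $\lambda$-eigenspace of $a$)$\otimes U_2=0$. Hence $a\otimes 1$ has no nonzero fixed vector, and this persists after rescaling, since $a$ has no eigenvalue in $\F_{q^u}$. This contradicts $\dim F_g=e_2\ge 2$.

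\emph{Case $\Asch_7$.} Here $d=b^t$ with $t\ge2$ and $b\ge2$. We first note $r_1>e_1\ge d/2=b^t/2\ge t$, so $r_1\nmid t!$. Hence $g$ (or its $r_1$-part) lies in the base group $\GL_b(q^u)\circ\cdots\circ\GL_b(q^u)$. Then $r_1$ divides $|\GL_b(q^u)|$, forcing $e_1\le b\le \sqrt d<d/2$ for $d>4$, a contradiction. So no such $M$ contains $g$ at all.

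The main obstacle is handling the central product and scalars cleanly in the $\Asch_4$ case: the precise fixed-space argument once $g$ is identified with $a\otimes 1$ modulo scalars. The plan there is to pass to the $r_1$-part of a lift, using $\gcd(r_1,q^u-1)=1$. For the unitary, symplectic and orthogonal types we would cite \cite[Tables~4.4.A, 4.7.A]{KL} to confirm that the relevant $\Asch_4,\Asch_7$ subgroups lie inside these $\GL$-tensor stabilisers.
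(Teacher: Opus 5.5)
Your proof is correct, and it reaches the contradiction by a different route from the paper's.

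\textbf{The paper's route.} It treats $\Asch_4$ and $\Asch_7$ together. First it shows that $g$ lies in the central product $\GL_{d_1}(q^u)\circ\cdots\circ\GL_{d_t}(q^u)$, using $r_1>t$ exactly as you do. It then uses $d/2\le e_1\le \max_i d_i\le d/(\min_i d_i)^{t-1}\le d/2$ to force $t=2$ and $\{d_1,d_2\}=\{d/2,2\}$. Since $e_2=d/2$ as well, it gets both $g=h\otimes 1$ and $g'=h'\otimes 1$. Hence $H=\langle g,g'\rangle$ leaves $U_1\otimes w$ invariant, contradicting the irreducibility of $H$.

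\textbf{Your route.} You dispose of $\Asch_7$ separately with the cruder bound $e_1\le b\le\sqrt d<d/2$. In the remaining $\Asch_4$ case you use only the single element $g$: once $g=a\otimes 1$ with $a$ irreducible on the $e_1$-dimensional factor, $V\cong U_1^{\oplus 2}$ as a $\langle g\rangle$-module. So $F_g=0$, contradicting $\dim F_g=e_2\ge 1$.

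\textbf{Comparison.} Your argument never uses $g'$ or the irreducibility of $H$. It therefore proves the stronger fact that no element of $g_1^G$ lies in any $\Asch_4$- or $\Asch_7$-subgroup at all (for $d>4$). In particular, the configuration described in Remark~\ref{rem:tensor} cannot actually occur for a stingray element with nonzero fixed space. Your lifting of $g$ to an element of order $r_1$ in $\GL(U_1)\times\GL(U_2)$ also makes precise the step ``$g=h\otimes 1$'', which the paper asserts without comment. The paper's version fits its general framework, in which $H$ is already known to be irreducible when $\Asch_4$ is reached.

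One small correction: the reason $r_1\nmid|\GL_2(q^u)|$ is the ppd property together with $e_1>2$, not the size of $r_1$.
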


\begin{proof}
Let $V=U_1\otimes U_2\otimes\dots\otimes U_t$ as above, and let $d_i=\dim(U_i)$ for each $i$, so that $d=d_1\cdots d_t$ and each $d_i\geq2$. Let 
$(g, g')$ be an $(e_1,e_2)$-stingray duo in  $g_1^G\times g_2^G$  such that $H=\langle g,g'\rangle$ is irreducible and leaves this tensor decomposition invariant.
We claim that $g\in \GL_{d_1}(q^u)\circ \dots\circ \GL_{d_t}(q^u)$. If this were not the case then $d_1=\dots = d_t=b$, say, so $d=b^t\geq 2^t$, and $g$ projects onto a nontrivial element of $S_t$. Since $r_1=|g|$ is prime, it follows that $r_1\leq t$. Now $r_1=ke_1+1$ for some integer $k$, and so we have $t\geq r_1\geq e_1+1\geq d/2+1 \geq 2^{t-1}+1$, and this is impossible for any $t\geq2$. This proves the claim. 

Since $g\in \GL_{d_1}(q^u)\circ \dots\circ \GL_{d_t}(q^u)$, and $r_1=|g|$ is prime and does not divide $q$, it follows that $r_1$ divides $q^{u i}-1$ for some $i\leq \max\{d_1,\dots,d_t\}$. Further, since $r_1$ is a primitive prime divisor of $q^{u e_1}-1$ we conclude that 
\[
d/2\leq e_1\leq i\leq \max\{d_1,\dots,d_t\}\leq d/(\min\{d_1,\dots,d_t\})^{t-1}\leq d/2.
\]
Thus $t=2$, $\min\{d_1,d_2\}=2$
and $d/2=e_1=i=\max\{d_1,d_2\}$, say $d_1=d/2$ and $d_2=2$. Since $d>4$, we have $d_1\geq3$, and since $r_1$ is a primitive prime divisor of $q^{u d_1}-1$, it follows that $r_1$ does not divide $|\GL_2(q^u)|$, and hence $g$ is of the form $g=h\otimes 1 \in \GL_{d/2}(q^u)\circ \GL_{2}(q^u)$. Also, since $e_2=d-e_1=d/2$, the same argument shows that $g'$ is of the form $g'=h'\otimes 1$. Thus $H=\langle g,g'\rangle$ leaves invariant the $d_1$-dimensional subspace $U_1\otimes 1$, which contradicts the assumption that $H$ is irreducible. 

We conclude that no stingray duos $(g,g')$ with these properties exist, and hence the proportions $ {\rm Prob}_4(g_1,g_2,\bfX)$ and ${\rm Prob}_7(g_1,g_2,\bfX)$ are zero. This also means that the families $\cM_4(\bfX)$ and $\cM_7(\bfX)$ are empty for all types $\bfX$.
\end{proof}

\begin{remark}\label{rem:tensor}
The proof of Proposition~\ref{p:tensor} shows that, if a single ppd $e$-stingray element with $e\ge d/2$ leaves invariant a tensor decomposition $V=U\otimes W$ of $V$ with $\dim(U)\geq \dim(V)$, then $e=d/2=\dim(U)$ and $\dim(W)=2$.  
\end{remark}

\section{\texorpdfstring{$\Asch_5:$}{Asch5} Stabilisers of subfields}

We use the notation and assumptions from Hypothesis~\ref{hyp}, and 
in this section we estimate the proportion  ${\rm Prob}_5(g_1,g_2,\bfX)$ of stingray duos $(g,g')\in g_1^G\times g_2^G$ for which $\langle g,g'\rangle$ preserves an `subfield structure' on $V$, which we explain as follows. We require $ua>1$ (where $q=p^a$ and $G=\GX_d(q)\leq \GL_d(q^u)$).
   There is a prime $r$ dividing $ua$ so, in particular, the field $\F_{q^u}$ has a unique maximal subfield $\F_0$ of order $q_0\coloneq q^{u/r}= p^{u a/r}$, and the $\F_0$-span of a basis for $V$ will be an $\F_0$-space $V_0=\mathbb{F}_0^{d}$. The group $\langle g,g'\rangle$ will be contained in the stabiliser $M$ in $G$ of some such  $V_0$ up to scalars and so, as $\GL_d(q^u)$ is transitive on the set of bases of $V$, $M$ will be conjugate in $\GL_d(q^u)$ to a subgroup of the form $G\cap (Z\circ\GL_{d}(q_0))$, where $Z\cong Z_{q^u-1}$ is the (central) subgroup of scalar matrices of $\GL_d(q^u)$. Each group $M$ has a normal subgroup which is a classical group $Y$ depending on both $\bfX$ and $r$. We show in Lemma~\ref{lem:sub1} that the prime $r$ must divide $a$, and that the possibilities for $Y$ are as in Table~\ref{t:sub1}.   
   The subgroups in $\cM_5(G)$ will 
be maximal subject to preserving a `subfield structure' on $V$.
Our main result is  Proposition~\ref{prop:sub}, and a formal proof is given at the end of the subsection after several preliminary results.

\begin{proposition}\label{prop:sub}
Assume that Hypothesis~$\ref{hyp}$ holds,  with $d>8$ and $q=p^a$ for $p$ a prime and $a\geq 1$. Then $ {\rm Prob}_5(g_1,g_2,\bfX)=0$ if one of {\rm(i)} $a=1$, or {\rm(ii)} each prime divisor of $a$ divides $\lcm(e_1, e_2)$, or {\rm(iii)} $\bfX\ne \bfL$ and $a=2$ or $q<8$. Otherwise  
     \[
 {\rm Prob}_5(g_1,g_2,\bfX) <  \left\{ \begin{array}{rl}  
 4.04 \cdot  q^{-e_1e_2 + (d-1)/2}& \mbox{if $\bfX=\bfL$}\\
2.11\cdot q^{-(4/3)e_1e_2+ (2/3)(d-1)}& \mbox{if $\bfX=\bfU$}\\
1.82\cdot q^{-(2/3)e_1e_2 + d/3} & \mbox{if $\bfX=\bfSp$}\\
2.47\cdot q^{-(2/3)e_1e_2 + d/3} & \mbox{if $\bfX=\mathbf{O^+}$}\\
5.46\cdot q^{-(2/3)e_1e_2 + d/3} & \mbox{if $\bfX=\mathbf{O^-}$.}\\
\end{array}\right.
\]
Moreover we take $\cM_5(\bfX)=\emptyset$ if one of {\rm (i)--(iii)} above holds, and  otherwise we take
\begin{quote}
 \textit{$\cM_5(\bfX)$  as the union, over primes $r$ dividing $a$ such that $r$ does not divide $\lcm(e_1, e_2)$, of the $G$-conjugacy classes of groups $M$ with normal classical subgroups $Y$ as in Table~$\ref{t:sub1}$}.
 \end{quote}

\end{proposition}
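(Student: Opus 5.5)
I will follow the template already used for $\Asch_2$ and $\Asch_3$. First I establish a structural lemma for a duo $(g,g')$ with $H=\langle g,g'\rangle\le M$, where $M$ stabilises an $\F_0$-form $V_0$ up to scalars and $H$ lies in no member of $\cM_1(\bfX)\cup\dots\cup\cM_4(\bfX)$ (the analogue of Lemmas~\ref{lem:imprim1} and~\ref{lem:extn1}).

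Write $q^u=q_0^r$. The element $g$ has prime order $r_1>e_1\ge d/2$, and $r_1\nmid q^u-1$, so $g$ lies in $Z\circ Y_0$ with its $r_1$-part in the classical group $Y_0$ over $\F_0$. Then $r_1$ divides $|\GL_d(q_0)|$, so $r_1\mid q_0^{j}-1$ for some $j\le d$. Since $r_1$ is a ppd of $q^{ue_1}-1=q_0^{re_1}-1$, this forces $re_1\le d<2e_1$, so $r=1$, unless the relevant divisibility comes from $q_0^{re_1}$ with $re_1\le d$. The way out is $r\mid$ something, so I will argue instead via eigenvalues. The $e_1$ nontrivial eigenvalues of $g$ on $U_g$ form a single Frobenius orbit over $\F_{q^u}$ of size $e_1$. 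These eigenvalues must be closed, up to the scalar factor, under the $q_0$-Frobenius. Hence $\F_{q^u}(\lambda)$ would require $r\mid e_1$: the $\F_0$-orbit has length $re_1/\gcd$, and this must not exceed $d$. So $r\mid e_1$, and similarly $r\mid e_2$.

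This shows that $\mathrm{Prob}_5=0$ when every prime $r\mid a$ divides $\lcm(e_1,e_2)$. I expect the precise condition ($r\mid e_1e_2$-type versus $r$ dividing both) to come out of exactly this eigenvalue argument. The points to check carefully are that $r$ must divide $a$ rather than $u$ (which kills the unitary-over-$\F_q$ options with $r=2=u$), and the classification of $Y$ into Table~\ref{t:sub1}. For the latter I will use \cite[Table 4.5.A]{KL}: type $\bfU$ needs $r$ odd, $\bfSp$ with $q$ even may give orthogonal $Y$, and so on. Cases (iii), where $\bfX\ne\bfL$ and $a=2$ or $q<8$, should follow because for $r=2$ the candidate $Y$ (for instance $\GO$ or $\Sp$ inside $\GU$) has no stingray elements of the right parity, or because the ppd requirement fails.

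Next comes counting. For each admissible prime $r$, $\cM_5(\bfX,r)$ consists of boundedly many $G$-classes (at most $\gcd$-many scalar twists), each self-normalising, and I will show $g_i^G\cap M$ is a single $M$-class as in Lemma~\ref{lem:extn1}(c). Lemma~\ref{lem:countM} and \eqref{e:asch3} then give
\[
\mathrm{Prob}_5(g_1,g_2,\bfX;r)\le \frac{|C_G(g_1)|}{|N(d,q^u,e_1,\bfX)|}\cdot\frac{c\,|N(d,q_0^{u'},e_1,\bfY)|}{|C_Y(g)|},
\]
with the constant $c$ absorbing scalars. Propositions~\ref{p:Lbds}(c) and~\ref{p:Xbds}(c),(d) turn this into roughly $q^{-\beta e_1e_2(1-1/r)}$ times $(q^{e_1}\pm1)(q^{e_2}\pm1)/(q_0^{e_1}\pm1)(q_0^{e_2}\pm1)$. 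The second factor contributes $q^{(d)(1-1/r)}$-type terms, which accounts for the exponents $-e_1e_2+(d-1)/2$ for $r=2$ and $-(2/3)e_1e_2+d/3$ for $r=3$.

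Finally I will sum over the primes $r\mid a$, using an analogue of Lemma~\ref{lem:c3boundV2}: the worst case is the smallest admissible $r$, which is $2$ for $\bfL$ and $3$ otherwise (for $\bfU$, $r$ is odd, giving $4/3=2(2/3)$). Larger $r$ give geometrically smaller terms, since the number of prime divisors of $a$ is $O(\log a)$. I expect the main obstacle to be the structural step: pinning down exactly which $(r,Y)$ occur and the single-class property, in particular for $\bfSp$ and $\bfO$ with $q$ even and for the scalar extension $Z\circ Y$ where $g$ need not lie in $Y$ itself. The numerical constants are then routine evaluation with $\bfa,\bfb$ from Table~\ref{t:abLU}.
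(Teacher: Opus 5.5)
\textbf{Gap: the key divisibility in your structural step is reversed.} Since $r_1\nmid |Z|=q^u-1$, you do get $g\in\GL_d(q_0)$. So the characteristic polynomial $f(t)(t-1)^{d-e_1}$ of $g$ has coefficients in $\F_0$. Hence the degree-$e_1$ factor $f$, irreducible over $\F_{q^u}$, is defined and irreducible over $\F_0$. Equivalently, the $\F_0$-Galois orbit of an eigenvalue $\lambda\ne1$ must lie among the $e_1$ nontrivial eigenvalues, so the order $k$ of $q_0$ modulo $r_1$ satisfies $k\le e_1$. Since $q^u=q_0^r$ has order $e_1=k/\gcd(k,r)$ modulo $r_1$, this forces $k=e_1$ and $r\nmid e_1$. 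Directly: if $r\mid e_1$ then $r_1\mid q_0^{e_1}-1=(q^u)^{e_1/r}-1$, contradicting that $r_1$ is a ppd of $(q^u)^{e_1}-1$. So the correct conclusion is $r\nmid e_1$ and $r\nmid e_2$, i.e.\ $r\nmid\lcm(e_1,e_2)$, with $r_i$ a ppd of $q_0^{e_i}-1$. This is the opposite of your ``$r\mid e_1$ and $r\mid e_2$''. Your orbit length $re_1/\gcd(r,e_1)$ has the cases swapped: the true length is $e_1$ when $r\nmid e_1$ and $re_1$ when $r\mid e_1$. Bounding the orbit by $d$ instead of $e_1$ would also leave $e_1=d/2$, $r=2$ unresolved. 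Your first attempt has the same root confusion: $r_1$ is a ppd to the base $q^u$, not of $q_0^{re_1}-1$ to the base $q_0$.

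\textbf{Why this matters.} From ``$r\mid e_1,e_2$'' you cannot deduce criterion (ii) as stated. You would instead get vanishing when no prime divisor of $a$ divides $\gcd(e_1,e_2)$. Case (iii) also fails for $\bfSp$ and $\mathbf{O^\varepsilon}$. There the $e_i$ are even, so your condition makes $r=2$ admissible whenever $a$ is even. The $r=2$ term would then dominate, giving exponent $-e_1e_2/2+d/4$ rather than $-(2/3)e_1e_2+d/3$. Your later claim that the smallest admissible $r$ is $3$ for these types contradicts your own derived condition.

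\textbf{What the paper uses instead.} In the paper, $r\nmid\lcm(e_1,e_2)$ together with $e_i$ even is exactly what forces $r$ odd for $\bfSp$ and $\mathbf{O^\varepsilon}$, hence $a\ge3$ and $q\ge8$. For $\bfU$, the $r=2$ options $\Sp_d(q)$ and $\GO^\pm_d(q)$ are excluded by the parity of the $e_i$ (odd for $\bfU$, even in those groups), not by ``$r$ dividing $a$ rather than $u$''. The ppd property over $\F_0$ is also what makes $(g,g')$ an $(e_1,e_2)$-ppd stingray duo in $Y$ with the same $e_i$ and $d$. Your counting factor $|N(d,q_0,e_1,\bfX)|/|C_Y(g)|$ silently relies on this. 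Note too that $g,g'$ do lie in $Y$ itself, since $|M:Y|$ is coprime to $r_i$, and each $\cM_5(\bfX,r)$ is a single $G$-class, which the stated constants need.

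Once the divisibility is corrected, the rest of your plan matches the paper's argument: $g_i^G\cap M=g_i^Y$, Lemma~\ref{lem:countM}, Propositions~\ref{p:Lbds} and~\ref{p:Xbds} applied to $Y$ over $\F_0$, and a geometric sum over the primes $r\mid a$.
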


 \begin{table}
\caption{Possibilities for the classical group $Y$ for $\cM_5(\bfX)$\\ in  Proposition~\ref{prop:sub}, where $q=p^a$ and the prime $r$ divides $a$}
\begin{tabular}{clccl}
  \toprule
Line&$\bfX$ & $Y$ & Conditions & Consequences\\
    \midrule
1&$\bfL$   & $\GL_d(q^{1/r})$ & &  $q \ge p^r\ge p^2$\\
2&$\bfU$   & $\GU_d(q^{1/r})$ & $r$ odd & $q \ge p^r\ge p^3$ \\
3&$\bfSp$   & $\Sp_d(q^{1/r})$ & $r$ odd& $q \ge p^r\ge p^3$\\ 
4&$\mathbf{O^{\varepsilon}}$, $\varepsilon=\pm$   & $\GO^{\varepsilon}_d(q^{1/r})$ & $r$ odd & $q \ge p^r\ge p^3$ \\ 
\bottomrule
\end{tabular}
\label{t:sub1}
\end{table}

To prove Proposition~\ref{prop:sub}, we work with the following assumptions:

\begin{hypothesis}[for $\Asch_5$]\label{H:AC5}
Assume that Hypothesis~$\ref{hyp}$ holds, where $d>8$ and $q=p^a$ for $p$ a prime and some $a\geq2$, and also that ${\rm Prob}_5(g_1,g_2,\bfX)>0$. Let $(g,g')$ be a stingray duo in $g_1^G\times g_2^G$ such that $H\coloneq\langle g,g'\rangle$  leaves invariant a `subfield structure' on $V$. Thus there is a prime $r$ dividing $ua$ and a basis $B$ of $V$ such that $H$ leaves invariant the $\F_0$-span $V_0$ of $B$ up to scalars, where $q_0=|\F_0|=p^{u a/r}=q^{u/r}$. We may identify $B$ with the standard basis of $V$ so that $H\leq M\coloneq  G\cap (Z\circ\GL_{d}(q_0))$, where $Z\cong Z_{q^u-1}$ is the subgroup of scalar matrices of $\GL_d(q^u)$. We also assume that  $H\not\leq L$ for any $L\in \cM_1(\bfX)\cup\dots \cup\cM_4(\bfX)$.
\end{hypothesis}

 First we obtain some restrictions on the parameters, and on the possible maximal subgroups $M$ in $\Asch_5$ containing $H$.
 
 \begin{lemma}\label{lem:sub1} 
Assume that Hypothesis~$\ref{H:AC5}$ holds. Then 
 \begin{enumerate}[{\rm (a)}]
     \item  the prime $r$ divides $a$ but does not divide $\lcm(e_1, e_2)$, 
     and for $i=1,2$, the prime $r_i$ (see Hypothesis~$\ref{hyp}$) 
     is a primitive prime divisor of $q_0^{e_i}-1$. Further, if $\bfX\ne\bfL$ then $a\geq 3$, and in particular $q=p^a\geq 8$;
     \item the group $M$ has a classical normal subgroup $Y$, with $Y$ as in one of the lines of Table~$\ref{t:sub1}$,  and in each case $M=N_G(Y)$ is self-normalising in $G$; 
     \item $g, g'\in Y$, and moreover, $(g,g')$ is a ppd $(e_1, e_2)$-stingray duo in $Y$, $g_1^G\cap M =g^M=g^Y$, and $g_2^G\cap M = (g')^M= (g')^Y$. 
 \end{enumerate}
  In particular, the content of the set $\cM_5(\mathbf{X)}$ given in Proposition~$\ref{prop:sub}$ is justified.
 \end{lemma}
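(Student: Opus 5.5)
The proof works directly with the prime orders $r_i=|g|,|g'|$ and the projection of $H$ to the outer part of $M$. Since $H\le M=G\cap(Z\circ\GL_d(q_0))$ with $q_0=q^{u/r}$, first consider the case $\bfX=\bfU$ with $r=2$: here $q_0=q$, and the normal subgroup of $M$ would be an orthogonal or symplectic group over $\F_q$ (the $\Asch_5$ subgroups $\GO_d(q)$ or $\Sp_d(q)$ of $\GU_d(q)$ as in \cite[Table 4.5.A]{KL}). Then $r_1$, a primitive prime divisor of $q^{2e_1}-1$ with $e_1$ odd, would have to divide $|\GL_d(q)|$. Hence $r_1\mid q^{j}-1$ for some $j\le d<2e_1$, contradicting primitivity. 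So $r\mid a$ in all cases, and $\F_0$ is a subfield of $\F_q$ (respectively of $\F_{q^2}$ with the unitary structure descending).

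Next, we show $g\in Z\circ\GL_d(q_0)$ actually lies in $\GL_d(q_0)$ modulo scalars in a way compatible with its fixed space. Since $r_1$ is odd and coprime to the index, $g$ lies in the subgroup $Z\circ Y_0$, where $Y_0$ is the classical group over $\F_0$. Its image in $Y_0/Z(Y_0)$ has order $r_1$, and as $g$ has a large $1$-eigenspace (dimension $e_2\ge 2$), the scalar component must be trivial. Hence $g\in Y_0$, and the same holds for $g'$. Now $r_1\mid |Y_0|$ forces $r_1\mid q_0^{uj}-1$ for some $j\le d$, and since $r_1$ is a ppd of $q^{ue_1}-1=q_0^{ure_1}-1$, we get $ure_1\le$ something at most $d$ unless $r_1$ is already a ppd of $q_0^{ue_1}-1$. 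Comparing orders of $q_0^u$ modulo $r_1$, the order is $re_1$ divided by $\gcd$ considerations; it must equal $e_1$ exactly when $r\nmid e_1$, and if $r\mid e_1$ the order would exceed $e_1$ or give a contradiction with $re_1>d$. This step needs the most care, and we will handle it by the standard fact that if $\mathrm{ord}_{r_1}(q_0^u)=k$ then $\mathrm{ord}_{r_1}(q^u)=k/\gcd(k,r)$. Setting this equal to $e_1$ gives $k\in\{e_1, re_1\}$; $k=re_1>d$ is impossible, so $k=e_1$ and $r\nmid e_1$ (otherwise $k/\gcd=e_1/r$). The same argument applies to $e_2$, yielding (a) except the bound on $a$.

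For $\bfX\ne\bfL$, we read off Table~\ref{t:sub1} from \cite[Table 4.5.A]{KL}: the possibilities with $r=2$ (e.g.\ $\GO_d(q^{1/2})<\Sp_d(q)$, $\GU_d(q^{1/2})<\Sp$ or $\GO$) are excluded because $r=2$ would divide the even integers $e_i$ (for $\bfSp,\bfO$), contradicting $r\nmid\lcm(e_1,e_2)$. This gives $r$ odd, hence $a\ge3$ and $q\ge 8$, completing (a) and the Lines of Table~\ref{t:sub1}. Self-normalising follows from $M=N_G(Y)$ as in \cite{KL}.

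For (c), we have shown $g,g'\in Y$ with $U_g$ defined over $\F_0$ (being $\im(g-1)$), so $(g,g')$ is a ppd stingray duo in $Y$. Conjugacy follows by repeating the torus argument from Lemma~\ref{lem:extn1}(c). Lemma~\ref{lem:uperp}(c) in $Y$ gives a torus $T_Y\le T$ with $|g^Y\cap T_Y|=e_1=|g^G\cap T|$, so $g_1^G\cap Y=g^Y$ and hence equals $g^M$; likewise for $g'$.
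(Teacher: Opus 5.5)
There is a genuine gap in your second paragraph, at the step ``$k=re_1>d$ is impossible''. Since $e_1\ge d/2$ and $r\ge 2$, you only know $re_1\ge d$. Equality holds exactly when $r=2$ and $e_1=e_2=d/2$, which is a central case, not a degenerate one. There $k=2e_1=d$ is compatible with the only information you use, namely that $r_1$ divides $|Y_0|$, i.e.\ $r_1\mid q_0^{j}-1$ for some $j\le d$ (with $q_0=|\F_0|$).

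This cannot be repaired by order considerations alone. For instance, $\GL_{10}(2)<\GL_{10}(4)$ contains elements of order $11$, a primitive prime divisor of $4^5-1$. Likewise $\Sp_{12}(3)<\Sp_{12}(9)$ contains elements of order $73$, a primitive prime divisor of $9^6-1$ that divides $9^3+1$.

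The same defect breaks your first paragraph. For $\bfX=\bfU$, $r=2$ and $e_1=e_2$ you have $d=2e_1$, not $d<2e_1$, and both $|\Sp_d(q)|$ and $|\GO^-_d(q)|$ are divisible by $q^{e_1}+1$. Consequently your derivation of $r\nmid\lcm(e_1,e_2)$ is unproved precisely when $e_1=e_2$. So is your exclusion of $r=2$ for $\bfSp$ and $\bfO^\eps$, which depends on it.

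What rules out such elements is that they act fixed-point-freely on $V$, whereas $g$ fixes a subspace of dimension $d-e_1=e_2$. So you must use the structure of $g$, not merely its order. The paper does this with the characteristic polynomial: $c_g(t)=f(t)(t-1)^{d-e_1}$ with $f$ irreducible of degree $e_1$ over $\F_{q^u}$. Since $g\in\GL_d(q_0)$, $c_g$ and hence $f$ have coefficients in $\F_0$. Thus $f$ is irreducible over $\F_0$ and $r_1\mid q_0^{e_1}-1$. This gives $k\le e_1$ directly, hence $k=e_1$ and $r\nmid e_1$, uniformly in all cases.

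The unitary case with $r=2$ is then excluded in either of two ways. One is parity: Lemma~\ref{lem:uperp}(b) applied to an orthogonal or symplectic $Y$ forces $e_1$ even. The other is that $r_1$ would divide both $q^{e_1}-1$ and $q^{e_1}+1$. With this replacement, the rest of your outline agrees with the paper's proof: removing the scalar part, reading off Table~\ref{t:sub1} from \cite{KL}, and the torus argument for (c).
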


\begin{proof}
It follows from \cite[Table 4.5.A]{KL} that $M$ has a classical normal subgroup $Y$ such that $M=N_G(M)=N_G(Y)$, and either $Y$ satisfies one of Lines 1--3 of Table~$\ref{t:sub1}$ (with $r$ odd if $\bfX=\bfU$) or 
\begin{enumerate}
    \item[(i)]  $\bfX=\mathbf{O^\varepsilon}$ and $Y=\GO^{\varepsilon'}_d(q^{1/r})$ with $\varepsilon = (\varepsilon')^r$; or
    \item[(ii)] $\bfX=\bfU$, $r=2$, and  $Y=\GO^\varepsilon_d(q)$, for some $\varepsilon=\pm$, or $Y=\Sp_d(q)$.
\end{enumerate}
We shall show that, in case (i), the prime $r$ is odd so that $\varepsilon = \varepsilon'$ and Line~4 of Table~\ref{t:sub1} holds; and we shall show that neither of the possibilities in case (ii) arises. From this it follows from Table~\ref{t:sub1} that in each line $r$ divides $a$; if $\bfX=\bfL$ then $q\geq p^r\geq p^2$, while if $\bfX\ne\bfL$, then the prime $r$ is odd so $a\geq r\geq 3$ and $q\geq p^r\geq p^3\geq 8$ (yielding information in the `Consequences' column of Table~\ref{t:sub1}). 

First we consider the elements $g, g'\in M$. Since $e_1\geq e_2\geq 2$, neither of their orders $r_1, r_2$ divides $|Z|=q^u-1$, and hence $g,g'\in Y\leq \GL_d(q_0)$. Consider the element  $g$ (the situation for $g'$ is identical). Since $g$ is an $e_1$-stingray element on $V$, the characteristic polynomial $c_g(t)=\det(t I_d-g)$ for $g$ on $V$ has the form $f(t)(t-1)^{d-e_1}$, with $f(t)$ irreducible over $\F_{q^u}$ of degree $e_1$. Moreover, since $g\in \GL_d(q_0)<\GL_d(q^u)$ where $q_0=q^{u/r}$, the polynomial $c_g(t)$ has coefficients in $\F_0$, and hence also $f(t)$ has coefficients in $\F_0$. Thus $f(t)$ is irreducible over $\F_0$ and $r_1 = |g|$ divides $q_0^{e_1}-1$. If $r$ divides $e_1$ then $q_0^{e_1}-1=(q^{u})^{e_1/r}-1$, and hence $r_1$ divides $(q^{u})^{e_1/r}-1$, which is a contradiction. Hence $r$ does not divide $e_1$. Now let $j$ be the least positive integer such that $r_1$ divides $q_0^j-1$. Then $j\leq e_1$ since we just proved that $r_1$ divides $q_0^{e_1}-1$. Also $j\geq e_1$ since $r_1$ also divides $(q_0^{j})^{r}-1 = q^{u j}-1$ and by definition $r_1$ is a primitive prime divisor of $(q^u)^{e_1}-1$. Hence $j=e_1$. Thus $r_1$ is a primitive prime divisor of $q_0^{e_1}-1$, and it follows that $g$ is a ppd $e_1$-stingray element of $Y$ acting on $V_0$. An identical argument shows that $r$ does not divide $e_2$, that $r_2$ is a primitive prime divisor of $q_0^{ e_2}-1$, and that $g'$ is a ppd $e_2$-stingray element of $Y$. In particular we have shown that $r\nmid\lcm(e_1,e_2)$.

Next we prove our assertions about the possibilities in (i) and (ii) above. 
If $\bfX=\bfU$, then the $e_i$ are odd (see Table~\ref{tab:one}). However if in this case the prime $r=2$ and $Y$ is an orthogonal or symplectic group, then the $e_i$ should be even by Lemma~\ref{lem:uperp}(b) and Table~\ref{tab:one}, which is a contradiction. Hence the exceptional cases in (ii) above do not arise. Moreover, if $\bfX=\bfSp$ or $\mathbf{O^\varepsilon}$, then both $e_1$ and $e_2$ are even (see Table~\ref{tab:one}) and since $r\nmid\lcm(e_1,e_2)$, the prime $r$ must be odd. In case (i) above this means that $\varepsilon = (\varepsilon')^r= \varepsilon'$ and hence that Line~4 of Table~\ref{t:sub1} holds. This justifies all the entries of Table~\ref{t:sub1}, and, as mentioned above, we now have that $r$ divides $a$. 
The fact that in all cases $M=N_G(Y)$ is self-normalising follows from \cite[Proposition 4.5.1]{KL}. 
This completes the proofs of parts (a) and (b), and justifies the content of the set $\cM_5(\bfX)$ given in Proposition~\ref{prop:sub}.

We showed above that $g,g'\in Y$. By Lemma~\ref{lem:uperp}(c) applied to $g$ as an element of $G$, and of $Y$, we see that there are unique cyclic tori $T$ of $G$, and $T_0$ of $Y$ containing $g$ such that $C_G(g)=C_G(T)$ and $C_Y(g)=C_Y(T_0)$. Moreover $T, T_0$ preserve the decompositions $V=U_g\oplus F_g$ and $V_0=U_{0,g}\oplus F_{0,g}$, with $U_g, F_g$ and $U_{0,g}, F_{0,g}$ as in Definition~\ref{def:stingray} for $g$ as an element of $G, Y$ respectively.
It follows that $F_{0,g}=F_g\cap V_0$ and also that $U_{0,g}=U_g\cap V_0$ by the uniqueness property in Lemma~\ref{lem:unique}(c). Then, since $g\in Y$, we have $g\in T_0<T$. The same properties hold for $g'$ as an element of $G$ and $Y$, with subspaces $U_{0,g'}=U_{g'}\cap V_0$ and $F_{0,g'}=F_{g'}\cap V_0$. Since $(g,g')$ is a stingray duo in $G$ we have $U_g\cap U_{g'}=0$, and hence also $U_{0,g}\cap U_{0,g'}=0$, so $(g,g')$ is a stingray duo in $Y$. Since $r_i$ is a primitive prime divisor of $q_0^{e_i}-1$, for each $i$ by part~(a), $(g,g')$ is a ppd $(e_1,e_2)$-stingray duo in $Y$, and  the first assertion in part (c) is proved.

Finally we consider $g_i^G\cap M$. Now $Y$ is normal in $M$ and the index $|M:Y|$ is coprime to $r_i$ (see \cite[Propositions 4.5.3, 4.5.4, 4.5.10]{KL}). It follows that $g_i^G\cap M=g_i^G\cap Y$. By Lemma~\ref{lem:uperp}(c), the torus $T$ in the previous paragraph contains exactly $e_1$ elements of $g_1^G$, and these form a single orbit under the conjugation action of $N_G(T)$. Applying this result to $g$ as an element of $Y$, we see that the torus $T_0$ contains exactly $e_1$ elements of $g^ Y$ and these form a single orbit under the conjugation action of $N_Y(T_0)$. Since $g^Y\subseteq g_1^G$, it follows that  $g_1^G\cap T=g_1^G\cap {T_0}=g^{N_Y(T_0)}$. Finally, since each element of $g_1^G\cap M$ lies in a cyclic torus $T_0'$ which is conjugate in $Y$ to $T_0$, it follows that $g_1^G\cap M$ forms a single $Y$-conjugacy class, namely $g^Y=g^M$. An identical argument shows that $g_2^G\cap M= (g')^M=(g')^Y$. This completes the proof of part~(c).
\end{proof}

It follows from Lemma~\ref{lem:sub1} that, for each $\bfX$, the set $\cM_5(\bfX)$ is partitioned according to the primes $r$ dividing $a$ and not dividing $\lcm(e_1, e_2)$: let $\cM_5(\bfX,r)$ denote the set of subgroups in $\cM_5(\bfX)$ occurring in line $\bfX$ of Table~$\ref{t:sub1}$ for this particular prime $r$. Then

\begin{equation}\label{e:sub}
\cM_5(\bfX) = \bigcup_r \cM_5(\bfX,r)\quad \mbox{and}\quad  {\rm Prob}_5(g_1,g_2,\bfX) \leq  \sum_r  {\rm Prob}_5(g_1,g_2,\bfX;r),
\end{equation}
where $ {\rm Prob}_5(g_1,g_2,\bfX;r)$ is
the contribution to \eqref{e:asch3} from subgroups $M\in\cM_5(\bfX,r)$ containing the element $g$. We show in Lemma~\ref{lem:sub2}(a) that each non-empty $\cM_5(\bfX,r)$ is a single $G$-conjugacy class.

\begin{lemma}\label{lem:sub2}
Assume that Hypothesis~$\ref{H:AC5}$ holds, and let $\cM_5(\bfX,r)$ and ${\rm Prob}_5(g_1,g_2,\bfX;r)$ be as in \eqref{e:sub},  for each $\bfX$ and each prime $r$ dividing $a$ but not dividing $\lcm(e_1, e_2)$. Then 
 \begin{enumerate}[{\rm (a)}]
     \item the set $\cM_5(\bfX,r)$ is a single $G$-conjugacy class; and 
     
     \item the following upper bounds hold
     \[
 {\rm Prob}_5(g_1,g_2,\bfX;r) <  \left\{ \begin{array}{rl}  
4\cdot q^{-(1-1/r)(2e_1e_2-d+1)}& \mbox{if $\bfX=\bfL$}\\
2.08 \cdot q^{-(1-1/r)(2e_1e_2-d+1)}& \mbox{if $\bfX=\bfU$}\\
 1.8\cdot q^{-(2e_1e_2-d)(1-1/r)/2} & \mbox{if $\bfX=\bfSp$}\\
2.44\cdot q^{-(2e_1e_2-d)(1-1/r)/2} & \mbox{if $\bfX=\mathbf{O^+}$}\\
5.4\cdot q^{-(2e_1e_2-d)(1-1/r)/2} & \mbox{if $\bfX=\mathbf{O^-}$.}\\
\end{array}\right.
\]
\end{enumerate}
\end{lemma}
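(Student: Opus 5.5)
Part (a) follows the pattern of Lemma~\ref{lem:extn2}(a). By Lemma~\ref{lem:sub1}(b), for a fixed prime $r$ and type $\bfX$ the group $Y$ is determined by a single line of Table~\ref{t:sub1}. Every subgroup of the form $G\cap(Z\circ\GL_d(q_0))$ arises from a choice of basis of $V$; in the non-linear cases this is a basis of the appropriate isometry type. Since $G=\GX_d(q)$ is transitive on such bases (by \cite[Table 4.5.A]{KL} and the discussion preceding Hypothesis~\ref{H:AC5}), there is a unique class $M^G$, and $\cM_5(\bfX,r)$ is a single $G$-conjugacy class.

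For part (b) I would copy the computation of Lemma~\ref{lem:extn3}. By Lemma~\ref{lem:sub1}, $M$ is self-normalising and $g_1^G\cap M=g^Y$. Lemma~\ref{lem:countM} then says $g$ lies in $|C_G(g_1)|/|C_M(g)|$ conjugates of $M$. Since $C_M(g)\ge C_Y(g)$, inequality \eqref{e:asch3} gives
\[
{\rm Prob}_5(g_1,g_2,\bfX;r)\le \frac{|C_G(g_1)|}{|N(d,q^u,e_1,\bfX)|}\cdot\frac{|\mathcal S_5(g,M)|}{|C_Y(g)|}.
\]
Because $g_2^G\cap M=(g')^Y$ and $(g,g')$ is a stingray duo in $Y$ (Lemma~\ref{lem:sub1}(c)), we have $|\mathcal S_5(g,M)|\le |N(d,q_0^u,e_1,\bfY)|$, with $Y$ of type $\bfY$ over $q_0=q^{1/r}$. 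Both factors are then supplied by Proposition~\ref{p:Lbds}(c) or Proposition~\ref{p:Xbds}(c).

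For $\bfX=\bfL$ this gives
\[
\frac{(q^{e_1}-1)(q^{e_2}-1)}{q^{2e_1e_2}}\cdot\frac{q_0^{2e_1e_2}}{(q_0^{e_1}-1)(q_0^{e_2}-1)}.
\]
I bound $(q^{e}-1)/(q_0^{e}-1)<q^{e(1-1/r)}\cdot(1-q_0^{-e})^{-1}$ and use $(1-q_0^{-e_1})^{-1}(1-q_0^{-e_2})^{-1}\le 4$ when $q_0\ge2$. This leaves the exponent $-(1-1/r)(2e_1e_2-d)$. The stated form has exponent $-(1-1/r)(2e_1e_2-d+1)$, so I need to recover one extra factor $q^{-(1-1/r)}$. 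I will try to obtain it by estimating $(q^{e_1}-1)(q^{e_2}-1)/((q_0^{e_1}-1)(q_0^{e_2}-1))$ more carefully, or else by using the exact $\Delta$ values; this is where the precise constant $4$ has to be checked.

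For $\bfX\ne\bfL$, Proposition~\ref{p:Xbds}(c) gives
\[
{\rm Prob}_5\le\frac{(q^{\alpha e_1}+1)(q^{\alpha e_2}+\beta)}{(q_0^{\alpha e_1}+1)(q_0^{\alpha e_2}+\beta)}\cdot\frac{\mathbf k(d,q_0^u,e_1,\bfY)\,|\cU(d,q_0^u,e_1,\bfY)|}{\mathbf k(d,q^u,e_1,\bfX)\,|\cU(d,q^u,e_1,\bfX)|}.
\]
I use $\mathbf k\le 1$ in the numerator and $\mathbf k\ge1-3/(2q^u)$ in the denominator, with $q\ge8$ from Lemma~\ref{lem:sub1}(a). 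The $\cU$-ratio is bounded by $(\bfb(q_0,\bfY)/\bfa(q,\bfX))\,q^{-ue_1e_2(1-1/r)}$ from Table~\ref{t:abLU}, taking $q_0\ge2$ and $q\ge8$. The first factor is at most $q^{\alpha d(1-1/r)}$ times a constant close to $1$. For $\bfSp$ and $\mathbf{O^\eps}$ (where $\alpha=1/2$) this yields the exponent $-(1-1/r)(2e_1e_2-d)/2$; for $\bfU$ it yields $-(1-1/r)(2e_1e_2-d)$, and the extra ``$+1$'' again has to be extracted.

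The main obstacle is the constant bookkeeping, in two places: obtaining the $-(d-1)$ shift for types $\bfL$ and $\bfU$, and getting the small constants $1.8$, $2.44$, $5.4$. The orthogonal case is the delicate one, because $\bfa(q,\mathbf{O^+})$ is small. There I will use the sharper values of $\bfa$ valid for $q\ge8$, with $\Gamma(1)\ge1-1/q-1/q^3$ from Lemma~\ref{lem:gam}, together with $\bfb(q_0,\bfO)\le 12/11$.
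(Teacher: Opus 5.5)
Your part (a), and the skeleton of part (b) (Lemma~\ref{lem:countM}, the bound $|\mathcal{S}_5(g,M)|\le |N(d,q_0,e_1,\bfX)|$, then Propositions~\ref{p:Lbds} and~\ref{p:Xbds}), agree with the paper. There is, however, a genuine gap for types $\bfL$ and $\bfU$.

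The factor $q^{-(1-1/r)}$ that produces the ``$+1$'' cannot be recovered by sharper estimates. You discarded it when you replaced $|C_M(g)|$ by $|C_Y(g)|$. In type $\bfL$ both ratios you use are exact. Since $(q^{e}-1)/(q_0^{e}-1)=\sum_{j=0}^{r-1}q_0^{ej}>q^{e(1-1/r)}$, your expression is strictly larger than
\[
q^{-(1-1/r)(2e_1e_2-d)}=q^{1-1/r}\cdot q^{-(1-1/r)(2e_1e_2-d+1)}.
\]
This exceeds the claimed $4q^{-(1-1/r)(2e_1e_2-d+1)}$ whenever $q^{1-1/r}\ge 4$, for instance when $q=8$ and $r=3$. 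So neither a finer estimate of these ratios nor exact $\Delta$-values can close the gap.

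The missing idea is that $M=G\cap(Z\circ\GL_d(q_0))$ contains the full scalar group $Z(G)$. This group has order $q-1$ for $\bfL$ and $q+1$ for $\bfU$. An element $zy$ with $z\in Z(G)$ and $y\in Y$ centralises $g$ if and only if $y$ does. Hence $C_M(g)\ge Z(G)\,C_Y(g)$, and
\[
|C_M(g)|\ \ge\ \frac{|Z(G)|}{|Z(G)\cap Y|}\,|C_Y(g)|,\qquad \frac{q-1}{q_0-1}>q^{1-1/r}\ (\bfL),\qquad \frac{q+1}{q_0+1}>\tfrac23\,q^{1-1/r}\ (\bfU,\ r\ \mbox{odd}).
\]
Dividing by this index supplies exactly the missing $q^{-(1-1/r)}$.
\begin{itemize}
\item For $\bfL$, combining it with $(q^e-1)/(q_0^e-1)<2q^{e(1-1/r)}$ gives the constant $4$.
\item For $\bfU$, combining it with $(q^e+1)/(q_0^e+1)<q^{e(1-1/r)}$, $\mathbf{k}\ge 125/128$ and $\bfa(q,\bfU)\ge 20/27$ gives $2.08$.
\end{itemize}

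For $\bfSp$ and $\mathbf{O^\varepsilon}$ the index is only used as $\ge 1$, as in your plan. Two pieces of your bookkeeping there still need correcting.
\begin{itemize}
\item For $\mathbf{O^-}$, the ratio $(q^{e_2/2}-1)/(q_0^{e_2/2}-1)$ is not $q^{(e_2/2)(1-1/r)}$ times a constant close to $1$. With $q=8$, $q_0=2$, $e_2=2$ it is $7$ against $4$. Bounding that constant by $2$ is what produces $5.4$, since $(16/13)\cdot 2\cdot(12/11)/(1/2)<5.4$.
\item For $\mathbf{O^+}$, you must use $\bfb(q_0,\mathbf{O^+})\le 8/11$ from Table~\ref{t:abLU}, not $12/11$. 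With $8/11$, the plain values $\bfa(q,\mathbf{O^+})=47/128$ and $\mathbf{k}\ge 13/16$ already give $2.44$. With $12/11$, even the sharper lower bound from Lemma~\ref{lem:gam} at $q=8$ only gives about $3.1$.
\end{itemize}
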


\begin{proof}
(a) By the discussion at the beginning of this section, for each $r$ as in part (a), the group $\GL_d(q^u)$ acts transitively by conjugation on the subgroups $M$ stabilising a subfield structure over the field of order $q^{u/r}$, so if  $\bfX=\bfL$ then the subgroups $M=N_G(Y)$ as in Line~1 of Table~\ref{t:sub1} form a single $\GL_d(q)$-conjugacy class of subgroups of type $\bfL$. For all other types, and for each prime $r$, the fact that the subgroups $M=N_G(Y)$ in the relevant line of Table~\ref{t:sub1} form a single $G$-conjugacy class of subgroups $N_G(Y)$ with $Y$ classical of type $\bfX$ essentially follows from `Witt's Lemma' on the uniqueness of these geometries; we refer to \cite[Propositions 2.3.1, 2.4.1 and 2.5.3]{KL} for the unitary, symplectic and orthogonal types, respectively. This proves part (a).

(b) We estimate the contribution ${\rm Prob}_5(g_1,g_2,\bfX;r)$ to ${\rm Prob}_5(g_1,g_2,\bfX)$ from the subgroups in $\cM_5(\bfX,r)$. Recall that $q_0=q^{1/r}=p^{a/r}$.  By part (a), these subgroups are all $G$-conjugate, and we choose a subgroup $M\in\cM_5(\bfX,r)$ and an element $g\in g_1^G\cap M$. By Lemmas~\ref{lem:countM} and~\ref{lem:sub1}(c), the number of $G$-conjugates $L\in\cM_5(\bfX,r)$ such that $g\in L$ is $|C_G(g_1)|/|C_M(g)|$.
Thus, by \eqref{e:asch3}, 
\begin{equation}\label{e:sub2}
    {\rm Prob}_5(g_1,g_2,\bfX;r) \leq   
\frac{|C_G(g_1)|}{|C_M(g)|}\cdot 
\frac{|\mathcal{S}_5(g,M)|}{|N(d,q^u,e_1,\bfX)|} =
\frac{|C_G(g_1)|}{|N(d,q^u,e_1,\bfX)|}\cdot \frac{|\mathcal{S}_5(g,M)|}{|C_M(g)|},
\end{equation}
with $|N(d,q^u, e_1,\bfX)|$ as in \eqref{e:ndex}, and  $\mathcal{S}_5(g,M)$ as in \eqref{e:siM}, namely the set of all $g'\in g_2^G\cap M$ such that $(g,g')$ is a stingray duo and $\langle g,g'\rangle$ is not contained in a subgroup of $\cM_j(\bfX)$ for any $j\leq 4$. 
The first factor in the last expression in \eqref{e:sub2} is determined in Proposition~\ref{p:Lbds} or~\ref{p:Xbds}.
We need to estimate the second factor in this expression.
Note that, by Lemma~\ref{lem:sub1}(b), $M=N_G(Y)$ with $Y$ a classical subgroup as in Table~\ref{t:sub1}, and so we have $|C_M(g)|=x(\bfX)\cdot |C_Y(g)|$, where $x(\bfX)$ is $(q-1)/(q_0-1)$ or $(q+1)/(q_0+1)$ for  $\bfX=\bfL$ or $\bfU$, and is at most $2$ for the other types. Also, by Lemma~\ref{lem:sub1}(c), if $g'\in g_2^G\cap M$ then $g'\in Y$ and $g_2^G\cap M = (g')^Y$. Observe that the type $\bfY$ of $Y$ is $\bfX$ by Table~\ref{t:sub1}. Thus
 \[
 |\mathcal{S}_5(g,M)|\leq  \#\{  h\in (g')^Y \mid \mbox{$(g,h)$ is a stingray duo on $V_0$} \} = |N(d, q_0, e_1, \bfX)|, 
 \]
with $N(d, q_0, e_1, \bfX)$ as in \eqref{e:ndex}. Now the conditions of Proposition~\ref{p:Lbds} or~\ref{p:Xbds} hold with $Y, q_0, g, g'$ replacing $G, q, g_1, g_2$, yielding an expression for $|C_Y(g)|/|N(d, q_0, e_1, \bfX)|$. Putting this information together we have
 \begin{equation}\label{e:prob5X}
     {\rm Prob}_5(g_1,g_2,\bfX;r) \leq  \frac{|C_G(g_1)|}{|N(d,q^u,e_1,\bfX)|}\cdot \frac{|N(d, q_0, e_1, \bfX)|}{x(\bfX)\cdot |C_Y(g)|},
 \end{equation}
and we now apply Proposition~\ref{p:Lbds} or~\ref{p:Xbds} 
separately for each type. 
For $\bfX=\bfL$, we have 
\[
 {\rm Prob}_5(g_1,g_2,\bfL;r)\leq  \frac{(q^{e_1}-1)(q^{e_2}-1)}{q^{2e_1e_2}}\cdot
 \frac{q_0^{2e_1e_2}}{(q-1)/(q_0-1)\cdot (q_0^{e_1}-1)(q_0^{e_2}-1)}.
\]
Now $(q^{e_1}-1)/(q_0^{e_1}-1) = 1 + q_0^{e_1} + \dots + q_0^{e_1(r-1)} < 2q_0^{e_1(r-1)} = 2q^{e_1(1-1/r)}$,  and similarly $(q^{e_2}-1)/(q_0^{e_2}-1)< 2 q^{e_2(1-1/r)}$, while $(q-1)/(q_0-1)>q^{1-1/r}$. Hence 
\[
{\rm Prob}_5(g_1,g_2,\bfL;r)< 4 q^{-(1-1/r)(2e_1e_2-e_1-e_2+1)}.
\]

For $\bfX=\bfU$, we have $r$ odd, and so $q^2\geq 2^6$ as $q\geq8$ by Lemma~\ref{lem:sub1}(a). Thus, using the bounds $\frac{125}{128}\leq 1-3/(2q^2)\leq  \mathbf{k}(d,q^u,e_1,\bfU)<1$, we obtain
 \begin{align*}
 {\rm Prob}_5(g_1,g_2,\bfU;r) &\leq 
       \frac{(q^{e_1}+1)(q^{e_2}+1)}{\mathbf{k}(d,q^2,e_1,\bfU)\cdot |\mathcal{U}(d,q^2,e_1,\bfU|}\cdot
 \frac{\mathbf{k}(d,q_0^2,e_1,\bfU)\cdot |\mathcal{U}(d,q_0^2,e_1,\bfU)|}{(q+1)/(q_0+1)\cdot  (q_0^{e_1}+1)(q_0^{e_2}+1)}\\ 
 & <
\frac{|\mathcal{U}(d,q_0^2,e_1,\bfU)|}{ (125/128)\cdot |\mathcal{U}(d,q^2,e_1,\bfU)|}\cdot   \frac{(q^{e_1}+1)(q^{e_2}+1)}{(q+1)/(q_0+1)\cdot  (q_0^{e_1}+1)(q_0^{e_2}+1)}\\
 & <
\frac{{\bf b}(q_0^2,\bfU)\, q_0^{2e_1e_2}}{ (125/128)\cdot {\bf a}(q^2,\bfU)\, q^{2e_1e_2}}\cdot   \frac{(q^{e_1}+1)(q^{e_2}+1)}{(q+1)/(q_0+1)\cdot  (q_0^{e_1}+1)(q_0^{e_2}+1)}.
 \end{align*} 
Next, we note since $r$ is odd  that, for $e\in\{1, e_1,e_2\}$,
\begin{equation}\label{E:e}
\frac{q^{e}+1}{q_0^{e}+1}
=q^{e(1-1/r)} \frac{1+q_0^{-er}}{1+q_0^{-e}} 
< q^{e(1-1/r)}.
 \end{equation}
 Also, taking $e=1$, we see that 
 \[
\frac{q+1}{q_0+1}> q^{1-1/r} \frac{1}{1+q_0^{-1}} \ge \frac23 q^{1-1/r}.
\]
Thus the second term in the upper bound for ${\rm Prob}_5(g_1,g_2,\bfU;r)$ is less than
\[
\frac{q^{e_1(1-1/r)}\cdot q^{e_2(1-1/r)}}{(2/3) q^{1-1/r}}
=\frac32 q^{(1-1/r)(e_1+e_2-1)}.
\]
Then,  using the upper and lower bounds in Proposition~\ref{p:Xbds} with $q^2>q_0^2\geq 4$, we have
\begin{align*}
 {\rm Prob}_5(g_1,g_2,\bfU;r) &< 
\frac{(q^{1/r})^{2e_1e_2}}{ (125/128)\cdot (20/27)\cdot q^{2e_1e_2}}\cdot (3/2)\cdot  q^{(1-1/r)(e_1+e_2-1)}\\
&< 2.08\cdot q^{-(1-1/r)(2e_1e_2-e_1-e_2+1)}.
\end{align*}
In the last two cases $\bfX=\bfSp$ or $\mathbf{O^\varepsilon}$, $r$ is odd, and so $q=q_0^r\ge 8$. Thus, using the bounds   
$13/16\leq 1-3/(2q)\leq  \mathbf{k}(d,q,e_1,\bfX) < 1$ and $1\leq x(\bfX)
\leq 2$, and setting $\nu=1$ if $\bfX=\bfSp$ and $\nu=\varepsilon\cdot 1$ if $\bfX=\mathbf{O^\varepsilon}$ we obtain, in \eqref{e:prob5X},
 \begin{align*}
 {\rm Prob}_5(g_1,g_2,\bfX;r) &\leq 
       \frac{(q^{e_1/2}+1)(q^{e_2/2}+\nu)}{\mathbf{k}(d,q,e_1,\bfX)\cdot |\mathcal{U}(d,q,e_1,\bfX)|}\cdot
 \frac{\mathbf{k}(d, q_0,e_1,\bfX)\cdot |\mathcal{U}(d,q_0,e_1,\bfX)|}{x(\bfX)\cdot (q_0^{e_1/2}+1)(q_0^{e_2/2}+\nu)}\\ 
 &< 
\frac{|\mathcal{U}(d,q_0,e_1,\bfX)|}{ (13/16)\cdot |\mathcal{U}(d,q,e_1,\bfX)|} 
\cdot \frac{(q^{e_1/2}+1)(q^{e_2/2}+\nu)}{(q_0^{e_1/2}+1)(q_0^{e_2/2}+\nu)}.
 \end{align*} 
Replacing $q, q_0$ in~\eqref{E:e} with $q^{1/2}, q_0^{1/2}$ gives $(q^{e/2}+1)/(q_0^{e/2}+1)<q^{(e/2)(1-1/r)}$, for $e\in\{e_1,e_2\}$, 
while  $(q^{e_2/2}-1)/(q_0^{e_2/2}-1)<2 q^{(e_2/2)(1-1/r)}$. Then, setting $y(\bfX)=1$ if $\bfX=\bfSp$ or $\mathbf{O^+}$ (the cases where $\nu=1$) and $y(\bfX)=2$ if $\bfX=\mathbf{O^-}$ (the case where $\nu=-1$),  the second term in the above expression is less than $y(\bfX)\cdot q^{(e_i+e_2)(1-1/r)/2}$. For the first term in the above expression we use the 
inequalities 
in Proposition~\ref{p:Xbds}, and find
\begin{align*}
 {\rm Prob}_5(g_1,g_2,\bfX;r) 
 &< \frac{ \bfb(q_0,\bfX)\cdot q_0^{e_1e_2}\cdot y(\bfX)\cdot q^{(e_1+e_2)(1-1/r)/2}}{ (13/16)\cdot \bfa(q,\bfX)\cdot q^{e_1e_2}} \\
&= \frac{(16/13)\cdot y(\bfX)\cdot \bfb(q_0,\bfX)}{\bfa(q,\bfX)}\cdot q^{-(2e_1e_2-e_1-e_2)(1-1/r)/2}
\end{align*}
and from Table~\ref{t:abLU} with $q\geq q_0^2\geq 4$, we have $(16/13)\cdot y(\bfX)\cdot \bfb(q_0,\bfX)/\bfa(q,\bfX)$ is $\frac{16}{13}\cdot\frac{16}{11}/1 = 256/143< 1.8$ if $\bfX=\bfSp$, and is $\frac{16}{13}\cdot\frac{8}{11}/\frac{47}{128} < 2.44$ if $\bfX=\mathbf{O^+}$, and is $\frac{16}{13}\cdot 2\cdot \frac{12}{11}/\frac{1}{2}<5.4$ if $\bfX=\mathbf{O^-}$. 
\end{proof}

Finally we prove Proposition~\ref{prop:sub}.

\medskip\noindent
\begin{proof}[Proof of Proposition~$\ref{prop:sub}$.]
Recall that $q=p^a$. By Lemma~\ref{lem:sub1},  ${\rm Prob}_5(g_1,g_2,\bfX)=0$ if $a=1$, or if each prime divisor of $a$ divides $\lcm(e_1,e_2)$, or if $\bfX\ne \bfL$ and $a=2$ or $q<8$; and in general we have ${\rm Prob}_5(g_1,g_2,\bfX)\leq \sum_r {\rm Prob}_5(g_1,g_2,\bfX;r)$ as in \eqref{e:sub}, where the sum is over all primes $r$ dividing $a$ such that $r$ does not divide $\lcm(e_1,e_2)$. An upper bound for ${\rm Prob}_5(g_1,g_2,\bfX;r)$ is given by Lemma~\ref{lem:sub2}, and in all cases the upper bound  can be expressed as $Y_\bfX Q_\bfX^{a/r}$, with both $Y_\bfX, Q_\bfX$ positive quantities depending on $p$ and $a$, but not~$r$. In particular we take $Q_\bfL= Q_\bfU= p^{2e_1e_2-d+1}$ and $Q_\bfSp= Q_{\mathbf{O}^\varepsilon}= p^{e_1e_2-d/2}$. Then, for  all $\bfX$, we have, since $d\geq 9$, 
$$Q_\bfX\ge p^{e_1e_2-d/2}\geq 2^{(d-2)2-d/2}=2^{3d/2-4}>2^9,\ \mbox{and}\ \sum_{i=0}^\infty Q_\bfX^{-i}=1/(1-Q_\bfX^{-1})<1.01.
$$
Consider first $\bfX=\bfL$. Here by Lemma~\ref{lem:sub2}, $Y_\bfL= 4 Q_\bfL^{-a}$. Let $r_0$ be the smallest prime dividing $a$, so $a/r\leq a/2$, and we have 
\begin{align*}
{\rm Prob}_5(g_1,g_2,\bfL) &< Y_\bfL \sum_{r|a} Q_\bfL^{a/r}
\leq  Y_\bfL  Q_\bfL^{a/r_0} \sum_{i=0}^{a/r_0} Q_\bfL^{-i}\\
&< Y_\bfL  Q_\bfL^{a/2} \sum_{i=0}^{\infty} Q_\bfL^{-i}  < 1.01 \cdot Y_\bfL  Q_\bfL^{a/2} = 4.04\cdot Q_\bfL^{-a/2}.
\end{align*}
This gives the upper bound ${\rm Prob}_5(g_1,g_2,\bfL) < 4.04\cdot q^{-e_1e_2+(d-1)/2}$ in Proposition~\ref{prop:sub}. For all other types $\bfX$, $r$ is odd by Table~\ref{t:sub1}, so $a/r\leq a/3$, and hence 
\begin{equation}\label{e:prob5x2}
{\rm Prob}_5(g_1,g_2,\bfX) < Y_\bfX \sum_{r|a} Q_\bfX^{a/r} < Y_\bfX  Q_\bfX^{a/3}\sum_{i=0}^{\infty} Q_\bfX^{-i} < 1.01\cdot Y_\bfX  Q_\bfX^{a/3}.    
\end{equation}
If $\bfX=\bfU$ then, by Lemma~\ref{lem:sub2},    $Q_\bfU=Q_\bfL$ and $Y_\bfU= 2.08\cdot Q_\bfL^{-a}$. Hence, by \eqref{e:prob5x2}, ${\rm Prob}_5(g_1,g_2,\bfU) 
<  2.11\cdot Q_\bfL^{-2a/3}=2.11\cdot q^{-(4/3)e_1e_2+ (2/3)(d-1)}$, as in  Proposition~\ref{prop:sub}.
Finally, for $\bfX=\bfSp$, $\bfX=\mathbf{O^+}$,  or $\bfX=\mathbf{O^-}$, we have, by Lemma~\ref{lem:sub2},  $Q_\bfX = p^{e_1e_2-d/2}$ and $Y_\bfX= c_\bfX Q_\bfX^{-a}$, where $c_\bfX = 1.8, 2.44$, or $5.4$, respectively. Hence, by \eqref{e:prob5x2}, ${\rm Prob}_5(g_1,g_2,\bfX) 
<  1.01\cdot c_\bfX\cdot Q_\bfX^{-2a/3}=1.01\cdot c_\bfX\cdot  q^{-(2/3)e_1e_2+ d/3}$, and the bounds in  Proposition~\ref{prop:sub} follow.
\end{proof}

\section{\texorpdfstring{$\Asch_6:$}{Asch6} normalisers of symplectic type \texorpdfstring{$s$}{s}-groups}

We use the notation and assumptions from Hypothesis~\ref{hyp}, and deal with the Aschbacher category $\Asch_6$, where the  dimension $d = s^n$ for some prime divisor $s$ of $q^u -1$ and $n\geq1$. 
In this section we estimate the proportion  ${\rm Prob}_6(g_1,g_2,\bfX)$ of stingray duos $(g,g')\in g_1^G\times g_2^G$ for which $\langle g,g'\rangle$ lies in the normaliser of an extraspecial group of order $s^{1+2n}$, or a symplectic type group of order $2^{2+2n}$ with $s=2$, and exponent $s\cdot\gcd(2,s)$ (see \cite[Chapter 4.6]{KL} or  \cite[Definition 2.2.13]{BHRD}). In particular, we shall see that for a non-zero contribution to $ {\rm Prob}_6(g_1,g_2,\bfX)$, the prime $s$ must be $2$.  Our main result is Proposition~\ref{prop:c6}, and a formal proof is given after proving a crucial technical Lemma~\ref{lem:c6-1}.

\begin{remark}
 {\rm
 Our approach to analysing this case has been influenced by the approach in \cite[Section 10]{PSY}. However, in working through our more general estimation problem for $\Asch_6$ groups, we discovered an error in the proof of \cite[Lemma 10.1]{PSY}. A valid upper bound for the proportion in this case in the situation in \cite[Section 10]{PSY} is given by our Proposition~\ref{prop:c6}(a) (since $s=2$ and $e_1=e_2$ in \cite{PSY}). This error does not affect the main result of \cite{PSY}. (The problem in \cite[Lemma 10.1]{PSY}, where  $\langle g,g'\rangle$ normalises a 2-group $R$ of type $2^{1+2n}$ or $4\circ 2^{1+2n}$, is that the proof implicitly assumed that the stingray elements left invariant each nontrivial $R$-orbit in $V$, and this was not justified.)  
 }   
\end{remark}

\begin{proposition}\label{prop:c6}
Assume that Hypothesis~$\ref{hyp}$ holds. Then 
\begin{enumerate}[{\rm (a)}]
    \item  $ {\rm Prob}_6(g_1,g_2,\bfX)=0$ if any one of the following holds:
    \begin{enumerate}[\rm (i)]
        \item $d\ne 2^n$ for some $n\geq4$,  or $q$ is not an odd prime;
        \item $(e_1,e_2)\ne (d/2,d/2)$ or $(d-2,2)$;
        \item $\bfX=\bfU$ or $\bfX=\mathbf{O^-}$;
    \end{enumerate}
    \item and otherwise $d= 2^n$ for some $n\geq4$, $q$ is an odd prime, $\bfX\in\{\bfL, \bfSp, \mathbf{O^+}\}$, and if $ {\rm Prob}_6(g_1,g_2,\bfX)>0$ then either
     \begin{enumerate}[\rm (i)]
        \item $(e_1,e_2)= (d/2,d/2)$, $r_1=r_2$ is $e_1+1=2^{n-1}+1$ or  $2e_1+1=2^{n}+1$, and 
        \[
 {\rm Prob}_6(g_1,g_2,\bfX) <  \left\{ \begin{array}{ll}  
\frac{45}{1024}\cdot q^{-d^2/4}& \mbox{if $\bfX=\bfL$}\\
\frac{45}{1024}\cdot q^{-d^2/4+23d/10} & \mbox{if $\bfX=\bfSp$ or $\mathbf{O^+}$;}\\
\end{array}\right.
\]
        \item or  $(e_1,e_2)=(d-2,2)$, with $d=2^n\geq 32$, $q\geq5$, and $n, q$ both odd primes as in Lemma~$\ref{lem:c6-1}${\rm(b)(iii)},  $r_1=e_1+1=2^n-1$, $r_2$ is an odd prime divisor of $q+1$ (in particular if $n=5$ then $q\geq 11$), and 
    \[
 {\rm Prob}_6(g_1,g_2,\bfX) <  \left\{ \begin{array}{rl}  
\frac{45}{640}\cdot  q^{-\frac{59}{25}d+8}& \mbox{if $\bfX=\bfL$}\\
5.3\cdot 10^{-3}\cdot   q^{-d+3} & \mbox{if $\bfX=\bfSp$ or $\mathbf{O^+}$ and either $d\ge 128$ or $q\ge 19$}\\
6.9\cdot q^{-d+3} & \mbox{if $\bfX=\bfSp$ or $\mathbf{O^+}$ and $d=32$ with $q\in\{11,13,17\}$.}
\end{array}\right.
\]
    \end{enumerate}  
\end{enumerate}
Moreover we take $\cM_6(\bfX)=\emptyset$ if any of the conditions of part {\rm(a)} hold, or if $r_1, r_2$ are not as in part {\rm(b)}, and otherwise
\begin{quote}
$\cM_6(\bfX)$ is the $G$-conjugacy class of subgroups $M = R.M_0$ occurring in Table~$\ref{t:c6}$.
\end{quote}
\end{proposition}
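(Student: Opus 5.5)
This follows the template of the $\Asch_2$, $\Asch_3$ and $\Asch_5$ sections: first pin down the parameters, then count. Suppose $(g,g')$ is a stingray duo with $H=\langle g,g'\rangle\le M=N_G(R)$, where $R$ is extraspecial of order $s^{1+2n}$ (or of symplectic type $2$-group when $s=2$) and $d=s^n$. Assume also that $H$ lies in no member of $\cM_1(\bfX)\cup\dots\cup\cM_5(\bfX)$.

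\emph{Step 1: restrict the parameters.} By \cite[Chapter 4.6]{KL}, $M/(RZ)$ embeds in $\Sp_{2n}(s)$, or in $\GO^\pm_{2n}(2)$ when $s=2$. Since $r_1>e_1\ge d/2$, the prime $r_1$ cannot divide $|R|$ or $|Z|$, so $r_1$ divides $|\Sp_{2n}(s)|=s^{n^2}\prod_{i\le n}(s^{2i}-1)$. Hence $r_1\mid s^{2i}-1$ for some $i\le n$, and so $r_1\le s^n+1=d+1$. Combining this with $r_1\equiv1\pmod{e_1}$ and $e_1\ge d/2$ leaves $r_1\in\{e_1+1,2e_1+1\}$ with $e_1\in\{d/2,d-2,d-1,d\}$.

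Next I would use an order-of-magnitude argument. An element of order $r_1$ normalising $R$ acts on the $s^{2n}-1$ nontrivial elements of $R/Z(R)$. Comparing $s^n+1$ with $2e_1+1$ and with the Zsigmondy constraint that the multiplicative order of $s$ mod $r_1$ is $2i$ should force $s=2$ and $(e_1,e_2)$ to be $(d/2,d/2)$ or $(d-2,2)$. In the second case one must show $r_1=2^n-1$, which forces $n$ prime, and $r_2\mid q+1$ (and $r_2\mid |\Sp_{2n}(2)|$). Further:
\begin{itemize}
\item Because $s\mid q^u-1$ and $M$ is maximal (not in $\Asch_5$), $q$ must be prime by \cite[Table 4.6.B]{KL}, and odd since $s=2$.
\item Cases $\bfU$ and $\mathbf{O^-}$ are excluded by the $\Asch_6$ tables together with the parity and type information of Lemma~\ref{lem:uperp}.
\item The requirement $d=2^n$ with $d>8$ gives $n\ge4$.
\end{itemize}
Together these give part (a) and the parameter claims in (b). I expect this to be a Lemma~\ref{lem:c6-1}-type technical analysis, and it is the main obstacle. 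The delicate points are excluding $r_1=e_1+1$ with $e_1=d-1$ or $d$, and handling $r_2$ in the $(d-2,2)$ case, where $r_2$ is small and only a divisor of $q+1$.

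\emph{Step 2: count.} By \cite{KL}, $\cM_6(\bfX)$ is one $G$-class of self-normalising groups $M=R.M_0$. The Sylow $r_1$-subgroups of $M$ are cyclic of order $r_1$, since $r_1^2\nmid|M|$. Hence $g_1^G\cap M$ is a union of a bounded number of $M$-classes, say $c$ of them, each corresponding to an $N$-orbit on a Sylow torus; for $(d/2,d/2)$ I expect $c$ to be at most $e_1$ divided by the normaliser order. Lemma~\ref{lem:countM}, adjusted by this multiplicity, bounds the number of conjugates of $M$ containing $g$ by $c|C_G(g_1)|/|C_M(g)|$. Then I would bound $|\mathcal S_6(g,M)|$ crudely by $|g_2^G\cap M|\le |M|$, or better by $|M|/|C_M(g')|$.

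Substituting into \eqref{e:asch3}, I get
\[
{\rm Prob}_6\le \frac{|C_G(g_1)|}{|N(d,q^u,e_1,\bfX)|}\cdot\frac{c\,|M|}{|C_M(g)|}.
\]
The first factor comes from Proposition~\ref{p:Lbds}(c) or~\ref{p:Xbds}(c). The second is at most $|R|\,|\Sp_{2n}(2)|\,(q-1)\le q\cdot 2^{2n+2}2^{n(2n+1)}$, together with $|C_M(g)|\ge r_1$. For $(d/2,d/2)$ the first factor is about $q^{-d^2/4+d}$ for $\bfL$ and about $q^{-d^2/8+d/2}$ for $\bfSp$ or $\mathbf{O^+}$. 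For $(d-2,2)$ it is about $q^{-2(d-2)+d}$ for $\bfL$ and about $q^{-(d-2)+d/2}$ otherwise.

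The factor $2^{2n^2+O(n)}=2^{O((\log d)^2)}$ is then absorbed into $q^{-\Theta(d)}$ using $d\ge16$ (respectively $d\ge32$), $q\ge3$ (respectively $q\ge5$, or $q\ge11$ when $n=5$). This yields the stated constants, with the separate tabulation for $d=32$, $q\in\{11,13,17\}$ where the absorption is weakest. Matching the exact exponents $-d^2/4+23d/10$ and $-\tfrac{59}{25}d+8$ will need careful numerics, but that part is routine once Step 1 is settled.
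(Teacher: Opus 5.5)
Your Step~1, which you yourself flag as the main obstacle, is where essentially all the content lies, and it is not carried out. The reduction you do state is also wrong. From $r_1\le d+1$, $r_1=ke_1+1$ and $e_1\ge d/2$ you only get $k\in\{1,2\}$, with $k=2$ forcing $e_1=d/2$. For $k=1$ every $e_1\in[d/2,d-2]$ survives (e.g.\ $d=16$, $e_1=10$, $r_1=11$). The cases $e_1\in\{d-1,d\}$ that you single out as delicate are excluded trivially by $e_2\ge 2$.

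The missing idea is the divisibility argument of Lemma~\ref{lem:c6-1}(b). With $s=2$ we have $g\notin\widehat R=Z_{q^u-1}\circ 2^{1+2n}$, so $\widehat R g$ is a semisimple element of order $r_1$ in $\Sp_{2n}(2)$, and $r_1\mid 2^j\pm1$ for some $j\le n$.
\begin{itemize}
\item Since $r_1\ge 2^{n-1}+1$, this forces $j\in\{n-1,n\}$.
\item A proper divisor of the odd number $2^n\pm1$ is at most $(2^n+1)/3$, so $r_1\in\{2^{n-1}+1,\,2^n+1,\,2^n-1\}$, that is, $(e_1,e_2)=(d/2,d/2)$ or $(d-2,2)$.
\item Primality of $r_1$ then makes $n-1$ or $n$ a power of $2$, or $n$ prime, respectively.
\item Applying the same argument to $(g',r_2)$, and using that $n-1$ and $n$ cannot both be powers of $2$, gives $r_2=r_1$ in the $(d/2,d/2)$ cases. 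You do not address this.
\end{itemize}
Also missing are the exclusion of $q=n=5$ (as $31\mid 5^3-1$), which is needed for ``$n=5\Rightarrow q\ge 11$'', and the exclusion of $\bfU$, which comes from all $e_i$ being even and so again depends on this analysis. For $s=2$ the paper cites \cite{GPPS}. An elementary route exists: for odd $s$, the odd prime $r_1$ divides the even number $s^i\pm1$ with $i\le n$, so $r_1\le (d+1)/2<e_1+1$. Your ``comparison'', however, is not yet an argument.

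Step~2 has the right shape, but your estimates are too lossy for the stated constants.
\begin{itemize}
\item The number of $M$-classes in $g_1^G\cap M$ is not bounded. It equals $e_1/|N_M(\langle g\rangle):C_M(g)|$, where the index is $2j$, coming from the normaliser of the torus $\langle Rg\rangle$ in $M/R$.
\item Hence the number of conjugates of $M$ containing $g$ is $e_1|C_G(g_1)|/|N_M(\langle g\rangle)|$, with $|N_M(\langle g\rangle)|=48(n-1)r_1z,\ 2nr_1z,\ 2nr_1z$ in the three cases.
\item Similarly $|g_2^G\cap M|=e_2|M|/|N_M(\langle g'\rangle)|$, not $|M|/|C_M(g')|$.
\end{itemize}
Using only $c\le e_1$, $|C_M(g)|\ge r_1$ and $|\mathcal S_6(g,M)|\le|M|$ loses a factor of at least $2n$. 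In case (b)(ii) this cannot be absorbed. For $d=32$, $q=11$ (a genuine case) the paper's chain already gives about $6.88\,q^{-d+3}$, so yours gives roughly $69\,q^{-d+3}$, not the claimed $6.9\,q^{-d+3}$. In the $(d/2,d/2)$ cases you should likewise bound $|\mathcal S_6(g,M)|$ by $|g_2^G\cap M|$ rather than $|M|$, as the paper does.
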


 \begin{table}
\caption{Possibilities for $R$ and $M$ for $\cM_6(\bfX)$ in Proposition~\ref{prop:c6}}
\begin{tabular}{cllll}
  \toprule
Line&$\bfX$ & $R$ & $M$ & Conditions\\
    \midrule
1&$\bfL$   & $(q-1)\circ 2^{1+2n}$ & $R.\Sp_{2n}(2)$ & $q=p\equiv 1\pmod{4}$\\
2&$\bfSp$   & $2_-^{1+2n}$ & $R.\SO^-_{2n}(2)$ & $q=p\equiv \pm 1\pmod{8}$\\ 
 &             &  & $R.\Omega^-_{2n}(2)$ & $q=p\equiv \pm 3\pmod{8}$\\ 
3&$\mathbf{O^+}$   & $2_+^{1+2n}$ & $R.\SO^+_{2n}(2)$ & $q=p\equiv \pm 1\pmod{8}$\\ 
 &             &  & $R.\Omega^+_{2n}(2)$ & $q=p\equiv \pm 3\pmod{8}$\\ 
\bottomrule
\end{tabular}
\label{t:c6}
\end{table}

To prove Proposition~\ref{prop:c6}, we work with the following assumptions.

\begin{hypothesis}[for $\Asch_6$]\label{H:AC6} Assume that Hypothesis~$\ref{hyp}$ holds, where $d=s^n>8$ for some prime $s$ dividing $q^u-1$ and $n\geq1$, and that ${\rm Prob}_6(g_1,g_2,\bfX)\ne 0$. Let $(g,g')$ be a stingray duo in $g_1^G\times g_2^G$ such that $H\coloneq\langle g,g'\rangle$  is contained in the normaliser $M$ of an extraspecial $s$-group $s^{1+2n}$, or a symplectic type group $2^{1+2n}\circ4$. Assume also that  $H\not\leq L$ for any $L\in \cM_1(\bfX)\cup\dots \cup\cM_5(\bfX)$.
\end{hypothesis}

 First we obtain some restrictions on the parameters, and on the subgroup $M$.
 
 \begin{lemma}\label{lem:c6-1} 
 Assume that Hypothesis~$\ref{H:AC6}$ holds. Then
 \begin{enumerate}[{\rm (a)}] 
     \item 
     the prime $s=2$, $n\geq4$, $q$ is an odd prime, and $M=N_G(R)$ is self-normalising, with $R$ an absolutely irreducible $2$-group as in Table~$\ref{t:c6}$. In particular, $\bfX\ne \bfU, \mathbf{O^-}$. 
     \item 
     One of the following holds for the parameters $e_i$ and $r_i$:
     \begin{enumerate}[{\rm (i)}]
         \item 
         $e_1=e_2=d/2=2^{n-1}$ and $r_1=r_2=2^{n-1}+1$ with $n-1=2^{n_0}\geq4$,
         \item 
         $e_1=e_2=d/2=2^{n-1}$ and $r_1=r_2=2^{n}+1$ with $n=2^{n_0}\geq4$,
         \item 
         $e_1=d-2=2^n-2$, $r_1=e_1+1=2^n-1$ with $n$ an odd prime, $n\geq5$, $q\geq5$, and $e_2=2$ with $r_2$ an odd prime divisor of $q+1$, so $q\ne 3, 7$.
         Moreover (recalling that $q, n$ are odd primes), either $n\ge 7$ with $q\ge5$, or  $n=5$ with $q\ge11$.  
     \end{enumerate}
     \item 
     For all $\bfX\ne \bfU, \mathbf{O^-}$, there is a single $G$-conjugacy class of subgroups $M$ as in Table~$\ref{t:c6}$, and for a fixed $g\in g_1^G\cap M$, the number of $G$-conjugates of $M$ containing $g$ is 
     $
     Y=|C_G(g_1)|\cdot e_1/|N_M(\langle g\rangle)|,
     $
     and $|g_1^G\cap M| = e_1\cdot |M|/|N_M(\langle g\rangle)|$. 
     Moreover, $|N_M(\langle g\rangle)|$ is given by the following table, where $z\coloneq|Z(M)|=|Z(R)|$ as in Table~$\ref{t:c6}$.
     
     \medskip
     \begin{tabular}{c|ccc}
     \toprule
    Part of {\rm (b)}  & {\rm (b)(i)}  & {\rm (b)(ii)}  & {\rm (b)(iii)} \\
     $|N_M(\langle g\rangle)|$ & $48(n-1)r_1 z$
        & $2nr_1z$  & $2nr_1z$ \\
        \bottomrule
        \end{tabular}
 \end{enumerate}
  In particular Proposition~$\ref{prop:c6}${\rm(a)} is proved, and the content of the set $\cM_6(\mathbf{X)}$ given in Proposition~$\ref{prop:c6}$ is justified.
 \end{lemma}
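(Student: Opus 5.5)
The plan is to start from the Kleidman--Liebeck description of $\Asch_6$ subgroups (\cite[\S4.6, Table 4.6.B]{KL}). There $M=N_G(R)$ with $R$ an $s$-group of symplectic type, $d=s^n$ and $M/RZ$ embedded in $\Sp_{2n}(s)$. Since $r_1$ is a ppd of $q^{ue_1}-1$, we have $r_1\geq e_1+1>d/2$, and $r_1\nmid |R|$ unless $r_1=s$, which is impossible because $s\mid q^u-1$. So $g$ maps nontrivially into $M/RZ\lesssim \Sp_{2n}(s)$. The orders of $\Sp_{2n}(s)$ have prime divisors at most $(s^{n}+1)$ or $s^n-1$ roughly, and $r_1>s^n/2$. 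Comparing with the prime divisors of $s^{2j}-1$ for $j\le n$, an odd $s$ forces $r_1$ to be a ppd of $s^{2n}-1$ or $s^n\pm$ something with $r_1\geq s^n/2+1$. I expect this to be incompatible with $r_1\equiv1 \pmod{e_1}$ except when $s=2$. I would also use that the exponent condition ($s\mid q^u-1$ and $q^u$ minimal with that property, as in \cite[Table 4.6.B]{KL}) gives $q=p$ prime, and in the unitary case $q^2$ with $s\mid q+1$. Then $s=2$ and $n\ge4$ follow from $d>8$. Table~4.6.B of \cite{KL} then rules out the unitary case and $\mathbf{O^-}$, the latter because the orthogonal $\Asch_6$ group has $R=2^{1+2n}_+$, giving type $+$ (the unitary case is the extra step for $\bfU$ handled below). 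The remaining rows give Table~\ref{t:c6} with $q$ an odd prime.

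Next, for part (b), the key input is that $r_1,r_2$ are odd primes $>e_i$ dividing $|\Sp_{2n}(2)|=2^{n^2}\prod(4^j-1)$. So each $r_i$ divides $2^{2j}-1$ for some $j\le n$, and $r_1>d/2=2^{n-1}$. The primes dividing $\prod_{j\le n}(4^j-1)$ exceeding $2^{n-1}$ are Fermat primes $2^{m}+1$ (with $m=n-1$ or $n$) or Mersenne-type $2^n-1$. Matching these with $r_1\equiv 1\pmod{e_1}$ and $e_1\ge d/2$ gives $r_1\in\{e_1+1,2e_1+1\}$. If $e_1=d/2$, then $r_1=2^{n-1}+1$ or $2^n+1$, which are Fermat primes, forcing $n-1$ or $n$ to be a power of $2$. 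If $e_1>d/2$, then $r_1=e_1+1\le 2^n-1$ must be prime and divide $\prod(4^j-1)$, which I expect forces $r_1=2^n-1$ (a Mersenne prime, so $n$ is prime) and $e_2=2$. For $e_2=2$, $r_2$ is a ppd of $q^2-1$, so $r_2\mid q+1$ is odd; this excludes $q=3,7$. The bound $n=5\Rightarrow q\ge11$ should come from requiring some odd prime divisor of $q+1$ to divide $|\Sp_{10}(2)|$, together with $q\equiv 1\pmod 4$ in the $\bfL$ row. I regard this number-theoretic case elimination, in particular the claim that no other large prime divisors of $|\Sp_{2n}(2)|$ are compatible with $r_1\equiv1\pmod{e_1}$, as the main obstacle, and would handle it via Zsigmondy and the size inequality $r_1>2^{n-1}$. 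The unitary case ($u=2$) would be excluded because $e_i$ odd together with these constraints on $r_i$ is incompatible, since the Fermat and Mersenne forms require even $e_1$ or $e_1=2^n-2$.

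Finally, part (c) is obtained by transcribing Lemma~\ref{lem:countM}, except that $g_1^G\cap M$ may split into several $M$-classes. So I would count directly, as in the proof of Lemma~\ref{lem:countM}: $Y\cdot|g_1^G|=|M^G|\cdot|g_1^G\cap M|$. Here $|g_1^G\cap M|$ equals the number of Sylow $r_1$-subgroups of $M$ (which are cyclic of order $r_1$, because $r_1^2\nmid |M|$) times the $e_1$ generators lying in $g_1^G$, the latter by Lemma~\ref{lem:uperp}(c). This gives $e_1|M|/|N_M(\langle g\rangle)|$ and hence $Y=|C_G(g_1)|e_1/|N_M(\langle g\rangle)|$. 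The orders $|N_M(\langle g\rangle)|$ come from normalisers of the corresponding cyclic subgroups in $\Sp_{2n}(2)$ or $\Omega^\pm_{2n}(2)$, lifted through $R$. Since $g$ acts fixed-point-freely on $R/Z(R)$, we have $C_R(g)=Z(R)$, so only $z$ survives. In case (b)(ii),(iii) the normaliser of the Singer-type torus in $\Sp_{2n}(2)$ has order $2n r_1$ (a cyclic group extended by a field automorphism group of order $2n$, cut down to the part centralising modulo $r_1$). In case (b)(i), $\langle g\rangle$ lies in a torus $\Sp_{2(n-1)}(2)$-type factor $\times\,\Sp_2(2)$, contributing the factor $|\Sp_2(2)|\cdot 2(n-1)\cdot 8$ which gives $48(n-1)r_1z$. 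I would verify these normaliser orders case by case with \cite[Table 4.6.B]{KL}.
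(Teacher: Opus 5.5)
Your overall plan matches the paper's: reduce to $s=2$, push $g$ into $M/R\le\Sp_{2n}(2)$, read off $(e_i,r_i)$ from the large primes dividing $|\Sp_{2n}(2)|$, and count via cyclic Sylow $r_1$-subgroups. However, two steps fail as written.

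\textbf{Part (c), case (b)(i).} Your claim that $g$ acts fixed-point-freely on $R/Z(R)$, so that $C_R(g)=Z(R)$, is false here. The order of $2$ modulo $r_1=2^{n-1}+1$ is $2(n-1)$. So the image of $g$ acts on $R/Z(R)\cong\F_2^{2n}$ irreducibly on a $2(n-1)$-dimensional subspace and trivially on a $2$-dimensional complement. By coprime action, $|C_R(g)|=4z$. Following your own principle ($|C_R(g)|=z$) you would get $2(n-1)\cdot z\cdot 6r_1=12(n-1)r_1z$. The product $|\Sp_2(2)|\cdot 2(n-1)\cdot 8=96(n-1)$ that you actually write does not give $48(n-1)$ either. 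It agrees with the lemma only if the $8$ is read as $4z$ with $z=2$, which is wrong for $\bfX=\bfL$, where $z=q-1$. The correct bookkeeping, as in the paper for $M/R=\Sp_{2n}(2)$, is:
\begin{itemize}
\item $|N_M(\langle g\rangle):C_M(g)|=2(n-1)$;
\item $|C_M(g)|=|C_R(g)|\cdot|C_{M/R}(Rg)|=4z\cdot 6r_1$, with the $6$ coming from $\Sp_2(2)\cong S_3$.
\end{itemize}
The factorisation of $|C_M(g)|$ uses $C_{M/R}(Rg)=C_M(g)R/R$. This follows from a Frattini argument, since $\langle g\rangle$ is a Sylow subgroup of $R\langle g\rangle$. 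In cases (b)(ii) and (b)(iii) your claim $C_R(g)=Z(R)$ is correct.

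\textbf{Part (b)(iii), the bound $n=5\Rightarrow q\ge 11$.} Your proposed justification does not exclude $q=5$. Indeed $q+1=6$ has the odd prime divisor $3$, which divides $|\Sp_{10}(2)|$, and $5\equiv1\pmod 4$. Moreover, the $\bfSp$ and $\mathbf{O^+}$ rows of Table~\ref{t:c6} impose no congruence modulo $4$ at all. The actual obstruction is the ppd condition on $r_1$. Since $r_1=2^n-1$ must be a primitive prime divisor of $q^{2^n-2}-1$, $q$ must be a primitive root modulo $r_1$. But $5^3=125\equiv1\pmod{31}$, so $31$ is not a ppd of $5^{30}-1$.

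\textbf{Smaller points.}
\begin{itemize}
\item Your route to $s=2$ in part (a) differs from the paper's. The paper quotes \cite[Lemma 4.3 and Example 2.5]{GPPS} when $e_1>d/2$ and uses the parity of $d$ when $e_1=d/2$. Your route is more elementary and works once finished: for odd $s$, $d=s^n$ is odd, so $r_1\ge e_1+1\ge (s^n+3)/2$. Also $r_1\mid s^j\pm1$ for some $j\le n$. Size then forces $j=n$ and $r_1=s^n\pm1$, which is even, a contradiction.
\item The step you call the main obstacle needs no Zsigmondy. By size and parity, a prime exceeding $2^{n-1}$ that divides $2^j\pm1$ with $j\le n$ must be one of $2^{n-1}+1$, $2^n-1$, $2^n+1$.
\item You should say why $r_2=r_1$ in (b)(i) and (b)(ii): the numbers $2^{n-1}+1$ and $2^n+1$ cannot both be prime when $n\ge4$.
\item The uniqueness of the $G$-class of $M$ in part (c) needs \cite[Proposition 4.6.3]{KL}, which you do not address.
\end{itemize}
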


\begin{proof}
(a) If $e_1>d/2$ then, by \cite[Lemma 4.3 and Example 2.5]{GPPS}, it follows that $s=2$. On the other hand, if $e_1=d/2$ then $d=s^n$ is even and again $s=2$. Thus in all cases $s=2$, and hence $n\geq4$ (since $d=2^n>8$) and $q$ is odd (since $s$ divides $q^u-1$). It follows from \cite[Chapter 4.6, Table 4.6.B]{KL} (or see \cite[Table 2.9]{BHRD}) that $M=N_G(R)$ is self-normalising with $R$ and $M$ as in one of the lines of Table~\ref{t:c6}, or $\bfX=\bfU$, $R=(q+1)\circ 2^{1+2n}$, and $M=R.\Sp_{2n}(2)$ with $q=p\equiv 3\pmod{4}$. 
Moreover since $\langle g,g'\rangle$ lies in no subgroup in $\cM_1(\bfX)\cup\dots \cup\cM_5(\bfX)$, it follows in particular that $M$ does not preserve a subfield structure on $V$, and hence $q$ is an odd prime (by \cite[Proposition 4.6.3(ii) and Table 4.6.B]{KL}, since $s=2$). In particular, $\bfX\ne \mathbf{O^-}$, and we will show below that the unitary case does not arise; apart from this part (a) is proved. 

(b) Now for all types $\bfX$, 
the group $M$ is contained in a subgroup  of $\GL_d(q^u)$ of the form $\widehat{M}\coloneq(Z_{q^u-1} \circ 2^{1+2n})\ldotp \Sp_{2n}(2)$ (see~\cite[Section 4.6]{KL}). 
We consider first the ppd $e_1$-stingray element $g$ of $M$. Since $|g|=r_1$ does not divide $q^u-1$ (as $e_1\geq2$), it follows that $r_1$ is odd and  $g$ does not lie in
the normal subgroup $\widehat{R}\coloneq Z_{q^u-1} \circ 2^{1+2n}$ of $\widehat{M}$. Hence $\widehat{R}g$ is a semisimple element of $\widehat{M}/\widehat{R}\cong\Sp_{2n}(2)$, so $r_1$ divides $2^{j}\pm1$ for some least positive integer $j\leq n$. Since $r_1$ is a primitive prime divisor of $(q^u)^{e_1}-1$, also $r_1=ke_1+1$ for some positive integer $k$, and so $2^j+1\geq r_1=ke_1+1\geq kd/2+1\geq 2^{n-1}+1$. In particular $k\leq 2$. Suppose first that $k=2$. Then $r_1=2^n+1$ and $e_1=d/2$, so also $e_2=d/2$. Since $r_1=2^n+1$ is a (Fermat) prime, the exponent $n$ must be a $2$-power $n=2^{n_0}\geq4$. Exactly the same argument applied to $(g', r_2)$ yields that either $r_2=r_1$, so that part (b)(ii) holds, or $r_2=e_2+1=2^{n-1}+1$. However, in the latter case, for $r_2$ to be prime,  we would also require $n-1=2^{n_0}-1$ to be a $2$-power, which is a contradiction since $n\geq 4$. 

Thus we now assume that $k=1$, so $2^j+1\geq r_1=e_1+1\geq 2^{n-1}+1$. Suppose next that $j<n$. Then $r_1=2^{n-1}+1$ and $e_1=d/2$, so also $e_2=d/2$. As before, since $r_1=2^{n-1}+1$ is a (Fermat) prime, the exponent $n-1$ must be a $2$-power, so $n-1=2^{n_0}\geq 3$ and this must be at least 4 for $r_1$ to be prime. Applying this argument to $(g', r_2)$ yields that either $r_2=r_1$, so that part (b)(i) holds, or $r_2=2e_2+1=2^{n}+1$. However in the latter case, for $r_2$ to be prime  we would also require $n+1=2^{n_0}+1$ to be a $2$-power, which is a contradiction. Thus we may assume that $j=n$ so $r_1$ divides $2^n\pm1$. We also have $r_1=e_1+1\leq 2^n-1$ since $e_1\leq d-2$. Thus $2^{n-1}+1\leq r_1\leq 2^n-1$ and $r_1$ divides $2^n\pm 1$. The only possibility is $r_1=2^n-1$ with $e_1=d-2$. For $r_1=2^n-1$ to be prime we require that $n$ is an odd prime and $n\geq5$ (since $n\geq4$). Also $e_2=2$ so $r_2$ divides $q^u+1$. We note at this point that in all cases the $e_i$ are even, and hence $\bfX\ne \bfU$, by Table~\ref{tab:one}, completing the proof of part (a). This implies that $u=1$, and hence in the case covered by (b)(iii), $r_2$ is an odd prime divisor of $q+1$, so $q\ne 3, 7$, that is, either $q=5$ or $q\ge11$ (as $q$ is an odd prime). To complete the proof it remains to show that, in case (b)(iii), the case $q=n=5$ is not possible: we note in this exceptional case that  
$e_1=d-2=30$, and $r_1=31$ with $r_1$ a ppd of $q^{30}-1$. However, the
prime  $31$ already divides $5^3-1$, so the ppd condition on $r_1$ fails. This completes the proof of part (b).

(c) In type $\bfL$, all subgroups $M$ as in Line~1 of Table~\ref{t:c6} are $G$-conjugate by \cite[Proposition 4.6.3(ii) and Lemma 2.10.14]{KL}, while the same is true for types $\bfSp$ and $\mathbf{O^+}$ by  \cite[Proposition 4.6.3(i)]{KL} (since $Z(R) =Z_2$). 

Now we consider $g_1^G\cap M$. By Lemma~\ref{lem:uperp}(c), each $h\in g_1^G$ lies in a unique cyclic torus $T$ and each such $T$ contains exactly $e_1$ elements of $g_1^G$. Moreover these $e_1$ elements all lie in the unique subgroup of $T$ of order $r_1$. Now fix $g\in g_1^G\cap M$. 
By part (b) it follows that the only factor $2^{2j}-1$ of $|\Sp_{2n}(2)|$ which is divisible by $r_1$ has $j=n-1, n, n$ in parts (i), (ii), (iii), respectively; and it follows that in all three cases  $\langle g\rangle\cong Z_{r_1}$ is a Sylow $r_1$-subgroup of $M$. Thus $|g_1^G\cap M|$ is equal to $|M:N_M(\langle g\rangle)|$ (the number of Sylow $r_1$-subgroups of $M$) times $e_1$ (the number of $g_1$-conjugates contained in each of these Sylow subgroups), that is to say (since $M=N_G(M)$),
$
|g_1^G\cap M| = e_1\cdot |M|/|N_M(\langle g\rangle)|.
$
Let $Y$ be the number of $M$-conjugates containing $g$. Then, the number of pairs $(h,L)$ such that $h\in g_1^G, L\in M^G$, and $h\in L$, satisfies
\[
\frac{|G|}{|C_G(g_1)|}\cdot Y = \frac{|G|}{|M|}\cdot |g_1^G\cap M| = \frac{|G|\cdot e_1}{|N_M(\langle g\rangle)|},
\]
and hence $Y=|C_G(g_1)|\cdot e_1/|N_M(\langle g\rangle)|$. Finally we determine $|N_M(\langle g\rangle)|$.

We claim that $|N_M(\langle g\rangle):C_M(g)|=2j$ (with $j=n-1, n, n$ as above). To see this we note that $Rg$ generates a cyclic torus, say $T_0$,  of $M/R$ (which is $\Sp_{2n}(2)$ or $\SO_{2n}^\pm(2)$) of order $r_1= 2^{n-1}+1, 2^n+1, 2^n-1$, respectively. The normaliser of $T_0$ modulo its centraliser is cyclic of order $2j=2(n-1), 2n, 2n$, respectively, and the claim follows. This means that  $|N_M(\langle g\rangle)|=2j|C|$, where $C\coloneq C_M(g)$. To determine $C$ we note first that $C$ contains $Z(M)$, and we have $C/Z(M)\leq M/Z(M) = 2^{2n}\ldotp M_0$ where $M_0=\Sp_{2n}(2)$ or $\SO^\pm_{2n}(2)$ acting naturally on the normal subgroup $2^{2n}$ of $M/Z(M)$. In case (b)(i), (ii), (iii), the element $Z(M)g$ centralises a subgroup of $2^{2n}$ of order $2^2, 1, 1$ respectively, and the centraliser of $Rg$ in $M/R\leq \Sp_{2n}(2)$ has order $r_1|S_3|, r_1, r_1$, respectively. Thus $|C|$ is $24r_1|Z(M)|, r_1|Z(M)|, r_1|Z(M)|$ respectively.

We note that Proposition~\ref{prop:c6}(a) and the validity of $\cM_6(\mathbf{X)}$ follow immediately from parts (a)--(c).
\end{proof}

Next we derive upper bounds $|M|$ for the groups $M\in \cM_6(\mathbf{X)}$. The proof is a simple modification of the argument of \cite[Lemma 4.1]{PSer}.

\begin{lemma}\label{lem:c6-order}
For  $\bfX$ and $M$ as in Table~$\ref{t:c6}$, with $d=2^n\geq16$ and  $q$ an odd prime,  
\[|M| < \begin{cases}
\frac{45}{64}\cdot q^{\frac{9}{5}d} & \textrm{for all } \bfX \textrm{ and } n\ge 4, q\ge 3,\\
\frac{45}{64}\cdot q^{\frac{16}{25}d} & \textrm{for all } \bfX \textrm{ and } n\ge 5, q\ge 11, \textrm{ or } n\ge 7, q\ge 5.\\
\end{cases}
\]
Moreover, if $ \bfX =\bfSp$ or $\mathbf{O^+}$ and the parameters are as in Lemma~$\ref{lem:c6-1}${\rm(b)(iii)}, so  either $n=5$ with $q\geq11$, or $n\geq 7$ with $q\ge5$, we have
\[
|M| < \begin{cases}
5.81\cdot q^{\frac{1}{2}\left(1 - \frac{3}{125}\right)d}&
 \textrm{for  }  n=5 \textrm{ and } 11\le q\le 17,\\
5.81\cdot q^{\frac{2}{5}d}
 & \textrm{for } 
 n\ge 7, q\ge 5, \textrm{ or } n=5, q\ge 19.
\\
\end{cases}
\]
\end{lemma}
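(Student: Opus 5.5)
The plan is to bound $|M|=|R|\cdot|M_0|$ directly, where $M_0=M/R$ is $\Sp_{2n}(2)$, $\SO^\pm_{2n}(2)$ or $\Omega^\pm_{2n}(2)$ as in Table~\ref{t:c6}. In all lines $|M_0|\le|\Sp_{2n}(2)|=2^{n^2}\prod_{i=1}^n(2^{2i}-1)<2^{2n^2+n}$. For $R$ we have $|R|\le (q-1)2^{2n}$ in Line~1 and $|R|=2^{1+2n}$ in Lines~2--3. So, uniformly, $|M|< q\cdot 2^{2n^2+3n+1}$, with the factor $q$ replaced by $2$ for $\bfSp$ and $\mathbf{O^+}$.

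The task is then to compare $2^{2n^2+3n+1}$ with $q^{cd}=q^{c2^n}$, following \cite[Lemma 4.1]{PSer}. Taking logarithms base $q$, we need
\[
(2n^2+3n+1)\log_q 2+1\le c\,2^n+\log_q(45/64).
\]
The right side grows exponentially in $n$ and the left only quadratically. It therefore suffices to check the smallest admissible $n$ and smallest $q$, and then argue monotonicity. Monotonicity in $q$ holds because $\log_q 2$ decreases. Monotonicity in $n$ holds because the ratio of successive increments $c2^n$ against $4n+5$ is favourable once $n\ge4$.

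For the first bound ($n\ge4$, $q\ge3$, $c=9/5$) the check is $n=4$, $q=3$. For the second ($c=16/25$) the checks are $n=5$, $q=11$ and $n=7$, $q=5$. The constant $45/64$ should come from sharpening $\prod(1-2^{-2i})$ and using the exact $|\Sp_{2n}(2)|$ rather than the crude exponent, so it will be computed from exact values at the base cases.

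For the refined bounds in types $\bfSp,\mathbf{O^+}$ under Lemma~\ref{lem:c6-1}(b)(iii), I would use $|M|\le 2^{1+2n}|\SO^\pm_{2n}(2)|$ with the exact orthogonal order $2\cdot 2^{n(n-1)}(2^n\mp1)\prod_{i=1}^{n-1}(2^{2i}-1)$. This gives roughly $2^{2n^2+n+2}$ in absolute terms, with no factor $q$. The exponents $\frac12(1-\frac3{125})d$ for $n=5$, $11\le q\le17$, and $\frac25 d$ otherwise, are again verified at the base cases ($n=5$, $q=11$; $n=5$, $q=19$; $n=7$, $q=5$) and propagated by the same monotonicity.

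The main obstacle is purely numerical: getting the stated constants ($45/64$ and $5.81$) exactly. This requires evaluating the exact group orders at the extremal parameters, rather than the loose power-of-two estimates, and checking carefully that the worst case really is at the smallest $n$ and $q$. That check is most delicate for $n=5$, $q=11$, where $q^{d}$ is comparatively small against $2^{2n^2}$.
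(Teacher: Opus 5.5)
Your strategy is the paper's: bound $|M|\le |R|\cdot|\Sp_{2n}(2)|$ (respectively $2^{2n+1}|\SO^-_{2n}(2)|$ for the refined bounds), extract $\prod_{i\ge1}(1-4^{-i})<(1-\frac14)(1-\frac1{16})=\frac{45}{64}$, check the smallest $(n,q)$ and induct on $n$.

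\emph{The first bound.} The reduction you actually write down is false at the base case $(n,q)=(4,3)$, $c=\frac95$. The missing $\frac{45}{64}$ on the left is only part of the reason.

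\begin{itemize}
\item As written, you need $45\log_3 2+1\approx 29.39\le 28.8+\log_3\frac{45}{64}\approx 28.48$, which is false.
\item Inserting $|\Sp_{2n}(2)|<\frac{45}{64}2^{2n^2+n}$ only raises the right side to $28.8$. It is still false.
\item Using the exact $|\Sp_8(2)|$ in your uniform bound $q\cdot 2^{2n+1}|\Sp_{2n}(2)|$ gives $3\cdot 2^{9}\,|\Sp_8(2)|\approx 7.3\cdot 10^{13}$. This exceeds $\frac{45}{64}3^{28.8}\approx 3.9\cdot 10^{13}$.
\end{itemize}

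The culprit is a spurious factor $2$ in your uniform bound. In fact $|R|=z\cdot 2^{2n}$ with $z=|Z(R)|$, where $z=q-1$ in Line~1 and $z=2$ in Lines~2--3. So $z<q$ and $|M|<\frac{45}{64}\,q\,2^{2n^2+3n}$, not $q\cdot 2^{2n^2+3n+1}$. For $\bfSp$ and $\mathbf{O^+}$, once you write $|R|=2^{2n+1}$, the factor $q$ should become $1$, not $2$.

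The condition to verify is then $(2n^2+3n)/\log_2 q+1<\frac95\,2^n$. At $(4,3)$ this reads $28.761<28.8$, so there is no room to lose a factor $2$. This is exactly the paper's inequality. With this correction, your monotonicity in $q$, your increment comparison ($4n+5$ against $c\,2^n$), and your base cases $(5,11)$ and $(7,5)$ for $c=\frac{16}{25}$ go through as in the paper.

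\emph{The refined bounds.} These need the same care, since the crude $2^{2n^2+n+2}$ fails at $(n,q)=(5,11)$: $57>\log_2 5.81+15.616\log_2 11\approx 56.56$. You must keep the product factor from the exact order, namely $|\SO^-_{2n}(2)|<\frac{45}{64}(1+2^{-n})2^{2n^2-n+1}$. For $n\ge5$ this gives $|M|<\frac{45}{64}\cdot\frac{33}{32}\cdot 2^{2n^2+n+2}<5.81\cdot 2^{2n^2+n-1}$. The resulting condition $(2n^2+n-1)/\log_2 q<\frac12(1-\frac{3}{125})2^n$ holds at $(5,11)$ only as $15.609<15.616$, and at $(5,19)$ as $12.71<12.8$.

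You correctly identify the binding cases. But the written bounds must be tightened as above before the base-case checks actually succeed; \lq\lq exact values at the base cases\rq\rq\ does not rescue the first bound unless $|R|$ is also counted correctly.
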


\begin{proof} 
Note first that $n\ge 4$ as $d=2^n\ge 16$.  Let $z=|Z(M)|$. 
 It follows from Table~$\ref{t:c6}$ that     $|M|<z\cdot2^{2n}\cdot|\Sp_{2n}(2)|$.  Since $|\Sp_{2n}(2)|<\frac{45}{64}\cdot 2^{2n^2+n}$ (see \cite[Table 3]{PSer} where, as $n\ge 4$,  we use the improved bound $\Omega(1,n,2^2)<(1-\frac{1}{2^2})(1-\frac{1}{2^4})=\frac{45}{64}$),
 and since in all cases $z<q$,  it follows,     that
\[
|M| < z\cdot \frac{45}{64}\cdot 2^{2n^2 +3n}  <  \frac{45}{64}\cdot q^{(2n^2 +3n)/\log_2(q) +1}. 
\]
By induction (on $n$) it is
easy to show  that,  for $n\ge 4$
and $q\ge 3$,
\[ (2n^2 + 3n )/\log_2(q) +1\le (2n^2+3n)/\log_2(3) +1 < \frac{9}{5} 2^n = \frac{9}{5}d,\]
while for either $n\ge 5$ with $q\ge 11$, or $n\ge 7$ with $q\ge 5$,  we have
\[ (2n^2 + 3n )/\log_2(q) +1 < \frac{16}{25} 2^n = \frac{16}{25}d.
\]
This proves the first bounds in the lemma.

Now we assume that $\bfX = \bfSp$ or $\bfO^+$ and either $n=5$ with $q\geq11$, or $n\geq 7$ with $q\ge5$. In this case $z=2$ and
it follows from Table~$\ref{t:c6}$ that     $|M|<2^{2n+1}\cdot|\SO^-_{2n}(2)|$. 
Since 
\[
|\SO^-_{2n}(2)|< \frac{45}{64}\cdot (1+ 2^{-n})\cdot 2^{2n^2-n+1} \le \frac{45}{32}\cdot\left(1+\frac{1}{32}\right)\cdot   2^{2n^2-n} < 1.451 \cdot 2^{2n^2-n}
\] 
(see \cite[Table 3]{PSer}, again using the improved bound $\Omega(1,n-1,2^2)<(1-\frac{1}{2^2})(1-\frac{1}{2^4})=\frac{45}{64}$)
 it follows  that
\[
|M| <  1.451\cdot 2^{2n^2 +n+1}
= 1.451 \cdot 4\cdot 2^{2n^2 +n-1}
< 5.81\cdot  q^{(2n^2 + n-1)/\log_2(q)}. 
\]
For $q\ge 11$ and $n=5$ we have
\[(2n^2+n-1)/\log_2(q)\le (2n^2+n-1)/\log_2(11) < \frac{1}{2}\left(1 - \frac{3}{125}\right)2^n = \frac{1}{2}\left(1 - \frac{3}{125}\right)
d,\] 
giving the bound asserted for $n=5$ with $11\leq q\leq 17$. Further, if either $n\ge 7$ with $q\geq 5$, or  $n=5$ with $q\ge 19$,   we find 
 \[
 (2n^2+n-1)/\log_2(q)< \frac{2}{5} 2^n = \frac{2}{5}d
 \]
 which yields the asserted bounds. 
 \end{proof}

Finally we prove Proposition~\ref{prop:c6}.

\bigskip\noindent
\emph{Proof of Proposition~$\ref{prop:c6}$.}\quad 
Part (a) of Proposition~\ref{prop:c6}, and the form of $\cM_6(\bfX)$  follow from Lemma~\ref{lem:c6-1}. 

Suppose then that Hypothesis~\ref{H:AC6} holds, in particular that  ${\rm Prob}_6(g_1,g_2,\bfX)>0$. The assertions in Proposition~\ref{prop:c6}(b) about $d, q$, the $e_i$ and $r_i$ are proved in Lemma~\ref{lem:c6-1}, and it remains to estimate ${\rm Prob}_6(g_1,g_2,\bfX)$ for the cases in Lemma~\ref{lem:c6-1} (b)(i), (b)(ii) and (b)(iii). In the first two cases 
$e_1=e_2=d/2=2^{n-1}$, while in the third case $e_1=d-2$ and $e_2=2$. Let $g\in g_1^G$ and let $M\in\cM_6(\bfX)$ such that $g\in M$.  By Lemma~\ref{lem:c6-1}~(c), $\cM_6(\bfX)$ is a single $G$-conjugacy class, the number of $G$-conjugates $L\in\cM_6(\bfX)$ such that $g\in L$ is $|C_G(g_1)|\cdot e_1/|N_M(\langle g\rangle)|$, and $|N_M(\langle g\rangle)|=cr_1z$ with $z=|Z(M)|$, and $c=48(n-1), 2n, 2n$ in case (b)(i),  (b)(ii),  or (b)(iii), respectively.
Thus, by \eqref{e:asch3}, 
\begin{equation}\label{e:c6-1}
    {\rm Prob}_6(g_1,g_2,\bfX) \leq   
\frac{|C_G(g_1)|\cdot e_1}{cr_1z}\cdot 
\frac{|\mathcal{S}_6(g,M)|}{|N(d,q,e_1,\bfX)|} =
\frac{|C_G(g_1)|}{|N(d,q,e_1,\bfX)|}\cdot \frac{e_1\cdot |\mathcal{S}_6(g,M)|}{cr_1z},
\end{equation}
with $|N(d,q^u, e_1,\bfX)|$ as in \eqref{e:ndex}, and  $\mathcal{S}_6(g,M)$ as in \eqref{e:siM}, namely the set of all $g'\in g_2^G\cap M$ such that $(g,g')$ is a stingray duo and $\langle g,g'\rangle$ is not contained in a subgroup of $\cM_j(\bfX)$ for any $j\leq 5$. 
The first factor in the last expression in \eqref{e:c6-1} is determined in Propositions~\ref{p:Lbds} and~\ref{p:Xbds}.
As we do not have an easy description of the actions of $M$ on $V$, we over-estimate $|\mathcal{S}_6(g,M)|$ by $|g_2^G\cap M|$ for the first two cases and by $|M|$ in the third case. 

\medskip\noindent
\emph{Cases {\rm(b)(i)} and {\rm(b)(ii)} of Lemma~$\ref{lem:c6-1}$:}\quad
Here $e_2=e_1$, and hence, by Lemma~\ref{lem:c6-1}(c), $|g_2^G\cap M|$ is equal to $|g_1^G\cap M|$, which  is equal to $e_1\cdot |M|/|N_M(\langle g\rangle)|=e_1\cdot |M|/cr_1z$. Thus, since $e_1<r_1$, 
\begin{equation}\label{e:c6-2}
  {\rm Prob}_6(g_1,g_2,\bfX) \leq  \frac{|C_G(g_1)|}{|N(d,q,e_1,\bfX)|}\cdot \frac{e_1^2\cdot |M|}{(cr_1z)^2} < \frac{|C_G(g_1)|}{|N(d,q,e_1,\bfX)|}\cdot \frac{|M|}{4n^2  z^2} 
\end{equation}
where we have used the common upper bound $1/c\leq 1/2n$ for the cases (b)(i) and (b)(ii).
Note that, by Lemma~\ref{lem:c6-order}, $|M|< \frac{45}{64}q^{2d}$.
We now apply Propositions~\ref{p:Lbds} and~\ref{p:Xbds} 
separately for each type.
Note that $(q^{e_i}-1)/(q-1) < 2q^{e_i-1}.$
If $\bfX=\bfL$, noting that $(q^{e_i}-1)/(q-1) < 2 q^{e_i}$, we have 
\[
 {\rm Prob}_6(g_1,g_2,\bfL)\leq  \frac{(q^{e_1}-1)(q^{e_2}-1)}{q^{2e_1e_2}}\cdot
 \frac{\frac{45}{64}q^{\frac{9}{5}d}}{4n^2 (q-1)^2}<  \frac{45}{64 \,n^2}\cdot \frac{q^{e_1+e_2-2}\cdot q^{\frac{9}{5}d}}{q^{2e_1e_2}}
 ,
\]
noting that  $(q^{e_i}-1)/(q-1) <2 q^{e_i-1}.$
Since $e_1=e_2=d/2$, this upper bound is $\frac{45}{64n^2}\cdot q^{-b}$ where $b=d^2/2 -\frac{14}{5}d+2$, and for all $d=2^n\geq 16$ we have $b>d^2/4$, and hence ${\rm Prob}_6(g_1,g_2,\bfL)\leq \frac{45}{1024} q^{-d^2/4}$ as in Proposition~\ref{prop:c6}(b)(i). In the other lines of Table~\ref{t:c6}, $\bfX=\bfSp$ or $\mathbf{O^+}$, and we have 
 $q$ odd, and the bounds $1/2\leq 1-3/(2q)\leq  \mathbf{k}(d,q,e_1,\bfX) < 1$. Note that $(q^{e_1/2}+1)(q^{e_2/2}+1)<2q^{(e_1+e_2)/2}$. Thus using Propositions~\ref{p:Xbds}, \eqref{e:c6-2} yields
 \begin{align*}
 {\rm Prob}_6(g_1,g_2,\bfX) \leq 
      &  \frac{(q^{e_1/2}+1)(q^{e_2/2}+1)}{\mathbf{k}(d,q,e_1,\bfX)\cdot |\mathcal{U}(d,q,e_1,\bfX)|}\cdot
 \frac{|M|}{16n^2} 
 < \frac{q^{(e_1+e_2)/2}\cdot |M|}{ \bfa(q,\bfX)\cdot q^{e_1e_2}\cdot 4n^2}.
 \end{align*} 
For both types, we have $4\bfa(q,\bfX)>1$, by Table~\ref{t:abLU} (since $q$ is odd), so using $e_1=e_2=d/2$, we have 
${\rm Prob}_6(g_1,g_2,\bfX) < \frac{1}{n^2}\cdot q^{-d^2/4 + d/2}\cdot |M|$.  By
Lemma~\ref{lem:c6-order}, $|M|< \frac{45}{64}q^{\frac{9}{5}d}$ and
thus
\[{\rm Prob}_6(g_1,g_2,\bfX) < \frac{45}{1024} \cdot q^{-d^2/4 + 23d/10}.\]

\medskip\noindent
\emph{Cases {\rm(b)(iii)} of Lemma~$\ref{lem:c6-1}$:}\quad Here $e_1=d-2$ and $e_2=2$, the dimension $d=2^n\geq 32$ with $n$ and $q$ odd primes,
and  either $n\ge 7$ with $q\ge5$, or $n=5$ with $q\geq 11$.

We use $|\mathcal{S}_6(g,M)|<|M|$, and the upper bound  for $|M|$
depending on $\bfX$ given by Lemma~\ref{lem:c6-order}. Thus, since $e_1<r_1$ and $c=2n$, \eqref{e:c6-1} becomes 
\[
  {\rm Prob}_6(g_1,g_2,\bfX) \leq  \frac{|C_G(g_1)|}{|N(d,q,e_1,\bfX)|}\cdot \frac{e_1\cdot |M|}{cr_1z} < \frac{|C_G(g_1)|}{|N(d,q,e_1,\bfX)|}\cdot \frac{|M|/z}{2n}. 
\]

Again we apply Propositions~\ref{p:Lbds} and~\ref{p:Xbds}.
If $\bfX=\bfL$, 
then by Lemma~\ref{lem:c6-order}
$|M| < \frac{45}{64}q^{\frac{16}{25}d}$, thus
\[
 {\rm Prob}_6(g_1,g_2,\bfL)\leq  \frac{(q^{d-2}-1)(q^{2}-1)}{q^{4d-8}}\cdot
 \frac{\frac{45}{64}q^{\frac{16}{25}d}}{2n}< \frac{45}{640}q^{-\frac{59}{25}d+8}.
\]

If $\bfX=\bfSp$ or $\mathbf{O^+}$ then 
\[
 {\rm Prob}_6(g_1,g_2,\bfX) \leq 
      \frac{(q^{(d-2)/2}+1)(q+1)}{\mathbf{k}(d,q,d-2,\bfX)\cdot |\mathcal{U}(d,q,d-2,\bfX)|}\cdot\frac{|M|}{2n}.
\]
By Proposition~~\ref{p:Xbds}, $|\mathcal{U}(d,q,d-2,\bfX)|> (47/128)\cdot q^{2d-4}$.
Also  $(q^{(d-2)/2}+1)(q+1)< 2q^{d/2}$ and $ 1-3/(2q)\leq  \mathbf{k}(d,q,e_1,\bfX).$ 
We consider first the cases
$n\ge 7$ with $q\ge 5$, and $n=5$ with $q\ge 19,$ (recall $n, q$ are odd primes).
Here by
Lemma~\ref{lem:c6-order}, $|M| < 5.81\cdot q^{\frac{2}{5}d}$
and hence \[
 {\rm Prob}_6(g_1,g_2,\bfX) < 
\frac{2q^{d/2}\cdot 5.81 \cdot q^{\frac{2}{5}d}}{(1-3/(2q))\cdot (47/128)\cdot q^{2d-4}\cdot 2n} 
< \frac{15.83  }{n(1-3/(2q))\cdot q^{2^n/10-1}} \cdot q^{-d+3}. 
\]
In all of these cases
\[
\frac{15.83}{n(1-3/(2q))\cdot q^{2^n/10-1}} \cdot q^{-d+3}
< 5.3\cdot 10^{-3} \cdot q^{-d+3}.
\]
This leaves the case $n=5$ with
$q\in\{11, 13, 17\}$; here $2n=10$   and 
\[|M| \le 2^{2n+1}\cdot |\SO^-_{2n}(2)|.\]
As in this case $(q^{(d-2)/2}+1)(q+1) < 1.1\cdot  q^{d/2},$ and  $|\mathcal{U}(d,q,d-2,\bfX)|> (227/500)\cdot q^{2d-4}$ by using $q\ge 11$ in the proof of  Proposition~\ref{p:Xbds},
\begin{align*}
{\rm Prob}_6(g_1,g_2,\bfX) &< 
\frac{1.1\cdot q^{d/2}\cdot 2^{2n+1} |\SO^-_{2n}(2)|}{(1-3/(2q))\cdot (227/500)\cdot q^{2d-4}\cdot 10} 
=\frac{1.1\cdot 50\cdot  2^{2n+1}\cdot |\SO^-_{2n}(2)|}{227\cdot (1-3/(2q))\cdot q^{\frac{3}{2}d-4}}\\
&< 
\frac{55\cdot  2048\cdot |\SO^-_{2n}(2)|}{227\cdot (1-3/(2q))\cdot q^{15}} \cdot q^{-d+3}
< 6.9\cdot q^{-d+3}.
\end{align*}
This completes the proof of Proposition~\ref{prop:c6}.

\section{\texorpdfstring{$\Asch_8:$}{Asch8} classical groups}

We use the notation and assumptions from Hypothesis~\ref{hyp}, and in this section we estimate the proportion  ${\rm Prob}_8(g_1,g_2,\bfX)$ of stingray duos $(g,g')\in g_1^G\times g_2^G$ for which $\langle g,g'\rangle$ lies in a classical subgroup $M$ of $G$ lying in 
the Aschbacher category $\Asch_8$, that is, $M$ preserves a non-degenerate form on $V$ other than the form defining $G$ of type $\bfX$. For a non-zero contribution to ${\rm Prob}_8(g_1,g_2,\bfX)$, we do not have $\bfX=\bfSp$ with $q$ even, by Remark~\ref{r:MX-Sp}(b), and taking this into account we record in Table~\ref{t:asch8} the possibilities for $M$ (see \cite[Table 4.8.A]{KL}). 
In particular, such subgroups $M$ exist only if the type $\bfX=\bfL$, and we take 
the family $\cM_8(\bfX)$ as in the main result of this section, Proposition~\ref{prop:classicalmax}.

\begin{proposition}\label{prop:classicalmax}
Assume that Hypothesis~$\ref{hyp}$ holds, and $d>8$. Then $ {\rm Prob}_8(g_1,g_2,\bfX)=0$ if $\bfX \ne \bfL$ or if $d$ is odd, and otherwise 
$$
{\rm Prob}_8(g_1,g_2,\bfL)\le \begin{cases}
     1.46\cdot q^{-e_1e_2+d/2}  &\mbox{if $q=2$} \\
     3.18\cdot q^{-e_1e_2+d/2}  &\mbox{if $q\ge3$.} 
\end{cases}  
$$
Moreover we take $\cM_8(\bfX)=\emptyset$ if $\bfX \ne \bfL$ or if $d$ is odd, and for $d$ even we take $\cM_8(\bfL)$ to consist the classical groups $M$ of type $\bfY$ as in Table~$\ref{t:asch8}$.
\end{proposition}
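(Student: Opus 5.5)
The plan is to first identify, from \cite[Table 4.8.A]{KL}, the $\Asch_8$-subgroups $M=N_G(Y)$ of $G=\GX_d(q)$ that could contain $H=\langle g,g'\rangle$. Here $Y$ is the full isometry group of a non-degenerate form. For $\bfX\ne\bfL$ the only candidates are orthogonal subgroups of $\Sp_d(q)$ with $q$ even, and these are excluded by Remark~\ref{r:MX-Sp}(b). Hence ${\rm Prob}_8(g_1,g_2,\bfX)=0$ unless $\bfX=\bfL$.

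For $\bfX=\bfL$, the argument of Lemma~\ref{lem:sub1}(c) applies: the prime order $r_i$ does not divide $|M:Y|$ because $e_i\ge2$. So $g,g'\in Y$, and $(g,g')$ is a ppd stingray duo for $Y$, whose type $\bfY$ is one of $\bfSp$, $\bfO^\pm$ (with $q$ odd) or $\bfU$ over $\F_{q_0}$ with $q=q_0^2$. Lemma~\ref{lem:uperp}(b) applied to $Y$ then forces both $e_i$ to be even in the symplectic and orthogonal cases, and both odd in the unitary case. In every case $d$ is even, and when $d$ is odd $\cM_8(\bfL)=\emptyset$. This also fixes $\cM_8(\bfL)$ as the (at most four) $G$-conjugacy classes of such $M$, one for each type $\bfY$.

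For the bound I will treat each type $\bfY$ separately, following the template of Lemmas~\ref{lem:extn3} and~\ref{lem:sub2}.

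\begin{enumerate}
\item I will show that $M$ is self-normalising and that $g_1^G\cap M=g^M$. This uses the torus argument: the unique cyclic torus $T_Y$ of $Y$ containing $g$ lies in the torus $T$ of $G$ and contains all $e_1$ conjugates of $g_1$ in $T$.
\item Lemma~\ref{lem:countM} then bounds the contribution by
\[
\frac{|C_G(g_1)|}{|N(d,q,e_1,\bfL)|}\cdot\frac{|N(d,q_Y,e_1,\bfY)|}{|C_M(g)|},
\]
where $|C_M(g)|\ge|C_Y(g)|$.
\item I will substitute Proposition~\ref{p:Lbds}(c) for the first factor, which gives $(q^{e_1}-1)(q^{e_2}-1)q^{-2e_1e_2}$. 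For the second factor I will substitute Proposition~\ref{p:Xbds}(c),(d), bounded by $\bfb(q,\bfY)\,q^{e_1e_2}/\bigl((q^{e_1/2}+1)(q^{e_2/2}+\beta)\bigr)$.
\end{enumerate}

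The product is then roughly $\bfb(q,\bfY)\,q^{-e_1e_2+d/2}$ times a correction factor, such as $(1+q^{-e_2/2})^{-1}$ or a factor of $2$ for the $\bfO^-$ term with $\beta=-1$. The unitary case gives a term like $q^{-e_1e_2+\dots}$ with far more room, so it is negligible. Summing over the types $\bfY$ present gives the result. For $q=2$ only $\bfY=\bfSp$ occurs, since $\Asch_8$ orthogonal subgroups need $q$ odd and $2$ is not a square; this gives $16/11<1.46$. For $q\ge3$ the $\bfSp$, $\bfO^+$, $\bfO^-$ and possibly $\bfU$ terms add up, after using $q\ge3$ in Table~\ref{t:abLU}, to a constant below $3.18$.

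The main obstacle is the class-fusion step for the orthogonal and unitary subgroups. I need to make sure that $N_G(Y)$, including its similarities and scalars, does not split $g_1^G\cap M$ into several $M$-classes, and that the orthogonal parameters $(e_i$ even, $U_{g_i}$ of minus type$)$ match Proposition~\ref{p:Xbds} for both $\eps=\pm$. I will handle this exactly as in Lemma~\ref{lem:extn1}(c): compare $g^G\cap T$ with $g^Y\cap T_Y$, both of size $e_1$. The second place needing care is tracking the $\beta=-1$ factor for $\bfO^-$ so that the final constant stays below $3.18$.
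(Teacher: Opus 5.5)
Your route is the paper's. You take the $\Asch_8$-subgroups from \cite[Table 4.8.A]{KL}, dispose of $\bfX\ne\bfL$ via Remark~\ref{r:MX-Sp}(b), and get the parity restrictions (hence $d$ even) from Lemma~\ref{lem:uperp}(b). You then apply Lemma~\ref{lem:countM} and Propositions~\ref{p:Lbds}(c) and~\ref{p:Xbds}(c),(d) type by type.

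The one difference is the fusion step $g_i^G\cap M=g_i^M$. The paper cites \cite[Theorem~6.1.1]{DeFrancesci}; you redo the torus comparison of Lemmas~\ref{lem:extn1}(c) and~\ref{lem:sub1}(c). That works and is self-contained, with two provisos. You must use that all the relevant cyclic tori of $Y$ are $Y$-conjugate (Witt's lemma applied to the subspaces $U_h$). You must also run the argument for $g_2$, since bounding $|\mathcal{S}_8(g,M)|$ by $|N(d,q_Y,e_1,\bfY)|$ needs $g_2^G\cap M=(g')^Y$.

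However, your final bookkeeping has two errors which, as written, do not yield $3.18$.

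\emph{The unitary term is not negligible.} With $q=q_0^2$ you have $(q^{e_i}-1)/(q_0^{e_i}+1)=q_0^{e_i}-1$ and $|\cU(d,q_0^2,e_1,\bfU)|<q_0^{2e_1e_2}=q^{e_1e_2}$. So this term is bounded by $q^{-e_1e_2+d/2}$ with constant $1$, exactly the same order as the other terms. The discarded factors ($\mathbf{k}$, $\Delta(e_1,d;-q_0)$, $1-q_0^{-e_i}$) all tend to $1$ as $q_0$ grows, so the constant cannot be made small. Adding it to the $\bfSp$, $\bfO^+$, $\bfO^-$ terms, as your ``and possibly $\bfU$'' suggests, pushes the constant above $3.7$. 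The fix is the parity fact you already derived: $\bfY=\bfU$ needs $e_1,e_2$ odd, while $\bfY\in\{\bfSp,\bfO^\pm\}$ needs them even. So the $\bfU$ term never occurs together with the others. The constant is $1$ in the odd case and the sum of the other three in the even case.

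\emph{The $\bfO^-$ correction is not a factor $2$.} Exactly,
\[
\frac{(q^{e_1}-1)(q^{e_2}-1)}{(q^{e_1/2}+1)(q^{e_2/2}-1)}=(q^{e_1/2}-1)(q^{e_2/2}+1)<q^{d/2}\,(1+q^{-e_2/2}),
\]
and $1+q^{-e_2/2}\le 4/3$ for $q\ge3$. So the $\bfO^-$ term is below $\frac{54}{71}\cdot\frac43<1.015$. A factor $2$ would give $\frac{108}{71}>1.52$. Even using the sharper $\bfb(3,\bfO^+)=\frac{81}{142}$, the total would be $\frac87+\frac{81}{142}+\frac{108}{71}>3.23$, which exceeds $3.18$. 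With the exact factor you get $\frac87+1.015+1.015<3.18$, as in the paper.
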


  \begin{table}
\caption{Maximal subgroups $M$
in $\Asch_8$ containing $e_i$-stingray\\ elements in Lemma~\ref{lem:classicalmax}}
\begin{tabular}{lcll}
  \toprule
$\bfX$ & $M$ & Conditions & $\bfY$\\
    \midrule
$\bfL$   & $\GSp_d(q)$ & $d,e_i$ even & $\bfSp$\\
$\bfL$   & $\GU_d(q_0)$ & $e_i$ odd, $q=q_0^2$ & $\bfU$\\
$\bfL$   & $\GO^{\varepsilon}_d(q)$   & $d,e_i$ even, $q$ odd & $\mathbf{O^\varepsilon}$\\
\bottomrule
\end{tabular}
\label{t:asch8}
\end{table}

To prove Proposition~\ref{prop:classicalmax}, we work with the following assumptions.

\begin{hypothesis}[for $\Asch_8$]\label{H:AC8} Assume that Hypothesis~$\ref{hyp}$ holds with $d>8$, and also that ${\rm Prob}_8(g_1,g_2,\bfX)\ne 0$,  so $\bfX = \bfL$ (by our discussion above). Let $(g,g')$ be a stingray duo in $g_1^G\times g_2^G$ such that $H\coloneq\langle g,g'\rangle\leq M$  for
    some $M\in \cM_8(\bfL)$, while $H\not\leq L$ for any $L\in \bigcup_{i=1}^7\cM_i(\bfL)$.
\end{hypothesis}

We begin with a lemma which determines the $G$-conjugates of a stingray element lying in a subgroup
$M\in \cM_8(\bfL)$.

\begin{lemma}\label{lem:classicalmax}
Assume that Hypothesis~$\ref{H:AC8}$ holds, and let $i\in\{1,2\}$, $e=e_i, r=r_i$ and $h\in g_i^G\cap M$. 
Then the `Conditions' in Table~$\ref{t:asch8}$ hold, $M$ is self-normalising and  $g_i^G \cap M = h^M.$
\end{lemma}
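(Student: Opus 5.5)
We first use \cite[Table 4.8.A]{KL} to list the maximal $\Asch_8$-subgroups $M$ of $G=\GL_d(q)$. Each is the normaliser of a classical group $Y$ of type $\bfSp$, $\bfU$ (with $q=q_0^2$) or $\mathbf{O^\varepsilon}$ (with $q$ odd), and $M=N_G(M)$ is self-normalising. Since $H\le M$ and $h\in g_i^G\cap M$ has prime order $r=r_i$, which is a primitive prime divisor of $q^{e}-1$ with $e\ge2$, we have $r\nmid q-1$. The index $|M:Y|$ divides a power of $q-1$ times a small factor (scalars, and at most $2$ for similitudes), so $r\nmid |M:Y|$ and $h\in Y$ (more precisely, in the isometry group). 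Thus $h$ is an $e$-stingray element of the classical group $Y$ on $V$. Lemma~\ref{lem:uperp}(b) then gives the parity restrictions. In the symplectic and orthogonal cases $e$ is even, and since this holds for both $i$, $d=e_1+e_2$ is even. In the unitary case, with $\F_q=\F_{q_0^2}$, $e$ is odd. Lemma~\ref{lem:uperp}(b) also forces $e\ge 2$ except possibly for unitary type, where $e_i\ge2$ anyway. This establishes the `Conditions' column of Table~\ref{t:asch8}.

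Next we show $g_i^G\cap M=h^M$, following the torus argument already used in Lemmas~\ref{lem:extn1}(c) and~\ref{lem:sub1}(c). First, $g_i^G\cap M=g_i^G\cap Y$ because $r$ is coprime to $|M:Y|$. Applying Lemma~\ref{lem:uperp}(c) in $Y$, $h$ lies in a unique cyclic torus $T_0$ of $Y$, with $|T_0|=q^{e/2}+1$ or $q_0^{e}+1$. Moreover $h^Y\cap T_0$ has exactly $e$ elements, permuted transitively by $N_Y(T_0)$. Applying Lemma~\ref{lem:uperp}(c) in $G$, $h$ lies in a unique cyclic torus $T$ of $G$ of order $q^e-1$, with $|g_i^G\cap T|=e$. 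Uniqueness gives $T_0\le T$, and since all elements of $g_i^G\cap T$ lie in the unique subgroup of order $r$ in $T$, which is contained in $T_0$, we get $g_i^G\cap T=h^Y\cap T_0$. Now any $x\in g_i^G\cap Y$ is an $e$-stingray element of $Y$ (by the same ppd and irreducibility reasoning) lying in a torus $T_0'$ of $Y$ of the same type. These tori are $Y$-conjugate: for the orthogonal case this needs the type of $U_x$ to be minus, given by Table~\ref{tab:one}, and $Y$ is transitive on nondegenerate $e$-subspaces of a given type by Witt's lemma. Hence $x$ is $Y$-conjugate into $g_i^G\cap T_0=h^{N_Y(T_0)}$, so $g_i^G\cap M=h^Y=h^M$.

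The main obstacle is making the conjugacy of tori and the inclusion $T_0\le T$ rigorous in each of the three types, in particular checking that the classical groups $Y$ in $\GL_d(q)$ contain a single class of the relevant cyclic tori. A second point is handling the similitude/scalar part of $M$ in the symplectic and orthogonal cases, where $M=\GSp_d(q)Z$ or $\mathrm{CO}^\varepsilon_d(q)$. For this we would first try the coprimality $r\nmid |M:Y|$, which reduces everything to the isometry group, and then cite \cite[Propositions 4.8.3--4.8.6]{KL} for self-normalisation.
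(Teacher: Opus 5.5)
Your proof is correct, but it proves the key statement $g_i^G\cap M=h^M$ by a different route from the paper. The paper's proof rests on three citations. Self-normalisation comes from maximality of $M$, the `Conditions' are read off from Table~\ref{tab:stingraycond}, and the fusion statement is quoted from \cite[Theorem~6.1.1]{DeFrancesci}: elements of $M$ that are conjugate in $G$ are already conjugate in $M$.

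You instead use $r\nmid|M:Y|$ to move everything into the isometry group $Y$. The paper glosses over this step by identifying $M$ with $\GSp_d(q)$, $\GU_d(q_0)$ or $\GO^\varepsilon_d(q)$, although $M=N_G(Y)$ is really the similarity group. You then run the torus-counting argument of Lemmas~\ref{lem:extn1}(c) and~\ref{lem:sub1}(c). The citation is shorter and applies to arbitrary elements of $M$. Your argument is self-contained and uses only tools already developed in the paper.

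Two small points.

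First, justify $T_0\le T$ via $T_0\le C_G(h)=T\times\GL(F_h)$, with $T_0$ acting trivially on $F_h$, rather than by `uniqueness'.

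Second, the step you flag as the main obstacle, conjugacy of the irreducible cyclic tori in $Y$, can be avoided. Once Witt's lemma has moved $U_x$ onto $U_h$ (and hence $F_x$ onto $F_h=U_h^\perp$), let $I$ be the isometry group of $U_h$. The prime $r$ divides $|I|$ only through the factor $q^{e/2}+1$, or $q_0^{e}+1$ in the unitary case. So the Sylow $r$-subgroups of $I$ are cyclic, and Sylow's theorem makes $\langle x\rangle$ and $\langle h\rangle$ conjugate in $I\times 1\le Y$. Hence $x$ is $Y$-conjugate into $\langle h\rangle\cap g_i^G\subseteq g_i^G\cap T=h^{N_Y(T_0)}$, which gives $g_i^G\cap M=h^Y=h^M$ directly.
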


\begin{proof}
The subgroup $M$ is
 self-normalising as $M$ is maximal in $G$. Since 
$h^M \subseteq g_i^G \cap M$, it remains in each case
to show that $g_i^G \cap M\subseteq h^M.$
 Recall that $\bfX=\bfL$,  $G = \GL_d(q)$, and that, by Table~\ref{t:asch8}, 
$M$ is isomorphic to $\GSp_d(q)$, $\GU_d(q)$,  or $\GO^{\varepsilon}_d(q)$. The `Conditions' in Table~$\ref{t:asch8}$ follow from Table~\ref{tab:stingraycond}.
Two elements in $g_i^{G}\cap M$ are conjugate in $M$ if and only if they are conjugate in $G,$ by
 \cite[Theorem~6.1.1]{DeFrancesci}. 
 \end{proof}

\noindent\textit{Proof of Proposition~$\ref{prop:classicalmax}$.} \quad
Let $g\in g_1^{G}$. We need to consider each $M\in\cM_8(\bfL)$ containing $g$. Now each such $M$ is a classical group of type $\bfY$ satisfying the
`Conditions' in some (unique) line  of Table~\ref{t:asch8}. 
 If we denote by $\cM_8(G,\bfY)$ the set of maximal subgroups of $G$ of type $\bfY$, then we can write \eqref{e:asch3} as 
\begin{align}\label{e:asch3-8}
    {\rm Prob}_8(g_1,g_2,\bfL) &\leq   \sum_{\bfY\in\{\bfSp, \bfU, \mathbf{O^\varepsilon}\}} \left(
\sum_{M\in\cM_8(G,\bfY), g\in M}
\frac{\left|\mathcal{S}_8(g,M)\right|}{\left|N(d,q,e_1,\bfL)\right|}\right),
\end{align}
 with $\mathcal{S}_8(g,M)$ as in \eqref{e:siM}. By Lemma~\ref{lem:classicalmax}, 
 $g_2^{G}\cap M$ is a single $M$-conjugacy class, and hence $\mathcal{S}_8(g,M)$ is a subset of 
 \[
 \{g'\in  g_2^{M} \mid \mbox{$(g,g')$ is a stingray duo in } M\}.
 \]
 Since $M\cong \GY_d(q)$, it follows from \eqref{e:ndex} that, if $M\in \cM_8(G,\bfY)$ contains $g$, then $|\mathcal{S}_8(g,M)|$ is at most  $|N(d,q,e_1,\bfY)|$, (noting that,  if $\bfY=\bfU$, then $M=\GU_d(q_0)$  and the second entry $q=q_0^2$). Moreover, since by Lemma~\ref{lem:classicalmax}, $N_G(M)=M$ and  $g_1^{G}\cap M$ is a single $M$-conjugacy class, it follows from Lemma~\ref{lem:countM} that the number of $M\in \cM_8(G,\bfY)$ containing $g$ is $|C_G(g_1)|/|C_M(g)|$; and both $C_G(g_1)$ and $C_M(g)$ are given by Lemma~\ref{lem:uperp}(c) (for types $\bfL$ and $\bfY$ respectively).  Thus by \eqref{e:asch3-8}, 
 \begin{align*}
{\rm Prob}_8(g_1,g_2,\bfL) &\leq \sum_{\bfY\in\{\bfSp, \bfU, \mathbf{O^\varepsilon}\}}  \frac{|C_G(g_1)|}{|C_M(g)|}\cdot \frac{|N(d,q,e_1,\bfY)|}{|N(d,q,e_1,\bfL)|}\\
    &=  \frac{|C_G(g_1)|}{|N(d,q,e_1,\bfL)|}\cdot \sum_{\bfY\in\{\bfSp, \bfU, \mathbf{O^\varepsilon}\}} \frac{|N(d,q,e_1,\bfY)|}{|C_M(g)|}.
 \end{align*}
   The first factor (and also each of the summands) is determined in Proposition~\ref{p:Lbds}.
In particular,
\begin{equation}\label{e:L}
\frac{|C_G(g_1)|}{|N(d,q,e_1,\bfL)|} = \frac{(q^{e_1}-1)(q^{e_2}-1)}{q^{2e_1e_2}}< q^{-2e_1e_2+d}.    
\end{equation}

For $\bfY=\bfU$, we have zero contribution unless $q=q_0^2$ is a square and the $e_i$ are both odd, and in this case it follows from Proposition~\ref{p:Xbds} that 
 \[
\frac{|N(d,q,e_1,\bfU)|}{|C_M(g)|} =  
\frac{\mathbf{k}(d,q_0^2,e_1,\bfU)|\cU(d,q_0^2,e_1,\bfU)|}{(q_0^{e_1}+1)(q_0^{e_2}+1)}
 < \frac{|\cU(d,q_0^2,e_1,\bfU)|}{(q_0^{e_1}+1)(q_0^{e_2}+1)}
< \frac{q_0^{2e_1e_2}}{q_0^{d}} = q^{e_1e_2-d/2}
 \]
 and the contribution to ${\rm Prob}_8(g_1,g_2,\bfL)$ is less than $q^{-2e_1e_2+d}\cdot q^{e_1e_2-d/2} = q^{-e_1e_2+d/2}$.

 For $\bfY=\bfSp$ or $\mathbf{O^\varepsilon}$, we have zero contribution unless the $e_i$ are both even, and in this case it follows from Proposition~\ref{p:Xbds}, (taking $\varepsilon=1$ if $\bfY=\bfSp$) that 
 \[
\frac{|N(d,q,e_1,\bfY)|}{|C_M(g)|} < \frac{|\cU(d,q,e_1,\bfY)|}{(q^{e_1/2}+1)(q^{e_2/2}+\varepsilon)}
< \frac{\bfb(q,\bfY)\cdot q^{e_1e_2}}{q^{e_1/2}(q^{e_2/2}+\varepsilon)}
 \]
 with $\bfb(q,\bfY)$ as in Table~\ref{t:abLU}. If $\bfY=\bfSp$ then $\bfb(q,\bfY)\leq 16/11$ if $q=2$ and $\bfb(q,\bfY)\leq 8/7$ if $q\geq3$. Hence the contribution to ${\rm Prob}_8(g_1,g_2,\bfL)$ is less than $q^{-2e_1e_2+d}\cdot (16/11)\cdot q^{e_1e_2-d/2} = (16/11)\cdot q^{-e_1e_2+d/2}$ if $q=2$, or $(8/7)\cdot q^{-e_1e_2+d/2}$ if $q\geq3$. If  $\bfY=\mathbf{O^\varepsilon}$ then $q$ is odd (see Table~\ref{t:asch8}) so $\bfb(q,\bfY)\leq 54/71$ and to estimate the contribution to ${\rm Prob}_8(g_1,g_2,\bfL)$ we use \eqref{e:L} to see that it is less than 
 \[
\frac{(q^{e_1}-1)(q^{e_2}-1)}{q^{2e_1e_2}} \cdot \frac{(54/71)\cdot q^{e_1e_2}}{q^{e_1/2}(q^{e_2/2}-1)}
< q^{-e_1e_2+d/2}\cdot (54/71)\cdot (1+q^{-e_2/2}).
 \]
Since $q\geq3$ and $e_2\geq2$, the final factor  $1+q^{-e_2/2} \leq 4/3$, and the contribution is therefore less than $1.015\cdot q^{-e_1e_2+d/2}$. 
 
 Finally we put these estimates together to obtain an upper bound for ${\rm Prob}_8(g_1,g_2,\bfL)$ of the form $k\cdot q^{-e_1e_2+d/2}$.  If at least one of the $e_i$ is odd, then the only non-zero contribution is from $\bfY=\bfU$ so we have $k=1$, and Proposition~\ref{prop:classicalmax} follows for all $q$ in this case. Suppose then that both of the $e_i$ are even. Then we can only have $\bfY=\bfSp$ or $\mathbf{O^\varepsilon}$. If $q=2$   then the only non-zero contribution is from $\bfY=\bfSp$ so we have $k=16/11 < 1.46$. On the other hand if $q \ge3$ then we may have contributions from $\bfY=\bfSp, \bfO^+$, and  $\bfO^-$, and we obtain $k=(8/7)+ 1.015 +  1.015 < 3.18$.
 This completes the proof of Proposition~\ref{prop:classicalmax}.

\section{\texorpdfstring{$\Asch_9:$}{Asch9} almost simple groups}\label{S:C9}

We use the notation and assumptions from Hypothesis~\ref{hyp}, and in this section we estimate the proportion  ${\rm Prob}_9(g_1,g_2,\bfX)$ of stingray duos $(g,g')\in g_1^G\times g_2^G$ for which $\langle g,g'\rangle\leq M$ where $M$ lies   in 
the Aschbacher category $\Asch_9$. In most cases these subgroups $M$ are maximal subgroups of $G$ lying in 
 $\Asch_9$ which can conceivably contain such stingray duos, while in the exceptional case where $\bfX=\bfSp$ and $q$ is even,  these are $\Asch_9$-subgroups of orthogonal subgroups of $G$, as described in Remark~\ref{r:MX-Sp}(b). Now $\Asch_9$-subgroups $M$ are projectively almost simple, that is, $M\cap Z(G)<N\lhdeq M$ such that $S \coloneq  N/(M\cap Z(G))$ is a nonabelian simple group and $M/(M\cap Z(G))\leq \Aut(S)$. 
Moreover, $N$ is absolutely irreducible on $V$.  We make the usual assumption $d>8$. The main result of the section is Proposition~\ref{prop:c9}; in particular, the bounds obtained there improve on those in \cite[Lemma 13.1]{PSY}  for the $\Asch_9$ case with $e_1=e_2=d/2$, see also \cite[Theorem 1.4]{GNP4}. 

\begin{proposition}\label{prop:c9}
Assume that Hypothesis~$\ref{hyp}$ holds with $d>8$.
\begin{enumerate}[{\rm (a)}]
    \item If $q^u\ne p$,  or if $\bfX \in\{ \bfL, \bfU\}$, or if $\bfX=\bfSp$ with $q$ odd, then $\textup{Prob}_9(g_1,g_2,\bfX)=0$.

    \item If $q^u=p$ and $\bfX= \bfO^\varepsilon$ for some $\varepsilon=\pm$, or $({\bfX},q)=(\bfSp,2)$, then 
\[
\textup{Prob}_9(g_1,g_2,\bfX)\le
\left\{\begin{array}{rc}
     8.24\cdot p^{-e_1e_2+d/2}& \mbox{ if $\bfX=\mathbf{O^\varepsilon}$  and $q^u=p\geq3$} \\
    28.1\cdot2^{-e_1e_2+d/2} & \mbox{ if $\bfX=\mathbf{O^\varepsilon}$  and $q^u=p=2$}\\
    6.38\cdot2^{-e_1e_2+d/2} & \mbox{ if $\bfX=\bfSp$  and $q^u=p=2$.}
\end{array}\right.
\]

\end{enumerate}
\end{proposition}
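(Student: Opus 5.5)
The plan is to classify, with \cite{GPPS} and \cite{GNP4}, the nearly simple irreducible subgroups of $\GL_d(q^u)$ that contain a ppd-element of prime order $r_1$ with $e_1\ge d/2$. Then we eliminate all but a very short list by the hypothesis that $H=\langle g,g'\rangle$ lies in no member of $\cM_1(\bfX)\cup\dots\cup\cM_8(\bfX)$, and bound the surviving contribution with Lemma~\ref{lem:fixg1} and Lemma~\ref{lem:countM}, exactly as in the $\Asch_2$, $\Asch_3$ and $\Asch_5$ sections.

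\emph{Step 1 (part (a)).} Suppose $M\in\Asch_9$ contains the duo. Since $g$ is an $e_1$-stingray element with $e_1\ge d/2$ and $d>8$, the element $g$ has a $(d-e_1)$-dimensional fixed space and order a ppd $r_1\ge e_1+1>d/2$. By \cite[Theorem 1.4]{GNP4}, a stingray element with such a large fixed space in an absolutely irreducible nearly simple group forces the socle $S$ to be an alternating group $A_m$ with $m\in\{d+1,d+2\}$ acting on its fully deleted permutation module over the prime field. Here $g$ is an $r_1$-cycle, and $r_1\in\{e_1+1,\dots\}$. The deleted permutation module is realised over $\F_p$, so if $q^u\ne p$ then $H$ lies in a subfield subgroup in $\cM_5$ or is excluded; the same argument applies to sporadic and Lie type candidates, which are ruled out by the list in \cite{GPPS}.

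For $\bfX=\bfL,\bfU$, and for $\bfSp$ with $q$ odd, the fully deleted module carries a nondegenerate symmetric form preserved by $S_m$. Hence $M$ is contained in an orthogonal group, so $H$ lies in some member of $\cM_8(\bfL)$ or fails to be maximal; for $\bfU$ it is conjugate into a subfield group, and for $\bfSp$ with $q$ odd an orthogonal form cannot coexist with the symplectic form on an absolutely irreducible module. This gives part (a), and leaves $\bfO^\eps$ over $\F_p$ and $\Sp_d(2)$, the latter through the $\Asch_9$ subgroups of $\Omega^\pm_d(2)$ described in Remark~\ref{r:MX-Sp}(b).

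\emph{Step 2 (counting).} For the surviving cases take $M=N_G(S)$ with $S\cong A_m$, $m=d+1$ or $d+2$ (the latter when $p\mid m$). The $G$-classes of such $M$ number at most two or four, according to the action of $\GO/\Omega$ or the choice of $m$. Show, using Lemma~\ref{lem:uperp}(c) for cycles of prime length, that $g_1^G\cap M$ is a union of at most a bounded number of $M$-classes of $r_1$-cycles. Then Lemma~\ref{lem:countM} gives the number of conjugates of $M$ containing $g$ as at most $c\,|C_G(g_1)|/|C_M(g)|$. We over-estimate $|\mathcal S_9(g,M)|$ by the number of $r_2$-elements of $M$ whose moved-point set meets that of $g$ appropriately; crudely, this is the number of $r_2$-cycles in $S_m$, at most $\binom{m}{r_2}(r_2-1)!$. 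Here $|C_M(g)|\ge r_1(m-r_1)!$. Combining these with Proposition~\ref{p:Xbds}(c), the quantity $|C_G(g_1)|/|N(d,p,e_1,\bfX)|$ is at most $(p^{e_1/2}+1)(p^{e_2/2}+1)/\bigl((1-3/(2p))\bfa(p,\bfX)p^{e_1e_2}\bigr)$, which is of order $p^{-e_1e_2+d/2}$.

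\emph{Step 3 (main obstacle).} The hardest step is ensuring that the factorial ratio $\binom{m}{r_2}(r_2-1)!/\bigl(r_1(m-r_1)!\bigr)$, times the class multiplicities, stays bounded by a small absolute constant independent of $d$. Since $r_1+r_2\ge d+2\ge m$, the quotient is polynomial, of order $m^{m-r_1}$, and this must be absorbed. I would first use the stingray condition on $g'$ (an $r_2$-cycle must move at most $e_2+1$ points, so $r_2=e_2+1$ and $r_1=e_1+1$) to make the count essentially $\binom{m}{r_2}\cdot(\text{overlap constraint})$. Then I would check that the resulting factor of order $m^2$ is dominated by the slack between $p^{-e_1e_2}$ and $p^{-e_1e_2+d/2}$. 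Finally, the constants $8.24$, $28.1$ and $6.38$ come from the values of $\bfa$ in Table~\ref{t:abLU}, from $1-3/(2p)$, and from the small-$d$ extremes $d=10,12$.
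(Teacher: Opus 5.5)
Your proposal has a genuine gap in the counting (Steps 2--3), and the classification step (Step 1) relies on a result that does not do what you need.

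\emph{Counting.} You assume there is slack to absorb extra factors, but there is none. By Proposition~\ref{p:Xbds}(c), $|C_G(g_1)|/|N(d,p,e_1,\bfX)|$ equals $(p^{e_1/2}+1)(p^{e_2/2}\pm1)/(\mathbf{k}\cdot|\cU(d,p,e_1,\bfX)|)$. So the whole factor $p^{d/2}$ in $p^{-e_1e_2+d/2}$ is already used up by the two torus orders. The constants $8.24$, $28.1$, $6.38$ are precisely $y(p)/((1-3/(2p))\bfa(p,\bfX))$, with $y(p)=(1+p^{-(d-2)/2})(1+p^{-1})$ (Table~\ref{t:asch9-perm}). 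You therefore need the number of conjugates of $M$ containing $g$, times $|\mathcal{S}_9(g,M)|$, to be at most $|C_G(g_1)|$ on the nose. An uncancelled class multiplicity $2$ or $4$ already breaks the stated bound, and a factor of order $m^2$ cannot be absorbed into any constant.

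Your count is also not polynomial. With $m=d+1$ and $r_i=e_i+1$, $\binom{m}{r_2}(r_2-1)!/(r_1(m-r_1)!)=\binom{d+1}{e_2+1}/(e_1+1)$, which is exponential in $d$ when $e_2\approx d/2$; likewise $m^{m-r_1}=m^{e_2}$.

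The missing idea is the structure of the duo inside $S_m$ (Lemma~\ref{lem:c9-1}):
\begin{itemize}
\item For a permutation with $s$ cycles, the fixed-space formula $\dim F_h=s-\tau$ with $\tau\le 2$ on the fully deleted module (\cite[Lemmas 4.3, 4.4]{PSY}) forces $g,g'$ to be single $r_i$-cycles with $r_i=e_i+1$. You assume this rather than prove it.
\item Irreducibility of $H$ then forces $|\Delta\cap\Delta'|=1$. Otherwise $H$ fixes $(\langle y_i-y_j\rangle+D)/D$, or the image of $\sum_{i\ne j}(y_i-y_j)$, or the $e_1$-space spanned by differences of points of $\Delta$.
\item Lemma~\ref{l:natAn} then gives $H=A_{d+1}$.
\end{itemize}
Now take $M=A_{d+1}$ with $N_G(M)=S_{d+1}\times Z'$. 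All $r_1$-cycles are $N_G(M)$-conjugate, so Lemma~\ref{lem:countM} applies to each of the $x(p)\le|Z'|$ classes. Also $|\mathcal{S}_9(g,M)|\le r_1\cdot e_2!$: choose $j\in\Delta$, then an $r_2$-cycle on $(\Omega'\setminus\Delta)\cup\{j\}$. Since $|C_{N_G(M)}(g)|=|Z'|\,r_1\,e_2!$, the cancellation is exact, and this is what yields the stated constants.

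\emph{Classification.} As used in the paper, \cite{GNP4} covers only $e_1=e_2=d/2$. For $e_1>d/2$, the list in \cite{GPPS} does not rule out sporadic and Lie type groups. It leaves $2.{\rm M}_{12}$, ${\rm J}_1$, $3.{\rm J}_3$, $2.{\rm Co}_1$, $6.{\rm Suz}$, $\SL_3(q_0^2)$ with $d=9$, ${}^2{\rm B}_2(8)$, ${\rm G}_2(3)$, $\Sp_4(4)$, and the families $\mathrm{PSp}_{2n}(3)$ and $\PSL_2(s)$. You also have the roles reversed: $g$ fixes only an $e_2$-space with $e_2\le d/2$, and it is $g'$ whose fixed space is large. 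The paper notes that every surviving case has $e_1\in\{d-2,d-3\}$, so $g'$ fixes a subspace of codimension $2$ or $3$. It then uses \cite[Theorem~2.11]{BG} (Remark~\ref{rem:emu}) plus ad hoc arguments to eliminate these cases (Proposition~\ref{lem:c9-3}). Those arguments are the eigenspace bound for the twisted tensor product $U\otimes U^{(q)}$, and the prime divisors of $|\PSL_2(8)|$ and $|\PSL_2(32)|$. Without such an argument using $g'$, Step 1 does not show that only the fully deleted permutation module survives.
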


To prove Proposition~\ref{prop:c9}, we work with the following assumptions.

\begin{hypothesis}[for $\Asch_9$]\label{H:AC9} Assume that Hypothesis~$\ref{hyp}$ holds, with $d>8$, and also that ${\rm Prob}_9(g_1,g_2,\bfX)\ne 0$. Let $(g,g')$ be a stingray duo in $g_1^G\times g_2^G$ such that $H\coloneq\langle g,g'\rangle\leq M$,  for some $M\in \cM_9(\bfX)$, while $H\not\leq L$ for any $L\in \bigcup_{i=1}^8\cM_i(\bfX)$. Thus $M\cap Z(G)<N\lhdeq M$ with $S \coloneq  N/(M\cap Z(G))$ a nonabelian simple group, $M/(M\cap Z(G))\leq \Aut(S)$, and $N$ absolutely irreducible on $V$. 
\end{hypothesis}

The situation where $S=A_n$ and $V$ is the fully deleted permutation module for the natural action of $S$ is exceptional, and is considered separately in Subsection~\ref{sub:fdpm}. It arises only for  $\bfX=\mathbf{O^\varepsilon}$, and bounds on the contribution  $\textup{Prob}_9(g_1,g_2,\bfX;\textup{f.d. perm mod})$ from groups of this type to $\textup{Prob}_9(g_1,g_2,\bfX)$ are obtained in Proposition~\ref{prop:c9-1}. For this reason we write
\begin{equation*}
  \textup{Prob}_9(g_1,g_2,\bfX) = \textup{Prob}_9(g_1,g_2,\bfX;\textup{f.d. perm mod}) + \textup{Prob}_9^*(g_1,g_2,\bfX)  
\end{equation*}
where $\textup{Prob}_9^*(g_1,g_2,\bfX)$ is the contribution from all other kinds of subgroups in $\Asch_9$. 
We are able to show in Subsection~\ref{sub:c9-diff} that $\textup{Prob}_9^*(g_1,g_2,\bfX)=0$ when $d>8$. The proof in the case $e_1>d/2$ uses the classification of the $\Asch_9$-subgroups containing an $e_1$-ppd stingray element given in \cite[Main Theorem]{GPPS}.
On the other hand, if $e_1=e_2=d/2$, then we use a new result obtained in \cite[Theorem 1.2]{GNP4} which shows, for $d>8$, that no $\Asch_9$-subgroups contain a $d/2$-ppd stingray element apart from the groups $A_n, S_n$ acting on fully deleted permutation modules.

\subsection{Fully deleted permutation modules}\label{sub:fdpm}  
We estimate $\textup{Prob}_9(g_1,g_2,\bfX;\textup{f.d. perm mod})$ in Proposition~\ref{prop:c9-1}.

\begin{proposition}\label{prop:c9-1}
 Assume that Hypothesis~$\ref{H:AC9}$ holds. Then:
\begin{enumerate}[{\rm(a)}]
    \item  if $q\ne p$, or if $\bfX \in\{ \bfL, \bfU\}$, or if $\bfX =\bfSp$ with $q$ odd, then 
    \[
    \textup{Prob}_9(g_1,g_2,\bfX;\textup{f.d. perm mod})=0;
    \]
    \item if $\bfX= \bfO^\varepsilon$ for some $\varepsilon=\pm$ with $q=p$,  or if $\bfX =\bfSp$ with $q=2$, then
\[
\textup{Prob}_9(g_1,g_2,\bfX;\textup{f.d. perm mod})\le
\left\{\begin{array}{rc}
     8.24\cdot p^{-e_1e_2+d/2}& \mbox{ if $\bfX=\mathbf{O^\varepsilon}$  and $q=p\geq3$} \\
    28.1\cdot2^{-e_1e_2+d/2} & \mbox{ if $\bfX=\mathbf{O^\varepsilon}$  and $q=p=2$}\\
   6.38\cdot 2^{-e_1e_2+d/2} & \mbox{ if $\bfX=\bfSp$  and $q=p=2$.}
\end{array}\right.
\]
\end{enumerate}
\end{proposition}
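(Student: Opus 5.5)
The plan is to identify exactly which fully deleted permutation modules can contain a stingray duo, and then to count along the lines already used for $\Asch_3$, $\Asch_5$ and $\Asch_8$.

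\emph{Setup.} Let $S=A_n$ act on the fully deleted permutation module over $\F_p$. This module has dimension $d=n-1$ if $p\nmid n$, and $d=n-2$ if $p\mid n$.

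\emph{Part (a).} This module is realised over the prime field and carries an invariant symmetric bilinear form. By \cite[Main Theorem]{GPPS} and \cite[Theorem 1.2]{GNP4}, the $\Asch_9$ subgroup $M$ then has normal subgroup $A_n$ and lies in an orthogonal group over $\F_p$. For $\bfX=\bfSp$ this can only give an $\Asch_9$-member when $q=2$, by Remark~\ref{r:MX-Sp}(b). When $q\ne p$ the group lies in a subfield subgroup, which contradicts the choice of $H$ outside $\cM_5(\bfX)$. Types $\bfL$ and $\bfU$ are excluded because $M$ fixes a form, so it would already lie in an $\Asch_8$-subgroup (for $\bfL$), and similarly in the unitary case.

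\emph{Part (b): structure of $g$ and $g'$.} Now let $q=p$. The element $g$ has prime order $r_1=k_1e_1+1>d/2$ and lies in $A_n\le M\le S_n\times Z$. Its support on $\{1,\dots,n\}$ must be a single $r_1$-cycle; otherwise $\dim(V(g-1))$ would be too large. The $1$-eigenspace then has codimension $r_1-1$ in the deleted module, which forces $e_1=r_1-1$, so $g$ is an $(e_1+1)$-cycle. The same argument makes $g'$ an $(e_2+1)$-cycle when $r_2>2$. The stingray duo condition $U_g\cap U_{g'}=0$ requires the two supports to meet in a controlled way: $|{\rm supp}(g)\cup{\rm supp}(g')|$ must be $n$, and the overlap is $2$ or $3$ according to whether $d=n-1$ or $d=n-2$.

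\emph{Part (b): counting.} Fix $g$. By Lemma~\ref{lem:countM}, the number of conjugates of $M$ containing $g$ is at most $|C_G(g_1)|/|C_M(g)|$, where $|C_M(g)|\ge r_1(n-r_1)!$ up to the factor $|M\cap Z(G)|\le2$. Bound $|\mathcal S_9(g,M)|$ by the number of $(e_2+1)$-cycles with the prescribed overlap, which is roughly $\binom{r_1}{\ell}\binom{n-r_1}{\cdot}(e_2)!$. The ratio $|\mathcal S_9(g,M)|/|C_M(g)|$ is then a small polynomial in $e_1,e_2$, about $e_1^2$ at most. Multiply by $|C_G(g_1)|/|N(d,p,e_1,\bfX)|$, taken from Proposition~\ref{p:Xbds}(c); this is $(p^{e_1/2}+1)(p^{e_2/2}+\nu)/(\mathbf k\,|\cU|)$. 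Using $\mathbf k\ge 1-3/(2p)$ (replaced by $1/4$ when $p=2$) and the lower bound $\bfa(p,\bfX)\,p^{e_1e_2}$ from Table~\ref{t:abLU}, this gives a bound of the form $C\cdot\mathrm{poly}(e_1,e_2)\cdot p^{-e_1e_2+d/2}$. For $\bfX=\bfSp$ with $q=2$, the count runs through the orthogonal overgroups $\Omega^\pm_d(2)$, with $\bfa=1$.

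\emph{Main obstacle.} The hardest step is removing the polynomial factor in $e_1,e_2$. I would first check whether the overlap constraint actually pins down $\mathcal S_9(g,M)$ to roughly $|C_M(g)|$ times a constant, since the cycle structure of $g'$ is nearly determined by the overlap. If it is not, I would absorb the factor using $e_1e_2\ge 2(d-2)$, which gives a spare $p^{-(d-6)}$ factor. Finally, I would evaluate the constants at the minimal $d=10$ to reach $8.24$, $28.1$ and $6.38$.
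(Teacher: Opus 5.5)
\textbf{There is a genuine gap in part (b).} It occurs where you decide how the supports $\Delta,\Delta'$ of $g,g'$ meet, and that is exactly the step that produces the stated constants.

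\emph{Your overlap claim is wrong.} Since $r_i=e_i+1$, we have $r_1+r_2=d+2=n+2-\delta$. So a union of size $n$ forces an overlap of $2-\delta\in\{1,0\}$, not $2$ or $3$. An overlap of at least $2$ is in fact impossible: two common points $i,j$ put the nonzero image of $y_i-y_j$ into $U_g\cap U_{g'}$.

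\emph{The correct configuration is $|\Delta\cap\Delta'|=1$.} For $\delta=1$ this is forced by $r_1+r_2=n+1$. For $\delta=2$, disjoint supports still give a duo (indeed $U_g\oplus U_{g'}=V$), so they must be excluded by irreducibility: $H=\langle g\rangle\times\langle g'\rangle$ then preserves $U_g$. Hence $|\Delta\cup\Delta'|=d+1$, and Lemma~\ref{l:natAn} gives $H=A_{d+1}$ fixing $\delta-1$ points.

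\emph{This is what makes the count clean.} Take $M=A_{d+1}$ acting on a $(d+1)$-set $\Omega'$. The support of $g'$ must be $(\Omega'\setminus\Delta)\cup\{j\}$ for some $j\in\Delta$, so $|\mathcal{S}_9(g,M)|\le r_1\cdot e_2!$. On the other hand $|C_{N_G(M)}(g)|=\gcd(p-1,2)\,r_1\,e_2!$, where $N_G(M)=S_{d+1}\times Z'$. The combinatorial factor therefore cancels completely. Only $|C_G(g_1)|/|N(d,p,e_1,\bfX)|$ remains, and Proposition~\ref{p:Xbds} bounds it by $(1+p^{-4})(1+p^{-1})\,p^{-e_1e_2+d/2}/((1-3/(2p))\,\bfa(p,\bfX))$. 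This gives $8.24$, $28.1$ and $6.38$.

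\textbf{Your fallback cannot replace this step.} Counting $g'$ with overlap $2$ when $\delta=1$ gives $|\mathcal{S}_9(g,M)|/|C_M(g)|=e_1e_2/2$, and those elements do not even form duos with $g$. For $e_2=2$ this factor is $d-2$, which is unbounded. It cannot be absorbed into a constant multiple of $p^{-e_1e_2+d/2}$. Invoking $e_1e_2\ge2(d-2)$ only trades it against part of the exponent, which gives a bound of a different shape.

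\textbf{Three smaller points.}

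(i) Lemma~\ref{lem:countM}, as you apply it, needs a single $G$-class of $M$. For $p$ odd the subgroups $A_{d+1}$ may form two classes in $\GO^\varepsilon_d(p)$ (Lemma~\ref{lem:c9-2}(c)). This factor $x(p)\le2$ is offset exactly by the scalar factor $|Z'|=2$ in $|C_{N_G(M)}(g)|$. So that factor must be kept in the denominator, not discarded as a loss.

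(ii) For $g'$ with $e_2=2$, ``the same argument'' is not automatic. Two $3$-cycles, or one $5$-cycle, have to be ruled out by computing the fixed space on the deleted module; the paper uses \cite[Lemmas 4.3, 4.4]{PSY}.

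(iii) In part (a), $\bfSp$ with $q$ odd is excluded because the invariant bilinear form on the absolutely irreducible module is symmetric. Remark~\ref{r:MX-Sp}(b) does not give this, since it concerns $q$ even. The appeal to \cite{GPPS} and \cite{GNP4} there is unnecessary.
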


Recall that $q=p^a$ where $p$ is prime and $a\geq1$. Let $S_n$ act on $Y=(\F_{q^u})^n$ by permuting the basis vectors $y_1,\dots,y_n$ according to the natural representation of $S_n$. The $S_n$-module $Y$ has submodules of dimensions 1 and $n-1$, namely 
\[
W\coloneq\langle y_i-y_{i+1}\mid 1\le i\le n-1\rangle = \left\{ \sum_{i=1}^n x_iy_i \mid \sum_{i=1}^n x_i=0 \right\} \quad \mbox{and}\quad D\coloneq\langle y_1+\cdots+y_n\rangle.
\]
We call $V=(W+D)/D\cong W/(W\cap D)$ the \emph{fully deleted permutation module}. Note that $W\cap D=D$ if $p\mid n$ and $W\cap D=0$ if $p\nmid n$. Let $\delta=1$ if $p\nmid n$ and $\delta=2$ if $p\mid n$ (see \cite[(5.3.6)]{KL}). Then:
\begin{enumerate}
  \item[(PM1)] The following are equivalent: $p\nmid n$,\quad $\delta=1$,\quad
  $Y=W\oplus D$,\quad
  $V\cong W\cong Y/D$;
  \item[(PM2)] The following are equivalent: $p\mid n$,\quad $\delta=2$,\quad
  $D\le W$,\quad $V=W/D$.
\end{enumerate}
Thus $\dim(V)=n-\delta$. The $S_n$-module $V$ is absolutely irreducible, and remains absolutely irreducible when restricted to $A_n$. It can be written over the prime field $\F_p$ of $\F_{q^u}$, and since $d=n-\delta>8$, the normaliser $L\coloneq  N_{\GL(V)}(A_n)= S_n\times Z$, where $Z$ is the subgroup of scalar matrices of $\GL(V)$, see \cite[p. 187]{KL}. We collect together in the next result these and other observations concerning $N_G(A_n)=G\cap L$ when this group is not contained in any subgroup of $\bigcup_{i=1}^8\mathcal{M}_i(\bfX)$. 

\begin{lemma}\label{lem:c9-2} 
Assume that Hypothesis~$\ref{H:AC9}$ holds, and let $V, n,\delta, Z$, $L\coloneq  N_{\GL(V)}(A_n)$ be as above. Then:
\begin{enumerate}[{\rm (a)}]
    \item $d=\dim(V) =n-\delta$ and $\mathrm{(PM1)}, \mathrm{(PM2)}$ hold, and $L\coloneq  N_{\GL(V)}(A_n)= S_n\times Z$;
    \item if $N_G(A_n)=G\cap L$ is not contained in any subgroup of $\bigcup_{i=1}^8\mathcal{M}_i(\bfX)$, then $q^u=p$, and either $\bfX = \bfSp$ with $p=2$, or $\bfX= \bfO^\varepsilon$ for some $\varepsilon\in\{+,-\}$ depending on $p$;
    \item with $G, \bfX$ as in part {\rm(b)},  the set of all subgroups of $\GX_d(p)$ isomorphic to $A_{d+1}$ either forms a single $\GX_d(p)$-conjugacy class, or is the union of two equal sized $\GX_d(p)$-conjugacy classes, and in the latter case $q=p$ is odd.
\end{enumerate}
\end{lemma}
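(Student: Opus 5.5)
Part (a) comes straight from the construction of $V$ and standard facts about $N_{\GL(V)}(A_n)$ from \cite[Section 5.3 and p.~187]{KL}. The formula $d=\dim(V)=n-\delta$ and the equivalences (PM1), (PM2) follow by direct computation with $W$ and $D$. If $p\nmid n$ then $Y=W\oplus D$ and $W\cap D=0$. If $p\mid n$ then $y_1+\dots+y_n$ has coordinate sum $n=0$, so $D\le W$. Since $d>8$ we have $n\ge 10$, so $S_n$ is the full automorphism group of $A_n$, and $V$ is absolutely irreducible for $A_n$. Schur's lemma then shows that $N_{\GL(V)}(A_n)$ is $S_n$ times the scalars $Z$: any element inducing an automorphism of $A_n$ induces one given by some element of $S_n$, and these differ by a centralising element, which is a scalar. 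This is the cited statement on \cite[p.~187]{KL}.

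For part (b) I will use the form-preservation data for $V$ from \cite[Proposition 5.3.5 / (5.3.6)]{KL}. Because $S_n$ acts on $Y$ by permutation matrices, it preserves the standard symmetric bilinear form $\sum x_iy_i$, and this form induces a nondegenerate form on $V$. That form is orthogonal if $p$ is odd. If $p=2$ it is symplectic, and also quadratic when $4\mid n$ or in the appropriate congruence case. Since $V$ is realised over $\F_p$, if $q^u\ne p$ then $N_G(A_n)\le G\cap (Z\circ\GL_d(p))$, which lies in a subfield subgroup of $\cM_5(\bfX)$ or is contained in some earlier category. I will need to check that this subgroup still contains the duo so that it is counted there; this is the standard reduction to the prime field. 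If instead $q^u=p$ (so $u=1$ and $\bfX\ne\bfU$), then for $\bfX=\bfL$ the group lies in the $\Asch_8$ classical subgroup preserving the induced form, which is in $\cM_8(\bfL)$. For $\bfX=\bfSp$ with $p$ odd, $V$ carries only a symmetric form, so absolute irreducibility rules out $A_n$ preserving a symplectic form. For $\bfX=\bfSp$ with $p=2$, and for $\bfX=\bfO^\varepsilon$, the sign $\varepsilon$ is determined by $n$ and $p$ via \cite[Proposition 5.3.5]{KL}. This is the step I expect to be the main obstacle: matching the type of the induced form precisely against the families $\cM_5$ and $\cM_8$ as they were defined, including the $\Sp$, $q$ even convention from Remark~\ref{r:MX-Sp}.

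For part (c) I will count conjugacy classes of $A_{d+1}$ inside $\GX_d(p)$. The relevant embeddings come from the fully deleted module with $n=d+1$ and $\delta=1$, so $p\nmid n$. Any subgroup isomorphic to $A_{d+1}$ acting in dimension $d=n-1$ must act via this module, since for $n\ge 10$ it is the unique nontrivial irreducible representation of $A_n$ of dimension $n-1$ or less. Up to $\GL_d(p)$ there is therefore one such subgroup. The stabiliser in $\GL_d(p)$ of the invariant form is a single $\GX$, so the $\GL$-class meets the set of $A_{d+1}$-subgroups of $\GX_d(p)$ in $|\GX:N_{\GX}(A_{d+1})|$ times the number of $\GX$-classes. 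By Witt and the fact that similitudes normalise $\GX$, this gives $|\mathrm{CGX}:\GX\cdot N|$ classes, where $\mathrm{CGX}$ denotes the conformal group. That index is $1$ or $2$, and the two classes are swapped by a similitude, so they are equal in size. When $p=2$, every similitude in $\mathrm{CGX}$ is a scalar times an isometry, because $\F_2^\times$ is trivial, so there is only one class. Hence two classes occur only when $q=p$ is odd, as in \cite[Proposition 5.3.5 and Table 3.5.H]{KL}.
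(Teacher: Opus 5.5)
Your proposal is correct. Parts (a) and (b) follow the paper: realisation over $\F_p$ together with $\cM_5(\bfX)$ forces $q^u=p$, and the invariant form together with $\cM_8$ then determines $\bfX$. The obstacle you flag in (b) goes away because the duo itself lies in $N_G(A_n)$. By Lemma~\ref{lem:sub1}, any subfield overgroup over $\F_{q^{u/r}}$ that contains a duo is automatically one of the groups placed in $\cM_5(\bfX)$; for $\bfX=\bfU$ with $r=2$, no such overgroup contains a duo at all. Similarly, the parity constraints in Table~\ref{tab:stingraycond} ensure that an orthogonal or symplectic overgroup of the duo is one of the groups in $\cM_8(\bfL)$, or contains no duo.

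For (c) your route differs in its ingredients.
\begin{itemize}
\item Single $\GL_d(p)$-class: you get this directly from the uniqueness of the minimal-degree nontrivial module of $A_{d+1}$. This needs $p\nmid d+1$, which holds for both $\delta=1$ and $\delta=2$. The paper instead quotes \cite[Proposition~5.3.5]{KL} for $A_n$ and passes to point stabilisers when $n=d+2$.
\item Conformal conjugacy: you derive it from the uniqueness up to scalars of the invariant form. That is Schur's lemma, not Witt's lemma. The paper quotes \cite[Lemma~1.8.10(ii)]{BHRD} for this.
\item The case $\Sp_d(2)$: the paper goes through the uniqueness of the class of $\SO^\eps_d(2)$ in $\Sp_d(2)$. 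You treat it directly, since the invariant alternating form is unique and $\F_2^\times=1$, which is cleaner.
\end{itemize}
The one step you assert without justification is that, for odd $p$, the index of $\GX\cdot N$ in the conformal group is at most $2$. This is exactly what the paper's computation $x'=\lambda^{-1}x^2\in\GO^\eps_d(p)$ supplies. A similitude with square multiplier $\mu^2$ is $\mu$ times an isometry, and scalars normalise every subgroup. Hence $\GX\cdot N$ contains every similitude with square multiplier, so its index is at most $|\F_p^\times:(\F_p^\times)^2|=2$.
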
 

\begin{proof}
Part (a) follows from the discussion above. Assume now that $N_G(A_n)=G\cap L$ is not contained in any subgroup of $\bigcup_{i=1}^8\mathcal{M}_i(\bfX)$. Then in particular, since the representation of $S_n$ can be written over the prime field $\F_p$ of $\F_{q^u}$, and since $N_G(A_n)=G\cap L$ is not contained in a subgroup in $\mathcal{M}_5(\bfX)$, it follows that $q^u=p$. Also  (see for example \cite[pp.~186--187]{KL}), either $A_n$ preserves a non-degenerate quadratic form on $V$ defined over $\mathbf{F}_{p}$, or $n\equiv 2\pmod{4}$, $p=2$, and $A_n$ preserves a non-degenerate symplectic form on $V$ (but no such quadratic form) defined over $\mathbb{F}_{2}$. Then since $N_G(A_n)=G\cap L$ is not contained in a subgroup in $\mathcal{M}_8(\bfX)$, it follows that either  $\bfX=\bfO^\varepsilon$ for some $\varepsilon=\pm$, or $\bfX=\bfSp$ with $p=2$, and part (b) is proved. 

It remains to prove part (c). By part (a), $n=d+\delta\geq d+1$, and so $n\geq 10$ since $d>8$. Note that if $n=d+2$ then all subgroups $A_{d+1}$ of $A_n$ are conjugate in $A_n$. Then since $n\geq10$, $A_n$ lies in a unique $\GL_d(p)$-conjugacy class by~\cite[Proposition~5.3.5]{KL}, and hence each $A_{d+1}\leq A_n$ also lies in a unique $\GL_d(p)$-conjugacy class of subgroups isomorphic to $A_{d+1}$. 
Now $A_{d+1}\le\Omega_d^\eps(p)$ by~\cite[p.\,187]{KL}, and $A_{d+1}$ lies in a unique
conjugacy class of the conformal orthogonal group $\textup{CO}_d^\eps(p)$ by~\cite[Lemma~1.8.10(ii)]{BHRD}. Since $|\textup{CO}_d^\eps(p):\GO_d^\eps(p)|=p-1$, there is a  unique conjugacy class of subgroups $A_{d+1}$ in $\GO_d^\eps(p)$ when $p=2$.  Also, since $\GO_d^\eps(2)=\SO_d^\eps(2)<\Sp_d(2)$ and $\Sp_d(2)$ has a unique conjugacy class of subgroups $\SO_d^\eps(2)$, it follows that $\Sp_d(2)$ has a unique conjugacy class of subgroups $A_{d+1}$.

Finally suppose that $p$ is odd, and let $H,H^x$ be subgroups of $\GO_d^\eps(p)$ isomorphic to $A_{d+1}$ where $x\in\textup{CO}_d^\eps(p)$. We prove that $H,H^{x^2}$ are conjugate in $\GO_d^\eps(p)$. If $B$ is the Gram matrix of the symmetric form preserved by $\GO_d^\eps(p)$, then $x^TBx=\lambda B$ for some $\lambda\in\F_p^\times$. Hence $(x^2)^TBx^2=\lambda^2 B$ and $x'\coloneq \lambda^{-1}x^2$ satisfies $(x')^TBx'=B$,  so $x'\in\GO_d^\eps(p)$. Since $H^{x'}=H^{x^2}$, and  $\textup{CO}_d^\eps(p)/\GO_d^\eps(p)\cong C_{p-1}$, it follows that there are at most two conjugacy classes of subgroups of $\GO_d^\eps(p)$ isomorphic to $A_{d+1}$, and if there are two classes then they are interchanged by $\textup{CO}_d^\eps(p)$ and so have equal size. 
\end{proof}

Next we restrict the cycle types of  stingray elements  in these subgroups $S_n\times Z$.

\begin{lemma}\label{lem:c9-1}
Using the assumptions of Proposition~$\ref{prop:c9-1}$, suppose also that $V$ is the fully deleted permutation module for $S_n$ as defined above, that $M\coloneq N_G(A_n)$ is an $\Asch_9$-subgroup of $\GX(V)$ such that $M\not\leq L$ for any  $L\in \bigcup_{i=1}^8\mathcal{M}_i(\bfX)$, and that $(g,g')$ is a stingray duo in $g_1^G\times g_2^G$ with $H\coloneq \langle g,g'\rangle\leq M$ and $H$ irreducible on $V$.  Then the following all hold.
\begin{enumerate}[{\rm (a)}]
    \item  $q^u=p$, $d=\dim(V)=n-\delta$, and $\bfX\in\{\bfO, \bfSp\}$, as in Lemma~$\ref{lem:c9-2}$;
    \item If $p=2$, then $H\leq A_{n'}<T$, where either $\bfX=\bfO^\varepsilon$ and $T=\Omega_d^\varepsilon(2)\unlhd G$, or $\bfX=\bfSp$ and $T\cong \Omega_d^\varepsilon(2)$ with $T.2$ a maximal subgroup of $\Sp_d(2)$,   for some $\varepsilon\in\{+,-\}$; in either case the inclusions $A_{n'}<\GO_d^\varepsilon(2)$ are as in the relevant line of  Table~$\ref{t:c9-1}$.
    
    \item  $r_i=e_i+1$ for each $i$, so $d, e_1, e_2$ are all even; also $g$ is an $r_1$-cycle and $g'$ is an $r_2$-cycle.
    \item Let $\Delta, \Delta'$ denote the subset of $\Omega=\{1,\dots,n\}$ of points permuted nontrivially by $g, g'$, respectively. Then $|\Delta\cap\Delta'|=1$, and $H= A_{d+1}$, fixing $\delta-1$ points of $\Omega$. 
\end{enumerate}
\end{lemma}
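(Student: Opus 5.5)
Part (a) is immediate from Lemma~\ref{lem:c9-2}(a),(b), since $M$ is not contained in any member of $\bigcup_{i=1}^8\cM_i(\bfX)$. Parts (b), (c) and (d) come from analysing how a ppd stingray element of prime order can sit in $M=G\cap(S_n\times Z)$.

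First I would show that $g,g'$ lie in $S_n$, in fact in $A_n$. Indeed $r_i=|g_i|$ is a prime with $r_i\ge e_i+1\ge3$ and $r_i\nmid p-1=|Z|$, so the projection of $g$ to $Z$ is trivial. Now let $g\in S_n$ have cycle type $r_1^{k}1^{n-kr_1}$. On the permutation module $Y$, the fixed space of $g$ has dimension $n-k(r_1-1)$. Passing to $V$ (a subquotient of codimension $\delta$) changes fixed-space dimensions by at most $\delta$. Hence $e_1=\dim V-\dim F_g$ lies within a small constant of $k(r_1-1)$; in fact I expect exactly $e_1=k(r_1-1)$, since on each $r_1$-cycle the non-fixed part of the regular $\langle g\rangle$-module is $r_1-1$ dimensional.

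The stingray condition then forces strong restrictions. The action of $g$ on $U_g$ is irreducible of dimension $e_1$, while $g$ acts on each $r_1$-cycle's augmentation module of dimension $r_1-1$. Irreducibility gives $k=1$ and $e_1=r_1-1$, provided $r_1-1$ equals the order of $p$ mod $r_1$, which is the ppd condition. Hence $r_i=e_i+1$, $g,g'$ are $r_i$-cycles, and since $r_i$ is odd the $e_i$ are even, so $d$ is even; this is (c). For (d), note $|\Delta|+|\Delta'|=e_1+e_2+2=d+2=n-\delta+2$. If $\Delta\cap\Delta'=\varnothing$, then $H$ is intransitive with two nontrivial orbits, and $H$ fixes the vector $\sum_{i\in\Delta}y_i$, whose image in $V$ is nonzero. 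That makes $H$ reducible, a contradiction. If $|\Delta\cap\Delta'|\ge2$, then $|\Delta\cup\Delta'|\le d$, and $H$ fixes at least $\delta+1$ points. The sum of those fixed basis vectors, or a difference of two of them, gives a nonzero $H$-invariant vector or subspace in $V$, again a contradiction. So $|\Delta\cap\Delta'|=1$ and $k=|\Delta\cup\Delta'|=d+1$. Lemma~\ref{l:natAn} (its hypotheses $\max\{r_1+3,r_2\}\le k\le r_1+r_2-1$ hold since $e_2\ge2$) then gives $H=A_{d+1}$ fixing $n-d-1=\delta-1$ points.

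For (b), when $p=2$ the group $H$ is generated by odd-order elements, so $H\le A_n$ lies in $\Omega^\varepsilon_d(2)$, using $A_n\le\Omega$ from \cite[p.\,187]{KL}. In the $\bfSp$ case $T.2=\GO^\varepsilon_d(2)$ is maximal in $\Sp_d(2)$. The inclusions $A_{n'}$ with $n'=d+1$ or $d+2$ are recorded according to $\delta$, as in Table~\ref{t:c9-1}.

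The step I expect to be the main obstacle is computing $\dim F_g$ on $V$ exactly. This is needed to rule out $k\ge2$ and to handle the subquotient when $\delta=2$ (where $p\mid n$, and possibly $p=r_1$ must be excluded by the ppd condition, since $r_1\nmid q$). I would treat it by decomposing $Y$ under $\langle g\rangle$ using coprimality ($r_1\ne p$).
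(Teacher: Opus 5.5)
Your route to (c) differs from the paper's and is sound. The paper quotes the fixed-space formulas of \cite[Lemmas~4.3, 4.4]{PSY}. You instead use $r_1\ne p$ and Maschke to get $U_g=[V,g]\cong[Y,g]$; for $\delta=2$ note that $[Y,g]\le W$, $W=W^g\oplus[Y,g]$ and $D\le W^g$. So $U_g$ is a direct sum of $k$ augmentation modules of dimension $r_1-1$, and irreducibility forces $k=1$ and $e_1=r_1-1$. This is more self-contained than the paper's argument. However, two steps elsewhere fail as written.

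\emph{Part (d), disjoint supports with $\delta=2$.} Here $\Omega=\Delta\sqcup\Delta'$. The vector $\sum_{i\in\Delta}y_i$ has coordinate sum $r_1\not\equiv0\pmod p$, so it is not in $W$ and has no image in $V=W/D$. In fact $W^H=D$ (since $r_2\equiv-r_1\pmod p$), and as $|H|$ is prime to $p$ this gives $V^H=0$. So no fixed-vector argument can work in this case. You need an invariant subspace instead. Since $g,g'$ commute, $F_g$ (of dimension $e_2$) is $H$-invariant; alternatively use the paper's $(W_0+D)/D$ with $W_0=\langle y_i-y_j\mid i,j\in\Delta\rangle$. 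For $\delta=1$ your argument is fine, because $V\cong Y/D$.

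\emph{Part (b), the case $\bfX=\bfSp$.} The claim $A_n\le\Omega^\varepsilon_d(2)$ is false when $n=d+2\equiv2\pmod 4$. Then $A_{d+2}$ preserves the symplectic form but no quadratic form \cite[p.~187]{KL}. Moreover $N_G(A_{d+2})$ is a genuine $\Asch_9$-subgroup of $\Sp_d(2)$ allowed by the hypotheses, since $\cM_8(\bfSp)=\emptyset$ for $q$ even. This is exactly why Table~\ref{t:c9-1} allows only $n'=d+1$ when $d\equiv 0,4\pmod 8$; note the table is indexed by $d \bmod 8$, not by $\delta$.

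The repair is to use your own (d). There $H=A_{d+1}$ is a point stabiliser, and $V$ restricted to it is the fully deleted module of $A_{d+1}$ with $d+1$ odd, which carries an invariant quadratic form. Alternatively, follow the paper: Lemma~\ref{lem:spgen} gives $H\le\SO^\varepsilon_d(2)$, hence $H\le\Omega^\varepsilon_d(2)$ because $H$ is generated by elements of odd order.

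There are also two smaller slips.
\begin{enumerate}
\item In Lemma~\ref{l:natAn} the smaller prime is $r_2$, so the hypothesis to check is $\max\{r_2+3,r_1\}\le r_1+r_2-1$. This holds since $r_1=e_1+1>5$. Your version, $\max\{r_1+3,r_2\}$, fails when $e_2=2$.
\item When $|\Delta\cap\Delta'|\ge2$, $H$ fixes at least $n-d=\delta$ points, not $\delta+1$. So for $\delta=1$ there may be a single fixed point $j$, and you must use $y_j+D\ne0$ in $V\cong Y/D$ rather than a difference of two fixed basis vectors.
\end{enumerate}
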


\begin{table}
  \caption{$\varepsilon, d, n'$  for Lemma~\ref{lem:c9-1}(b) when $q^u=p=2$}
\begin{tabular}{ccll}
  \toprule
  $\varepsilon$ & $d$ & $n'$ & Inclusions of alternating groups \\
  \midrule
$+$ & $0\pmod{8}$  & $d+1$  & $A_{d+1}<\GO^+_d(2)$ \\
               & $6\pmod{8}$  & $d+1, d+2$  & $A_{d+1}<A_{d+2}<\GO_{d}^+(2)$ \\
$-$ & $4\pmod{8}$  & $d+1$  & $A_{d+1}<\GO^-_d(2)$ \\
               & $2\pmod{8}$  & $d+1, d+2$  & $A_{d+1}<A_{d+2}<\GO_{d}^-(2)$ \\
\bottomrule
\end{tabular}
\label{t:c9-1}
\end{table}

\begin{proof}
(a) This part comes from Lemma~\ref{lem:c9-2}. 
Since $|g|, |g'|$, are odd and do not divide $q^u-1=p-1$, it follows that
$H\leq A_n$. Moreover, since  $\bfX\in\{\bfO, \bfSp\}$, the parameters $e_1, e_2$ are even, by Table~\ref{tab:stingraycond}, and hence also $d$ is even.   

(b) We assume that $p=2$ and we refer to \cite[pp.~186--187]{KL}, or see \cite[Section 4]{Liebeck}. 
If $\bfX=\bfSp$, then by Lemma~\ref{lem:spgen} (and Lemma~\ref{lem:redOm}), $H\leq T\cong\Omega_d^\varepsilon(2)$ with $T.2=\SO_d^\varepsilon(2)$ a maximal subgroup of $G$ (for some $\varepsilon\in\{+,-\}$), so $H\leq T\cap N_G(S_n)$. Thus, whether  $\bfX=\bfO$ or $\bfX=\bfSp$, we have, by \cite[pp.~186--187]{KL}, that  either one of the inclusions given in  Table~\ref{t:c9-1} holds with $n'=n$, or
\begin{equation}\label{e:c9-fd}
\bfX=\bfSp,\quad  d=n-2\equiv 0\pmod{4},  \mbox{and\quad $H\leq A_{d+2}<\Sp_{d}(2)$.}
\end{equation}  
However, if \eqref{e:c9-fd} holds, then $A_{d+2}$ preserves no quadratic form on $V$ (see comment after \cite[(5.3.9) on p.~187]{KL}), and so $A_{d+2}$ does not lie in any orthogonal subgroup of $\Sp_d(2)$. Thus $H\cap A_{d+2}$ is a proper subgroup of $A_{d+2}$; in fact $H\cap A_{d+2}=A_{n'}$ with $n'=d+1$ and Line~1 or~3 of Table~\ref{t:c9-1} holds. To clarify what is happening in this case, let $A\cong A_{d+1}$ be the stabiliser in $A_{d+2}$ of the point $d+2$, and let $Y'=\langle y_1',\dots,y_{d+1}'\rangle=\F_2^{d+1}$ denote
the permutation module for $A$ with corresponding fully deleted $A$-permutation module $V'= \langle y_i'-y_{i+1}'\mid 1\leq i\leq d\rangle$, (recall that $p=2$ and $d+1$ is odd). The map $\varphi:Y'\to Y$ given by 
$y_i'\to y_i-y_{d+2}$, for $1\leq i\leq d+1$, is an $A$-module monomorphism, and $\varphi(V')\leq W$ with $\varphi(V')\cap D=0$. Thus $W=\varphi(V')\oplus D$ and $\varphi$ induces an $A$-module isomorphism from $V'$ 
to $V=W/D$. By  \cite[(5.3.9) on p.~187]{KL} (or see Tab;e~\ref{t:c9-1}), $A$ preserves a quadratic form on $V$, and hence $A < \Omega_d^\varepsilon(2) < \Sp_d(2)=G$, with $\varepsilon = +$ if $d\equiv 0\pmod{8}$, and  $\varepsilon = -$ if $d\equiv 4\pmod{8}$. Therefore, if \eqref{e:c9-fd} holds, then $T\cong \Omega^\varepsilon_d(2)$ \emph{for this value of $\varepsilon$}, and up to conjugacy $A_{d+2}\cap T = A$. Also  $H\leq A\leq N_T(A)$, and by Remark~\ref{r:MX-Sp}, $N_T(A)$ is an $\Asch_9$-subgroup for $\bfX= \bfSp$, and  the inclusion on Line~1 or 3 of Table~\ref{t:c9-1} holds. Thus part (b) is proved.

(c) Here we apply results from \cite{PSY}. For $i=1, 2$, let $h=g$ if $i=1$, and let $h=g'$ if $i=2$; also let $r=r_i, e=e_i$ and $e'=e_{3-i}=d-e$, so that $h$ is an $e$-ppd stingray element of $G$ of odd prime order $r$. As noted in Section~\ref{s:sting}, $r=\ell e+1$ for some positive integer $\ell$. Moreover, as a permutation of $\Omega=\{1,\dots,n\}$ under the natural action of $A_n$, $h$ therefore has $c$ cycles of length $r$ and $f$ fixed points, for some $c, f$ such that $n=cr+f$, and hence $h$ has exactly $s\coloneq c+f$ cycles. Note that $s=c+f = n-c(r-1) = n-c\ell e$. By \cite[Lemma 4.3, and Table~4]{PSY} if $p$ divides $n$ (so $d=n-2$), or by \cite[Lemma 4.4]{PSY} if $p$ does not divide $n$ (so $d=n-1$), the fixed point subspace $F_h$ of $h$ in $V$ has dimension $s-\tau$ for some $\tau\in\{0,1,2\}$.  Since $\dim(F_h)=e'=d-e$, and since $d=n-\delta$ by part (a), it follows that $n-\delta -e=  d-e = e' =s-\tau = n-c\ell e-\tau$, and hence 
\[
(c\ell-1)e = \delta-\tau\leq \delta\leq 2. 
\]
Suppose that $c\ell\geq 2$. The above equation then implies that $e\leq 2$, and since $e\geq2$, we conclude that $e=2, c\ell=2$, $\delta=2$, and $\tau=0$. Thus $d=n-\delta=n-2$ and so $p$ divides $n$ by (\textrm{PM2}). The more detailed information in \cite[Lemma 4.3, and Table~4]{PSY} now implies, since $\dim(F_h)=s-\tau=s$, that $p$ is odd and $p$ divides each cycle length of $h$. Thus $p=r$, which is a contradiction since $r$ divides $p^e-1$. Therefore $c\ell=1$ so $c=\ell=1$, and part (c) is proved. 

(d) Let $\Delta, \Delta'$ be as in the statement. By part (c), $d=e_1+e_2=r_1+r_2-2$, so $n=r_1+r_2-2+\delta$. We claim that, if $|\Delta\cap\Delta'|=1$, then $H= A_{d+1}$ and fixes $\delta-1$ points of $\Omega$. 
Assume that  $|\Delta\cap\Delta'|=1$. Then $\Delta\cup\Delta'$ is an $H$-orbit in $\Omega$ of size $|\Delta\cup\Delta'|= r_1+r_2-1= d+1$, and $H$ fixes the remaining $|\Omega\setminus(\Delta\cup\Delta')|=n-(d+1) =\delta-1$ points. In particular $|\Delta\cup\Delta'|\leq \min\{ r_1+r_2-1, n\}$. Note that $r_1=e_1+1\geqslant d/2 +1>5$ (since $d>8$), and hence $|\Delta\cup\Delta'|\geq \max\{r_1, r_2+3\}$. The claim therefore follows from Lemma~\ref{l:natAn}, and it remains to prove that $|\Delta\cap\Delta'|=1$.

Let $\nu\coloneq |\Delta\cap \Delta'|$. Then $n\geq |\Delta\cup\Delta'|=r_1+r_2-\nu$. 
If $|\Delta\cup\Delta'|\leq n-2$, then $H$ fixes at least two distinct points of $\Omega$, say $i$ and $j$, and hence $H$ fixes 
the $1$-subspace $(\langle y_i-y_j\rangle +D)/D$ of $V$, contradicting the assumption that $H$ is irreducible on $V$.
Thus $|\Delta\cup\Delta'|\geq n-1$, and we have 
\[
r_1+r_2-3+\delta = n-1\leq |\Delta\cup\Delta'|= r_1+r_2-\nu\leq n=r_1+r_2-2+\delta
\]
so $2\leq \delta+\nu\leq 3$. Suppose first that $\delta=2$. Then $p$ divides $n$ by (PM2),  $n=r_1+r_2$, and $\nu\leq 1$. If $\nu=0$, then $H= \langle g\rangle\times \langle g'\rangle$ and $H$ leaves invariant the $e_1$-subspace $(W_0+D)/D$, where $W_0=\langle y_i-y_j\mid i,j\in\Delta\rangle$. This is a contradiction, and hence $\nu=1$, and part (d) is proved in this case. Thus we may assume that $\delta=1$. Then $p$ does not divide $n$,  $n=r_1+r_2-1$, and $1\leq \nu\leq 2$. Also $W\cap D=0$ and the permutation module  $\F_p^n=W\oplus D$. If $\nu=2$ then $\Delta\cup\Delta'$ is an $H$-invariant $(n-1)$-subset of $\Omega$, and hence $H$ fixes some point, say $j$, of $\Omega$. This means that, in its action on $\F_p^n$, $H$ fixes the vector $w\coloneq \sum_{i\ne j} (y_i-y_j)$ of $W$. Now $w = v-ny_j$, where $v=\sum_{i=1}^ny_i$ spans the subspace $D$. Note that $ny_j$ is non-zero since $p$ does not divide $n$, and hence $ny_j\not\in D$. This implies that $w\ne0$, 
for if $w=0$ 
then we would have $ny_j=v\in D$. Thus $H$ leaves invariant the $1$-subspace $(\langle w\rangle +D)/D$ of $V$, which is a contradiction. Hence $\nu=1$ and the proof of part (d), and the lemma, is complete.
\end{proof}

We now have all the information needed to prove Proposition~\ref{prop:c9-1}.

\bigskip\noindent \textit{Proof of Proposition~$\ref{prop:c9-1}$.}\quad 
Suppose that $\textup{Prob}_9(g_1,g_2, \bfX; \textup{f.d. perm mod})>0$. Then, by Lemma~\ref{lem:c9-1}(a), 
$q^u=p$, and the type $\bfX= \bfO^\varepsilon$ for some $\varepsilon=\pm$, or $\bfX= \bfSp$ with $q^u=2$. 

If $\bfX= \bfSp$ with $q=2$, then $G=\Sp_d(2)$ and the maximal $\Asch_9$-subgroups we are concerned with are of the form $N_G(S_{d+2})$. By Lemma~\ref{lem:c9-2}(c), $G$ contains a unique conjugacy class of subgroups $A_{d+1}$, and by Lemma~\ref{lem:c9-1}(b), each such subgroup is contained in a maximal subgroup $\SO_d^{\varepsilon}(2)$ of $G$, for some $\varepsilon \in\{+,-\}$ (depending on the congruence of $d$ modulo $8$). Further, by Lemma~\ref{lem:c9-1}(d), for each stingray duo $(g,g')$ in $N_G(S_{d+2})$, the subgroup $H=\langle g,g'\rangle  =A_{d+1}$ (and so $H$ is contained in a subgroup $\SO_d^{\varepsilon}(2)$). Note that we treat the subgroups $N_G(H) = N_G(A_{d+1})$ as possible $\Asch_9$-subgroups  (recall Definition~\ref{e:gen} and Lemma~\ref{lem:spgen}), and we note that there is exactly $x(p)=1$ (a unique) $G$-conjugacy class of them. 

On the other hand if $\bfX= \bfO^\varepsilon$ for some $\varepsilon \in\{+,-\}$ then again, by Lemma~\ref{lem:c9-1}(d), contributions to the probability come from stingray duos $(g,g')$  
such that $\langle g,g'\rangle =A_{d+1}$ and $V$ is the fully deleted permutation module for $A_{d+1}$. Moreover, by Lemma~\ref{lem:c9-2}(c), these subgroups form $x(p)$ equal-sized $G$-conjugacy classes, where $x(p)=1$ if $p=2$, and $x(p)\leq 2$ if $p$ is odd. 

From now on we treat these cases together; and note that the number of $G$-conjugacy classes of these $\Asch_9$-subgroups is $x(p)$  in both cases. 
To estimate the probability, choose a subgroup $M=A_{d+1}$ and an element $g\in g_1^G\cap M$. Note that $N_G(M)=S_{d+1}\times Z'$ where $Z'$ is the subgroup of scalars in $G$, so $|Z'|=\gcd(p-1,2)$. Note also that $N_G(M)$ is self-normalising in $G$ as $S_n$ is complete. By Lemma~\ref{lem:c9-1}(c), each element of $g_1^G\cap M=g_1^G\cap N_G(M)$ is an $r_1$-cycle in $M=A_{d+1}$ and hence $g_1^G\cap N_G(M)=g^{N_G(M)}$. Thus we may apply Lemma~\ref{lem:countM} to the $G$-conjugacy classes $N_G(M)^G$ and $g_1^G$. Therefore, the number of subgroups in $N_G(M)^G$ that contain $g$  is equal to $|C_G(g_1)|/|C_{N_G(M)}(g)|$, and the number of subgroups $M$ of this kind that contain $g$ is $x(p)\cdot|C_G(g_1)|/|C_{N_G(M)}(g)|$. 
Thus, by \eqref{e:asch3}, 

\begin{align}\label{e:c9-2}
   \textup{Prob}_9(g_1,g_2,\bfX;\textup{f.d. perm mod})&   \leq x(p)\cdot \frac{|C_G(g_1)|}{|C_{N_G(M)}(g)|}\cdot 
\frac{|\mathcal{S}_9(g,M)|}{|N(d,p,e_1,\bfX)|} \\ \nonumber
       &  =   x(p)\cdot  \frac{|C_G(g_1)|}{|N(d,p,e_1,\bfX)|}\cdot \frac{|\mathcal{S}_9(g,M)|}{|C_{N_G(M)}(g)|},
\end{align}
with $|N(d,p, e_1,\bfX)|$ as in \eqref{e:ndex}, and  $\mathcal{S}_9(g,M)$ as in \eqref{e:siM},  the set of all $g'\in g_2^G\cap M$ such that $(g,g')$ is a stingray duo and $\langle g,g'\rangle$ is not contained in a subgroup of $\cM_j(\bfX)$ for any $j\leq 8$. 
The first fraction in the last expression in \eqref{e:c9-2} is determined by Proposition~\ref{p:Xbds}, namely
\[
\frac{|C_G(g_1)|}{|N(d,p,e_1,\bfX)|}\leq  
 \frac{(p^{e_1/2}+1)(p^{e_2/2}+1)}{\mathbf{k}(d,p, e_1,\bfX)\cdot|\cU(d,p,e_1,\bfX)|}
 \leq  \frac{(p^{e_1/2}+1)(p^{e_2/2}+1)}{(1-3/(2p))\cdot p^{e_1e_2}\cdot \bfa(p,\bfX)}
\]
with $\bfa(p,\bfX)$ as in Table~\ref{t:abLU}, which we write also in Table~\ref{t:asch9-perm}.

\noindent 
To estimate the second fraction in \eqref{e:c9-2} we note first that 
\[
|C_{N_G(M)}(g)|=|Z'|\,|C_{S_{d+1}}(g)|= \gcd(p-1,2)\cdot r_1\cdot (d+1-r_1)!\geq  x(p)\cdot  r_1\cdot e_2!.
\]
We estimate $|\mathcal{S}_9(g,M)|$ as follows. As in Lemma~\ref{lem:c9-1}(d), $g$  has support $\Delta$, an $r_1$-subset of $\Omega'=\{1,\dots, d+1\}$.  Each of the elements $g'$ has support
$\Delta'=(\Omega'\setminus \Delta)\cup \{j\}$ for some $j\in\Delta$. There are $r_1$ choices for $j$ and, given $j$, there are $(r_2-1)! = e_2!$ choices for the $r_2$-cycle $g'$ with support $\Delta'$. Thus $|\mathcal{S}_9(g,M)|\leq r_1 \cdot e_2!$. Therefore,
\begin{align*}
 \textup{Prob}_9(g_1,g_2,\bfX;\textup{f.d. perm mod}) &\leq x(p)\cdot
 \frac{(p^{e_1/2}+1)(p^{e_2/2}+1)}{(1-3/(2p))\cdot p^{e_1e_2}\cdot \bfa(p,\bfX)} \cdot \frac{r_1 \cdot e_2!}{x(p)\cdot r_1\cdot e_2!}\\
 &< 
 \frac{y(p)}{(1-3/(2p))\cdot  \bfa(p,\bfX)} \cdot p^{-e_1e_2+d/2}
\end{align*}
where, noting that $e_1+e_2=d$ and $2\leq e_2 = d-e_1\leq d/2$, $y(p)$ is the maximum of $f(x)=(1+p^{-(d-x)/2} )(1+p^{-x/2})$ over all $x=e_2\in [2, d/2]$ and $d\geq10$. Since $f$ has a unique maximum of $(1+p^{-(d-2)/2})(1+p^{-1})$ in this interval,  at $x=2$, we have $y(p) = (1+p^{-(d-2)/2})(1+p^{-1})$ which is at most $y(2)\leq 51/32<1.6$ (with $d=10$), and for $p$ odd is at most $y(3)<1.35$  The upper bounds in Proposition~\ref{prop:c9-1} now follow from these estimates, as recorded in Table~\ref{t:asch9-perm}.

  \begin{table}
\caption{Bounds for Proposition~\ref{prop:c9-1}}
\begin{tabular}{lcccc}
  \toprule
$\bfX$ & $p$ & $\bfa(p,\bfX)$ & $y(p)$ & $y(p)/((1-3/(2p)) \bfa(p,\bfX))$\\
    \midrule
$\bfSp$              & $2$   & $1$    & $<1.6$&$< 6.38$\\
$\mathbf{O^\varepsilon}$   & $2$   & $5/22$ & $<1.6$&$< 28.1$\\
$\mathbf{O^\varepsilon}$   & odd   & $20/61$& $<1.35$&$< 8.24$\\
\bottomrule
\end{tabular}
\label{t:asch9-perm}
\end{table}

\subsection{Estimation of  \texorpdfstring{$\textup{Prob}_9^*(g_1,g_2,\bfX)$}{}.} \label{sub:c9-diff}
In this subsection, we prove that the probability $\textup{Prob}_9^*(g_1,g_2,\bfX)=0$. Since $d>8$, this follows immediately from a theorem \cite[Theorem 1.2, Table 1]{GNP4} of Zalesski and the authors for the case $e_1=e_2=d/2$, and so we assume that $e_2<e_1$ and hence that $e_2<d/2<e_1$. Thus $M$ is an $\Asch_9$-group containing an $e_1$-ppd element $g$ with $e_1>d/2$,  and hence $M$ is restricted by the classification given by~\cite[Main Theorem]{GPPS} and must be one of the groups in~\cite[Example 2.6--2.9]{GPPS}.
Recall that, for the contribution to $\textup{Prob}_9^*(g_1,g_2,\bfX)$,  the simple group $S$ involved in $M$ is not $A_n$ acting on its fully deleted permutation module. We prove in Proposition~\ref{lem:c9-3} 
that none of the remaining groups in~\cite[Example 2.6--2.9]{GPPS} contains a stingray duo $(g,g')$, and hence that $\textup{Prob}_9^*(g_1,g_2,\bfX)=0$.
  Our method of proof is to apply the classification in~\cite[Main Theorem]{GPPS} to deduce first that, in all cases, $e_1= d-3$ or $d-2$, and hence $e_2=2$ or $3$. We then apply a representation theoretic result~\cite[Theorem~2.11]{BG}, which restricts further the possibilities for $M$. We make an important remark about our application of the latter result.  

\begin{remark}\label{rem:emu}
The result~\cite[Theorem~2.11]{BG}  involves a quantity $\nu$ defined on cosets of the scalar subgroup $Z=Z(\GL_d(q^u))$, see \cite[(7)]{BG}. We  use this quantity for the coset $g'Z$ with $g'$ our $e_2$-ppd stingray element, and for this element $\nu(g'Z)$ is equal to $e_2$. 
Since $d\ge9$, \cite[Theorem~2.11]{BG} implies that either the simple group $S$ involved in $M$ belongs to one of two lists of groups, called List $\mathcal{A}$ and List $\mathcal{B}$ in~\cite[Tables~1,\,2]{BG}, or
$e_2>\max\{2,\sqrt{d}/2\}$.
Moreover, it can easily be checked that the latter inequality fails if either (i) $e_2=2$, or (ii) $e_2=3$ and $d\geq 36$. Note that the simple groups $S$ in List $\mathcal{A}$ are all alternating groups.
\end{remark}

\begin{proposition}\label{lem:c9-3}
Assume that Hypothesis~$\ref{hyp}$ holds with $d>8$. 
Then no maximal $\Asch_9$-group   contains a stingray duo in $g_1^G\times g_2^G$,  apart from those with a normal subgroup $A_n$ acting on its fully deleted permutation module. Consequently $\textup{Prob}_9^*(g_1,g_2,\bfX)=0$. 
\end{proposition}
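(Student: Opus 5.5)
The case $e_1=e_2=d/2$ is already disposed of by \cite[Theorem 1.2, Table 1]{GNP4}, so I will assume $e_2<d/2<e_1$. Suppose $M$ is a maximal $\Asch_9$-subgroup containing a stingray duo $(g,g')$, where $S$ is not $A_n$ on its fully deleted permutation module. Since $g$ is an $e_1$-ppd element with $e_1>d/2$, \cite[Main Theorem]{GPPS} puts $M$ in one of \cite[Examples 2.6--2.9]{GPPS}: alternating groups on other modules, sporadic groups, cross-characteristic groups of Lie type, or defining-characteristic groups of Lie type. The first step is to read off, example by example, the pairs $(d,e_1)$ that occur, and check that $e_1\in\{d-3,d-2\}$ in every case. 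The reason is that when $e_1<d-3$, those examples only arise in small dimension, and $d>8$ rules them out. This forces $e_2\in\{2,3\}$.

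The second step uses $g'$ through Remark~\ref{rem:emu}. Since $\nu(g'Z)=e_2$, \cite[Theorem~2.11]{BG} says that either $S$ lies in List $\mathcal{A}$ or List $\mathcal{B}$, or $e_2>\max\{2,\sqrt d/2\}$. The inequality fails when $e_2=2$, and when $e_2=3$ with $d\ge 36$. So outside $e_2=3$, $9<d<36$, the group $S$ is in List $\mathcal{A}$ (alternating) or List $\mathcal{B}$. The remaining range is finite, and I will treat it directly from the tables in \cite{GPPS}.

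The third step intersects the candidates. I take the groups from \cite[Examples 2.6--2.9]{GPPS} with $e_1\in\{d-3,d-2\}$, $d>8$, and $S$ in List $\mathcal{A}$ or $\mathcal{B}$ (or $d<36$ when $e_2=3$). For each such group I check whether it contains an element of prime order $r_2$ that is a ppd of $q^{ue_2}-1$ and has a fixed space of codimension exactly $e_2$. Two elementary tests should do most of the work. First, $r_2$ and $r_1$ must both divide $|M|$ with the prescribed multiplicative orders of $q^u$. Second, the fixed-space dimension must be compatible: a $2$- or $3$-stingray element has a very large fixed space, which restricts the action of elements of $S$ of that order, and those eigenvalue multiplicities are known for the modules in question. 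For alternating groups on non-fully-deleted modules, I would use the known minimal degrees together with the fact that a $3$-cycle or $r$-cycle acting with codimension $\le 3$ forces the fully deleted module, which is excluded. For Lie type groups in defining characteristic, I would argue that a semisimple element moving only a $2$- or $3$-dimensional subspace in a $d$-dimensional module with $d>8$ can only occur in the natural module, contradicting the $\Asch_9$ hypothesis.

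The main obstacle is the third step: a clean case-by-case elimination over the List $\mathcal{B}$ groups and the sporadic and cross-characteristic entries of \cite{GPPS}, especially the residual range $e_2=3$, $d<36$. There I expect to need explicit character or Brauer tables, or order considerations such as $r_1=e_1+1$ being prime and dividing $|S|$. Once no candidate survives, no $\Asch_9$-subgroup other than the fully deleted permutation ones contains a duo, and $\textup{Prob}_9^*(g_1,g_2,\bfX)=0$.
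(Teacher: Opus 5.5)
Your skeleton is the paper's: \cite{GNP4} handles $e_1=e_2$, \cite[Main Theorem]{GPPS} forces $e_1\in\{d-3,d-2\}$, and \cite[Theorem~2.11]{BG} is applied via Remark~\ref{rem:emu}. The gap is that the proof consists of your third step, which you leave as a programme. Moreover, the general principle you offer for defining characteristic is unproved in exactly the case where \cite{BG} gives nothing.

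First, your residual range should be $e_2=3$, $9\le d\le 35$, not $9<d<36$. At $d=9$ we have $\max\{2,\sqrt d/2\}=2<3$, so \cite{BG} does not apply, and $d=9$ is precisely the surviving defining-characteristic case. By \cite[Table~6]{GPPS} the only entries with $d>8$ have $d=9$, $e_1=6$, $e_2=3$ and $M^{(\infty)}=(\mathrm{P})\SL_3(q_0^2)$. Your claim that a semisimple element moving only a $3$-space occurs only on the natural module cannot just be asserted here. The paper argues directly:
\begin{itemize}
\item \cite[Proposition~5.4.6]{KL} gives $q\in\{q_0,q_0^2\}$.
\item $q=q_0^2$ is impossible, since $r_1$ (a ppd of $q^6-1$) would then not divide $|\SL_3(q)|$.
\item Hence $V=U\otimes U^{(q)}$ is a twisted tensor product with $U=\F_{q^2}^3$.
\item $r_2$ is a ppd of $(q^2)^3-1$, so $g'$ is irreducible, hence regular semisimple, on $U$.
\item By \cite[Lemma~3.7]{LieSh}, every eigenspace of $g'\otimes(g')^{(q)}$ on $V$ then has dimension at most $3$, contradicting $\dim F_{g'}=6$.
\end{itemize}

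The only other survivor with $e_2=3$ comes from \cite[Table~8]{GPPS}: $S=\PSL_2(s)$, $d=s+1$, $s=2^c$ with $c$ prime, and $e_1=d-3$. No character or Brauer tables are needed. Since $9\le d\le 35$, we get $s\in\{8,32\}$ and $r_1=e_1+1=s-1$. Then $r_2$ would be a prime with $r_2\equiv1\pmod 3$, $r_2\ne r_1$ and $r_2\mid|S|$. But $|\PSL_2(8)|=2^3\cdot3^2\cdot7$ and $|\PSL_2(32)|=2^5\cdot3\cdot11\cdot31$ have no such prime divisor.

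All remaining candidates have $e_2=2$: the sporadic entries of \cite[Table~5]{GPPS}, the three groups of \cite[Table~7]{GPPS}, and the other lines of \cite[Table~8]{GPPS}. These fall to List $\mathcal{B}$ purely by dimension. Its only entry with $d\ge9$ is $\PSU_5(2)$ with $d=10$, and $\mathrm{M}_{12}$ appears there only with $d=6$.

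Finally, \cite[Tables~2--4]{GPPS} contain no alternating example with $d\ge9$ other than the fully deleted module, so your minimal-degree argument for alternating groups is unnecessary. Until these specific verifications are carried out, the proposal does not establish the statement.
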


\begin{proof}
As mentioned above, if $e_1=e_2=d/2$, then the result follows from \cite[Theorem 1.2, Table 1]{GNP4}, which shows that no $\Asch_9$-subgroup contains a $(d/2)$-stingray element when $d>8$, apart from the case of $S=A_n$ on its fully deleted permutation module. Thus we assume that $e_1>d/2>e_2\geq 2$.   Suppose that $M$ is a maximal $\Asch_9$-group such that $M$ contains a stingray duo  $(g,g')$ from $g_1^G\times g_2^G$, and $M$ does not have a normal subgroup $A_n$ acting on its fully deleted permutation module.
As discussed above, $M$ must be  one of the groups in~\cite[Example 2.6--2.9]{GPPS}.

\medskip\noindent
\textit{Groups from~\cite[Example 2.6]{GPPS}.}\quad Here $S=A_n$ and $n, d, p, r_1$ must be as in one of the lines of~\cite[Tables 2, 3, or 4]{GPPS}.  However, since $M$ is not acting on the fully deleted permutation module for $S$, there are no possibilities with $d\geq  9$. 

\medskip\noindent
\textit{Groups from~\cite[Example 2.7]{GPPS}.}\quad Here  the simple group $S$ is a sporadic group, and the derived subgroup $M', d, p, e_1, r_1$ must be as in one of the lines of~\cite[Table 5]{GPPS}. For convenience we list in Table~\ref{tab:GPPSTab5} the five triples $d, e_1, M'$ from this table which satisfy $d\geq 9$ and $d/2< e_1\le d-2$. Observe that in each case  $e_1=d-2$ and so $e_2=2$. Thus by~\cite[Theorem~2.11]{BG} and our discussion in Remark~\ref{rem:emu}, the
group $S$ must appear in List $\mathcal{B}$ in~\cite[Table 2]{BG}. The only sporadic group $S$ occurring in~\cite[Table 2]{BG} and also in Table~\ref{tab:GPPSTab5} is the group $S={\rm M}_{12}$. However, in~\cite[Table 2]{BG}, $\dim(V)=6$, while in Table~\ref{tab:GPPSTab5}, $d=\dim(V)=12$. Thus there are no examples from~\cite[Example 2.7]{GPPS}.

\begin{table}
\caption{Values of $d$, $e_1$, $M'$ from~\cite[Table~5]{GPPS}}
\begin{tabular}{rccccc}
  \toprule
  $d$   &  $12$ & $20$ &$18$ &$24$ &$12$\\
  $e_1$ & $10$& $18$& $16$& $22$& $10$\\
  $M'$  & $2.{\rm M}_{12}$ & ${\rm J}_1$ & $3.{\rm J}_3$ & $2.{\rm Co}_1$&$6.{\rm Suz}$\\
\bottomrule
\end{tabular}
\label{tab:GPPSTab5}
\end{table}

\medskip\noindent
\textit{Groups from~\cite[Example 2.8]{GPPS}.}\quad Here $S$ is a simple group of Lie type in characteristic $p$. In this case the subgroup $M^{(\infty)}$ together with $ d$ and $e_1$ must be as in one of the lines of~\cite[Table 6]{GPPS}. There are exactly two lines with $d>8$; each has $d=9$ and $e_1=6$ and hence $e_2=3$, and the subgroup $M^{(\infty)}$ is either $\SL_3(q_0^2)$ or $\PSL_3(q_0^2)$, where $q=q_0^k$ for some $k\ge1$. We argue that such a group $M$ does not contain a stingray duo $(g,g')$. By assumption $V$ is an absolutely irreducible $\mathbb{F}_qM^{(\infty)}$-module realised over no proper subfield of $\mathbb{F}_q$. Applying \cite[Proposition 5.4.6(i)]{KL} with $q=p^f$ and $q_0^2=p^e$ we find that either (a) $f=e$ and $q=q_0^2$, or (b) $f=e/2$ and $q=q_0$. Since $g\in M^{(\infty)}$, it follows that $r_1=|g|$ divides $q_0^{2i}-1$ for some $i\leq 3$. However, $r_1$ is a primitive prime divisor of $q^{e_1}-1=q^6-1$, and so (a) cannot hold (as $r_1$ would then divide $q^i-1$ with $i\leq 3$), and we conclude that $q=q_0$. Then  \cite[Proposition 5.4.6(i)]{KL} implies further that $V$ is a twisted tensor product $U\otimes  U^{(q)}$, where $U$ is the natural module $U=(\mathbb{F}_{q^2})^3$ of $M^{(\infty)}$, and an  element $x \in M^{(\infty)}$ acts on $V$ as $x \otimes x^{\sigma}$, where $\sigma$ is a field automorphism  of order $2$ (coming from the map $\lambda \mapsto \lambda^{q}$ on $\mathbb{F}_{q^2}$).
The element $g'$ lies in $M^{(\infty)}$ and  $r_2=|g'|$ is a primitive prime divisor of $(q^2)^3-1$, since $r_2$ is a primitive prime divisor of $q^3-1$ and so does not divide $q^2-1$ or $(q^2)^2-1$. Thus $g'$ is an irreducible element, and hence, in particular, a regular semisimple element of $\GL(U)$. Therefore the eigenspaces of $g'$ on $U$ (working over an algebraic closure of $\mathbb{F}_{q^2}$) are all $1$-dimensional. The same is true for the action of $(g')^{\sigma}$ on $U^{(q)}$. It follows from (the semisimple case of) \cite[Lemma 3.7]{LieSh} (or see \cite[Lemma 5.4.2]{BGbook}) that the maximum dimension of an eigenspace for $g'\otimes (g')^{(q)}$ acting on $V$ is $3$.   This a contradiction since, in its action on $V$, $g'$ has fixed point space of dimension $e_1=6$.  Thus there are no examples from~\cite[Example 2.8]{GPPS}.

\medskip\noindent
\textit{Groups from~\cite[Example 2.9]{GPPS}.}\quad Finally we consider  simple groups $S$ of Lie type in characteristic different from $p$. 
In this case either $M'$, $d$ and $e_1$ are as in one of the lines of~\cite[Table 7]{GPPS} (examples from individual groups), or $S, d, e_1, r_1$ are as in one of the lines of~\cite[Table 8]{GPPS} (examples from infinite families of simple groups). 
From ~\cite[Table 7]{GPPS} we find exactly three possibilities with $d\geq9$ and $e_1\le d-2$, namely $M'$ is ${}^2{\rm B}_2(8)$, ${\rm G}_2(3)$, or $\Sp_4(4)$ with $d= 14, 14$, or $18$, respectively, and in each case $e_1=d-2$ so $e_2=2$.  Thus by~\cite[Theorem~2.11]{BG} and our discussion in Remark~\ref{rem:emu}, the
group $S$ must appear in List $\mathcal{B}$ in~\cite[Table 2]{BG}. However, the entries in~\cite[Table~2]{BG} all have $d\le 10$, so we have no examples from~\cite[Table~7]{GPPS}.

\begin{table}
\caption{$d$, $e_1$, $S$ from~\cite[Table~8]{GPPS}}
\begin{tabular}{rcccccc}
  \toprule
  \textup{line \#} && 4 &    5 & 5 &7 & 9\\
  $d$       && $\frac{3^n+1}{2}$ & $s+1$ &$s$ & $s+1$&$\frac{s+1}{2}$ \\
  $e_1$&&$d-2$&$d-3$&$d-2$&$d-2$&$d-2$\\
  $S$     &&  $\textup{PSp}_{2n}(3)$ & $\PSL_2(s)$ &$\PSL_2(s)$ &$\PSL_2(s)$ &$\PSL_2(s)$ \\
  $s$&& 3&$s=2^\textup{prime}\ge7$&$s=2^\textup{prime}\ge7$&$\textup{prime}$&$\textup{odd}$\\
\bottomrule
\end{tabular}
\label{tab:GPPSTab8}
\end{table}

Finally, we consider the infinite families $S$ and possibilities for $d, e_1, r_1$ from~\cite[Table 8]{GPPS} with $d\geq9$ and $e_1\leq d-2$. This gives rise to the values of $S, d, e_1$ listed in Table~\ref{tab:GPPSTab8}. In each case $r_1=e_1+1$ and  $e_1=d-2$ or $d-3$, so $e_2=2$ or $3$. Thus by~\cite[Theorem~2.11]{BG} and our discussion in Remark~\ref{rem:emu}, if $e_2=2$, or if $e_2=3$ and $d\geq 36$,  then the group $S$ must appear in List $\mathcal{B}$ in~\cite[Table 2]{BG}. However, the only entry in~\cite[Table 2]{BG} for a group with $d\geq9$,  occurs in the first row of ~\cite[Table 2]{BG} with $d=10$ and $S=\PSU_5(2)$, and this group $S$ does not occur in Table~\ref{tab:GPPSTab8}. Thus we have $e_2=3$, $e_1=d-3$, and $9\leq d\leq 35$. The only entry in Table~\ref{tab:GPPSTab8} with $e_1=d-3$ is for $S=\PSL_2(s)$ with $d=s+1$ and $s=2^c\ge7$ with $c$ a prime. Since $r_1=\ell e_1+1\geq e_1+1=d-2=s-1$ and $r_i$ divides $|S|$, we must have $r_1=e_1=d-2=s-1$, and since $9\leq d\leq 35$ it follows that $s=8$ or $32$. Now $e_2=3$ and so $r_2=|g'|$ is a primitive prime divisor of $(q^u)^3-1$, and $r_2=|g'|=3k+1$, for some $k$. In particular, $g'$ is not a scalar, and $g'\in M^{(\infty)}$, so $r_2$ divides $|S|$.  Also since $r_1=s-1$  is a primitive prime divisor of $(q^u)^{e_1}-1$ and $e_1=d-3>3=e_2$, it follows that $r_1\ne r_2$. Thus $r_2$ is a prime divisor of $|S|$, distinct from $s-1$, and $r_2\equiv 1\pmod{3}$. However, $|\PSL_2(s)|=2^3\cdot3^2\cdot7$ or $2^5\cdot3\cdot11\cdot31$ for $s=8$ or $32$, respectively, and there is no such prime $r_2$.  Thus there are no examples from~\cite[Example 2.9]{GPPS}.

We have therefore proved that there are no possibilities for $M$, and  hence we have $\textup{Prob}_9^*(g_1,g_2,\bfX)=0$, completing the proof.
\end{proof}

\medskip\noindent
\textit{Proof of Proposition~$\ref{prop:c9}$.}\quad 
By Proposition~\ref{lem:c9-3}, 
\[
\textup{Prob}_9(g_1,g_2,\bfX)=\textup{Prob}_9(g_1,g_2, \bfX; \textup{f.d. perm mod}).
\]
Now all the assertions of Proposition~\ref{prop:c9} follow immediately from Proposition~\ref{prop:c9-1}.

\section{Proofs of Theorems~\ref{thm:appn}
and~\ref{thm:main} }\label{s:proofs}

In this section we prove both  Theorem~\ref{thm:appn}
and Theorem~\ref{thm:main}. Since Theorem~\ref{thm:appn} depends on Theorem~\ref{thm:main}, we focus first on the latter result, and give the proof of Theorem~\ref{thm:appn} in Subsection~\ref{ss:thm1.1}. 
We draw together results from the previous sections, and although some of these results hold for smaller $d$, we assume that $d>8$. We use \eqref{e:rhonongen}, namely,
\begin{equation*}
    \rhonongen(g_1,g_2,\bfX)  = \sum_{i=1}^9 {\rm Prob}_i(g_1,g_2,\bfX)
\end{equation*}
together with the upper bounds for the probabilities ${\rm Prob}_i(g_1,g_2,\bfX)$ given in Propositions~\ref{prop:redL}, \ref{prop:redUSO}, \ref{prop:imprim}, \ref{prop:extn}, \ref{p:tensor}, \ref{prop:sub}, \ref{prop:c6}, \ref{prop:classicalmax},
and \ref{prop:c9}. First we prove a crucial proposition that deals with the Aschbacher classes individually, and is used in the proof of Theorem~\ref{thm:main} given in Section~\ref{ss:thm1.2}.

\begin{table}
\footnotesize
\begin{tabular}{lrr|rr|rr|rr|rr}
  \toprule
$i\backslash \bfX$ & $\bfL$ & &$\bfU$ & &$\bfSp$ & & $\mathbf{O^+}$& &$\mathbf{O^-}$&\\
    \midrule
    & $q=2$ & $q\ge3$ & $q=2$ & $q\ge3$ & $q=2$ & $q\ge 3$ & $q=2$ & $q\ge3$ & $q=2$ & $q\ge 3$ \\
    \midrule
$\Asch_1$   & 0
& 0
&  $7.5\cdot 10^{-11}$
&  $  3.3\cdot 10^{-17}$ &0.008 &$1.02\cdot10^{-4} $& 0.035 &$ 3.1\cdot 10^{-4}$ & 0 & 0\\
$\Asch_2$ & 0 & $1.4\cdot 10^{-6}$ & $9.9\cdot 10^{-7}$
& $1.3\cdot 10^{-10}$ & 0&0 & 0 & 0.18 & 0 &0.18\\
$\Asch_3$ & $0.0081$ & $0.00032$ &$4.3\cdot 10^{-6}$ & $1.5\cdot 10^{-8}$ & $0.074$ & $0.6$& 
 $0.279^{\dagger}$ & $0.017^{\dagger}$ &
$0.086$ & $0.006$  \\
$\Asch_5$ & 0 & $0.016$ & 0 & $6\cdot 10^{-14}$& 0 & 0.91&0 &1.24 & 0 &2.73\\
$\Asch_6$ & 0 & $2.1\cdot 10^{-26}$ & 0&0 & 0 & $0.0053^{\dagger}$ & 0&$0.0053^{\dagger}$
&0  &0\\
$\Asch_8$ & 0.092 & $0.04$ & 0& 0& 0& 0& 0& 0& 0 &0\\
$\Asch_9$ & 0 & $0$ & 0& 0&0.399 &0 &$1.76^{\dagger}$ & 0.102& $1.76^{\dagger}$ & 0.102\\
\bottomrule\\[.1cm]
\end{tabular}
\caption{Values of $p(i,\bfX)$ for Proposition~\ref{prop:main}, for $i\ne 4,7$. 
For entries with a $\dagger$ see Remark~\ref{r:dagger}.}
\label{t:proofex}
\end{table}

\begin{remark}\label{r:dagger}
In Table~\ref{t:proofex}, the following entries were marked with a  $\dagger$.
\begin{enumerate}
    \item[(a)] Class  $\Asch_3$ in type $\bfO^+$. The bound in the table only applies when $e_2\geq3$; in the case where 
    $e_2=2$, the bound is $12.81$  for $q=2$, $2.005$ for $q\geq 4$, and we note that $q\ne 3$ here. 
    \item[(b)] Class  $\Asch_6$ in types $\bfSp$ or $\bfO^+$ with $q\geq 3$. The bound in the table applies 
    in all cases except when $d= 32$, $e_2=2$,  and $ q\in \{11,13,17\}$;  in these cases the bound is $6.9$.
    \item[(c)] Class  $\Asch_9$ in type $\bfO$ with $q=2$. By Lemma~\ref{lem:c9-1}(b) (see also Table~\ref{t:c9-1}), $p(9,\bfO^+)=0$ when $d\equiv 2, 4\pmod{8}$, and  $p(9,\bfO^-)=0$ when $d\equiv 0, 6\pmod{8}$. 
\end{enumerate}
Note that there are some some cases where we know that $p(i,\bfX)=0$, but which are not explicitly identified in Table~\ref{t:proofex} (for example, 
for $\bfX\ne \bfL$, $p(5,\bfX)=0$ if $q<8$). For full details we refer readers to the main propositions for each Aschbacher class $\Asch_i$ in the previous sections. 
\end{remark}

\begin{proposition}\label{prop:main}
Suppose that $G$ is a classical group of type $(\bfX, d, |\F|)$ on $V$ as in Table~$\ref{tab:G}$, and that $e_1, e_2$ are integers  such that $2\leq e_2\leq e_1$ and $d=e_1+e_2>8$.  If, for $j=1,2$, $g_j$ is an $e_j$-ppd stingray element in $G$, and if $i\in\{1,2,\dots,9\}$, then,
\[
{\rm Prob}_i(g_1,g_2,\bfX)\le \lambda_\bfX\cdot (q^{-1}+q^{-2}) + p(i,\bfX)\cdot q^{-d+3}
\]
 where $\lambda_\bfX=1$ if $(i,\bfX)=(1,\bfL)$ and $\lambda_\bfX=0$ otherwise, and where $p(i,\bfX)=0$ if $i\in\{4,7\}$, and otherwise $p(i,\bfX)$ is as in line $\Asch_i$ of Table~$\ref{t:proofex}$ for type $\bfX$.
\end{proposition}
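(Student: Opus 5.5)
The plan is to go class by class: for each $i$ take the bound on ${\rm Prob}_i(g_1,g_2,\bfX)$ from the relevant proposition of the earlier sections and convert it to the form $p(i,\bfX)\,q^{-d+3}$. This means bounding the exponent of $q$ above by $-d+3$ and absorbing the surplus into the constant. For $i\in\{4,7\}$ nothing needs doing, since Proposition~\ref{p:tensor} gives ${\rm Prob}_i=0$. For $(i,\bfX)=(1,\bfL)$, Proposition~\ref{prop:redL} gives the term $q^{-1}+q^{-2}$ directly, so $\lambda_\bfL=1$ and $p(1,\bfL)=0$. Whenever a proposition asserts ${\rm Prob}_i=0$ under some condition, for example $\bfO^-$ in $\Asch_1$, $q$ even in $\Asch_2$, $\bfU$ in $\Asch_6$, or $\bfX\ne\bfL$ in $\Asch_8$, we record a zero entry.

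For the remaining entries, every bound has the shape $c\cdot f(e_1,e_2)\cdot q^{-E(e_1,e_2)}$. Here $E$ is one of $ue_1e_2$, $2e_1e_2-e_1-e_2$, $e_1e_2/\alpha$, $e_1e_2-d/2$, $(2/3)e_1e_2-d/3$, or the explicit $\Asch_6$ exponents, and $f$ is either $1$ or a polynomial factor such as $e_1e_2$. The key elementary fact is that, with $e_1+e_2=d$ fixed and $2\le e_2\le e_1$, the product $e_1e_2$ is minimised at $e_2=2$, so $e_1e_2\ge 2(d-2)$; for unitary type ($e_i$ odd) it is minimised at $e_2=3$, giving $e_1e_2\ge 3(d-3)$. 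Substituting these gives $E\ge d-3+\theta(d)$ with $\theta$ increasing in $d$. For example, $e_1e_2-d/2\ge 3d/2-4$, so $q^{-e_1e_2+d/2}\le q^{-d+3}\cdot q^{-(d/2-1)}$. Then $p(i,\bfX)=\sup_{d,e_2}c\,f\,q^{-\theta}$, which we evaluate at the smallest admissible $d$ (usually $d=10$, or the minimal $d$ forced by the structural constraints, such as $d\ge 12$ in $\Asch_3$ lines $3$--$6$, $d=2^n\ge16$ in $\Asch_6$, and $q\ge8$ in $\Asch_5$ for $\bfX\ne\bfL$). We do this separately for $q=2$ and for $q\ge3$, using the smallest admissible $q$ (for instance $q\ge4$ when $3^2-1$ has no ppd).

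The polynomial factors, such as $e_1e_2$ in $\Asch_2$ and $e_1$ in the $\Asch_6$ counts, are handled by checking that $e_1e_2\,q^{-(\cdot)}$ is decreasing in $e_2$ over the relevant range. Where both $e_2=2$ and large $e_2$ give competing contributions, the $e_2=2$ endpoint dominates. The cases where the $e_2=2$ endpoint gives a much worse constant are split off, as in Remark~\ref{r:dagger}: $\Asch_3$ for $\bfO^+$ with $e_2=2$, using the final assertion of Lemma~\ref{lem:extn3} rather than Table~\ref{t:Prob3Res}; $\Asch_6$ with $d=32$ and $q\in\{11,13,17\}$, using Proposition~\ref{prop:c6}(b)(ii); and the congruence conditions on $d$ modulo $8$ in $\Asch_9$ from Table~\ref{t:c9-1}.

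I expect the main obstacle to be the bookkeeping in $\Asch_3$ and $\Asch_5$. There the proposition's constant depends on parities of $e_i/2$ and on $q$, and the exponent is only $e_1e_2/2$ or $(2/3)e_1e_2$. When $e_2=2$ this saving is marginal, since $e_1e_2/2=d-2$, so the conversion to $q^{-d+3}$ gains only one power of $q$ and the constants become sizeable. This is why entries such as $0.6$, $0.91$, $1.24$ and $2.73$ for $q\ge3$ appear; I would verify these numerically at the extreme parameters $q=3$ (or $q=8$ in $\Asch_5$) and minimal $d$. I would also check that the denominator-related losses, namely $\mathbf{k}\ge 1-3/(2q)$ and the constants $\bfa(q,\bfX)$, are already included in the source propositions, so that no further factor is introduced here.
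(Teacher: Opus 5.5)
Your plan is correct and is essentially the paper's proof. Like the paper, you go class by class through Propositions~\ref{prop:redL}--\ref{prop:c9}, minimise over $e_2$ using the monotonicity of Remark~\ref{rem:fx} (so $e_1e_2\ge 2(d-2)$, or $3(d-3)$ for $\bfU$), rewrite each bound as $q^{-d+3}$ times a residual evaluated at the least admissible $d$ and $q$, and split off the $\dagger$ cases of Remark~\ref{r:dagger}.

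Two small slips need fixing when you do the bookkeeping. First, the zero entry in $\Asch_2$ for $q$ even holds only for orthogonal type. For $\bfU$ with $q=2$ you have $q^u=4$ and a nonzero entry. Second, the $\Asch_3$ entry $0.6$ for $\bfSp$ comes from $e_2=2$ with $q\ge5$ (Line~4 of Table~\ref{t:asch3}), giving $3.0\cdot 5^{-1}$. It does not come from $q=3$, where $e_2=2$ cannot occur.
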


The proof of Proposition~\ref{prop:main} uses Remark~\ref{rem:fx} a number of times.

\begin{remark}\label{rem:fx}
In Propositions~\ref{prop:imprim} to~\ref{prop:c9}, many of
the expressions involve terms of the form $q^{-e_1e_2}$. Finding upper bounds requires
setting
$e_2=d-e_1$ to be the integer maximising
a function $f_Q(x)$ over $x\in [c,d/2]$, where $2\le c\le d/2$, $d>8$, and $Q\ge 2$ is a  fixed integer. We note that for a fixed $Q$, the  functions 
$f_Q(x) = x(d-x)Q^{-x(d-x)}$,
and $g_Q(x) = Q^{-x(d-x)}$
are decreasing
for $x\in [c,d/2]$.
Thus their maximum value is at $x=c$.

\end{remark}

\begin{proof}
We prove the result separately for  each Aschbacher class $\Asch_i$, and each type $\bfX$. Observe that, for type $\bfL$ we have $d\ge 9$ and $e_2\ge 2$;
for type $\bfU$ we have $d\ge 10$ and $e_2\ge 3$;
and for all other types 
$d\ge 10$ and $e_2\ge 2, $ see Table~\ref{tab:one}. For the cases $i\in\{4,7\}$, ${\rm Prob}_i(g_1,g_2,\bfX)=0$ by Proposition~\ref{p:tensor}, and so the result holds with $p(i,\bfX)=0$. We assume from now on that $i\not\in\{4,7\}$.

We pursue the following strategy, which applies to all cases except $\Asch_1$ with $\bfX=\bfL.$ For each type $\bfX$
and each Aschbacher class $\Asch_i$, we first determine an upper bound for
${\rm Prob}_i(g_1,g_2,\bfX)$ of the form $c_{\bfX} \cdot q^{-f(d)} \cdot q^{-d+3},$ where 
$c_{\bfX}$
is a constant depending on $\bfX$ and $f(x)$ is a polynomial. We then bound  $q^{-f(d)}$
by choosing a lower bound for $q,$ usually $2$ or $3$, and an upper bound for $f(d)$ for $d\ge 9$, usually $f(9)$ or $f(10).$

\medskip\noindent
\emph{{Case} $\Asch_1$.} \quad
For  $\bfX=\bfL$ we have
${\rm Prob}_1(g_1,g_2,\bfL)\le
q^{-1}+q^{-2}$ by Proposition~\ref{prop:redL}, so the result holds with $a_1=1$ and $p(1,\bfL)=0$.  
For $\bfX\neq \bfL$,
upper bounds
for ${\rm Prob}_1(g_1,g_2,\bfX)$
are given in  Proposition~\ref{prop:redUSO}, namely ${\rm Prob}_1(g_1,g_2,\bfO^-)=0$, and otherwise the bounds involve a constant $c_{\bfX}$ depending on $\bfX$ and $q$.
For $q=2$ we have
$c_{\bfU}=2.56$,
$c_{\bfSp}=4$, and
$c_{\bfO^+}=17.6$,
and for $q\ge 3$ we have
$c_{\bfU}=81/50$, 
$c_{\bfSp}=2$, and 
$c_{\bfO^+}=6.1.$
For
 $\bfX=\bfU$ 
\[{\rm Prob}_1(g_1,g_2,\bfU)\le c_{\bfU} \cdot q^{-2e_1e_2} 
\le c_{\bfU} \cdot q^{-6(d-3)} 
= c_{\bfU}\cdot q^{-5d+15} q^{-d+3}.\]
For $\bfX=\bfSp, \bfO^+$
\[{\rm Prob}_1(g_1,g_2,\bfX)\le c_{\bfX}\cdot  q^{-e_1e_2} 
\le c_{\bfX} \cdot q^{-2(d-2)} 
= c_{\bfX}\cdot q^{-d+1} q^{-d+3}.\]
The constant $p(i,\bfX)$ listed
in Table~\ref{t:proofex} is an upper
bound for  $c_{\bfX}\cdot q^{-5d+15}$,
when $\bfX=\bfU$ and an upper bound for $c_{\bfX}\cdot q^{-d+1}$
when $\bfX=\bfSp, \bfO^+$,
where we use $d\ge 10$, and we treat $q= 2$ and $q\ge 3$ separately.

\medskip\noindent
\emph{{Case} $\Asch_2$.} \quad
By Proposition~\ref{prop:imprim}, 
${\rm Prob}_2(g_1,g_2,\bfX)= 0$ 
if (i) $\bfX=\bfSp$, or 
(ii) $\bfX=\bfO^\varepsilon$ with $q$ even, or
(iii) $q^u=2$, or (iv) $d$ is odd. The result holds in all these cases, so we now 
consider all other cases. In particular we now have $q^u\geq3$ and  $d$ even, so $d\ge 10.$
Also $\bfX=\bfL$, $\bfU$ or $\bfO^\varepsilon$, and we  set  $c_{\bfL}=1/2$, $c_{\bfU}=1.46$ and $c_{\bfO^\varepsilon}=4.12$; and also 
 $Q_{\bfX}=q^2$ for $\bfX=\bfL$, $\bfU$, and $Q_{\bfO^\varepsilon}=q$. 
 Then  ${\rm Prob}_2(g_1,g_2,\bfX)\le c_{\bfX} \cdot e_1 e_2\cdot  Q_{\bfX}^{-e_1e_2 +d/2}$, 
by Proposition~\ref{prop:imprim}.
 We will apply Remark~\ref{rem:fx}. 
For this we set $E_{\bfL}=2$, $E_{\bfU}=3$, and $E_{\bfO^\varepsilon}=2$ unless $q=3$ where we take $E_{\bfO^\varepsilon}=3$ (see Remark~\ref{rem:ppd}(a)).
Then by Remark~\ref{rem:fx}, 
 $f_{Q_{\bfX}}(x) = x(d-x){Q_{\bfX}}^{-x(d-x)}
$ is decreasing for $x\in [E_{\bfX},d/2]$. This yields
\begin{align*}
{\rm Prob}_2(g_1,g_2,\bfX)
&\le 
c_{\bfX} \cdot e_1 e_2\cdot  Q_{\bfX}^{-e_1e_2 +d/2}
\le  c_{\bfX}\cdot 
{E_{\bfX}(d{-}E_{\bfX})}\cdot Q_{\bfX}^{-E_{\bfX} (d{-}E_{\bfX}) +d/2}.
\end{align*}
Suppose first that $\bfX=\bfL$. Then $q=q^u\ge 3$, and we note that  $y\le q^{y/3}$ if $q\ge 3$ and $y\ge 7$. Hence $d-2\leq q^{(d-2)/3}$ since $d-2> 7$, and so
\begin{align*}
{\rm Prob}_2(g_1,g_2,\bfL)
 &\le c_{\bfL}\cdot 
{2 (d{-}2)}\cdot q^{-4(d{-}2)+d}
< q^{(d-2)/3}\cdot q^{-3d{+}8}\\
&=  q^{(-5d+13)/3}\cdot q^{-d+3} < 1.4\cdot 10^{-6} q^{-d+3},
\end{align*}
as in Table~\ref{t:proofex}.
Now consider $\bfX=\bfU$ and recall that here  $e_2\ge 3$.
 Note that $y\le q^{5y/12}$ if $q\ge 2$ and $y\ge 7$. Then since $d\geq10$ we have 
 $d-3\le q^{5(d-3)/12}$ and hence
 \begin{align*}
 {\rm Prob}_2(g_1,g_2,\bfU) &
\le
 c_{\bfU} \cdot 3(d{-}3)\cdot q^{-6(d{-}3)+d}\\ 
&\le 3\cdot c_{\bfU} \cdot q^{5(d-3)/12}\cdot q^{-6(d{-}3)+d} = 4.38
\cdot q^{(-43d+165)/12}\cdot q^{-d+3}.
\end{align*}
Inserting $q=2$ and $q=3$
yields the lower bounds in Table~\ref{t:proofex}. 
Finally consider
$\bfX=\bfO^\varepsilon$ with $q$ odd
and $d\ge 10$. Suppose first that $q\ge5$, and note that  $(d-2) < q^{(d-2)/5}$.   Hence 
 \begin{align*}
{\rm Prob}_2(g_1,g_2,\bfO^\varepsilon)
&\le 4.12\cdot 2(d-2) \cdot q^{-2(d-2) + d/2}<  8.24 \cdot q^{(d-2)/5}\cdot q^{-2(d-2) + d/2}\\
&= 8.24 \cdot q^{-3d/10 +3/5} q^{-d+3}\le 0.18\cdot q^{-d+3}
\end{align*}
as in Table~\ref{t:proofex}.  It remains to consider the case $q=3$.
Here $e_2\ge 3$ by Remark~\ref{rem:ppd}(a), and we note that  $(d-3) < 3^{(d-2)/4}= q^{(d-2)/4}$. Therefore
 \begin{align*}
{\rm Prob}_2(g_1,g_2,\bfO^\varepsilon)
&\le 4.12\cdot 3(d-3) \cdot q^{-3(d-3) + d/2}\le  12.36\cdot  q^{(d-2)/4}\cdot q^{-3(d-3) + d/2}\\
&=  12.36\cdot  q^{-5d/4 +11/2} q^{-d+3} < 0.0057\cdot q^{-d+3}\ll 0.18\cdot q^{-d+3}
\end{align*}
as in Table~\ref{t:proofex}.

\medskip\noindent
\emph{{Case} $\Asch_3$.} \quad
Proposition~\ref{prop:extn} identifies the cases where 
$ {\rm Prob}_3(g_1,g_2,\bfX)=0$ (see Table~$\ref{t:Prob3Zero}$) and shows that 
  otherwise
 \[
 {\rm Prob}_3(g_1,g_2,\bfX) < c_{\bfX} \cdot q^{-e_1e_2/\alpha},
 \]
 where $c_{\bfX}$ and $\alpha\in\{1,2\}$ are given in Table~$\ref{t:Prob3Res}$.
Note that for $\bfX = \bfL$ we have $\alpha=1$,  $e_2\ge 2$, and $d\ge 9.$ Using Remark~\ref{rem:fx}, 
\[
{\rm Prob}_3(g_1,g_2,\bfL)
\le  c_{\bf L} \cdot q^{-2(d-2)} = c_{\bfL}\cdot
q^{-d+1} q^{-d+3}.
\]
If $q=2$ then as $d\ge 9$, we have $c_{\bfL}\cdot
q^{-d+1} q^{-d+3} < c_{\bf L}\cdot 2^{-8}\cdot q^{-d+3}$, while if $q\geq3$ this becomes $c_{\bfL}\cdot
q^{-d+1} q^{-d+3} <  c_{\bfL} \cdot 3^{-8}\cdot q^{-d+3}$, as in Table~\ref{t:proofex}. 

For type  $\bfU$, we have $\alpha=1$,  $e_2\ge 3$, and $d\ge 10.$ Using Remark~\ref{rem:fx}, 
\[
{\rm Prob}_3(g_1,g_2,\bfU)
\le  c_{\bfU} \cdot q^{-3(d-3)} = c_{\bfU}\cdot
q^{-2d+6} q^{-d+3}.
\]
If $q=2$ then as $d \ge 10$, we have $c_{\bfU}\cdot
q^{-2d+6} q^{-d+3} < c_{\bfU}\cdot 2^{-14}\cdot q^{-d+3}$, while if $q\ge3$ then $ c_{\bfU}\cdot
q^{-2d+6} q^{-d+3} < c_{\bfU}\cdot 3^{-14}\cdot q^{-d+3}$, as in Table~\ref{t:proofex}.

Now consider the cases $\bfX=\bfSp$ or $\bfX=\bfO^\epsilon$. Then by  Table~\ref{t:asch3}, $d\ge 12$  and one of
the lines of that table holds and by Table~\ref{t:Prob3Res}, $\alpha=2$. 
In the cases in Table~\ref{t:asch3} where $e_2\ge 4$ we find
\[
{\rm Prob}_3(g_1,g_2,\bfX) 
\le c_{\bfX} \cdot q^{-e_1e_2/2} \le
c_{\bfX} \cdot  q^{-2(d-4)} = 
c_{\bfX} \cdot
q^{-d+5} q^{-d+3}\le 
c_{\bfX} \cdot
q^{-7} q^{-d+3}.
\]
The worst cases for $e_2\geq4$ are $c_\bfSp=9.42$,  $c_{\bfO^+}=35.64$, and $c_{\bfO^-}=10.94$, all for $d=12$. These yield the  
bounds in Table~\ref{t:asch3} for $\bfSp$ with $q=2$, and for $\bfO^+$ and $\bfO^-$.

This only leaves the cases $(\bfX,e_2) \in \{ (\bfSp, 2), (\bfO^+,2) \}$ (since by Table~\ref{t:asch3} we have $e_2\geq4$ for $\bfX=\bfO^-$);
where for $\bfX=\bfSp$ we have $q\ge 5$ and for $\bfX=\bfO^+$ we have $q\neq 3.$ Here we have
\[
{\rm Prob}_3(g_1,g_2,\bfX) 
\le c_\bfX \cdot q^{-e_1e_2/2} =
c_\bfX\cdot  q^{-(d-2)} = 
c_\bfX\cdot q^{-1} q^{-d+3}.
\]
The worst case for $\bfX=\bfSp$ has $c_\bfSp=3.0$ giving the bound $0.6$ as in Table~\ref{t:proofex} for $q\geq3$.  
For $\bfX=\bfO^+$ with $e_2=2$, the worst cases are $c_{\bfO^+}=25.62$  with $q=2$ giving bound $12.81$, and $c_{\bfO^+}=8.02$  with $q\geq 4$ giving bound $2.005$, as recorded in Remark~\ref{r:dagger}.

\medskip\noindent
\emph{{Case} $\Asch_5$.} \quad Recall $q=p^a$ with $p$ a prime. 
By Proposition~\ref{prop:sub},  ${\rm Prob}_5(g_1,g_2,\bfX)=0$ if $a=1$, or if $\bfX\ne \bfL$ and $a=2$, so we may assume that $q\geq 4$ for type $\bfL$ and  $q\geq8$ if $\bfX\ne \bfL$; and we use the bounds provided by Proposition~\ref{prop:sub}.
  Consider first type $\bfX=\bfL$, where by Remark~\ref{rem:fx}, and using $d\ge 9$ and $q\geq4$,
\begin{align*}
{\rm Prob}_5(g_1,g_2,\bfL)
&\le
4.04\cdot  q^{-e_1e_2+(d-1)/2} \le 
4.04\cdot q^{-2 (d-2) + (d-1)/2}\\
&= 4.04\cdot q^{-(d-1)/2} q^{-d + 3} \le 4.04\cdot 4^{-4} q^{-d+3} <0.016 \cdot q^{-d+3}.
 \end{align*}
For $\bfX\ne \bfL$ we have  $d\geq10$ and $q\geq8$. Further  $e_2\ge 3$ for type $\bfU$ and it follows that
\begin{align*}
{\rm Prob}_5(g_1,g_2,\bfU)& \le
2.11 \cdot  q^{-4e_1e_2/3+2(d-1)/3} \le 
2.11\cdot q^{-4(d-3)+ 2(d-1)/3}\\
 &= 2.11\cdot q^{-(7d- 25)/3} q^{-d + 3} \le  6\cdot 10^{-14} q^{-d+3} .\end{align*}
For the other types let
$c_{\bfSp}=1.82$, 
$c_{\bfO^+}=2.47$, and
$c_{\bfO^-}=5.46$. Then
\[
{\rm Prob}_5(g_1,g_2,\bfX)\le c_{\bfX}\cdot q^{-2e_1e_2/3+d/3} \le 
c_{\bfX}\cdot
 q^{-2\cdot 2 (d-2)/3 + d/3}
 = c_{\bfX}\cdot q^{-1/3} q^{-d + 3} \le c_{\bfX}/2\cdot  q^{-d+3} 
 \]
giving the bounds in Table~\ref{t:asch3}.

\medskip\noindent
\emph{{Case} $\Asch_6$.} \quad
Suppose that ${\rm Prob}_6(g_1,g_2,\bfX) >0$. Then by Proposition~\ref{prop:c6},   
(i) $q$ is an odd prime so $q\ge 3$, (ii) $d=2^n$ with $n\ge 4$,
(iii) $\bfX\neq \bfU, \bfO^-$, and (iv) there are two possibilities for $(e_1,e_2)$ which we consider separately for each type. Suppose first that $\bfX= \bfL$. If $(e_1,e_2)=(d/2,d/2)$ then by Proposition~\ref{prop:c6}, as $d\ge 16$, 
\[
{\rm Prob}_6(g_1,g_2,\bfL) \le \frac{45}{1024}\cdot 
q^{-d^2/4} = \frac{45}{1024}\cdot q^{-d^2/4+d-3} q^{-d+3}\le 2.1\cdot 10^{-26}\cdot q^{-d+3}. 
\] 
If 
$e_2=2$ then  $d\ge 32$ and $q\ge 5$, by Proposition~\ref{prop:c6}(ii), and 
 \[
 {\rm Prob}_6(g_1,g_2,\bfL) \le 
\frac{45}{640} \cdot q^{-\frac{59}{25}d+8} =  \frac{45}{640}\cdot q^{-\frac{34}{25}d+5} q^{-d+3}\le 8.4\cdot 10^{-29}\cdot q^{-d+3}, 
\] 
and we record the larger of these bounds in Table~\ref{t:asch3}.
Now suppose that $\bfX=\bfSp$
or  $\bfO^+$. If $(e_1,e_2)=(d/2,d/2),$ then by Proposition~\ref{prop:c6}, as $q\ge 3$ and $d\geq 16$
\[
{\rm Prob}_6(g_1,g_2,\bfX) \le  \frac{45}{1024}
q^{-\frac{1}{4}d^2+ \frac{23}{10}d} = \frac{45}{1024}\cdot q^{-\frac{1}{4}d^2+\frac{33}{10}d-3} q^{-d+3}\le 7.4\cdot 10^{-9}\cdot q^{-d+3}. 
\] 
Finally consider the case 
$e_2=2$. Here, by Proposition~\ref{prop:c6}, 
either $d=32$, $q\in \{11,13,17\}$, and
\[
{\rm Prob}_6(g_1,g_2,\bfX) \le 
6.9 \cdot   q^{-d+3}, 
\]
or one of $q\geq19$ or $d\geq 128$, and
 \[
 {\rm Prob}_6(g_1,g_2,\bfX) \le 
 5.3\cdot 10^{-3}\cdot q^{-d+3}. 
 \]
The latter bound is recorded in Table~\ref{t:asch3}, and the former special case is recorded in Remark~\ref{r:dagger}.

\medskip\noindent
\emph{{Case} $\Asch_8$.} \quad
Suppose that ${\rm Prob}_8(g_1,g_2,\bfX) >0$. Then by Proposition~\ref{prop:classicalmax},   $\bfX= \bfL$, $d$ is even so $d\geq10$, and 
${\rm Prob}_8(g_1,g_2,\bfL)\leq c_\bfL \cdot  q^{-e_1e_2+d/2}$, where $c_\bfL=1.46$ if $q=2$, and $c_\bfL =3.18$ if  $q\ge 3$. We obtain using Remark~\ref{rem:fx} that 
\[
{\rm Prob}_8(g_1,g_2,\bfL)\le
c_\bfL\cdot q^{-e_1e_2+d/2} \le
c_\bfL\cdot q^{-2(d-2)+d/2}  =
c_\bfL\cdot q^{-d/2 + 1}\cdot q^{-d+ 3}.
\]
Since $d\ge 10$, for $q=2$ we have $c_\bfL\cdot q^{-d/2 + 1}< 0.092$, while for 
$q\ge3$ we have $c_\bfL\cdot q^{-d/2 + 1}<0.04$, as in Table~\ref{t:proofex}.

\medskip\noindent
\emph{{Case} $\Asch_9$.} \quad
Suppose that ${\rm Prob}_9(g_1,g_2,\bfX) >0$. Then by  Proposition~\ref{prop:c9}, $q^u=p$ is a prime, and either    $\bfX=\bfSp$ with $q=2$ or  
$\bfX=\bfO^\varepsilon$, and 
${\rm Prob}_9(g_1,g_2,
\bfX) \le c_{\bfX}\cdot q^{-e_1e_2 + d/2}$, 
where $d\ge 10$, 
$c_{\bfSp} = 6.38$, and 
 $c_{\bfO^\varepsilon} =8.24$
if $q$ is odd or
$c_{\bfO^\varepsilon} =28.1$
if $q=2.$ Using Remark~\ref{rem:fx} we have 
 \[
 {\rm Prob}_9(g_1,g_2,
\bfX) \le c_{\bfX}\cdot q^{-e_1e_2 + d/2} \le
c_{\bfX}\cdot q^{-2(d-2) + d/2}  = 
c_{\bfX}\cdot q^{-(d-2)/2}\cdot q^{-d+3}.
\]
Since $d\ge 10$, for $q=2$ we have $c_{\bfX}\cdot q^{-(d-2)/2}\leq c_{\bfX}\cdot 2^{-4}$, while for 
$q\ge3$ we have $c_{\bfX}\cdot q^{-(d-2)/2}\leq c_{\bfX}\cdot 3^{-4}$, giving the bounds in Table~\ref{t:proofex}.

Since $q^{-(d-2)/2} \le 3^{-4}$ for $q\ge 3$ and  $q^{-(d-2)/2} \le 2^{-4}$ for $q=2$, and upper bounds in Table~\ref{t:proofex} follow on substituting the values of $c_{\bfX}$. Finally the comments in Remark~\ref{r:dagger}(c) follow from Lemma~\ref{lem:c9-1}(b) and Table~\ref{t:c9-1}.
\end{proof}

\subsection{Proof of Theorem~\ref{thm:main}}\label{ss:thm1.2}

Recall that by Equation~\eqref{e:aschnongen}
and Proposition~\ref{prop:main},
   \[
   \rhonongen(g_1,g_2,\bfX)  = \sum_{i=1}^9 {\rm Prob}_i(g_1,g_2,\bfX) \le \lambda_\bfX (q^{-1} + q^{-2}) + \sum_{i} p(i, \bfX)\cdot q^{-d+3}
   \]
where  $\lambda_\bfX=1$ for type $\bfL$ and $\lambda_\bfX=0$ otherwise.
To determine $\kappa_\bfX(q)$, we consider
the different types $\bfX.$
For most types, to determine the `constants' $\kappa_\bfX(q)$ it is sufficient to add up the upper bounds $p(i, \bfX)$ stated in the column labelled 
$\bfX$ in Table~\ref{t:proofex} for the corresponding value of $q$. We treat the exceptional cases in Remark~\ref{r:dagger} separately.
Let
\[
p_{\bfX} \coloneq  \sum_i p(i, \bfX).
\]

Consider first type $\bfL$. If $q=2$ then we may take  $p_{\bfL} = 0.1081$, so we may take $\kappa_\bfX(q) = 0.11$
while if $q\geq 3$ then $p_{\bfL} \le 0.05632$ so we take $\kappa_\bfX(q) = 0.06.$

For type $\bfU$ with $q=2$ 
we find $p_{\bfU} \le 5.3\cdot 10^{-6}$ and we take $\kappa_\bfX(q) = 5.3\cdot 10^{-6}$
while if $q\geq 3$ then we choose
$p_{\bfU} < 1.6 \cdot 10^{-8} = \kappa_\bfX(q).$

For type $\bfSp$ with $q=2$ 
we find $p_{\bfSp} = 1.147< 1.15=\kappa_\bfX(q)$,
while if $q\geq 3$ then 
$p_{\bfSp} = 1.5154<1.52 = \kappa_\bfX(q).$
In the exceptional case where $e_2=2$ with $q\geq3$ we have $p_{\bfSp} = 8.4101<8.42 = \kappa_\bfX(q).$

For type $\bfO^+$ with $q=2$ 
we find $p_{\bfO^+} = 2.074 <2.08=\kappa_\bfX(q)$,
while if $q\geq 3$ then 
$p_{\bfO^+} = 1.53984 < 1.54 = \kappa_\bfX(q).$
In the exceptional case where $e_2=2$, for $q=2$ we have $p_{\bfO^+} = 14.605< 14.61 = \kappa_\bfX(q)$. In the case with $e_2=2$ and $q\geq3$ 
 we have $p_{\bfO^+} = 3.52784<3.53 = \kappa_\bfX(q)$,  while in the special case with $d=32$ and $q\in\{11, 13, 17\}$, we have $p_{\bfO^+} = 10.4273 
 <10.43 = \kappa_\bfX(q)$.

For type $\bfO^-$ with $q=2$ 
we find $p_{\bfO^-} = 1.846 < 1.85=\kappa_\bfX(q)$,
while if $q\geq 3$ then 
$p_{\bfO^-} = 3.018 < 3.02 = \kappa_\bfX(q).$ This completes the proof of the main assertion of Theorem~\ref{thm:main}. In particular, the values of $\kappa_{\bfX}(q)$ in Table~\ref{t:kappa} are valid.

The final assertion is proved using the bounds in Table~\ref{t:kappa}, together with Remark~\ref{r:dagger}(c). First we consider the bounds in columns 2 and 3 of Table~\ref{t:kappa} for the types $\bfX\ne\bfL$. Recall that in this case $d$ is even and so $d\geq 10$. Hence
        \begin{align*}
        \rhogen(g_1,g_2,G) &\geq  \left\{ \begin{array}{lll}
             1- \max\{ 5.3\cdot 10^{-6}\cdot 2^{-d+3}, 1.6\cdot 10^{-8}\cdot 3^{-d+3}\}  &> 0.999             & \mbox{if}\  \bfX=\bfU\\ 
             1- \max\{ 1.15\cdot 2^{-d+3}, 1.52\cdot 3^{-d+3} \}     &> 0.991           & \mbox{if}\  \bfX=\bfSp\\
             1-  \max\{2.08\cdot 2^{-d+3}, 1.54\cdot 3^{-d+3}\}     &> 0.983           & \mbox{if}\  \bfX=\bfO^+\\
             1- \max\{ 1.85\cdot 2^{-d+3}, 3.02\cdot 3^{-d+3} \}     &> 0.985           & \mbox{if}\  \bfX= \bfO^-\\
        \end{array}\right.
    \end{align*}
 and so in these cases  $\rhogen(g_1,g_2,G)>0.983$. Next consider the bounds in  columns 2 and 3  of Table~\ref{t:kappa} for $\bfX=\bfL$ (so here $d\geq9$): for $q=2$ we have  $\rhogen(g_1,g_2,G)\geq 1-q^{-1}-q^{-2}-0.11\cdot q^{-d+3} > 0.248$, while for $q\geq3$ we get  $\rhogen(g_1,g_2,G)\geq 1-q^{-1}-q^{-2}-0.06\cdot q^{-d+3} > 0.555$. Finally we consider the exceptional bounds in column 4 of Table~\ref{t:kappa}. Here $d\geq 10$ and $d$ is even. In each of the three lines where $q\geq3$ we have $\rhogen(g_1,g_2,G)\geqslant 1- 10.43\cdot 3^{-7}>0.995$. Thus we may assume that  $(\bfX,q,e_2)=(\bfO^{+},2,2)$. If $d \geqslant 14$ then by Table~\ref{t:kappa}, $\rhogen(g_1,g_2,G)\geqslant  1 - 14.61\cdot 2^{-11} > 0.99$, so we may assume further that $d = 10$ or $d=12$. In these cases it follows from Remark~\ref{r:dagger}(c) that $p(9,\bfO^+)=0$. Also if $d=10$ then the condition of line $4$ of  Table~\ref{t:Prob3Zero} holds and hence $p(3,\bfO^+)=0$ by Proposition~\ref{prop:extn}. If $d=12$ then by Lemma~\ref{lem:c3boundV2} for the case $\Asch_3$ we can only have $b=2$ and we have the bound ${\rm Prob}_3(g_1,g_2, \bfO^+;b) < 12.585 \cdot 2 \cdot 2^{-10} < 0.0246$.  Thus if $d=10$ then by Table~\ref{t:proofex} we have $\rhogen(g_1,g_2,G)\geqslant 1- 0.035\cdot 2^{-7}>0.99$;  while if $d=12$ then by  Table~\ref{t:proofex} and our observations for the case $\Asch_3$, we have $\rhogen(g_1,g_2,G)\geqslant 1- 0.035\cdot 2^{-9} - 0.0246 >0.975$. This completes the proof.
\qed

\subsection{Proof of Theorem~\ref{thm:appn}}\label{ss:thm1.1}

We prove Theorem~\ref{thm:appn} as a corollary to the following more precise and technical result.

\begin{proposition}\label{prop:appn}
    Suppose that $G$ is a classical group of type $(\bfX, n, q)$  with $n > 8$  as in Table~$\ref{tab:G}$, and that $h_1, h_2$ are  independent randomly selected elements of $G$.  Then with probability at least $c_0/\log n$, for some positive constant $c_0$, some powers $ h_1^{m_1}, h_2^{m_2}$ generate a naturally embedded classical subgroup of type $(\bfY, d, q)$, where $2\alpha\log n_0<d\leq 4\alpha\log n_0$ with $\alpha, n_0$  as in Table~$\ref{tab:XY}$, and  $\bfX, \bfY$ are  as in Table~$\ref{tab:G}$.
\end{proposition}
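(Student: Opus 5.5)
We follow the three-stage construction described in the introduction: first power random elements to stingray elements, then check they form a stingray duo, then invoke Theorem~\ref{thm:main}. Table~\ref{tab:XY} (which we have not seen) presumably specifies $\alpha$ and $n_0$ so that admissible $e$ lie in an interval $[\alpha\log n_0, 2\alpha\log n_0]$ with the right parity. The case $\bfX=\bfU$ needs $e$ odd, $\bfX=\bfSp,\bfO^\eps$ need $e$ even, and odd-dimensional orthogonal groups must be handled by working inside a nondegenerate even-dimensional subspace.

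\textbf{Step 1 (stingray powers).} By \cite[Theorem 3.3(b)]{NP14}, for a uniformly random $h\in G$ the probability that some power $h^{m}$ is an $e$-ppd stingray element with $e$ in the prescribed interval is at least $c_1/\log n$. The intended mechanism is that $h$ has a unique irreducible $e$-dimensional constituent whose order involves an $e$-ppd $r$, and killing the remaining part of the order gives $h^m$. We will apply this to $h_1$ and $h_2$ independently, obtaining $g_1=h_1^{m_1}$ and $g_2=h_2^{m_2}$ with parameters $e_1,e_2$ in the interval, ordered so that $e_1\ge e_2$. This step looks like it costs a factor $c_1^2/(\log n)^2$, which is worse than the claimed $c_0/\log n$. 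I would try to recover one $\log n$ by noting that the NP14 bound presumably sums over roughly $\log n$ values of $e$, each contributing order $1/\log n$ or better. In fact I expect the full interval to have probability bounded below by a constant, because the relevant density is about $\sum_{e\in I}1/e\approx\log 2$. If so, one pair already succeeds with constant probability, and the $c_0/\log n$ bound has room to spare; this is the point I would check first in \cite{NP14}.

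\textbf{Step 2 (duo and nondegeneracy).} Conditional on the classes of $g_1$ and $g_2$, the pair $(g_1,g_2)$ is uniformly distributed on $g_1^G\times g_2^G$ because $h_1,h_2$ are independent and uniform. By \cite[Section 3]{GNP2} and \cite[Theorem 1.1]{GIM}, the probability that $U_{g_1}\cap U_{g_2}=0$ and that $V_d:=U_{g_1}+U_{g_2}$ is nondegenerate (when $\bfX\ne\bfL$) is at least a positive constant, namely at least $1-3/(2q^u)$ up to the $q^{-ed}$-type corrections computed there. On this event the restrictions of $g_1,g_2$ to $V_d$ form an $(e_1,e_2)$-ppd stingray duo, since both act trivially on $V_d^\perp$ (or on a complement for $\bfL$), and $d=e_1+e_2\in(2\alpha\log n_0,4\alpha\log n_0]$. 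For $\bfO^\eps$, the type of $V_d$ (either $+$ or $-$) is whatever it turns out to be, which is why $\bfY=\bfO^{\eps'}$.

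\textbf{Step 3 (generation).} Conditioned on the classes and on the subspace pair, the duo is uniformly distributed among stingray duos with these classes in $\GY(V_d)$, so Theorem~\ref{thm:main} applies with $d>8$, which holds once $n$ is large. It shows that $\langle g_1,g_2\rangle$ contains $\OmegaY(V_d)$ with probability at least $0.248$. For $\bfSp$ with $q$ even, the group generated is the orthogonal group of Lemma~\ref{lem:spgen}. That subgroup fixes $V_d^\perp$ pointwise, so it is naturally embedded. Multiplying the constants from the three steps gives $c_0$.

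\textbf{Main obstacle.} The main obstacle is the uniformity and independence bookkeeping in Step 3. We need that conditioning on the events of Steps 1 and 2 leaves $(g_1,g_2)$ uniform on the set of stingray duos in the relevant classes, so that $\rhogen$ is exactly the conditional probability. Conjugation invariance of all the events involved should give this, via Proposition~\ref{prop:om-class}.
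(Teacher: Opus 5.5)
Your three steps are the paper's three steps. First, \cite[Theorem 3.3(b)]{NP14} powers each $h_i$ to an $e_i$-ppd stingray element with $e_i\in(\alpha\log n_0,2\alpha\log n_0]$. Second, \cite[Lemma 2.1]{GNP1} for type $\bfL$, or \cite[Theorem 1.1]{GNP2} (with \cite[Theorem 1.1]{GIM} when $|\F|=2$) otherwise, makes $U_{g_1}\oplus U_{g_2}$ a (nondegenerate) $d$-space with $F_{g_1}\cap F_{g_2}$ a pointwise fixed complement. Third, Theorem~\ref{thm:main} gives generation, and $c_0$ is the product of the three constants. Your conditioning argument in Step~3 is more careful than the paper, which does not discuss uniformity at all.

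However, the point you flag in Step~1 is a genuine gap, and the paper does not close it either. The paper quotes \cite[Theorem 3.3(b)]{NP14} as giving probability at least $b/\log n$ for each $h_i$ separately, assumes both events hold, and then takes $c_0=bb'b''$. By independence those inputs give only $b^2b'b''/(\log n)^2$ for the joint event. Bookkeeping cannot improve this: if each $h_i$ succeeded with probability of exact order $1/\log n$, the pair would succeed with probability $o(1/\log n)$.

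So the bound $c_0/\log n$ really needs the stronger input you suspect. A single random element must have a power which is an $e$-ppd stingray element for \emph{some} $e$ in the interval with probability bounded below by a constant, or at least by $c/\sqrt{\log n}$. Your heuristic $\sum_e 1/e\approx\log 2$ points the right way, but a proof needs two further ingredients.

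(i) You need a lower bound, uniform in $e$ in the interval, on the proportion of elements with a unique $e$-dimensional irreducible constituent whose order is divisible by an $e$-ppd $r$, and no other constituent whose order is divisible by $r$. The second condition costs a factor behaving roughly like $(n/e)^{-1/e}$, and this is where $e>\alpha\log n_0$ is used.

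(ii) You need the events for different $e$ to be disjoint before summing, e.g.\ by demanding exactly one constituent whose dimension lies in the interval.

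Unless \cite{NP14} actually supplies this, your argument, like the paper's, proves only $c_0/(\log n)^2$.

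A lesser point, also passed over in the paper: Theorem~\ref{thm:main} requires $d>8$, and $d>2\alpha\log n_0$ does not force this for small $n$. Saying ``once $n$ is large'' does not give a constant uniform in $q$ for the finitely many remaining values of $n$. Those need a separate argument or must be excluded.
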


 \begin{table}
  \caption{Values of $\alpha, n_0$ for Proposition~\ref{prop:appn}.}
\begin{tabular}{cll}
  \toprule
   Types &$\alpha$ & $n_0$\\
  \midrule
  {\bf L} or {\bf U} & $1$ & $n$\\
  {\bf Sp} or $\bfO^\eps$ & $2$ & $(n-\nu)/2$, where $\nu\in\{0,1\}$,\\
 && and $n\equiv \nu\pmod{2}$\\
\bottomrule
\end{tabular}
\label{tab:XY}
\end{table}

\begin{proof}
    Assume the hypotheses of Proposition~\ref{prop:appn} and let $V$ denote the underlying space $\F^n$ on which $G$ acts naturally, with $\F$ as in Table~\ref{tab:G}. We write $n=n_0$ if ${\bf X}= {\bf L}$ or ${\bf U}$, $n=2n_0$ if ${\bf X}= {\bf Sp}$ or ${\bf O}^\pm$, and finally $n= 2n_0+1$ if  ${\bf X}={\bf O}^\circ$ with $q$ odd. Thus $n_0$ is as in Table~\ref{tab:XY}.
    Then, for $i=1$ or $2$,  by \cite[Theorem 3.3(b)]{NP14}, the  probability that some power $g_i\coloneq h_i^{m_i}$ is an $e_i$-ppd stingray element, with $e_i\in (\alpha\log n_0, 2\alpha\log n_0]$, is at least $b/\log n$ for an explicit positive constant $b$ (where $\alpha, n_0$ are as in Table~\ref{tab:XY}).  Assume now that this is the case. Let $d\coloneq e_1+e_2$ and note that $d\in (2\alpha\log n_0, 4\alpha\log n_0]$. For $i=1,2$, let $U_{g_i}$ be the $g_i$-invariant $e_i$-subspace of $V$ on which $g_i$ acts irreducibly and let $F_{g_i}$ be the fixed point space of $g_i$ in $V$. Note also that if $\bfX\ne   \bfL$ then, by Lemma~\ref{lem:redOm} and Table~\ref{tab:stingraycond}, the $e_i$ have the same parity and so $d$ is even.

By \cite[Lemma 2.1]{GNP1} if  $\bfX=  \bfL$, or \cite[Theorem 1.1]{GNP2} (and \cite[Theorem 1.1]{GIM} for $|\F|=2$)  if  $\bfX\ne  \bfL$, there exists a positive constant $b'$ such that, with probability at least $b'$ we have $U_{g_1}\cap U_{g_2} = 0$, so that $V_0\coloneq  U_{g_1}\oplus U_{g_2}$ is a $d$-subspace, and in addition $V_0$ is nondegenerate if $\bfX\ne  \bfL$.  Assume now that $V_0$ is a $d$-space on which $H\coloneq \langle g_1,g_2\rangle$ acts, fixing the complement $F_{g_1}\cap F_{g_2}$, and that $V_0$ is nondegenerate if $\bfX\ne  \bfL$. Note that $H$ is a subgroup of a naturally embedded classical subgroup of $G$, acting on $V_0$ and fixing $F_{g_1}\cap F_{g_2}$ pointwise. 
Then, by Theorem~\ref{thm:main}, with probability at least a positive constant $b''$, $H$ is a classical subgroup of type $(\bfY, d, q)$ on $V_0$, where $\bfX, \bfY$ are as in Table~\ref{tab:G}.   

Putting these steps together we see that all the conditions of Proposition~\ref{prop:appn} hold for $h_1, h_2$ with probability at least $c_0/\log n$ where $c_0=bb'b''$. This completes the proof.
\end{proof}
 
 Now we prove Theorem~\ref{thm:appn}. We take the constant $k(\eta)$ in Theorem~\ref{thm:appn} to be $\log(\eta^{-1})/c_0$, with $c_0$ as in Proposition~\ref{prop:appn}.

\medskip\noindent
\textbf{Proof of Theorem~\ref{thm:appn}.}\quad  We use the notation from Proposition~\ref{prop:appn} and its proof above.
First we note that, if $\bfX=  \bfL$ or $ \bfU$, then the dimension $d$ in Proposition~\ref{prop:appn} satisfies $2\log n< d\leq 4\log n$. On the other hand if  $\bfX=  \bfSp$ or $ \bfO^\varepsilon$ with $\eps\in\{+,-,\circ\}$, then  $n=2n_0$ or $2n_0+1$ so $n_0=n/2$ or $(n-1)/2$ respectively (see Table~\ref{tab:XY}), and since $n>8$ we have 
\[
2\log n < 4\log( (n-1)/2) < d\leq 8\log(n/2) < 8\log n.
\]
Thus for all types $2\log n < d \leq c(\bfX)\log n$ with $c(\bfX)$ as in Table~\ref{tab:G}. 

Now suppose that $\eta > 0$, and that $h_1, h_2, \dots, h_{k+1}$ are independent uniformly distributed random elements from   $G$ for some $k>1$. Then by Proposition~\ref{prop:appn}, for each  $i$, the probability $\pi$ that some powers $h_i^{m_1}, h_{i+1}^{m_2}$ generate a naturally embedded $d$-dimensional classical subgroup of $G$ with $d$ as above, satisfies $\pi \geq c_0/\log n$ with $c_0$ the positive constant in Proposition~\ref{prop:appn}. Thus  the probability that at least one of these $k$ pairs generates such a subgroup is $1-(1-\pi)^k$, and this probability is at least $1-\eta$ if and only if $\tau\coloneq  (1-\pi)^k\leq \eta$. We claim that this holds for all $k\geq k(\eta)\log n$, where $k(\eta)= \log(\eta^{-1})/c_0$ (a positive constant).

Note that $0<c_0/\log n<\pi<1$, and hence $x\coloneq -c_0/\log n$ satisfies $-1<x<0$. Therefore (see, for example, \cite[(2)]{NieP}), $\log(1+x) < x<0$. Thus our assumption $k\geq k(\eta)\log n$ implies, since each of $x, \log(1+x)$, and $\log(\eta)$ is negative, 
\[
k\geq k(\eta)\log n = \frac{\log(\eta^{-1})}{c_0}\log n = \frac{\log(\eta^{-1})}{-x} = \frac{\log(\eta)}{x} > \frac{\log(\eta)}{\log(1+x)}.
\]
Therefore $\log(\eta) > k\log(1+x) = \log((1+x)^k)$, and hence $\eta > (1+x)^k = (1-c_0/\log n)^k \geq (1-\pi)^k = \tau$. Thus the claim is proved, and hence also Theorem~\ref{thm:appn} is proved.\qed

\section{Acknowledgements}
All three  authors thank Frank L\"ubeck for insightful discussions that helped shape the direction of this work. We thank
Tim Burness for  
his advice on various aspects of the work.
The research for this paper forms a major part of the Australian Research Council Discovery Project  Grant DP190100450 of the second and third authors which has supported the first author.   
The second author acknowledges funding by the Deutsche Forschungsgemeinschaft (DFG, German Research Foundation) - Project 286237555 - TRR 195.

\end{document}